\setlist[itemize]{noitemsep, topsep=0pt, leftmargin=1cm}
\newlist{enumalpha}{enumerate}{1}
\setlist[enumalpha, 1]{label=(\alph*)}
\newlist{enumroman}{enumerate}{1}
\setlist[enumroman, 1]{label=(\roman*)}
\newtheorem{theorem}{Theorem}
\newtheorem{proposition}[theorem]{Proposition}
\newtheorem{lemma}[theorem]{Lemma}
\newtheorem{corollary}[theorem]{Corollary}
\newtheorem{definition}[theorem]{Definition}
\theoremstyle{remark}
\newtheorem{example}[theorem]{Example}
\newtheorem{remark}[theorem]{Remark}
\numberwithin{theorem}{section}
\numberwithin{equation}{section}
\crefname{subsection}{subsection}{subsections}
\crefname{subsubsection}{paragraph}{paragraphs}
\newcommand{\npar}{\medskip\noindent\textbf}
\newcommand{\customref}[2]{\hyperref[#2]{#1}}
\newcommand\iref[2]{\customref{\Cref*{#1}~\ref*{#2}}{#2}}
\newcommand{\Z}{\mathbb Z}
\newcommand{\Q}{\mathbb Q}
\newcommand{\R}{\mathbb R}
\newcommand{\N}{\mathbb N}
\newcommand{\F}{\mathbb F}
\newcommand{\J}{\mathbb J}
\newcommand{\cF}{\mathcal F}
\newcommand{\cD}{\mathcal D}
\newcommand{\cR}{\mathcal R}
\newcommand{\cO}{\mathcal O}
\newcommand{\cG}{\mathcal G}
\newcommand{\cI}{\mathcal I}
\newcommand{\fF}{\mathfrak F}
\newcommand{\fg}{\mathfrak g}
\newcommand{\fh}{\mathfrak h}
\newcommand{\fn}{\mathfrak n}
\newcommand{\fm}{\mathfrak m}
\newcommand{\nbar}{\underline n}
\newcommand{\abar}{\underline a}
\newcommand{\ab}{\textnormal{ab}}
\newcommand{\alg}{\textnormal{alg}}
\newcommand{\Aut}{\mathrm{Aut}}
\newcommand{\Gal}{\mathrm{Gal}}
\newcommand{\GL}{\mathrm{GL}}
\DeclareMathOperator{\Hom}{Hom}
\newcommand{\perf}{\textnormal{perf}}
\DeclareMathOperator{\rank}{rank}
\newcommand{\sep}{\textnormal{sep}}
\newcommand{\Stab}{\mathrm{Stab}}
\DeclareMathOperator{\Spec}{Spec}
\DeclareMathOperator{\Tr}{Tr}
\DeclareMathOperator{\Ver}{Ver}
\newcommand{\dblquot}{\mskip-0.3\thinmuskip/\!\!/\!}
\newcommand{\Orb}[3]{#1\otimes#2\dblquot_{#3}}
\newcommand{\dd}{\mathrm{d}}
\DeclareMathOperator{\Ext}{\acute{E}tExt}
\DeclareMathOperator{\lastjump}{lastjump}
\DeclareMathOperator{\orb}{orb}
\newcommand{\bwp}{\boldsymbol{\wp}}
\DeclareMathOperator{\pr}{pr}
\newcommand{\andd}{\quad\quad\textnormal{and}\quad\quad}
\newcommand{\rprod}{\mathchoice{\prod_P\!'\ }{\prod_P'}{\prod_P'}{\prod_P'}}
\newcommand{\notp}{\N\setminus p\N}
\newcommand{\Onotp}{\{0\}\cup\notp}
\newcommand{\suchthat}[2]{\left\{#1\ \middle\vert\ #2\right\}}
\newcommand{\cardsuchthat}[2]{\left|\suchthat{#1}{#2}\right|}
\newcommand{\simto}{\overset\sim\to}
\newcommand{\llpar}{(\!(}
\newcommand{\rrpar}{)\!)}
\newcommand{\longtwoheadrightarrow}[1]
     {\foreach \n in {1,...,#1}{\relbar\joinrel\!}\twoheadrightarrow}
\title{Counting two-step nilpotent wildly ramified extensions of function fields}
\subjclass{11R45, 11N45, 11S15, 14G17}
\author{Fabian Gundlach}
\email{fabian.gundlach@uni-paderborn.de}
\author{Béranger Seguin}
\email{math@beranger-seguin.fr}
\address{Universität Paderborn, Fakultät EIM, Institut für Mathematik, Warburger Str. 100, 33098 Paderborn, Germany.}
\begin{document}

\begin{abstract}
  We study the asymptotic distribution of wildly ramified extensions of function fields in characteristic~$p > 2$, focusing on (certain) $p$-groups of nilpotency class at most $2$.
  Rather than the discriminant, we count extensions according to an invariant describing the last jump in the ramification filtration at each place.
  We prove a local--global principle relating the distribution of extensions over global function fields to their distribution over local fields, leading to an asymptotic formula for the number of extensions with a given global last-jump invariant.
  A key ingredient is Abrashkin's nilpotent Artin–-Schreier theory, which lets us parametrize extensions and obtain bounds on the ramification of local extensions by estimating the number of solutions to certain polynomial equations over finite fields.
\end{abstract}

\maketitle

{
  \setcounter{tocdepth}{1}
  \hypersetup{linkcolor=black}
  \tableofcontents{}
}

\section{Introduction}
\label{sn:introduction}

For the whole article, we fix a prime number~$p$.
If $F$ is a field, we denote by $\Gamma_F \coloneqq \Gal(F^\sep|F)$ its absolute Galois group.

\subsection{Context}

The asymptotic distribution of field extensions (usually counted by discriminant) is an actively studied topic.
Over a number field $F$, the case of abelian extensions was solved in \cite{wright} using the description of $\Gal(F^\ab|F)$ given by class field theory, and the case of extensions of degree $\leq 5$ was solved in \cite{davenport,bharga-quart,bharga-quint,bsw} using explicit parametrizations.
Significant progress has also been made for nilpotent extensions in \cite{klueners-malle-nilpotent-Galois-extensions,koymans-pagano-nilpotent}.
Although the general problem remains wide open, precise conjectures predict the expected distribution of class groups and Galois groups \cite{cohenlenstra,malle1}.
Over function fields, similar conjectures have been made when considering only tamely ramified extensions.
Results consistent with these expectations were obtained recently in \cite{EVW,ETW} by counting $\F_q$-points of moduli spaces of tamely ramified covers of the line (Hurwitz spaces).

Meanwhile, the distribution of wildly ramified extensions of function fields of characteristic~$p$ (both local and global) is much more mysterious, and there is not even a conjecture (see however the ``main speculation'' of \cite{darda-yasuda-wild}).
For abelian $p$-extensions, asymptotics have been described in \cite{lagemann1,lagemann2,kluners-muller,potthast,gundab}.
For non-abelian $p$-extensions, very little is known: only the local distribution of extensions whose Galois group is a certain generalization of Heisenberg groups (different from the generalized Heisenberg groups which we consider in \Cref{sn:more-groups}) has been described in \cite{muller-thesis}.
Constructions of moduli spaces for wildly ramified covers of curves, as in the articles \cite{harbater,bertin-mezard,fried-mezard,pries,zhang,moduli-torsors,danghippold}, highlight how different these spaces are from usual Hurwitz spaces, making it unlikely that the strategy of \cite{ETW} can be straightforwardly adapted.

The goal of this article is to study the distribution of extensions of function fields of characteristic~$p > 2$ for the $p$-groups arising as central extensions of abelian groups (i.e., $p$-groups of nilpotency class $\leq 2$).

\subsection{The last jump}
\label{subsn:last-jump}

In this paper, we do not count using the discriminant, but we use an invariant obtained by describing the last jump in the ramification filtration at each place, in the upper numbering.
Let $G$ be a finite group (seen as a discrete topological group).
If $\fF$ is a local field and $K$ is an (étale) $G$-extension of~$\fF$, corresponding to a continuous group homomorphism $\gamma : \Gamma_{\fF} \to G$ (see \Cref{subsn:extensions-and-cohomology}), we define an invariant $\lastjump(K|\fF)$ as follows:
\[
  \lastjump(K|\fF)
  \coloneqq
  \inf\suchthat{
      v\in\R_{\geq0}
  }{
    \gamma\big(
      \Gamma_{\fF}^{v}
    \big)
    = 1
  },
\]
where $\Gamma_{\fF}^{v}$ denotes the $v$-th ramification subgroup of the absolute Galois group of $\fF$, in the upper numbering (see \cite[Chap.~IV, \S 3]{serrecl}).
When the choice of the local field $\fF$ is implied by the context, we define $\lastjump(K) \coloneqq \lastjump(K|\fF)$.

Now, let $K$ be an (étale) $G$-extension of a global field~$F$.
At each place $P$ of $F$, we define $\lastjump_P(K) \coloneqq \lastjump(K \otimes_F F_P \mid F_P)$, where $F_P$ is the completion of $F$ at $P$.
These local invariants are then assembled into the following global invariant, mimicking the way invariants like the discriminant (or rather its degree) behave:
\begin{equation}
\label{eq:def-global-lastjump}
  \lastjump(K)
  \coloneqq
  \sum_P
    \deg(P)\cdot\lastjump_P(K).
\end{equation}
In its principle, this invariant is more closely related to the ``product of the ramified primes'' (used for example in \cite{wood-local}) than to the discriminant, but it adds weights to the primes depending on how wild the ramification at each prime is.
Note however that our invariant does not distinguish unramified primes from tamely ramified primes.

When $G$ is a $p$-group and $F$ has characteristic~$p$, tame ramification is impossible, so that $\lastjump(K)$ (resp.~$\lastjump_P(K)$) vanishes if and only if $K$ is an unramified extension (resp.~$P$ is unramified in $F$).

The invariants $\lastjump_P(K)$ and $\lastjump(K)$ are rational numbers whose denominators divide $|G|$.
When $G$ is an abelian group, the last jump is always an integer by the Hasse--Arf theorem, and it coincides with the exponent of the ``Artin--Schreier conductor'', an invariant for which asymptotics were given in \cite{gundab}.
As discussed there, counting by Artin--Schreier conductor gives simpler, more uniform results than counting by discriminant.

\subsection{Main results}

Assume now that $p > 2$, and let $G$ be a nontrivial finite $p$-group of nilpotency class at most~$2$, i.e., such that $[G,G] \subseteq Z(G)$.
As we explain in \Cref{lem:p-torsion-is-group}, the $p$-torsion elements of $G$ form a group, which we denote by~$G[p]$.
Let $F \coloneqq \F_q(T)$ be a rational (global) function field of characteristic~$p$, and let $\Ext(G,F)$ be the set of isomorphism classes of (étale) $G$-extensions of $F$ (see \Cref{subsn:extensions-and-cohomology}).
Our first main result is the following exact local--global principle:

\begin{theorem}[cf.\ \Cref{thm:local--global-principle}]
  \label{thm:intro-local--global}
  For every place $P$ of $F=\F_q(T)$, let $N_P\in\Q_{\geq0}$.
  Assume that $N_P=0$ for all but finitely many places.
  Then,
  \[
    \sum_{\substack{
      K \in \Ext(G,F):\\
      \forall P,\ \lastjump_P(K) = N_P
    }}
      \frac{1}{|\Aut(K)|}
    \quad=\quad
    \prod_P
      \ 
      \sum_{\substack{
        K_P \in \Ext(G,F_P):\\
        \lastjump(K_P) = N_P
      }}
        \frac{1}{|\Aut(K_P)|}.
  \]
\end{theorem}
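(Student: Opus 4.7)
The plan is to translate both sides into weighted counts of continuous Galois representations, and then compare them via Abrashkin's nilpotent Artin--Schreier theory together with the rationality of $\mathbb P^1$.

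First, I would invoke the standard correspondence (cf.\ \Cref{subsn:extensions-and-cohomology}) between $G$-extensions of a field $F'$ (up to isomorphism) and $G$-conjugacy classes of continuous homomorphisms $\gamma\colon\Gamma_{F'}\to G$, under which $|\Aut(K_\gamma)|=|Z_G(\mathrm{im}\,\gamma)|$. The orbit--stabilizer theorem then rewrites both sides as
\[
  \mathrm{LHS}=\frac{|H_F|}{|G|},\qquad\mathrm{RHS}=\prod_P\frac{|H_{F_P}|}{|G|},
\]
where $H_F\coloneqq\{\gamma\colon\Gamma_F\to G\mid\forall P,\ \lastjump(\gamma|_{\Gamma_{F_P}})=N_P\}$ and analogously $H_{F_P}$ locally at each~$P$. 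When $N_P=0$, the condition $\lastjump(\gamma_P)=0$ forces $\gamma_P$ to be unramified (since $G$ is a $p$-group and $F$ has characteristic $p$); such homomorphisms factor through $\Gal(F_P^{\mathrm{unr}}|F_P)\cong\hat\Z$, of which there are exactly $|G|$, so $|H_{F_P}|/|G|=1$ and the infinite product collapses to a finite one over $S\coloneqq\{P:N_P>0\}$.

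Second, I would apply Abrashkin's nilpotent Artin--Schreier theory---valid for $p$-groups of nilpotency class $<p$, hence covering our class-$2$ case since $p>2$---to parametrize $\Hom_{\mathrm{cont}}(\Gamma_{F'},G)$ for $F'\in\{F,F_P\}$ by Lie-algebraic data over $F'$ modulo a generalized Artin--Schreier operator $\bwp$ whose fixed points recover~$G$. Three structural properties drive the argument: (i)~the inclusion $F\hookrightarrow F_P$ induces localization of the data at~$P$; (ii)~the invariant $\lastjump_P$ depends only on the polar part of the data at~$P$; and (iii)~for $F=\F_q(T)$, Mittag--Leffler on $\mathbb P^1$ yields a decomposition of the global data as a globally regular part plus finitely-supported local polar parts. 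Combining (i)--(iii), I would establish that the restriction map $H_F\to\prod_P H_{F_P}$ is injective, and that its image consists of tuples whose ``arithmetic Frobenius'' parts are coherently determined by a single element $g\in G$ (the image under $\gamma$ of a generator of $\Gal(\bar\F_q|\F_q)$). A direct count then gives $|H_F|=|G|\cdot\prod_P|H_{F_P}|/|G|$, which after dividing by $|G|$ is the desired equality.

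The main obstacle is the class-$2$ step. For $G$ abelian, everything reduces to classical Artin--Schreier--Witt theory on $\mathbb P^1$, and injectivity as well as the description of the image follow from standard facts about rational functions (notably $H^1(\mathbb P^1,\cO)=0$), in the spirit of~\cite{gundab}. In the class-$2$ case, the central extension $1\to[G,G]\to G\to G^{\mathrm{ab}}\to 1$ introduces a commutator $2$-cocycle with values in $[G,G]$; Abrashkin's theory expresses the obstruction to lifting as an explicit commutator expression in the Lie data. Verifying that this obstruction vanishes globally over $\F_q(T)$, \emph{and} simultaneously that the commutator contribution does not push the $\lastjump$ above the value prescribed by the abelianization, is the technical crux of the argument and where Abrashkin's nilpotent Artin--Schreier theory becomes essential.
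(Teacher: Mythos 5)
Your high-level intuitions are largely on target: the parametrization by nilpotent Artin--Schreier theory, the observation that the last jump at a place~$P$ depends only on the ``polar part'' of the data at~$P$ (this is \Cref{cor:irrelevance-D0} in the paper), the role of Mittag--Leffler over $\mathbb P^1$ in the abelian case (the exact sequence from \cite{potthast} cited in the proof of \Cref{lem:direct-local--global-principle}), and the reduction of the LHS/RHS to $|H_F|/|G|$ and $\prod_P |H_{F_P}|/|G|$ via orbit--stabilizer are all correct and are indeed the ideas in play.

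The gap is in the crucial counting step. You assert that the restriction map $H_F\to\prod_P H_{F_P}$ is injective with image ``coherently determined by a single element $g\in G$'' and conclude ``a direct count then gives $|H_F|=|G|\cdot\prod_P|H_{F_P}|/|G|$.'' But this is exactly the assertion that needs to be proved, and the proposed characterization of the image is not correct for non-abelian $G$. Since $\Hom(\Gamma_F,G)$ is not a group when $G$ is non-abelian, the restriction map is not a homomorphism, so you cannot characterize its image as a kernel/Frobenius-compatibility condition the way you can in the abelian case; worse, the fibers over different tuples need not have the same weighted size a priori. The paper's proof circumvents this by inducting on $|\fg|$ along a central subalgebra $\fn\simeq\Z/p\Z$: for a fixed $\varepsilon\in\Hom(\Gamma_F, G/N)$, the set of lifts to $\Hom(\Gamma_F, G)$ is a torsor under $\Hom(\Gamma_F, N)$ (\Cref{rmk:twisting}), and twisting is ``additive'' on polar parts (\Cref{lem:twisting-orb,lem:twisting-alpha}); this torsor structure is what makes the exact count $\sum_{[\gamma]:\alpha(\orb([\gamma]))=([D_P])} |\Stab_G(\gamma)|^{-1} = \prod_P|[D_P]|$ of \Cref{lem:direct-local--global-principle} tractable, from which the theorem follows by summing over tuples with $\lastjump([D_P])=N_P$.

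Your final paragraph also misidentifies the technical crux. The paper never needs to verify that ``the commutator contribution does not push the $\lastjump$ above the value prescribed by the abelianization'' --- and indeed there is no such statement to verify, since the last jump of a class-$2$ extension genuinely can exceed that of its abelianization. What actually makes the non-abelian inductive step work is (a) the vanishing of the obstruction to the embedding problem for the central quotient $G\twoheadrightarrow G/N$ (this follows from surjectivity of $\Orb{\fg}{W(F^\perf)}{W(F^\perf)}\to\Orb{(\fg/\fn)}{W(F^\perf)}{W(F^\perf)}$), and (b) the fact that the twisting action on $\fg\otimes\cD_P$ is by translation of polar parts and hence fully compatible with the last-jump filtration via \Cref{def:property-Jv}. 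Neither of these is the cohomological obstruction calculus you describe.
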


This local--global principle allows us to determine the asymptotics of $G$-extensions of~$\F_q(T)$ using estimates for the number of extensions of the local fields $\F_{q^d}\llpar T\rrpar$ for $d \geq 1$.
The following global asymptotics are the main results of this article.

\begin{theorem}[cf.\ \Cref{thm:proof-counting}]
  \label{thm:intro-counting}
  Let
  \[
    r \coloneqq \log_p|G[p]|
    \andd
    M \coloneqq
    \begin{cases}
      1 & \textnormal{if } G[p]\textnormal{ is abelian},\\
      1+p^{-1} & \textnormal{otherwise}.\\
    \end{cases}
  \]
  If $G[p]$ is non-abelian, assume that $|G[p]|\leq p^{p-1}$.
  Assume moreover that $q$ is a large enough power of~$p$ (depending on the group $G$).
  Then, there is a function
  $
    C:\Q/M\Z\rightarrow\R_{\geq0}
  $
  with $C(0) \neq 0$, such that for rational $N\rightarrow\infty$, we have
  \[
    \sum_{\substack{
      K\in\Ext(G,F):\\
      \lastjump(K)=N
    }}
      \frac1{|\Aut(K)|}
    \quad=\quad
    C\!\left(N\bmod M\right)\cdot
    q^{\frac{r+1}{M}\cdot N}
    +
    o\!\left(
      q^{\frac{r+1}{M}\cdot N}
    \right).
  \]
\end{theorem}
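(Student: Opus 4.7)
The plan is to apply the local-global principle (\Cref{thm:intro-local--global}) to reduce to a convolution of local counts, then encode this convolution as a global generating series and analyze its singularities using precise local counting estimates from Abrashkin's nilpotent Artin--Schreier theory.

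Setting $A_P(v) \coloneqq \sum_{K_P : \lastjump(K_P) = v} |\Aut(K_P)|^{-1}$, the local-global principle gives
\[
    \sum_{\substack{K \in \Ext(G, F)\\ \lastjump(K) = N}} \frac{1}{|\Aut(K)|}
    = \sum_{\substack{(N_P)_P \\ \sum_P \deg(P) N_P = N}} \prod_P A_P(N_P).
\]
Since $F = \F_q(T)$, the completion $F_P \cong \F_{q^d}\llpar T\rrpar$ depends only on $d = \deg(P)$, so $A_P$ reduces to a sequence $a_d(v)$; writing $\pi_d$ for the number of degree-$d$ places of $F$, I would encode the convolution through
\[
    L_d(X) \coloneqq \sum_{v \in \frac{1}{|G|}\Z_{\geq 0}} a_d(v)\, X^{d v},
    \qquad
    Z(X) \coloneqq \prod_{d \geq 1} L_d(X)^{\pi_d},
\]
so that the coefficient of $X^N$ in $Z$ is exactly the desired sum, and its large-$N$ asymptotic is governed by the dominant singularities of $Z$.

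To control those singularities, I would use Abrashkin's theory to parametrize $G$-extensions of $\F_{q^d}\llpar T\rrpar$ by algebraic data (tuples of Laurent series modulo an Artin--Schreier-type operator, subject to polynomial relations enforced by the commutator structure of $G$). The hypothesis $|G[p]| \leq p^{p-1}$ is exactly the range where this description applies, and the last jump translates into an explicit degree condition on the data. From here I would count tuples with prescribed last jump, ultimately showing that $a_d(v) \sim c \cdot (q^d)^{(r+1) v / M}$ along an arithmetic progression of values of $v$, so that each $L_d$ has a finite set of dominant singularities on the circle $|X^d| = q^{-d(r+1)/M}$.

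Plugging these local estimates into $Z(X)$, the local circles align into a common critical circle $|X| = q^{-(r+1)/M}$, and the finite set of dominant singularities on it (an analytic reflection of the discreteness of admissible last-jump values) produces the periodic main term $C(N \bmod M) \, q^{(r+1) N / M}$ via partial fractions or a suitable Tauberian argument; the hypothesis that $q$ be a large enough power of $p$ ensures that subdominant contributions are genuinely $o(q^{(r+1) N / M})$. The main obstacle will be carrying out the algebraic count in the third step with sufficient uniformity in $d$ and $v$: the jump from $M = 1$ (abelian case) to $M = 1 + p^{-1}$ (non-abelian case) reflects the failure of the Hasse--Arf theorem for non-abelian $p$-groups, which admits fractional last jumps and slows the growth of the dominant local counts, requiring a more delicate solution count than in the abelian setting treated in \cite{gundab}.
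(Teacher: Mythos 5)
Your high-level plan (local--global principle, Euler product, singularity analysis) is indeed the paper's approach. However, several of the intermediate claims are wrong in ways that matter for the argument.

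First, the local estimate you want to prove, $a_d(v) \sim c\cdot(q^d)^{(r+1)v/M}$ "along an arithmetic progression of values of $v$", is too strong and is incompatible with the theorem. The correct local count at the critical value is $a_d(M) = (q^d)^r + \cO((q^d)^{r-1})$ (not exponent $(r+1)$); the factor $q^{(r+1)N/M}$ in the global asymptotic does not come from an exponent $(r+1)v/M$ in the local count, but rather the $+1$ enters through the analytic lemma (\Cref{lem:analytic-lemma}), reflecting the density of places of each degree just as the pole of the zeta function does. Moreover, for $v > M$ the paper proves only one-sided upper bounds (\Cref{thm:local-counting}, parts c,d), not asymptotics, and showing that those bounds are strictly subdominant is precisely what requires the $|G[p]|\leq p^{p-1}$ and large-$q$ hypotheses. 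If your claimed asymptotic held along a genuine arithmetic progression, the pole structure would force $B>1$ as in the Heisenberg case of \Cref{thm:heisenberg-count}, not $B=1$.

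Second, your identification of $|G[p]|\leq p^{p-1}$ as "exactly the range where [Abrashkin's] description applies" is incorrect: Abrashkin's nilpotent Artin--Schreier theory works for all $p$-groups of nilpotency class $<p$ (and all of Section~3 of the paper is carried out with no size restriction). The bound $|G[p]|\leq p^{p-1}$ (i.e.\ $r\leq p-1$) is needed only so that the inequality $\varepsilon + r < (p-r)/(p+1) + r = (r+1)/M$ has room, which makes the upper bounds for $v>M$ subdominant in the analytic lemma — a purely analytic constraint, not a parametrization constraint. Relatedly, your picture of each local factor $L_d$ having singularities on the critical circle is wrong: on $|X| = q^{-(r+1)/M}$ each $L_d$ is holomorphic and close to $1$, and the pole of the global $Z(X)$ arises from the infinite product over places (as for the Hasse--Weil zeta function). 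Getting these two points right is the substantive technical content that your "partial fractions or a suitable Tauberian argument" elides; the paper isolates it cleanly in \Cref{lem:analytic-lemma} and then verifies Inequality~(\ref{eqn:hypothesis-on-sup}) case by case using \Cref{thm:local-counting}.
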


The hypothesis that $q$ is large is needed due to a technical limitation of our local counting methods.
An explicit lower bound on $q$ can be deduced from the proof in \Cref{subsn:proof-main-thm}, but we presume that the conclusion may hold for all $q$.
We show that the conclusion indeed holds for all $q$ if $G$ has exponent~$p$, or more generally if $G$ satisfies the hypothesis of \Cref{prop:better-bound}.

For concrete groups $G$, it can be possible to overcome the restriction that $|G[p]|\leq p^{p-1}$.
As an illustration, we carry out the necessary computations for the generalized Heisenberg groups $G = H_k(\F_p)$ defined in \Cref{def:heisenberg-group}:

\begin{theorem}[cf.\ \Cref{thm:heisenberg-count}]
  \label{thm:intro-heisenberg}
  There are explicit constants $A \in \Q_{>0}, B \in \N_{>0}, M \in \Q_{>0}$ (cf.~\Cref{subsn:heisenberg-global-asymptotics}) and a function $C:\Q/M\Z\rightarrow\R_{\geq0}$ with $C(0)\neq0$, such that for rational $N\rightarrow\infty$, we have
  \[
    \sum_{\substack{
      K\in\Ext(G,F):\\
      \lastjump(K)=N
    }}
      \frac1{|\Aut(K)|}
    \quad=\quad
    C\!\left(N\bmod M\right)\cdot
    q^{AN}
    N^{B-1}
    +
    o\!\left(
      q^{AN}
      N^{B-1}
    \right).
  \]
  Moreover, if $p \geq 5$, then $B = 1$ (cf.~\Cref{prop:B-is-often-1}).
\end{theorem}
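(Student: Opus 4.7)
The plan is to combine the local--global principle of \Cref{thm:intro-local--global} with an explicit computation of the local generating series for $H_k(\F_p)$-extensions, and then to extract the global asymptotics from the resulting Euler product via a Tauberian / Delange-type argument. For each place $P$ of $F = \F_q(T)$, I introduce the local series
\[
  Z_P(u) \coloneqq \sum_{K_P\in\Ext(G,F_P)}\frac{1}{|\Aut(K_P)|}\,u^{\deg(P)\,\lastjump(K_P)},
\]
with $u$ treated as a formal variable supported on $\frac{1}{|G|}\Z_{\geq 0}$. By \Cref{thm:intro-local--global}, the sum appearing in the theorem is the coefficient of $u^N$ in the global Euler product $Z(u) \coloneqq \prod_P Z_P(u)$, since the constraint $\sum_P \deg(P)\,\lastjump_P(K) = N$ is exactly the convolution of local contributions.

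Next, I would evaluate each $Z_P(u)$ in closed form using Abrashkin's nilpotent Artin--Schreier theory. Continuous homomorphisms $\Gamma_{F_P} \to H_k(\F_p)$ correspond to tuples in an Artin--Schreier quotient by the image of the operator $\bwp$, and the last jump of the corresponding extension can be read as the maximum pole order of the tuple, possibly after cancellation through the one-dimensional center. The count of tuples with prescribed top pole order is an $\F_{q^{\deg P}}$-point count, and aggregating gives a rational expression for $Z_P(u)$ whose poles on the circle of minimal modulus determine the exponential rate of growth.

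Third, I would analyze the global Euler product by comparing it place by place with a product of shifted zeta functions of $\F_q(T)$. This identifies the radius of convergence as $q^{-A}$, with $A$ an explicit rational number built from the rank of $H_k(\F_p)$ together with the cancellation exponent contributed by the center. The order of the pole at $u = q^{-A}$, together with any satellite singularities of the same modulus, gives both the integer $B$ and the period $M$, and yields the periodic factor $C(N\bmod M)$ via the corresponding residues. A standard transfer theorem (Flajolet--Odlyzko or Delange) then converts the singular expansion into the claimed asymptotic $C(N\bmod M)\cdot q^{AN}\cdot N^{B-1} + o(q^{AN}\, N^{B-1})$, with $C(0)\neq 0$ following because the trivial place configuration gives a strictly positive contribution to the leading residue.

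The main obstacle is the local step: since $|H_k(\F_p)|$ may exceed $p^{p-1}$, the uniform bounds underlying \Cref{thm:intro-counting} do not apply, and one must exploit the specific structure of $H_k(\F_p)$---that its commutator subgroup is the one-dimensional center---to track exactly when the central coordinate can cancel the top pole of the abelian quotient. This cancellation is the ultimate source of the logarithmic factor $N^{B-1}$. For $p \geq 5$, a divisibility argument on the realizable pole orders should show that only one family of places contributes to the leading singularity of $Z(u)$, so the pole is simple and $B = 1$; for $p = 3$, several families can accumulate and $B$ may exceed $1$.
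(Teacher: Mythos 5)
Your high-level plan matches the paper's: local--global principle to set up an Euler product over places, local estimates for each factor, and an analytic transfer to extract the asymptotic. The paper uses the same skeleton via \Cref{thm:local--global-principle} and the analytic \Cref{lem:analytic-lemma}, so the architecture is sound.

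However, the local step as you describe it is both vague and incorrect in a way that would cause the argument to fail. You propose to read the last jump of a parametrizing tuple as ``the maximum pole order of the tuple, possibly after cancellation through the one-dimensional center.'' That is not how the last jump behaves in the nilpotent case: by \Cref{thm:lastjump-Jv} and \Cref{def:property-Jv}, the condition $\lastjump(D)<v$ is governed by a system of polynomial equations (involving commutators of the $D_a$ with their Frobenius twists $\sigma^i(D_a)$), not by which coordinate has the highest pole. In particular, the fine cutoffs $1+p^{-m}$ in \Cref{prop:precise-slightly-ramified} come from conditions of the form $[\sigma^i D_1, D_1]=0$, which is a commutation-with-Frobenius constraint rather than a pole-order condition, and ``cancellation through the center'' is not the mechanism. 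The actual content of the local step for $H_k(\F_p)$ is: (i) show that the count reduces to the sets $A_{k,m}(\F_q)=\{x\in\F_q^{2k}\mid f_k(\sigma^i(x),x)=0,\ 1\le i\le m\}$; (ii) prove (via Galois descent) that for $m\ge k$ the set $A_{k,m}$ is the union of isotropic $\F_p$-subspaces extended to $\F_q$, with an explicit count of maximal isotropic subspaces; and (iii) prove by a character-sum / Gauss-sum argument that $|A_{k,m}(\F_q)|=q^{2k-m}+\cO_k(q^k)$ for $m<k$. None of this is in your proposal, and without it you cannot identify which local exponents dominate, hence cannot pin down $A$, $B$, $M$ or verify the analytic-lemma hypotheses.

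Your explanation of the factor $N^{B-1}$ and the $p\ge 5$ simplification is also off. The exponent $B$ is not a ``logarithmic factor coming from cancellation''; it is the number of values $m\in\{0,\ldots,k\}$ (weighted by the count $b'_m$ of isotropic subspaces when $m=k$) for which the ratio $(e'_m+1)/n(m)$ attains its maximum. Showing $B=1$ for $p\ge 5$ (\Cref{prop:B-is-often-1}) is a concrete calculus-plus-congruence computation: one shows the sequence $m\mapsto (2k+2-m)/(1+p^{-m-1})$ is unimodal, that ties force $p-1 \mid p^{m+2}+1$, hence $p=3$. Your ``divisibility argument on realizable pole orders'' gestures toward the right phenomenon but does not supply it. In short: the scaffolding matches, but the load-bearing local analysis is missing.
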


\begin{remark}
  In principle, asymptotics for the number of field extensions (excluding non-simple étale algebras) could be obtained by inclusion--exclusion over subgroups of $G$.
  Note also the following criterion for surjectivity, which follows from \cite[Lemma~5.9]{cgt}: when $G$ is a nilpotent group, a homomorphism $\Gamma_F \to G$ is surjective if and only if the induced homomorphism $\Gamma_F^{\ab} \to G^{\ab}$ is surjective.
  (This fact was also used in \cite{koymans-pagano-nilpotent}.)
\end{remark}

\subsection{Strategy and outline of the paper}
We now summarize the content of each section.
This outline also serves as an explanation of our general strategy.

A key tool in our proofs is Abrashkin's \emph{nilpotent Artin--Schreier theory} from \cite{abrashkin-ramification-filtration-3} (simpler forms of this theory for $p$-groups of exponent $p$ and for $p$-groups of nilpotency class $\leq 2$, were respectively introduced in \cite{abrashkin-ramification-filtration} and in \cite{abrashkin-ramification-filtration-2}).
\Cref{sn:preliminaries} is essentially a reformulation of this theory.
We review a general principle for the parametrization of extensions (\Cref{thm:param-galrep}), and we explain how to apply this principle using Witt vectors and Lie algebras (\Cref{thm:parametrization-abelian}, \Cref{thm:parametrization}).

In \Cref{sn:local}, we assume that the base field is a local function field of characteristic~$p$.
We refine the parametrization by describing an approximate fundamental domain, making the parametrization finite-to-one (\Cref{thm:local-fundamental-domain}), and we use Abrashkin's description of the ramification filtration from \cite{abrashkin-ramification-filtration-3} to characterize extensions with a given last jump (\Cref{def:property-Jv}, \Cref{thm:lastjump-Jv}).

In \Cref{sn:local-counting}, we analyze the equations obtained in the previous section in order to obtain bounds on the number of their solutions, and hence on the number of local extensions with bounded last jump (\Cref{thm:local-counting}).
Locally, the bounds we obtain are rough: we have precise estimates of the number of extensions for small values of the last jump, but only upper bounds for large values.

In \Cref{sn:global}, we assume that the base field is a rational global function field of characteristic~$p$.
We prove an exact local--global principle for the last jump (\Cref{thm:local--global-principle}, which is \Cref{thm:intro-local--global}), as well as a general analytic lemma allowing one to deduce global asymptotics from local estimates (\Cref{lem:analytic-lemma}).
These two tools, combined with the results of the previous section, let us prove our main counting theorem (\Cref{thm:proof-counting}, which is \Cref{thm:intro-counting}) in \Cref{subsn:proof-main-thm}.

Handling the case where the $p$-torsion subgroup~$G[p]$ is non-abelian of size $\geq p^p$ requires a more careful analysis of the equations.
In \Cref{sn:more-groups}, we consider the infinite family of (non-abelian) Heisenberg groups $H_k(\F_p)$ (of exponent~$p$), for which the hypotheses of \Cref{thm:intro-counting} need not be satisfied, and we obtain global asymptotics for the number of $H_k(\F_p)$-extensions of~$\F_q(T)$ (see \Cref{thm:heisenberg-count}, which is \Cref{thm:intro-heisenberg}).
This example illustrates what is needed to solve the problem for more general $p$-groups of exponent~$p$: a key step is to estimate the number of elements commuting with their Frobenius in a certain Lie algebra.

\subsection{Possible improvements}

Refinements could come from better understanding the geometry of the varieties defined by the equations of \Cref{def:property-Jv} (cf.~\Cref{rk:mod-space}) in order to improve our bounds on their number of $\F_q$-points (for example, using the Grothendieck--Lefschetz trace formula as in \cite{ETW} and \cite{langweiltordu}).
Another interesting question, which could be related to the geometric point of view on the problem, is whether the generating functions associated to our counting problems are rational functions, as such a phenomenon was observed in \cite{gundab} for abelian $p$-extensions.

\medskip

Although we consider only groups of nilpotency class $\leq 2$, Abrashkin's theory applies in principle to all $p$-groups of nilpotency class $< p$.
However, practical difficulties arise when using it for counting purposes due to the complexity of the description of the ramification filtration.
The equivalent formulations given by Abrashkin in \cite{abrashkin-differential} (in terms of the canonical connection on $\varphi$-modules from \cite[2.2.4]{fontaine}) and in \cite{abrashkin-deformations-2} might be better suited for counting.
Moreover, we have no reason to think that our exact local--global principle (\Cref{thm:local--global-principle}) holds for groups of higher nilpotency class.
Note also that, in characteristic $2$, Abrashkin's theory cannot be used to deal with any non-abelian group.

It should be noted that parametrizing extensions is not the major difficulty (see \Cref{subsn:parametrization}).
For instance, $\GL_n(\F_p)$-extensions are parametrized by étale $\varphi$-modules of dimension~$n$, and over local function fields the explicit (``group-theoretic'') description of the absolute Galois group given in \cite{koch-absgal}  (without its ramification filtration!) offers in theory easy access to all extensions.
The main challenge lies in obtaining a sufficiently good description of the ramification filtration of these extensions (e.g., an expression for the discriminant/last jump/$\dots$ in terms of the parametrization) to make counting possible.
In that regard, the results of \cite{imai-wild-ramification-groups} are interesting, as they give an example where the ramification filtration is described even when the nilpotency class equals~$p$, extending slightly beyond the scope of Abrashkin's theory.
For groups of nilpotency class $2$, the case $p = 2$ is also considered in \cite{abrashkin-grothendieck}.

\medskip

It is natural to ask whether other invariants satisfy the exact local--global principle from \Cref{thm:intro-local--global}.
For the discriminant, we doubt this --- at least, our method of proof does not apply.
Nevertheless, it might be feasible to prove an approximate local--global principle sufficient for a statement analogous to \Cref{thm:intro-counting}.

\medskip

In our main results, the base field is always a rational (global) function field $\F_q(T)$, with trivial class group.
For non-rational base fields $F \neq \F_q(T)$, the exact local--global principle can famously fail even when only considering unramified abelian extensions.
This failure can be quantified for abelian $p$-groups $G$ using Selmer groups (see \cite{lagemann1,lagemann2,potthast}) and it seems plausible that the same methods can be used to generalize our main counting results (\Cref{thm:intro-counting,thm:intro-heisenberg}) to non-rational base fields.

\subsection{Terminology and notation}

If $X$ is a set, we denote its cardinality by $|X|$.
When $x$ is an element of a set $X$ on which a group acts, we denote by $[x]$ the orbit of $x$, usually without specifying which group is acting when the context makes it clear.

Throughout the article, $\sigma$ always denotes the absolute Frobenius endomorphism $x \mapsto x^p$ of a commutative ring~$R$ of characteristic~$p$, and we also call $\sigma$ the endomorphism induced by $\sigma$ for every object constructed functorially from $R$.

In this article, Galois cohomology sets $H^i(\Gamma_F, G)$ are defined using \emph{continuous} group cohomology (absolute Galois groups are equipped with the profinite topology, and $G$ is a topological group on which $\Gamma_F$ acts continuously).
These cohomology sets are themselves groups if $G$ is abelian.
If $G$ is non-abelian, they are defined only if $i \in \{0,1\}$, and are pointed sets with no natural group structure (cf.~\cite[Chap.~VII, Annexe]{serrecl}).

Many notations are introduced in the text.
These notations, together with references to their definition and with a short description, are listed in \hyperref[notation-chart]{an appendix} (p.~\pageref{notation-chart}).

\subsection{Acknowledgments}

This work was supported by the Deutsche Forschungsgemeinschaft (DFG, German Research Foundation) --- Project-ID 491392403 --- TRR 358 (project A4).
The authors are grateful to Xujia Chen, Kiran Kedlaya, Jürgen Klüners and Nicolas Potthast for helpful discussions, and moreover to Victor Abrashkin, Jordan Ellenberg, Carlo Pagano, and Takehiko Yasuda for their comments on an earlier draft.

\section{Preliminaries}
\label{sn:preliminaries}

In this section, we review known results concerning the following topics:
\begin{itemize}
  \item
    the parametrization of extensions in characteristic~$p$ (\Cref{subsn:extensions-and-cohomology,subsn:parametrization});
  \item
    Perfect closures of rings in characteristic $p$ (\Cref{subsn:perfect-closure});
  \item
    Witt vectors and their Galois cohomology (\Cref{subsn:witt-vectors});
  \item
    the Lazard correspondence, which relates each finite $p$-group $G$ of nilpotency class $< p$ to a finite Lie $\Z_p$-algebra (\Cref{subsn:lie-algebras});
  \item
    the approach developed by Abrashkin under the name \emph{nilpotent Artin--Schreier theory}, which parametrizes $G$-extensions in characteristic~$p$ (\Cref{subsn:nilpotent-artinschreier}).
\end{itemize}
Our explanations loosely follow those given by Abrashkin in \cite[\S1]{abrashkin-ramification-filtration-3}, but with some differences (cf.~\Cref{rk:no-cohen-rings}).
The theory presented in \Cref{subsn:nilpotent-artinschreier} takes a simpler form when $G$ is a $p$-group of exponent~$p$, as perfect closures and Witt vectors are not required.
See \Cref{subsn:nilpas-expp} for a brief overview of the simplified theory.

\subsection{Extensions and cohomology classes}
\label{subsn:extensions-and-cohomology}

Let $G$ be a finite group and $F$ be a field.
By a \emph{$G$-extension} of $F$, we mean an étale $F$-algebra~$K$ together with an action of $G$ such that there is a $G$-equivariant $F^\sep$-algebra isomorphism between $K\otimes_F F^\sep$ and the ring of maps $f : G \to F^\sep$ on which $G$ acts via $(g.f)(h)=f(hg)$.
An \emph{isomorphism} between two $G$-extensions of $F$ is a $G$-equivariant isomorphism between the corresponding étale $F$-algebras.
We denote the set of isomorphism classes of $G$-extensions of~$F$ by~$\Ext(G,F)$, often confusing an element of $\Ext(G,F)$ with a representative $K$ of the corresponding isomorphism class, and denoting by $\Aut(K)$ the group of its $G$-equivariant $F$-algebra automorphisms.
We recall the well-known relationship between $\Ext(G,F)$ and the set $H^1(\Gamma_F, G)$ of $G$-conjugacy classes of continuous group homomorphisms $\gamma\in\Hom(\Gamma_F, G)$, seeing $G$ as a discrete topological group equipped with the trivial $\Gamma_F$-action:

\begin{lemma}
  \label{lem:etext-bij-cohomology}
  There is a bijection
  \[
    \Ext(G, F) \overset\sim\longleftrightarrow H^1(\Gamma_F, G)
  \]
  such that, if $K\in\Ext(G, F)$ corresponds to $[\gamma]\in H^1(\Gamma_F, G)$, then:
  \begin{enumalpha}
    \item
      For the action of $G$ on $\Hom(\Gamma_F, G)$ by conjugation, we have
      $
        \Aut(K) \simeq \Stab_G(\gamma)
      $.
    \item
      $K$ is a field if and only if $\gamma:\Gamma_F\rightarrow G$ is surjective.
    \item
      $K$ is the trivial $G$-extension of $F$ (the ring of maps $G \to F$) if and only if $\gamma = 1$.
  \end{enumalpha}
\end{lemma}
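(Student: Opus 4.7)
The plan is to establish the bijection by Galois descent, exploiting the definitional fact that $K\otimes_F F^\sep$ is $G$-equivariantly isomorphic to the split object $\mathrm{Maps}(G,F^\sep)$, so that $G$-extensions of $F$ are exactly $\Gamma_F$-twisted forms of the trivial one. The first computation to carry out is to identify the automorphism group of the split $G$-extension $\mathrm{Maps}(G,F^\sep)$ as a $G$-equivariant $F^\sep$-algebra: its $|G|$ minimal idempotents are $\{e_g \colon f \mapsto f(g)\}_{g \in G}$, permuted by $G$ via the left-translation action, so any $G$-equivariant $F^\sep$-algebra automorphism must permute these idempotents $G$-equivariantly and is therefore a right translation $f \mapsto (g \mapsto f(gh))$ for some $h \in G$. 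This gives a canonical isomorphism between this automorphism group and $G$.

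Next, I would construct the map $\Ext(G,F) \to H^1(\Gamma_F,G)$ by picking a trivialization $\phi : K\otimes_F F^\sep \simto \mathrm{Maps}(G,F^\sep)$ and comparing $\phi$ with its Galois conjugates: for each $\tau \in \Gamma_F$, the automorphism $\phi \circ (\mathrm{id}_K \otimes \tau) \circ \phi^{-1} \circ \tau^{-1}$ of $\mathrm{Maps}(G,F^\sep)$ is $G$-equivariant and $F^\sep$-linear, hence corresponds to a unique element $\gamma(\tau)\in G$. Continuity is automatic from the étaleness of $K$, and since $\Gamma_F$ acts trivially on $G$ the cocycle condition collapses to $\gamma$ being a group homomorphism; replacing $\phi$ by $\phi$ composed with right translation by $h$ conjugates $\gamma$ by $h$, so the class $[\gamma] \in H^1(\Gamma_F,G)$ is well defined. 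The inverse map sends $[\gamma]$ to the $F$-subalgebra of $\Gamma_F$-fixed elements of $\mathrm{Maps}(G,F^\sep)$ under the twisted action $(\tau \cdot f)(g) \coloneqq \tau(f(\gamma(\tau)^{-1}g))$, keeping the original left-translation $G$-action; the mutual inverseness is the standard descent argument.

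Properties \emph{(a)}--\emph{(c)} then follow by unwinding the construction. For \emph{(a)}, a $G$-equivariant $F$-algebra automorphism of $K$ extends $F^\sep$-linearly to a $G$-equivariant, $\Gamma_F$-equivariant automorphism of $\mathrm{Maps}(G,F^\sep)$, hence to right translation by some $h \in G$; commuting with the twisted $\Gamma_F$-action is equivalent to $\gamma(\tau)h\gamma(\tau)^{-1} = h$ for all $\tau$, so $\Aut(K) \simeq \Stab_G(\gamma)$ under conjugation. For \emph{(b)}, the connected components of $\Spec K$ are in bijection with $\Gamma_F$-orbits in $\Hom_{F\text{-alg}}(K, F^\sep)$, which via $\phi$ identifies with $G$ acted on by $\Gamma_F$ through left translation by $\gamma(\tau)^{-1}$; hence $K$ is a field iff this action is transitive iff $\gamma$ is surjective. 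For \emph{(c)}, $\gamma = 1$ makes the twisted and standard actions coincide, so the fixed ring is $\mathrm{Maps}(G, F)$, and conversely a splitting over $F$ makes $\phi$ itself $\Gamma_F$-equivariant, forcing $\gamma = 1$.

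No deep obstacle is expected here, since the statement is a classical descent result; the main care lies in fixing sign and direction conventions consistently (left vs.\ right $G$-translation, and whether the cocycle is $\gamma$ or $\gamma^{-1}$), since these determine the precise formula for the twisted action in the inverse construction and the exact shape of the conjugation in \emph{(a)}. Once these conventions are fixed at the outset, the entire proof is a routine bookkeeping check.
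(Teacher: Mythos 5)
The paper does not actually prove \Cref{lem:etext-bij-cohomology}; it states it as a recalled fact, so there is nothing to compare against. Your proof supplies the missing argument via the standard Galois descent/twisted-forms mechanism, which is indeed what the authors implicitly invoke, and the overall plan is sound: identify the $G$-equivariant automorphisms of the split algebra with $G$, extract a cocycle from comparing a trivialization with its Galois translates, observe that triviality of the $\Gamma_F$-action on $G$ collapses cocycles to homomorphisms, and deduce (a)--(c) by unwinding.

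The one concrete slip is in the first step. With the paper's convention $(g.f)(h)=f(hg)$, the $G$-action on $\mathrm{Maps}(G,F^\sep)$ is itself a right translation, so the $G$-equivariant $F^\sep$-algebra automorphisms are the \emph{left} translations $f \mapsto (g \mapsto f(hg))$, not the right translations $f \mapsto (g \mapsto f(gh))$ you wrote. Indeed, for $\psi_h(f)(x) \coloneqq f(xh)$ one computes $\psi_h(g.f)(x) = f(xhg)$ while $(g.\psi_h(f))(x) = f(xgh)$, which agree only for central $h$; whereas $\phi_h(f)(x) \coloneqq f(hx)$ gives $\phi_h(g.f)(x) = f(hxg) = (g.\phi_h(f))(x)$. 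You flag exactly this kind of left-versus-right convention hazard at the end of your write-up, so this is an instance of the pitfall you already anticipated rather than a conceptual gap; but as written the formula is wrong and would propagate incorrect signs into the explicit twisted action and the conjugation appearing in (a). Parts (b) and (c) are handled correctly: $K$ is a field iff $\Gamma_F$ acts transitively on $\Hom_{F\text{-alg}}(K,F^\sep)\cong G$, which (via $\gamma$) is surjectivity; and $\gamma=1$ is both necessary and sufficient for the descent datum to be the untwisted one, since the trivial homomorphism is a fixed point of conjugation so its class is a singleton.
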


\begin{definition}[Twisting]
  \label{def:twisting}
  Let $N$ be a subgroup of the center of $G$.
  The \emph{twist} of $\gamma\in\Hom(\Gamma_F,G)$ by $\delta\in\Hom(\Gamma_F,N)$ is the point-wise product $\gamma\cdot\delta\in\Hom(\Gamma_F,G)$.
\end{definition}

\begin{remark}
  \label{rmk:twisting}
  Denoting the projection $G\twoheadrightarrow G/N$ by $\pi$, the twisting operation lets us define a bijection for any given $\gamma \in \Hom(\Gamma_F, G)$:
  {
    \setlength\arraycolsep{3pt}
    \[\begin{matrix}
      \Hom(\Gamma_F, N) &
      \overset\sim\longrightarrow &
      \{\gamma'\in\Hom(\Gamma_F, G) \mid \pi\circ\gamma' = \pi\circ\gamma\},
      \\[0.5em]
      \delta &
      \longmapsto &
      \gamma\cdot\delta.
    \end{matrix}\]
  }%
  Said differently, twisting defines a free action of the abelian group $\Hom(\Gamma_F, N)$ on the set $\Hom(\Gamma_F, G)$, and each orbit $[\gamma]$ is determined by the (well-defined) element $\pi \circ \gamma \in \Hom(\Gamma_F, G/N)$.
\end{remark}

\subsection{Parametrization of extensions.}
\label{subsn:parametrization}

We fix a field~$F$.
In this subsection, we describe a general principle for parametrizing elements of~$H^1(\Gamma_F, G)$.

Let $G_{F^\sep}$ be a topological group equipped with a continuous action of $\Gamma_F$ and with a $\Gamma_F$-equivariant group homomorphism $\sigma : G_{F^\sep} \to G_{F^\sep}$.
We denote by $G$ the subgroup of $G_{F^\sep}$ consisting of fixed points of $\sigma$, and by $G_F$ the closed subgroup of~$G_{F^\sep}$ consisting of $\Gamma_F$-invariant elements.
Note that, as the actions of $\sigma$ and $\Gamma_F$ commute, we have $\sigma(G_F) \subseteq G_F$.
We define a left action of $G_{F^\sep}$ on itself via the formula:
\[
  g.m \coloneqq \sigma(g) m g^{-1}.
\]
This action restricts to an action of $G_F$ on itself, whose set of orbits we denote by $G_F \dblquot_{G_F}$.
The \emph{multiplicative Artin--Schreier map} is the map $\bwp : G_{F^\sep} \to G_{F^\sep}$ defined by
\[
  \bwp(g) \coloneqq \sigma(g)g^{-1} = g.1.
\]
Note that $\bwp(g) = 1$ if and only if $g \in G$.
If $G_{F^\sep}$ is abelian, then $g.m = \wp(g)m$ for all $g,m\in G_{F^\sep}$, so that the set of orbits $G_F\dblquot_{G_F}$ is the quotient group $G_F/\wp(G_F)$.

\begin{proposition}
  \label{thm:param-galrep}
  Assume that the following properties hold:
  \begin{enumroman}
    \item
      \label{thm:param-galrep-i}
      $G \subseteq G_F$;
    \item
      \label{thm:param-galrep-ii}
      $G_F \subseteq \bwp(G_{F^\sep})$;
    \item
      \label{thm:param-galrep-iii}
      The map of pointed sets $H^1(\Gamma_F, G) \to H^1(\Gamma_F, G_{F^\sep})$ is trivial.
  \end{enumroman}
  Then, there is a bijection
  \[
    \begin{matrix}
      \orb : &
      H^1(\Gamma_F, G)&
      \overset\sim\longrightarrow &
      G_F \dblquot_{G_F}
      \\ &
      [\tau \mapsto g^{-1} \tau(g)] &
      \longmapsto &
      [\bwp(g)] &&
      \textnormal{for any } g \in \bwp^{-1}(G_F).
    \end{matrix}
  \]
  Moreover, if $\orb([\gamma]) = [m]$, then $\Stab_G(\gamma)=\Stab_{G_F}(m)$.
\end{proposition}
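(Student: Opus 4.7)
The strategy is to construct the orbit map in the forward direction, then establish injectivity and surjectivity in parallel with the well-definedness checks, and finally verify the stabilizer statement by explicit conjugation. The hypotheses \iref{thm:param-galrep}{thm:param-galrep-i}--\iref{thm:param-galrep}{thm:param-galrep-iii} align so precisely with the three things one needs (that coboundaries can be extracted, that every $\Gamma_F$-invariant is a Brauer-type value, and that the $\sigma$-fixed condition is compatible with the $\Gamma_F$-fixed condition) that the proof is essentially an exercise in non-abelian Galois descent.

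The plan is as follows. Given $\gamma \in \Hom(\Gamma_F, G)$, hypothesis \iref{thm:param-galrep}{thm:param-galrep-iii} applied to the $1$-cocycle $\gamma$ (viewed in $G_{F^\sep}$, where $\Gamma_F$ acts trivially on $G \subseteq G_{F^\sep}$ by \iref{thm:param-galrep}{thm:param-galrep-i}) produces some $g \in G_{F^\sep}$ with $\gamma(\tau) = g^{-1}\tau(g)$ for all $\tau \in \Gamma_F$. I would then compute $\tau(\bwp(g)) = \sigma(\tau(g))\tau(g)^{-1} = \sigma(g\gamma(\tau))(g\gamma(\tau))^{-1} = \sigma(g)\sigma(\gamma(\tau))\gamma(\tau)^{-1}g^{-1} = \sigma(g)g^{-1} = \bwp(g)$, using \iref{thm:param-galrep}{thm:param-galrep-i} twice (to get $\sigma(\gamma(\tau)) = \gamma(\tau)$ and $\tau(\gamma(\tau')) = \gamma(\tau')$). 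This shows $\bwp(g) \in G_F$, so $[\bwp(g)]$ makes sense in $G_F\dblquot_{G_F}$. Independence of the choice of $g$ is immediate: two candidates differ by left-multiplication by an element $h \in G_F$, and a direct computation gives $\bwp(hg) = \sigma(h)\bwp(g)h^{-1} = h.\bwp(g)$, so the orbit is unchanged. Independence of the choice of representative of $[\gamma]$ under $G$-conjugation follows similarly by replacing $g$ with $gh^{-1}$ for $h \in G \subseteq G_F$.

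For injectivity, I assume $\bwp(g_2) = h.\bwp(g_1)$ for some $h \in G_F$ and unwind it to $\bwp(hg_1) = \bwp(g_2)$, which forces $g_2 = hg_1 k$ for some $k \in G$ (since $\bwp(u)=\bwp(v)$ iff $v^{-1}u \in G$); substituting back yields $\gamma_2 = k^{-1}\gamma_1 k$ using that $h, k \in G_F$, hence $[\gamma_1] = [\gamma_2]$. Surjectivity is where I use \iref{thm:param-galrep}{thm:param-galrep-ii}: given $m \in G_F$, pick $g$ with $\bwp(g) = m$, set $\gamma(\tau) \coloneqq g^{-1}\tau(g)$, and verify $\sigma(\gamma(\tau)) = \gamma(\tau)$ by exploiting $\tau(m) = m$ to rewrite $\tau(\sigma(g)) = \sigma(g)g^{-1}\tau(g)$. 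The cocycle identity combined with triviality of the $\Gamma_F$-action on $G$ (by \iref{thm:param-galrep}{thm:param-galrep-i}) makes $\gamma$ a group homomorphism, as required. Finally, for the stabilizer statement, I would show that conjugation by $g$ restricts to a bijection $\Stab_{G_F}(m) \simto \Stab_G(\gamma)$: if $h \in G_F$ fixes $m$, then $\bwp(hg) = h.m = m$, so $hg = gk$ for a unique $k = g^{-1}hg \in G$, and the condition $gkg^{-1} \in G_F$ translates into $\gamma(\tau) k \gamma(\tau)^{-1} = k$ for all $\tau$, i.e., $k \in \Stab_G(\gamma)$.

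The computations are all short, but the main subtle point is keeping book on where the hypotheses are genuinely used: \iref{thm:param-galrep}{thm:param-galrep-i} enters at every step where one needs an element of $G$ to be simultaneously $\sigma$- and $\Gamma_F$-invariant, \iref{thm:param-galrep}{thm:param-galrep-ii} is needed only for surjectivity, and \iref{thm:param-galrep}{thm:param-galrep-iii} is needed only to start the construction. No single step is difficult; the main obstacle is purely notational — managing the noncommutation of $\sigma$, $\Gamma_F$, and the twisted action $g.m = \sigma(g)mg^{-1}$ without getting lost.
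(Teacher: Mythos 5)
Your proof is correct, and it takes a more explicit route than the paper's. The paper reduces the statement to a cited black box: it invokes a general proposition on arithmetic invariant theory identifying the kernel of the pointed-set map $H^1(\Gamma_F, \Stab_{G_{F^\sep}}(1)) \to H^1(\Gamma_F, G_{F^\sep})$ with the orbit set $\big(G_F \cap \bwp(G_{F^\sep})\big)\dblquot_{G_F}$, and then uses the three hypotheses only to check that the stabilizer of $1$ is $G$ acted on trivially, that $G_F$ is contained in the orbit $\bwp(G_{F^\sep})$ of $1$, and that the kernel is all of $H^1(\Gamma_F, G)$. Your argument proves the bijection from scratch, checking well-definedness (independence of $g$ and of the conjugacy representative), injectivity, surjectivity, and deriving the stabilizer correspondence by conjugation by $g$; this is self-contained and makes the role of each hypothesis transparent, which is exactly what the paper's terse citation hides. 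Two small bookkeeping remarks: first, the equality $\sigma(\gamma(\tau)) = \gamma(\tau)$ that you ascribe to hypothesis~(i) actually follows from the definition of $G$ as the $\sigma$-fixed subgroup of $G_{F^\sep}$ --- it is hypothesis~(i) that makes $\gamma(\tau')$ fixed by $\Gamma_F$ and thus collapses the cocycle condition to the homomorphism condition; second, your stabilizer argument produces an isomorphism $\Stab_{G_F}(m) \simeq \Stab_G(\gamma)$ given by $h \mapsto g^{-1}hg$, not a literal set-theoretic equality (these are subgroups of $G_{F^\sep}$ that need not coincide), which is the honest formulation and the one invoked later in the paper.
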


\begin{proof}
  As in \cite[Proposition~1]{arithmetic-invariant}, there is a bijection between the kernel of the map of pointed sets $H^1(\Gamma_F, \Stab_{G_{F^\sep}}(1)) \to H^1(\Gamma_F, G_{F^\sep})$ and the set $(G_F \cap \bwp(G_{F^\sep})) \dblquot_{G_F}$ of $G_F$-orbits of elements of $G_F \cap \bwp(G_{F^\sep})$.
  For the action of $G_{F^\sep}$ on itself, the stabilizer of~$1$ is~$G$ by definition, \ref{thm:param-galrep-i} implies that $\Gamma_F$ does act trivially on $G$, \ref{thm:param-galrep-ii} implies that the orbit $\bwp(G_{F^\sep})$ of~$1$ contains~$G_F$ (so that $(G_F \cap \bwp(G_{F^\sep})) \dblquot_{G_F} = G_F \dblquot_{G_F}$), and \ref{thm:param-galrep-iii} implies that $\ker(H^1(\Gamma_F, G) \to H^1(\Gamma_F, G_{F^\sep})) = H^1(\Gamma_F, G)$.
  We have the desired bijection, and its definition matches the one given here.
  The equality between stabilizers is easily verified.
\end{proof}

Assume that $F$ has characteristic $p$.
We illustrate the principle with fundamental examples:

\begin{example}
  \label{ex:alg-gp}
  Let $\cG$ be an algebraic group over~$\F_p$ and $G = \cG(\F_p)$.
  Then, the group~$\cG(F^\sep)$, equipped with its natural $\Gamma_F$-action and absolute Frobenius endomorphism $\sigma$, is a good candidate to apply \Cref{thm:param-galrep}, as it always satisfies condition \ref{thm:param-galrep-i}, and moreover $G_F = \cG(F)$.
  However, conditions \ref{thm:param-galrep-ii} and \ref{thm:param-galrep-iii} still need to be verified.
\end{example}

\begin{example}[Artin--Schreier theory]
  Consider the group $G_{F^\sep} = F^\sep$ (the case $\cG = \mathbb G_a$ of \Cref{ex:alg-gp}).
  The subgroup of $\Gamma_F$-invariant elements is $G_F = F$.
  The subgroup of elements fixed by the Frobenius homomorphism $\sigma(x)=x^p$ is $G = \F_p$.
  The map $\wp:F^\sep\rightarrow F^\sep$, $x\mapsto x^p-x$ is surjective, and we have $H^1(\Gamma_F, F^\sep) = 0$ by \cite[Chap.~X, \S~1, Prop.~1]{serrecl}.
  Hence, \Cref{thm:param-galrep} yields the well-known bijection $H^1(\Gamma_F,\Z/p\Z) \stackrel\sim\rightarrow F/\wp(F)$.
\end{example}

\begin{example}
  Let $G = \GL_n(\F_p)$ and $G_{F^\sep} = \GL_n(F^\sep)$ (the case $\cG = \GL_n$ of \Cref{ex:alg-gp}).
  The subgroup of $\Gamma_F$-invariant elements is $\GL_n(F)$.
  In this case, $H^1(\Gamma_F, G_{F^\sep})$ vanishes by a generalization of Hilbert's Theorem 90 (cf.~\cite[Chap.~X, \S 1, Prop.~3]{serrecl}), and the map~$\bwp$ is surjective on $F^\sep$-points as it comes from an étale morphism $(\GL_n)_{\F_p} \to (\GL_n)_{\F_p}$.
  We retrieve the theory of \emph{étale $\varphi$-modules} of dimension $n$ (cf.~\cite[Subsection~3.2]{fontaineouyang}, and notably their Remark~3.24).
  In particular, the case $n=1$ gives a special case of Kummer theory for the parametrization of $\Z/(q-1)\Z$-extensions (the case $\cG = \mathbb G_m$ of \Cref{ex:alg-gp}).
\end{example}

\subsection{Perfect closure}
\label{subsn:perfect-closure}

Let $R$ be an integral domain of characteristic~$p$, so that its Frobenius endomorphism $\sigma : R \to R$, $x \mapsto x^p$ is injective.
We define the \emph{perfect closure}~$R^\perf$ of $R$ as the following direct limit:
\[
  R^\perf \coloneqq \varinjlim (R \overset\sigma\to R \overset\sigma\to R \to \ldots).
\]
In other words, any element of $R^\perf$ is a formal $p^n$-th root of an element of $R$ for some $n \geq 0$.
Since $\sigma : R \to R$ is injective, the canonical map $R \to R^\perf$ is an injection, and we regard $R$ as a subring of $R^\perf$ via this map.
The absolute Frobenius endomorphism $\sigma$ of $R^\perf$ extends the Frobenius of $R$ and is an automorphism, so that $R^\perf$ is a perfect ring containing $R$.
Moreover, that construction is functorial in $R$.

Let now $F$ be a field of characteristic~$p$, and let $F^\sep$ be a separable closure of $F$.
The field $F^\alg \coloneqq (F^\sep)^\perf$ is an algebraic closure of $F$, and it is also a separable closure of $F^\perf$.
When we refer to the absolute Galois group $\Gamma_{F^\perf}$, we mean $\Gal(F^\alg|F^\perf)$.
A key property of the perfect closure is that its separable extensions correspond bijectively to those of~$F$:

\begin{lemma}
  \label{lem:galgp-perfclos}
  The restriction map $\Gamma_F \to \Gamma_{F^\perf}$ is an isomorphism of topological groups.
\end{lemma}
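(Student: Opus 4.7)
The plan is to construct an explicit two-sided inverse by restricting each $F^\perf$-automorphism of $F^\alg$ to $F^\sep$. The key observation driving everything is that, in characteristic $p$, $p^n$-th roots are unique when they exist, which makes both the extension of automorphisms from $F^\sep$ to $F^\alg = (F^\sep)^\perf$ and the restriction back to $F^\sep$ canonical.

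First, I would make the map $\Gamma_F \to \Gamma_{F^\perf}$ explicit. Every element of $F^\alg$ admits a unique expression $x^{1/p^n}$ with $x \in F^\sep$ and $n \geq 0$, so for $\rho \in \Gamma_F$ I define $\tilde\rho \in \Aut(F^\alg)$ by $\tilde\rho(x^{1/p^n}) \coloneqq \rho(x)^{1/p^n}$. A short verification using uniqueness of $p^n$-th roots shows that $\tilde\rho$ is a ring automorphism, and since $\rho$ fixes $F$, the extension $\tilde\rho$ fixes every $a^{1/p^n} \in F^\perf$, so $\tilde\rho \in \Gamma_{F^\perf}$. Conversely, given $\tau \in \Gamma_{F^\perf}$, I would restrict it to $F^\sep$: the element $\tau$ fixes $F \subseteq F^\perf$, and for any $x \in F^\sep$ the separable minimal polynomial of $x$ over $F$ has $\tau(x)$ as a root, so $\tau(F^\sep) \subseteq F^\sep$ and $\tau|_{F^\sep} \in \Gamma_F$. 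Showing the two assignments are mutually inverse is then immediate: extension-then-restriction is the identity by construction, and for the other composition I would observe that $\tau(x^{1/p^n})$ is a $p^n$-th root of $\tau(x)$, hence equal to $\tau(x)^{1/p^n}$ by uniqueness. Both maps are clearly group homomorphisms.

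For the topological claim, I would check continuity of each direction separately. The restriction $\Gamma_{F^\perf} \to \Gamma_F$ is continuous because the preimage of the pointwise stabilizer of a finite subextension $L$ of $F^\sep|F$ is the pointwise stabilizer of $L$ in $\Gamma_{F^\perf}$. For the other direction, given a finite subextension $M$ of $F^\alg|F^\perf$, I would write its generators as $y_i^{1/p^{n_i}}$ with $y_i \in F^\sep$ (possible since $F^\alg = (F^\sep)^\perf$) and observe that the preimage of $\Stab(M)$ contains $\Stab(F(y_1, \ldots, y_k))$, which is open in $\Gamma_F$. I expect no serious obstacle here: the entire argument is a formal manipulation exploiting that $F^\perf|F$ is purely inseparable and that $F^\sep$ is the unique maximal separable subextension of $F^\alg|F$, together with the characteristic-$p$ phenomenon that $p^n$-th roots are unique.
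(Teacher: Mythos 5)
Your proof is correct and takes essentially the same approach as the paper's: exploit uniqueness of $p^n$-th roots in characteristic $p$ to construct the explicit inverse (restriction of an $F^\perf$-automorphism of $F^\alg$ to $F^\sep$), then verify continuity. The paper's version is much terser — it only sketches the inverse map and explicitly leaves the continuity check to the reader — so your write-up fills in precisely those details.
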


\begin{proof}
  The preimage of an automorphism $\tau$ of $F^\alg|F^\perf$ is the automorphism of $F^\sep|F$ sending~$\sqrt[p^k]{x}$ to $\sqrt[p^k]{\tau(x)}$ for all $x\in F$.
  We leave it to the reader to check that this inverse map is continuous.
\end{proof}

We let $\Gamma_F$ act on $F^\alg = (F^\perf)^\sep$ via the isomorphism $\Gamma_F \simeq \Gamma_{F^\perf}$.

For any finite group $G$, composition with the restriction map $\Gamma_F \to \Gamma_{F^\perf}$ induces a bijection between the pointed sets $H^1(\Gamma_{F^\perf}, G)$ and~$H^1(\Gamma_F, G)$, which by \Cref{lem:etext-bij-cohomology} means that there is a bijection $\Ext(G, F^\perf) \simeq \Ext(G, F)$.

We denote by $\wp$ the $\F_p$-linear endomorphism $x \mapsto \sigma(x) - x$.
For $G = \Z/p\Z$, the bijection $\Ext(\Z/p\Z, F^\perf) \simeq \Ext(\Z/p\Z, F)$ turns into a bijection $F^\perf/\wp(F^\perf) \simeq F/\wp(F)$ using Artin--Schreier theory.
This can also be observed directly:

\begin{lemma}
  \label{lem:artsch-perfclos}
  The map $F/\wp(F) \to F^\perf/\wp(F^\perf)$ induced by the inclusion $F \subseteq F^\perf$ is an isomorphism.
\end{lemma}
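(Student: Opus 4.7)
The plan is to verify surjectivity and injectivity of the induced map $F/\wp(F)\to F^\perf/\wp(F^\perf)$ by hand. The key identity in both directions is that, writing $\sigma$ for the Frobenius,
\[
  \sigma^n(y)-y
  \;=\;
  \sum_{k=0}^{n-1}\bigl(\sigma^{k+1}(y)-\sigma^k(y)\bigr)
  \;=\;
  \wp\!\left(\sum_{k=0}^{n-1}\sigma^k(y)\right),
\]
which holds for any $y$ in a ring of characteristic $p$.

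For surjectivity, I would take an arbitrary $y\in F^\perf$. By definition of the perfect closure, there exists $n\geq 0$ such that $\sigma^n(y)=y^{p^n}\in F$. The identity above then gives
\[
  y \;=\; \sigma^n(y) \;-\; \wp\!\left(\sum_{k=0}^{n-1}\sigma^k(y)\right),
\]
so $y$ is congruent modulo $\wp(F^\perf)$ to the element $\sigma^n(y)\in F$. This shows the map is onto.

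For injectivity, I would suppose that $x\in F$ satisfies $x=\wp(y)$ for some $y\in F^\perf$, and show $x\in\wp(F)$ by proving $y$ itself lies in $F$. From $y^p=y+x$ one obtains by induction
\[
  y^{p^n} \;=\; y \;+\; x+x^p+\cdots+x^{p^{n-1}},
\]
since each step only adds $p$-th powers of elements of $F$. Choosing $n$ large enough that $y^{p^n}\in F$, both $y^{p^n}$ and the sum on the right lie in $F$, forcing $y\in F$. Hence $x\in\wp(F)$, which is exactly injectivity of the map on quotients.

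Neither step should pose a real obstacle: the only mildly delicate point is the inductive identity $y^{p^n}=y+\sum_{k<n}x^{p^k}$, which is immediate from Frobenius being a ring homomorphism in characteristic $p$. Combined with \Cref{lem:galgp-perfclos}, this also gives a cohomology-free proof of the compatibility with the bijection $\Ext(\Z/p\Z,F)\simeq\Ext(\Z/p\Z,F^\perf)$ mentioned just above the statement.
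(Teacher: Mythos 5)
Your proof is correct. The surjectivity argument is essentially the paper's: both observe that $y \equiv \sigma^n(y) \bmod \wp(F^\perf)$ for $n$ large enough that $\sigma^n(y) \in F$; you simply make the telescoping sum $\sigma^n(y)-y = \wp\bigl(\sum_{k<n}\sigma^k(y)\bigr)$ explicit, whereas the paper states the congruence $z \equiv \sigma(z) \bmod \wp(F^\perf)$ and iterates.

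For injectivity, your route genuinely differs from the paper's. The paper argues conceptually: $Y^p - Y - x$ is a separable polynomial, and $F^\perf|F$ is purely inseparable, so a root $y \in F^\perf$ must already lie in $F$. You instead compute: from $y^p = y + x$ you deduce by induction $y^{p^n} = y + x + x^p + \cdots + x^{p^{n-1}}$, and for $n$ large enough that $y^{p^n} \in F$, solving for $y$ exhibits it as a difference of elements of $F$. Your version is more elementary — it avoids invoking separability and pure inseparability altogether and is entirely self-contained — at the cost of being slightly less transparent about \emph{why} the statement holds. Both are fine; the computation is sound (the inductive step uses only that $\sigma$ is a ring homomorphism in characteristic $p$).
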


\begin{proof}
  The injectivity amounts to the inclusion $F \cap \wp(F^\perf) \subseteq \wp(F)$.
  Let $x \in F$.
  The equation $y^p - y = x$ is a separable equation in the variable $y$.
  Since the extension $F^\perf|F$ is purely inseparable, any solution $y$ in $F^\perf$ has to lie in $F$.
  Therefore, if $x \in \wp(F^\perf)$, then $x \in \wp(F)$.

  We now check surjectivity.
  Let $x \in F^\perf$.
  By definition of $F^\perf$, there is some $n \geq 0$ such that $\sigma^n(x) \in F$.
  By definition of $\wp$, we have $z \equiv \sigma(z) \mod \wp(F^\perf)$ for all $z \in F^\perf$, and in particular $x \equiv \sigma(x) \equiv \ldots \equiv \sigma^n(x)$.
\end{proof}

\subsection{Witt vectors}
\label{subsn:witt-vectors}

To deal with $p$-groups of exponent larger than~$p$, we use $p$-typical Witt vectors.
We have a functor
\[
  W:
  \{\textnormal{rings of characteristic~$p$}\}
  \longrightarrow
  \{\textnormal{$\Z_p$-algebras}\}
\]
mapping a ring $R$ to the corresponding ring of Witt vectors $W(R)$, whose elements can be represented as vectors of infinite length with coordinates in $R$.
Let $R$ be an integral domain of characteristic $p$.
The operation consisting of adding a leading zero to a Witt vector, shifting all other coordinates one place to the right, defines the ($\Z_p$-linear) Verschiebung operator $\Ver: W(R) \rightarrow W(R)$.
Moreover, the absolute Frobenius endomorphism $\sigma: x\mapsto x^p$ of $R$ induces a coordinatewise endomorphism $\sigma$ of $W(R)$, fixing exactly the elements of $W(\F_p) = \Z_p$.
We denote by $\wp$ the $\Z_p$-linear map $W(R) \to W(R)$, $x \mapsto \sigma(x) - x$, whose kernel is~$\Z_p$.
The endomorphism of $R$ given by multiplication by $p$ coincides with ${\Ver} \circ \sigma = \sigma \circ \Ver$.
The ring of Witt vectors of length~$n$ over $R$ is the $\Z/p^n\Z$-algebra
\[
  W_n(R) \coloneqq W(R) / \Ver^n(W(R)),
\]
which coincides with $W(R)/p^nW(R)$ when $R$ is perfect.
We have $W(R) = \varprojlim_n W_n(R)$.

We now fix a field $F$ of characteristic~$p$.
The action of $\Gamma_F$ on $F^\alg = (F^\sep)^\perf$ induces a coordinatewise action on $W(F^\alg)$, fixing exactly the elements of $W(F^\perf)$.

\begin{remark}[Artin--Schreier--Witt theory]
  The additive group $G_{F^\sep} \coloneq W_n(F^\sep)$ satisfies the hypotheses of \Cref{thm:param-galrep} with $G_F = W_n(F)$ and $G = W_n(\F_p) = \Z/p^n\Z$.
  Hence, we have a bijection $H^1(\Gamma_F, \Z/p^n\Z) \simto W_n(F)/\wp(W_n(F))$.
  (See for example \cite[Lemmas~9 and~10, and Proposition~11 in Section~4.10]{bosch-algebra}.)
\end{remark}

\begin{remark}
  \label{rmk:flat-module}
  The ring $W(F^\perf)$ is a torsion-free $\Z_p$-module, hence flat, so any short exact sequence of $\Z_p$-modules
  $
    0 \rightarrow \fn \rightarrow \fg \rightarrow \fg/\fn \rightarrow 0
  $
  induces an exact sequence of $W(F^\perf)$-modules
  \[
    0 \rightarrow \fn\otimes_{\Z_p} W(F^\perf) \rightarrow \fg\otimes_{\Z_p} W(F^\perf) \rightarrow (\fg/\fn)\otimes_{\Z_p} W(F^\perf) \rightarrow 0.
  \]
\end{remark}

\begin{lemma}
  \label{lem:witt-props}
  The following properties hold for any finite $\Z_p$-module $\fg$:
  \begin{enumroman}
    \item
      \label{lem:witt-props-0}
      The Artin--Schreier map $\wp:\fg\otimes W(F^\alg)\rightarrow \fg\otimes W(F^\alg)$, $g\mapsto\sigma(g)-g$, is surjective.
    \item
      \label{lem:witt-props-i}
      The natural map $\fg \otimes W(F^\perf) \to \fg \otimes W(F^\alg)$ is injective.
    \item
      \label{lem:witt-props-ii}
      $(\fg \otimes W(F^\alg))^{\Gamma_F} = \fg \otimes W(F^\perf)$.
    \item
      \label{lem:witt-props-iii}
      $(\fg \otimes W(F^\alg))^{\sigma}
      = \fg$.
    \item
      \label{lem:witt-props-iv}
      $H^1(\Gamma_F, \fg\otimes W(F^\alg)) = 0$.
  \end{enumroman}
\end{lemma}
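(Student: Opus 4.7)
The plan is to reduce all five assertions to the case $\fg = \Z/p^n\Z$. By the structure theorem for finite $\Z_p$-modules, $\fg = \bigoplus_i \Z/p^{n_i}\Z$, and each of the five statements is preserved under finite direct sums: tensor product with $W(F^\alg)$, $\Gamma_F$- and $\sigma$-fixed points, and continuous $H^1$ all commute with finite direct sums, while surjectivity of $\wp$ and injectivity in (ii) can be checked componentwise. Since $F^\alg$ is perfect, $\sigma$ is an automorphism of $W(F^\alg)$, so the identity $p = \Ver \circ \sigma$ yields $p^n W(F^\alg) = \Ver^n W(F^\alg)$ and hence a canonical identification $\Z/p^n\Z \otimes_{\Z_p} W(F^\alg) \simeq W_n(F^\alg)$ that is compatible with $\sigma$, $\wp$, and the $\Gamma_F$-action; the analogous identification holds for $W(F^\perf)$.

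Assertions (ii), (iii), and (iv) then follow immediately from the fact that $\Gamma_F$ and $\sigma$ act coordinate-wise on $W_n(F^\alg)$. Indeed, the $\Gamma_F$-fixed points are $W_n((F^\alg)^{\Gamma_F}) = W_n(F^\perf)$ by \Cref{lem:galgp-perfclos}, giving (iii); the $\sigma$-fixed points are $W_n((F^\alg)^\sigma) = W_n(\F_p) = \Z/p^n\Z$, giving (iv); and applying $W_n$ to the inclusion $F^\perf \hookrightarrow F^\alg$ gives a coordinate-wise injection, proving (ii).

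For (i) and (v) I would induct on $n$ using the short exact sequence of $\Gamma_F$-modules
\[
  0 \longrightarrow W_{n-1}(F^\alg) \overset{\Ver}{\longrightarrow} W_n(F^\alg) \longrightarrow F^\alg \longrightarrow 0,
\]
whose maps commute with $\sigma$ and with the $\Gamma_F$-action. The induction step uses the snake lemma for (i) and the long exact sequence in Galois cohomology for (v). The base case $n = 1$ requires two facts: surjectivity of $\wp$ on $F^\alg$, which holds because $F^\alg$ is algebraically closed and $y^p - y = x$ is separable; and $H^1(\Gamma_F, F^\alg) = 0$, which follows from the isomorphism $\Gamma_F \simeq \Gamma_{F^\perf}$ of \Cref{lem:galgp-perfclos} combined with the classical additive Hilbert~90 applied to the perfect field $F^\perf$ (whose separable closure is $F^\alg$).

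The only delicate part is bookkeeping: one must verify that the reduction to cyclic modules identifies the tensored $\sigma$- and $\Gamma_F$-actions with the coordinate-wise ones on $W_n(F^\alg)$, and that $\Ver$ and the first-coordinate projection are $\Gamma_F$- and $\sigma$-equivariant. No further input is required beyond standard Witt vector theory over perfect fields and additive Hilbert~90.
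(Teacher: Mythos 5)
Your proof is correct and follows essentially the same approach as the paper: reduce to $\fg = \Z/p^n\Z$ via the structure theorem, identify $\Z/p^n\Z \otimes W(R) \simeq W_n(R)$ for perfect $R$, and handle (ii)--(iv) by observing that the $\Gamma_F$- and $\sigma$-actions on Witt vectors are coordinatewise. For (i) and (v) the paper simply cites results from Bosch's \emph{Algebra} (Lemma~9 and Proposition~11 in Section~4.10) applied to $K = F^\perf$, whereas you unpack those citations into the standard induction on $n$ using the short exact sequence $0 \to W_{n-1}(F^\alg) \overset{\Ver}{\to} W_n(F^\alg) \to F^\alg \to 0$, the snake lemma, the long exact sequence in cohomology, and the base cases (surjectivity of $\wp$ on an algebraically closed field and additive Hilbert 90 for $F^\perf$). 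This is the same argument the cited textbook results rest on, so the only difference is that your write-up is self-contained.
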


\begin{proof}
  We can assume without loss of generality that $\fg = \Z/p^n\Z$ with $n \geq 1$, as every finite $\Z_p$-module is a direct sum of such factors.
  We have $\fg \otimes W(F^\perf) = W_n(F^\perf)$ and $\fg \otimes W(F^\alg) = W_n(F^\alg)$.
  Then:
  \begin{enumroman}
    \item
      Apply \cite[Lemma~9 in Section~4.10]{bosch-algebra} to $K = F^\perf$ (recall that $F^\alg = (F^\perf)^\sep$).
    \item
      Clear.
    \item
      Clear (the action of $\Gamma_F\simeq\Gamma_{F^\perf}$ on $W_n(F^\alg)$ is coordinatewise).
    \item
      Clear (the action of $\sigma$ on $W_n(F^\alg)$ is coordinatewise, and $W_n(\F_p) = \Z/p^n\Z$).
    \item
      This follows from \cite[Proposition~11 in Section~4.10]{bosch-algebra}, applied to $K = F^\perf$.
      \qedhere
  \end{enumroman}
\end{proof}

\Cref{lem:witt-props} lets us apply \Cref{thm:param-galrep} to obtain the following parametrization of $G$-extensions of $F$, where $G$ is any finite abelian $p$-group (corresponding to $(\fg,+)$ for a finite $\Z_p$-module~$\fg$):

\begin{corollary}
  \label{thm:parametrization-abelian}
  Let $\fg$ be a finite $\Z_p$-module, and let $\wp:\fg\otimes W(F^\perf)\to \fg\otimes W(F^\perf)$ be the (additive) Artin--Schreier map $g\mapsto \sigma(g)-g$.
  We have a
  bijection:
  \[
    H^1(\Gamma_F, (\fg, +)) \stackrel\sim\longrightarrow \fg\otimes W(F^\perf) / \wp(\fg\otimes W(F^\perf)).
  \]
\end{corollary}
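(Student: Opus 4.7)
The plan is to apply the general principle of \Cref{thm:param-galrep} to the abelian group $G_{F^\sep} \coloneqq \fg \otimes W(F^\alg)$, equipped with its natural coordinatewise $\Gamma_F$-action and with the $\Z_p$-linear endomorphism $\sigma$ induced by the Frobenius of $W(F^\alg)$. These two actions commute because both act coordinatewise on Witt vectors, so $\sigma$ is $\Gamma_F$-equivariant as required.

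First I would identify the relevant subgroups. By \iref{lem:witt-props}{lem:witt-props-iii}, the subgroup of $\sigma$-fixed points is $G = \fg$, and by \iref{lem:witt-props}{lem:witt-props-ii}, the subgroup of $\Gamma_F$-invariants is $G_F = \fg \otimes W(F^\perf)$. The inclusion $\Z_p \hookrightarrow W(F^\perf)$ yields $\fg = \fg \otimes \Z_p \hookrightarrow \fg \otimes W(F^\perf)$, establishing condition \ref{thm:param-galrep-i} of \Cref{thm:param-galrep}. Condition \ref{thm:param-galrep-ii} follows from \iref{lem:witt-props}{lem:witt-props-0}: the Artin--Schreier map $\wp$ is already surjective on $G_{F^\sep} = \fg \otimes W(F^\alg)$, so certainly $G_F \subseteq \bwp(G_{F^\sep})$. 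Condition \ref{thm:param-galrep-iii} follows from \iref{lem:witt-props}{lem:witt-props-iv}, since $H^1(\Gamma_F, G_{F^\sep}) = 0$ makes the map $H^1(\Gamma_F, G) \to H^1(\Gamma_F, G_{F^\sep})$ automatically trivial.

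Next I would unwind the orbit space in the abelian setting. As noted immediately before \Cref{thm:param-galrep}, when $G_{F^\sep}$ is abelian the action $g.m = \sigma(g) + m - g = m + \wp(g)$ is by translation by elements of $\wp(G_F)$, so
\[
  G_F \dblquot_{G_F} = G_F / \wp(G_F) = \fg \otimes W(F^\perf) \big/ \wp\!\left(\fg \otimes W(F^\perf)\right).
\]
Combining this identification with the bijection furnished by \Cref{thm:param-galrep} yields the claim.

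There is no real obstacle here: the statement is essentially a packaging of \Cref{thm:param-galrep} together with the list of properties in \Cref{lem:witt-props}. The only points requiring any care are checking that $\sigma$ and the $\Gamma_F$-action on $\fg \otimes W(F^\alg)$ commute (immediate from coordinatewise action) and that the inclusion $\fg \hookrightarrow \fg \otimes W(F^\perf)$ is well-defined (which uses flatness of $W(F^\perf)$ over $\Z_p$, cf.~\Cref{rmk:flat-module}).
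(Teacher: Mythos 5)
Your proof is correct and takes essentially the same route as the paper's: apply \Cref{thm:param-galrep} with $G_{F^\sep} = \fg \otimes W(F^\alg)$, verifying its three hypotheses by \Cref{lem:witt-props} and then unwinding the orbit set via the abelian simplification $G_F \dblquot_{G_F} = G_F/\wp(G_F)$. The paper compresses this into a one-line remark citing the same two results; your version simply makes the verifications explicit.
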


The goal of the following subsections is to obtain a non-abelian version of \Cref{thm:parametrization-abelian} --- this will be \Cref{thm:parametrization}.

\subsection{Lie $\Z_p$-algebras}
\label{subsn:lie-algebras}

\subsubsection{Definitions}

A \emph{Lie $\Z_p$-algebra} is a $\Z_p$-module $\fg$ equipped with a ($\Z_p$-bilinear, alternating) Lie bracket $[-, -] : \fg^2 \to \fg$ satisfying the Jacobi identity $[[a,b],c]+[[b,c],a]+[[c,a],b]=0$.
Let~$\fg$ be a Lie $\Z_p$-algebra.
We say that $\fg$ is \emph{abelian} if its Lie bracket is identically zero.
An \emph{ideal} of $\fg$ is a submodule $\fn \subseteq \fg$ such that $[\fg, \fn] \subseteq \fn$.
We can form the quotient of $\fg$ by an ideal~$\fn$ to obtain a Lie algebra $\fg/\fn$.
The \emph{center} of $\fg$ is the ideal $Z(\fg)$ formed of elements $x$ such that $[\fg,x]=0$.
For elements $x_1,\dots,x_n \in \fg$, we use the notation
\[
  [x_1,\dots,x_n]
  \coloneqq
  \underbrace{
    [[\cdots[[
  }_{n-1}
    x_1,x_2],
    x_3],
    \dots,
  x_n].
\]
We say that $\fg$ is \emph{nilpotent} if there is an integer $n$ such that $[x_1, \ldots, x_{n+1}]$ vanishes for all $x_1, \ldots, x_{n+1} \in \fg$.
The smallest such $n$ is then the \emph{nilpotency class} of $\fg$.
For instance, the zero Lie algebra is the unique Lie algebra of class $0$, and the Lie algebras of class $1$ are exactly the nonzero abelian Lie algebras.
The center of a nonzero nilpotent Lie algebra $\fg$ is a nonzero abelian Lie algebra.
In particular, if $\fg\neq0$ is finite and nilpotent, then there is a Lie subalgebra $\fn \subseteq Z(\fg)$ with $\fn \simeq \Z/p\Z$.

Lie algebras of nilpotency class $\leq 2$ are those for which the Lie bracket is valued in the center, i.e., such that $[\fg, \fg] \subseteq Z(\fg)$.
If $\fg$ is a $\Z_p$-module and $\mathfrak z$ is a given submodule of~$\fg$, then equipping $\fg$ with a Lie bracket such that $\fg$ has nilpotency class $\leq 2$ and center $\mathfrak z$ amounts to giving a nondegenerate alternating $\Z_p$-bilinear map $(\fg/\mathfrak z) \oplus (\fg/\mathfrak z) \to \mathfrak z$ (the Jacobi identity is automatically satisfied).

\subsubsection{The Lazard correspondence (cf.\ \cite{lazard})}
\label{subsn:lazard-correspondence}

Let $\fg$ be a Lie $\Z_p$-algebra of nilpotency class $<p$.
We define a group law $\circ$ on $\fg$ via the \emph{truncated Baker--Campbell--Hausdorff formula}:
\[
  x \circ y
  \coloneqq
  x + y
  + \frac12 [x,y]
  + \frac1{12} [x,y,y] - \frac1{12} [x,y,x]
  + \ldots
\]
where the sum is including only the finitely many terms of the Baker--Campbell--Hausdorff formula (see e.g.~\cite[p.\,29]{serrelie}) which do not feature $p$-fold commutators, thus involving only denominators coprime to~$p$ and making the sum well-defined.
For instance, for Lie algebras of nilpotency class $\leq 2$ (with $p>2$), the formula simplifies to
$
  x \circ y
  =
  x + y
  + \frac12 [x,y]
$.
The operation transforming the Lie algebra $\fg$ into the group $(\fg, \circ)$ is the key construction in the \emph{Lazard correspondence}, which is an equivalence of categories:
\begin{align*}
  \{\textnormal{finite Lie $\Z_p$-algebras of nilpotency class $<p$}\} &
  \longleftrightarrow
  \{\textnormal{finite $p$-groups of nilpotency class $<p$}\}
  .
\end{align*}
In particular, every finite $p$-group $G$ of nilpotency class $<p$ is isomorphic to $(\fg, \circ)$ for some finite Lie $\Z_p$-algebra $\fg$, which then has the same nilpotency class.
This correspondence was introduced by Lazard in \cite{lazard} (see also \cite{efflaz} or \cite[Section 1.2]{abrashkin-ramification-filtration-3}), and it is somewhat analogous to the classical Lie correspondence between Lie algebras and Lie groups.

Via this correspondence, Lie subalgebras of $\fg$ correspond to subgroups of $(\fg, \circ)$.
Similarly, ideals of $\fg$ correspond to normal subgroups of $(\fg, \circ)$, and the quotients then correspond to each other: a short exact sequence $0 \to \fn \to \fg \to \fg/\fn \to 0$ of Lie algebras (i.e., a short exact sequence of $\Z_p$-modules in which every arrow is a Lie algebra homomorphism) induces a short exact sequence $1 \to (\fn, \circ) \to (\fg, \circ) \to (\fg/\fn, \circ) \to 1$ of groups.

As the Lie bracket of an element $x \in \fg$ with itself vanishes, the $n$-th power of $x$ as an element of the group $(\fg, \circ)$ is $n \cdot x$, for all $n \in \Z$.
In particular, the inverse of $x$ with respect to $\circ$ is simply its additive inverse $-x$.

If $[x,y]=0$, then $x \circ y = y \circ x = x + y$.
Conversely, if $x$ and $y$ commute in $(\fg,\circ)$, then the Campbell identity
\[
  x \circ y \circ (-x)
  =
  \sum_{n=0}^{p-1}
    \frac{(-1)^n}{n!}
    [y,\underbrace{x,\ldots,x}_n]
\]
implies that $[x,y]$ is an $n$-fold commutator for any $n \geq 2$ (by induction on $n$) and hence $[x,y]=0$ as $\fg$ is nilpotent.
In particular, the center $Z(\fg)$ of the Lie algebra $\fg$ coincides with the center of the group~$(\fg, \circ)$.
Note that the Lie algebra $\fg$ is abelian if and only if the group $(\fg, \circ)$ is abelian, in which case the laws $+$ and $\circ$ coincide.

If $\fg$ is a $\Z_p$-module, the set $\fg[p]$ of its $p$-torsion elements forms an $\F_p$-vector space.
When $\fg$ is a Lie $\Z_p$-algebra, the Lie $\F_p$-algebra $\fg[p]$ is an ideal of $\fg$: indeed, if $x \in \fg[p]$, then $p \cdot [x,y] = [p \cdot x, y] = 0$ for any $y \in \fg$.
Using the Lazard correspondence, this directly implies:

\begin{lemma}
  \label{lem:p-torsion-is-group}
  Let $G$ be a finite $p$-group of nilpotency class smaller than~$p$.
  Its $p$-torsion elements form a normal subgroup of $G$, which we denote by $G[p]$.
\end{lemma}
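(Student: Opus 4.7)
The plan is to apply the Lazard correspondence to reduce the claim about the group $G$ to the analogous (and essentially trivial) claim about a Lie $\Z_p$-algebra, exploiting the identifications set up in \Cref{subsn:lazard-correspondence}.

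First, I would invoke the Lazard correspondence to write $G \simeq (\fg,\circ)$ for some finite Lie $\Z_p$-algebra $\fg$ of the same nilpotency class as $G$ (which is $<p$ by hypothesis). The crucial fact, recalled just above the lemma, is that for each element $x\in\fg$ and each integer $n$, the $n$-th power of $x$ in the group $(\fg,\circ)$ coincides with the additive multiple $n\cdot x$. In particular, an element $x\in\fg$ is $p$-torsion as an element of the group $(\fg,\circ)$ if and only if $p\cdot x=0$ in the $\Z_p$-module $\fg$. Hence the set of $p$-torsion elements of the group $G$ corresponds, under Lazard, to the submodule $\fg[p]\subseteq\fg$.

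Next, I would check that $\fg[p]$ is an ideal of $\fg$, which is exactly the computation displayed in the paragraph preceding the lemma: for $x\in\fg[p]$ and $y\in\fg$, one has $p\cdot[x,y]=[p\cdot x,y]=0$ by $\Z_p$-bilinearity of the bracket, so $[x,y]\in\fg[p]$. Finally, I would quote the fact (also recalled in \Cref{subsn:lazard-correspondence}) that, under the Lazard correspondence, ideals of $\fg$ correspond to normal subgroups of $(\fg,\circ)$. Combining these three observations yields that $G[p]$ is a normal subgroup of $G$, which is exactly the claim.

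Since every step is essentially an unpacking of the general properties of the Lazard correspondence, which the paper has already recorded, no real obstacle arises; the only care needed is to ensure that the group-theoretic notion of $p$-torsion and the module-theoretic notion of $p$-torsion genuinely coincide under Lazard, which is the content of the identity $x^n=n\cdot x$ in $(\fg,\circ)$.
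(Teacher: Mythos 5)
Your proof is correct and follows the same route the paper takes: reduce via the Lazard correspondence, use the identity $x^{\circ n}=n\cdot x$ to identify group-theoretic and module-theoretic $p$-torsion, observe that $\fg[p]$ is an ideal by bilinearity of the bracket, and conclude via the ideal/normal-subgroup dictionary. Your write-up is simply a slightly more explicit unpacking of the one-line argument the paper gives right before the lemma statement.
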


\begin{remark}
  The condition on the nilpotency class is crucial in \Cref{lem:p-torsion-is-group}:
  for example, the $2$-torsion elements of the $2$-group $D_4$ (of nilpotency class $2$) do not form a subgroup of $D_4$.
\end{remark}

\subsection{Nilpotent Artin--Schreier theory}
\label{subsn:nilpotent-artinschreier}

We fix a finite Lie $\Z_p$-algebra $\fg$ of nilpotency class smaller than~$p$, and we let $G \coloneqq (\fg, \circ)$.
(By \Cref{subsn:lazard-correspondence}, every finite $p$-group $G$ of nilpotency class $<p$ arises in this way.)
The tensor product $\fg \otimes_{\Z_p} W(F^\alg)$ inherits a Lie $\Z_p$-algebra structure from $\fg$ (and hence a group law $\circ$) by $W(F^\alg)$-linear extension.
In this subsection, we apply \Cref{thm:param-galrep} to the group $G_{F^\sep} \coloneq (\fg\otimes W(F^\alg), \circ)$ in order to prove \Cref{thm:parametrization}.

The actions of $\Gamma_F$ and of $\sigma$ on $\fg\otimes W(F^\alg)$ respect the group law $\circ$, so they are actions on the group~$G_{F^\sep}$, and they commute with each other.
By \Cref{lem:witt-props} (points~\ref{lem:witt-props-ii} and~\ref{lem:witt-props-iii}), the subgroup of $G_{F^\sep}$ fixed by $\Gamma_F$ is $G_F \coloneqq (\fg\otimes W(F^\perf),\circ)$ and the subgroup fixed by $\sigma$ is $G=(\fg,\circ)$.
Following \Cref{subsn:parametrization}, we then define a left action of the group $G_{F^\sep}$ on the set $\fg \otimes W(F^\alg)$ by $g.m \coloneqq \sigma(g) \circ m \circ (-g)$.
This action restricts to an action of $G_F = (\fg \otimes W(F^\perf), \circ)$ on $\fg \otimes W(F^\perf)$.
We write $\Orb{\fg}{W(F^\perf)}{W(F^\perf)}$ for the set of $(\fg\otimes W(F^\perf), \circ)$-orbits of $\fg\otimes W(F^\perf)$.
The multiplicative Artin--Schreier map $\bwp:\fg\otimes W(F^\alg)\rightarrow \fg\otimes W(F^\alg)$ is given by $g\mapsto \sigma(g)\circ(-g)$, so that $\bwp(g) = g.0$.

We first prove the following lemma, which is used in proofs by induction on the size of~$\fg$:

\begin{lemma}
  \label{lem:central-lemma}
  Let $g, m \in \fg\otimes W(F^\alg)$ and $h, n \in Z(\fg)\otimes W(F^\alg)$.
  Then,
  $
    (g + h).(m + n) = g.m + h.n
  $.
\end{lemma}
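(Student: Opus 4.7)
The plan is to unfold the left-hand side using the definition $x.y = \sigma(x) \circ y \circ (-x)$ and reduce it to the right-hand side by exploiting the centrality of $h$ and $n$. The key observation is that since $Z(\fg)$ is a $\Z_p$-submodule of $\fg$, the tensor product $Z(\fg) \otimes W(F^\alg)$ lies in the center of the Lie $W(F^\alg)$-algebra $\fg \otimes W(F^\alg)$; moreover, this center is stable under $\sigma$ (which acts coordinatewise on $W(F^\alg)$ and preserves $Z(\fg)$) and under negation. Hence $\sigma(h), n, -h$ are all central.

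The crucial ingredient is that on central elements the group law $\circ$ coincides with $+$: if $y$ is central then every iterated commutator of the form $[\cdots, y, \cdots]$ vanishes, so the truncated Baker--Campbell--Hausdorff formula collapses to $x \circ y = y \circ x = x + y$. In particular, central elements $\circ$-commute with every element of $\fg \otimes W(F^\alg)$, and the restriction of $\circ$ to the center equals~$+$.

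With these remarks, the computation proceeds as follows. First rewrite
\[
  (g+h).(m+n)
  =
  \bigl(\sigma(g) \circ \sigma(h)\bigr)
  \circ
  \bigl(m \circ n\bigr)
  \circ
  \bigl((-g) \circ (-h)\bigr),
\]
using that $\sigma(h), n, -h$ are central (so each sum $x+y$ with $y$ central equals $x \circ y$). Next, using associativity of $\circ$ and the fact that the central factors $\sigma(h), n, -h$ commute with every other factor, rearrange to regroup the non-central and central parts:
\[
  (g+h).(m+n)
  =
  \bigl(\sigma(g) \circ m \circ (-g)\bigr)
  \circ
  \bigl(\sigma(h) \circ n \circ (-h)\bigr)
  =
  (g.m) \circ (h.n).
\]
Finally, observe that $h.n = \sigma(h) + n - h$ is a sum of central elements, hence central. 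Therefore the outer $\circ$ collapses to $+$, yielding $(g+h).(m+n) = g.m + h.n$.

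There is no real obstacle: the proof is essentially bookkeeping once one registers that the BCH product reduces to addition whenever a central element is involved. The only point requiring a moment of care is identifying $Z(\fg \otimes W(F^\alg))$ with $Z(\fg) \otimes W(F^\alg)$, which is immediate from $W(F^\alg)$-bilinearity of the Lie bracket.
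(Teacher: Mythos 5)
Your proof is correct and follows essentially the same route as the paper's: unfold the action by its definition, use centrality of $\sigma(h), n, -h$ to replace $+$ by $\circ$ and to commute factors into the regrouping $(g.m)\circ(h.n)$, then use centrality of $h.n$ to collapse the final $\circ$ back to $+$. The only difference is that you spell out the supporting observations (stability of the center under $\sigma$, identification of $Z(\fg\otimes W(F^\alg))$ with $Z(\fg)\otimes W(F^\alg)$) a bit more explicitly than the paper, which just annotates the chain of equalities.
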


\begin{proof}
  \begin{align*}
    (g + h).(m + n)
    &=
    \sigma(g+h) \circ (m+n) \circ (-g-h)
    \\
    &=
    \sigma(g) \circ \sigma(h) \circ m \circ n \circ (-g) \circ (-h)
    &
    \textnormal{as $h,n \in Z(\fg)\otimes W(F^\alg)$}
    \\
    &=
    \sigma(g) \circ  m \circ (-g) \circ \sigma(h) \circ n \circ (-h)
    &
    \textnormal{as $h,n \in Z(\fg)\otimes W(F^\alg)$}
    \\
    &=
    (g.m) \circ (h.n)
    \\
    &=
    g.m + h.n
    &
    \textnormal{as $h.n \in Z(\fg)\otimes W(F^\alg)$}.
    &
    \qedhere
  \end{align*}
\end{proof}

In particular, applied to $m=n=0$, \Cref{lem:central-lemma} implies:

\begin{corollary}
  \label{cor:bwp-additive}
  Let $g \in \fg \otimes W(F^\alg)$ and $h \in Z(\fg) \otimes W(F^\alg)$.
  Then, $\bwp(g+h) = \bwp(g) + \bwp(h)$.
\end{corollary}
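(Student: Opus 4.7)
The plan is to derive the corollary as an immediate specialization of \Cref{lem:central-lemma} to the base point $0$ of the left action. Recall that by definition $\bwp(x) = \sigma(x) \circ (-x) = x.0$ for any $x \in \fg \otimes W(F^\alg)$, so computing $\bwp(g+h)$ amounts to evaluating the action $(g+h).0$.

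First, I would observe that the hypotheses of \Cref{lem:central-lemma} are satisfied with the choices $m \coloneqq 0$ and $n \coloneqq 0$, since $0 \in Z(\fg) \otimes W(F^\alg)$ trivially. Applying the lemma yields
\[
  (g+h).(0+0) \;=\; g.0 + h.0.
\]
Rewriting both sides in terms of $\bwp$, the left-hand side is $\bwp(g+h)$ and the right-hand side is $\bwp(g) + \bwp(h)$, which is exactly the claimed identity.

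There is essentially no obstacle here: the work has already been done inside \Cref{lem:central-lemma}, whose proof exploits that elements of $Z(\fg) \otimes W(F^\alg)$ commute with everything for $\circ$ (so $\sigma(h)$ and $-h$ can be shifted past the $g$-factors), and that the sum of two central elements coincides with their $\circ$-product. The only thing to check, which is purely formal, is that taking $m = n = 0$ in the conclusion of \Cref{lem:central-lemma} does specialize to the additivity of $\bwp$ on a central perturbation — and this is immediate from the definition $\bwp(x) = x.0$.
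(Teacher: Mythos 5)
Your proof is correct and matches the paper's exactly: the paper also obtains \Cref{cor:bwp-additive} by specializing \Cref{lem:central-lemma} to $m = n = 0$ and rewriting $x.0$ as $\bwp(x)$.
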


We now verify the remaining hypotheses of \Cref{thm:param-galrep}:

\begin{lemma}
  \label{lem:lsep-is-good}
  The group $(\fg\otimes W(F^\alg),\circ)$ satisfies the following properties:
  \begin{enumroman}
  \item\label{item:lsep-is-good-i}
    We have $(\fg \otimes W(F^\alg))^{\sigma} \subseteq (\fg \otimes W(F^\alg))^{\Gamma_F}$.
  \item\label{item:lsep-is-good-ii}
    The multiplicative Artin--Schreier map $\bwp:\fg\otimes W(F^\alg)\rightarrow \fg\otimes W(F^\alg)$ is surjective.
  \item\label{item:lsep-is-good-iii}
    The cohomology set $H^1(\Gamma_F, (\fg \otimes W(F^\alg), \circ))$ is a singleton.
  \end{enumroman}
\end{lemma}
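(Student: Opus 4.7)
The plan is to verify (i) by direct inspection, and to establish (ii) and (iii) by simultaneous induction on $|\fg|$. For (i), combining parts \iref{lem:witt-props}{lem:witt-props-ii} and \iref{lem:witt-props}{lem:witt-props-iii} gives $(\fg \otimes W(F^\alg))^{\sigma} = \fg \subseteq \fg \otimes W(F^\perf) = (\fg \otimes W(F^\alg))^{\Gamma_F}$, where the middle inclusion comes from $\Z_p = W(\F_p) \hookrightarrow W(F^\perf)$.

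For the induction, the base case is when $\fg$ is abelian: then $\circ = +$ and $\bwp = \wp$, so (ii) reduces to \iref{lem:witt-props}{lem:witt-props-0} and (iii) to \iref{lem:witt-props}{lem:witt-props-iv}. In the inductive step, one selects a nonzero ideal $\fn \subseteq Z(\fg)$ with $|\fn| < |\fg|$, which exists because every nonzero finite nilpotent Lie algebra has nonzero center. By \Cref{rmk:flat-module} and the Lazard correspondence (using that $\fn$ is an ideal of $\fg$ so $\fg/\fn$ is still a nilpotent Lie $\Z_p$-algebra of class $<p$), tensoring $0 \to \fn \to \fg \to \fg/\fn \to 0$ with $W(F^\alg)$ yields a short exact sequence of groups
\[
  1 \to (\fn \otimes W(F^\alg), +) \to (\fg \otimes W(F^\alg), \circ) \to ((\fg/\fn) \otimes W(F^\alg), \circ) \to 1,
\]
whose kernel is abelian and central.

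For (ii), given any $m \in \fg \otimes W(F^\alg)$, the inductive hypothesis applied to $\fg/\fn$ produces some $\bar g$ whose image under $\bwp$ matches the image of $m$ in $(\fg/\fn) \otimes W(F^\alg)$. Lifting $\bar g$ to some $g_0 \in \fg \otimes W(F^\alg)$, the difference $h \coloneqq m - \bwp(g_0)$ lies in $\fn \otimes W(F^\alg)$; by \iref{lem:witt-props}{lem:witt-props-0} one solves $\wp(h') = h$ with $h' \in \fn \otimes W(F^\alg)$, and \Cref{cor:bwp-additive} then delivers $\bwp(g_0 + h') = \bwp(g_0) + \wp(h') = m$.

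For (iii), the long exact sequence of pointed sets attached to the central extension above extends one step past $H^1$ (cf.~\cite[Chap.~I, \S 5.7]{serrecl}):
\[
  H^1(\Gamma_F, \fn \otimes W(F^\alg)) \to H^1(\Gamma_F, (\fg \otimes W(F^\alg), \circ)) \to H^1(\Gamma_F, ((\fg/\fn) \otimes W(F^\alg), \circ)).
\]
The leftmost term vanishes by \iref{lem:witt-props}{lem:witt-props-iv}, and the rightmost is a singleton by the inductive hypothesis; exactness then forces the middle term to be a singleton as well. The subtlest point in the whole argument is precisely this step: the usual non-abelian cohomology sequence stops at $H^1$, and the crucial extra term one needs is available only because the kernel is central.
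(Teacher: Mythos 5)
Your proof is correct and takes essentially the same route as the paper's: part (i) is read off from \Cref{lem:witt-props}, and parts (ii) and (iii) are proved together by induction on $|\fg|$ through a nonzero abelian ideal $\fn\subseteq Z(\fg)$, using \Cref{cor:bwp-additive} for surjectivity and the exact sequence of pointed sets in non-abelian Galois cohomology for the $H^1$ claim. The paper fixes $\fn\cong\Z/p\Z$ and bases the induction at $\fg=0$ (proving the abelian case separately beforehand), whereas you allow any nonzero proper $\fn\subseteq Z(\fg)$ and take the abelian case as your base; these are inessential variations.

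One thing to correct in your closing remark, however: centrality of the kernel is \emph{not} what makes the cohomology step go through. The exact sequence of pointed sets $H^1(\Gamma_F,N)\to H^1(\Gamma_F,G)\to H^1(\Gamma_F,G/N)$, which is the only thing you actually use, holds for \emph{every} normal subgroup $N$ of a $\Gamma_F$-group $G$, with no centrality hypothesis. Exactness at the middle term says that the fiber over the distinguished point of $H^1(\Gamma_F,G/N)$ equals the image of $H^1(\Gamma_F,N)$; since both outer terms here are singletons, so is the middle one. The further connecting map $H^1(\Gamma_F,G/N)\to H^2(\Gamma_F,N)$ is the only piece of the non-abelian machinery that genuinely requires centrality, and you never invoke it. Centrality of $\fn$ \emph{is} essential to your argument, but in a different place: it is what lets you identify $\bwp$ on $\fn\otimes W(F^\alg)$ with the additive $\wp$, and what underlies \Cref{cor:bwp-additive}.
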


\begin{proof}
  Point \ref{item:lsep-is-good-i} directly follows from \Cref{lem:witt-props} (points \ref{lem:witt-props-ii} and \ref{lem:witt-props-iii}), so we focus on points~\ref{item:lsep-is-good-ii} and~\ref{item:lsep-is-good-iii}.

  When $\fg$ is an abelian Lie $\Z_p$-algebra, the group law $\circ$ coincides with $+$, so we have already proved the claims in \Cref{lem:witt-props} (points~\ref{lem:witt-props-0} and \ref{lem:witt-props-iv}).

  We now prove the result for a general Lie $\Z_p$-algebra $\fg$ of nilpotency class $<p$, by induction on the size of $\fg$.
  The case $\fg=0$ is clear, so assume that~$\fg$ is nonzero and that the result holds for Lie $\Z_p$-algebras of nilpotency class $<p$ whose size is smaller than $|\fg|$.
  Pick a subalgebra $\fn \subseteq Z(\fg)$ isomorphic to~$\Z/p\Z$.
  We have already shown the claims for the abelian Lie algebra $\fn$, and the claims hold for the Lie algebra $\fg/\fn$ by the induction hypothesis.
  To combine these two known cases, we use the exact sequence of Lie $\Z_p$-algebras
  \[
    0 \rightarrow
    \fn \rightarrow
    \fg \rightarrow
    \fg/\fn \rightarrow
    0,
  \]
  which induces an exact sequence of Lie $\Z_p$-algebras (see~\Cref{rmk:flat-module})
  \[
    0 \rightarrow
    \fn\otimes W(F^\alg) \rightarrow
    \fg\otimes W(F^\alg) \rightarrow
    (\fg/\fn)\otimes W(F^\alg) \rightarrow
    0,
  \]
  and thus an exact sequence of groups (see~\Cref{subsn:lazard-correspondence})
  \[
    1 \rightarrow
    (\fn\otimes W(F^\alg),\circ)
    \rightarrow (\fg\otimes W(F^\alg),\circ)
    \rightarrow ((\fg/\fn)\otimes W(F^\alg),\circ)
    \rightarrow 1.
  \]
  We prove points~\ref{item:lsep-is-good-ii} and~\ref{item:lsep-is-good-iii}:
  \begin{enumroman}[start=2]
    \item
      Let $x \in \fg \otimes W(F^\alg)$, and let $\bar x \in (\fg/\fn) \otimes W(F^\alg)$ be its projection.
      By the induction hypothesis, there is some $\bar y \in (\fg/\fn) \otimes W(F^\alg)$ such that $\bwp(\bar y) = \bar x$.
      Choose an arbitrary lift $y \in \fg \otimes W(F^\alg)$ of $\bar y$.
      Then, $x - \bwp(y)$ belongs to $\fn \otimes W(F^\alg)$.
      By the case $\fg = \Z/p\Z$, there is a $z \in \fn \otimes W(F^\alg)$ such that $\bwp(z) = x - \bwp(y)$.
      As $z$ is central, \Cref{cor:bwp-additive} implies that $\bwp(y + z) = \bwp(y) + \bwp(z) = x$.
      Thus, $\bwp$ is surjective.
    
    \item
      We have the following exact sequence of pointed sets in non-abelian Galois cohomology:
      \[
          H^1(\Gamma_F, (\fn \otimes W(F^\alg), \circ))
        \longrightarrow
        H^1(\Gamma_F, (\fg \otimes W(F^\alg), \circ))
        \longrightarrow
          H^1(\Gamma_F, ((\fg/\fn) \otimes W(F^\alg), \circ))
      \]
      In this sequence, we already know that the cohomology sets associated to $\fn$ and to $\fg/\fn$ are trivial.
      Hence, so is the cohomology set $H^1(\Gamma_F, (\fg\otimes W(F^\alg),\circ))$ associated to $\fg$.
      \qedhere
  \end{enumroman}
\end{proof}

\Cref{lem:witt-props,lem:lsep-is-good} let us apply \Cref{thm:param-galrep} to parametrize $G$-extensions of $F$, where $G = (\fg,\circ)$, generalizing \Cref{thm:parametrization-abelian} to (some) non-abelian $p$-extensions:

\begin{theorem}
  \label{thm:parametrization}
  There is a bijection
  \[
    \begin{matrix}
      \orb : &
      H^1(\Gamma_F, G)&
      \overset\sim\longrightarrow &
      \Orb{\fg}{W(F^\perf)}{W(F^\perf)}
      \\ &
      [\tau \mapsto (-g) \circ \tau(g)] &
      \longmapsto &
      [\bwp(g)] &
      \textnormal{for any } g \in \bwp^{-1}(\fg \otimes W(F^\perf)).
    \end{matrix}
  \]
  Moreover, if $\orb([\gamma]) = [m]$, then
  $
    \Stab_G(\gamma) \simeq \Stab_{(\fg \otimes W(F^\perf), \circ)}(m)
  $.
\end{theorem}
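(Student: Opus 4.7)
The plan is to apply Theorem~\ref{thm:param-galrep} directly to the group $G_{F^\sep} = (\fg\otimes W(F^\alg), \circ)$ equipped with its natural $\Gamma_F$-action and with the Frobenius endomorphism $\sigma$; essentially all of the substantive work has already been completed in \Cref{lem:witt-props} and \Cref{lem:lsep-is-good}, so the proof is a matter of assembling these ingredients.

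First, I would identify the two relevant subgroups of $G_{F^\sep}$ appearing in \Cref{thm:param-galrep}. By \iref{lem:witt-props}{lem:witt-props-ii}, the subgroup of $\Gamma_F$-invariants is the subset $\fg\otimes W(F^\perf)$, which with the restricted group law is exactly $G_F = (\fg\otimes W(F^\perf), \circ)$. By \iref{lem:witt-props}{lem:witt-props-iii}, the subgroup of $\sigma$-invariants is $\fg = \fg\otimes\Z_p$, which coincides with $G = (\fg,\circ)$. With these identifications, the set of orbits $G_F\dblquot_{G_F}$ introduced in \Cref{subsn:parametrization} becomes $\Orb{\fg}{W(F^\perf)}{W(F^\perf)}$.

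Next, I would verify the three hypotheses of \Cref{thm:param-galrep} for this $G_{F^\sep}$. Hypothesis~\ref{thm:param-galrep-i} (that $G\subseteq G_F$) is \iref{lem:lsep-is-good}{item:lsep-is-good-i}. Hypothesis~\ref{thm:param-galrep-ii} (that $G_F\subseteq\bwp(G_{F^\sep})$) follows from the surjectivity of $\bwp$ proved in \iref{lem:lsep-is-good}{item:lsep-is-good-ii}. Finally, hypothesis~\ref{thm:param-galrep-iii} (that $H^1(\Gamma_F, G)\to H^1(\Gamma_F, G_{F^\sep})$ is trivial) follows immediately from \iref{lem:lsep-is-good}{item:lsep-is-good-iii}, since the codomain is a singleton.

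Applying \Cref{thm:param-galrep} then yields a bijection $\orb : H^1(\Gamma_F, G) \simto \Orb{\fg}{W(F^\perf)}{W(F^\perf)}$ together with the equality of stabilizers. The only remaining step is to translate the formula of \Cref{thm:param-galrep} into additive notation: as noted in \Cref{subsn:lazard-correspondence}, the inverse of an element $g \in (\fg\otimes W(F^\alg), \circ)$ is its additive opposite $-g$, so the assignment $[\tau\mapsto g^{-1}\tau(g)]\mapsto[\bwp(g)]$ becomes $[\tau\mapsto (-g)\circ\tau(g)]\mapsto[\bwp(g)]$, matching the statement. Since all difficulty has been absorbed into the preparatory lemmas, I do not expect any genuine obstacle in the proof itself; the only subtlety to double-check is that the action of $G_{F^\sep}$ on itself used in \Cref{thm:param-galrep} is precisely the action $g.m = \sigma(g)\circ m \circ (-g)$ already introduced in this subsection, ensuring that the two notions of orbit coincide.
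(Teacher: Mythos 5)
Your proposal is correct and follows exactly the route the paper intends: the paper gives no separate proof of \Cref{thm:parametrization} beyond the sentence preceding it, which says that \Cref{lem:witt-props,lem:lsep-is-good} allow one to apply \Cref{thm:param-galrep}, and you have simply made that application explicit, correctly identifying $G_F$ and $G$ via \iref{lem:witt-props}{lem:witt-props-ii} and \iref{lem:witt-props}{lem:witt-props-iii}, verifying the three hypotheses via \Cref{lem:lsep-is-good}, and noting that $g^{-1}=-g$ for the group law $\circ$. No gaps.
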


\begin{remark}
  \label{rk:no-cohen-rings}
  In \cite{abrashkin-ramification-filtration-3}, Abrashkin instead constructed a closely related bijection
  \[
    H^1(\Gamma_F, G) \stackrel\sim\longrightarrow \Orb{\fg}{O(F)}{O(F)},
  \]
  where $O(F)\subseteq W(F)$ is the Cohen ring of $F$, i.e., the $p$-adically complete flat local $\Z_p$-algebra with maximal ideal~$p$ and residue field~$F$ (unique up to isomorphism).
  We refer to \cite[Subsection~1.2.4]{fontaineouyang} or \cite[Subsection~1.1]{bertmess} for more details about Cohen rings.
  
  The main advantage of Cohen rings over Witt vectors is the fact that $\Z/p\Z\otimes O(F) = O(F)/pO(F) = F$, whereas $\Z/p\Z\otimes W(F^\perf)=F^\perf$ is slightly larger.
  (However, this disadvantage disappears modulo the image of the Artin--Schreier map, see \Cref{lem:artsch-perfclos}.)
  On the other hand, the main advantage of Witt vectors over Cohen rings is functoriality: any field homomorphism $F_1\hookrightarrow F_2$ induces a canonical ring homomorphism $W(F_1^\perf)\hookrightarrow W(F_2^\perf)$, but not necessarily a canonical ring homomorphism $O(F_1)\rightarrow O(F_2)$.
  Functoriality is a convenient property for us to apply to the actions of $\Gamma_F$ and $\sigma$ on $F^\sep$ and to embeddings of global fields into their completions.
\end{remark}

\begin{proposition}[Naturality]
  \label{rmk:functoriality}
  The bijection of \Cref{thm:parametrization} is natural in both $\fg$ and~$F$:
  \begin{enumalpha}
  \item
    \label{rmk:functoriality-i}
    Let $f:\fg_1\rightarrow \fg_2$ be a morphism of finite Lie $\Z_p$-algebras of nilpotency class smaller than~$p$.
    We obtain a commutative diagram:
    \[\begin{tikzcd}
      H^1(\Gamma_F, (\fg_1, \circ)) \dar \rar[->]{\orb}[swap]{\sim} & \Orb{\fg_1}{W(F^\perf)}{W(F^\perf)} \dar \\
      H^1(\Gamma_F, (\fg_2, \circ)) \rar[->]{\orb}[swap]{\sim} & \Orb{\fg_2}{W(F^\perf)}{W(F^\perf)}
    \end{tikzcd}\]
    where the vertical maps are the natural maps induced by $f$.
  \item
    \label{rmk:functoriality-ii}
    Let $f : F_1\hookrightarrow F_2$ be a field homomorphism between fields of characteristic~$p$.
    We obtain a commutative diagram:
    \[\begin{tikzcd}
      H^1(\Gamma_{F_1}, (\fg, \circ)) \dar \rar[->]{\orb}[swap]{\sim} & \Orb{\fg}{W(F_1^\perf)}{W(F_1^\perf)} \dar \\
      H^1(\Gamma_{F_2}, (\fg, \circ)) \rar[->]{\orb}[swap]{\sim} & \Orb{\fg}{W(F_2^\perf)}{W(F_2^\perf)}
    \end{tikzcd}\]
    where the vertical maps are the natural maps induced by $f$.
  \end{enumalpha}
\end{proposition}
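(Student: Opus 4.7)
The plan is to verify commutativity of each square by unwinding the explicit description of $\orb$ given in \Cref{thm:parametrization}, namely that $\orb([\gamma]) = [\bwp(g)]$ whenever $g \in \bwp^{-1}(\fg \otimes W(F^\perf))$ satisfies $\gamma(\tau) = (-g) \circ \tau(g)$. Both parts reduce to checking that lifting a cocycle on one side of the square is compatible with applying the functorial map, i.e.\ that the relevant induced map commutes with $\sigma$, with the Galois action, and with the group law $\circ$.

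For part \ref{rmk:functoriality-i}, I take a cocycle $\gamma_1 : \Gamma_F \to (\fg_1,\circ)$, pick a lift $g_1 \in \fg_1 \otimes W(F^\alg)$ as in \Cref{thm:parametrization}, and set $g_2 \coloneqq (f \otimes \mathrm{id}_{W(F^\alg)})(g_1)$. The map $f \otimes \mathrm{id}$ is a homomorphism of Lie $W(F^\alg)$-algebras, hence (via Baker--Campbell--Hausdorff, cf.\ \Cref{subsn:lazard-correspondence}) a homomorphism of groups under $\circ$, and it commutes with both $\sigma$ and the $\Gamma_F$-action, which act only on the Witt-vector factor. Consequently $\bwp(g_2) = (f \otimes \mathrm{id})(\bwp(g_1))$ still lies in $\fg_2 \otimes W(F^\perf)$, and $[\tau \mapsto (-g_2) \circ \tau(g_2)]$ is precisely the class of $f \circ \gamma_1$, so $g_2$ is a valid lift of the image cocycle. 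Its orbit $[\bwp(g_2)]$ is then exactly the image of $[\bwp(g_1)] = \orb([\gamma_1])$ under the right vertical map, giving commutativity.

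For part \ref{rmk:functoriality-ii}, after fixing compatible algebraic closures so that $f$ extends to $F_1^\alg \hookrightarrow F_2^\alg$, I obtain a restriction map $\Gamma_{F_2} \to \Gamma_{F_1}$ (inducing the left vertical arrow by precomposition) and a functorial injection $W(F_1^\alg) \hookrightarrow W(F_2^\alg)$ commuting with $\sigma$ and sending $W(F_1^\perf)$ into $W(F_2^\perf)$. Given $\gamma$ and a lift $g \in \fg \otimes W(F_1^\alg)$, I regard $g$ as an element of $\fg \otimes W(F_2^\alg)$ via the induced map. For $\tau \in \Gamma_{F_2}$ the action on $g$ factors through $\tau|_{F_1^\sep}$, so $(-g) \circ \tau(g) = \gamma(\tau|_{F_1^\sep})$, i.e.\ the same $g$ lifts the pulled-back cocycle. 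Moreover $\bwp(g) \in \fg \otimes W(F_1^\perf)$ is carried to its own image in $\fg \otimes W(F_2^\perf)$ by the right vertical arrow, and this matches $\orb$ of the restricted cocycle.

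The argument is essentially bookkeeping; I do not anticipate a genuine obstacle. The one point that deserves care is making sure the operations one claims to commute (tensor with $f$, or base-change of Witt vectors along $f$) really do commute with $\sigma$, the Galois action, and the non-abelian law $\circ$. All of these compatibilities are immediate from the facts that $\sigma$ and $\Gamma_F$ act only on the Witt-vector factor and that the group law $\circ$ is polynomially built from the Lie bracket via the truncated Baker--Campbell--Hausdorff formula, so it is preserved by any Lie-algebra homomorphism. The stabilizer identification from \Cref{thm:parametrization} additionally gives compatibility on the level of stabilizers, should one wish to enhance the naturality statement to include automorphism groups.
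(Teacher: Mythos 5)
The paper gives no proof for this proposition (it is stated and then immediately followed by \Cref{rmk:subfield}; the label prefix \texttt{rmk:} suggests it was regarded as a routine remark), so there is no argument of the paper's to compare against. Your proof is a correct unwinding of the definition of $\orb$ from \Cref{thm:parametrization}, and it is the argument the authors are implicitly invoking.

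Two small points worth making explicit, though both are already implicit in your write-up. First, in part \ref{rmk:functoriality-i} one must check not only that $g_2 \coloneqq (f \otimes \mathrm{id})(g_1)$ lifts $f \circ \gamma_1$, but also that the right vertical arrow is actually well defined on orbits; this is the equivariance $(f \otimes \mathrm{id})(g.m) = (f \otimes \mathrm{id})(g).(f \otimes \mathrm{id})(m)$, which follows from exactly the same compatibilities ($f\otimes\mathrm{id}$ commutes with $\sigma$, $\circ$, and negation) that you list. Second, in part \ref{rmk:functoriality-ii} the extension of $f$ to an embedding $F_1^\sep \hookrightarrow F_2^\sep$ (equivalently $F_1^\alg \hookrightarrow F_2^\alg$) involves a choice; a different choice changes the restriction map $\Gamma_{F_2} \to \Gamma_{F_1}$ by conjugation, but since the bottom-left entry $H^1(\Gamma_{F_2}, G)$ consists of conjugacy classes, the induced vertical arrow and hence the square are independent of that choice. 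Neither remark constitutes a gap in your argument; they just spell out the final well-definedness bookkeeping.
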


\begin{remark}
\label{rmk:subfield}
  Let $[m]\in\Orb{\fg}{W(F^\perf)}{W(F^\perf)}$ correspond to a $G$-extension $K|F$ via the bijections of \Cref{thm:parametrization} and \Cref{lem:etext-bij-cohomology}.
  \Cref{rmk:functoriality} implies the following:
  \begin{enumalpha}
    \item
      \label{rk:func-quotient}
      For any ideal $\fn$ of $\fg$, the orbit $[m \bmod \fn]\in \Orb{(\fg/\fn)}{W(F^\perf)}{W(F^\perf)}$ corresponds to the $(\fg/\fn,\,\circ)$-subextension of $K$ fixed by~$(\fn,\,\circ)$.
    \item
      \label{rk:func-completion}
      For any valuation $v$ of $F$, denoting by $F_v$ the completion of $F$ with respect to $v$, the orbit $[m] \in \Orb{\fg}{W(F_v^\perf)}{W(F_v^\perf)}$ corresponds to the $G$-extension $K \otimes_F F_v$ of $F_v$.
  \end{enumalpha}
\end{remark}

By \Cref{thm:parametrization} and \Cref{lem:etext-bij-cohomology}, elements of $\Orb{\fg}{W(F^\perf)}{W(F^\perf)}$ correspond bijectively to $G$-extensions of $F$.
Thus, we extend the definition of the last jump:

\begin{definition}
  \label{def:lastjump-for-orbits}
  Assume that $F$ is either a local or a global function field of characteristic~$p$.
  If $K|F$ is the $G$-extension associated to an orbit $[m] \in \Orb{\fg}{W(F^\perf)}{W(F^\perf)}$, we define both $\lastjump(m)$ and $\lastjump([m])$ to be $\lastjump(K|F)$ (cf.~\Cref{subsn:last-jump}).
\end{definition}

Finally, we describe the effect of the ``twisting operation'' of \Cref{def:twisting} in terms of the parametrization:

\begin{lemma}[Twisting]
  \label{lem:twisting-orb}
  Let $\fn$ be a Lie subalgebra of $Z(\fg)$.
  For any $\gamma\in\Hom(\Gamma_F, (\fg, \circ))$ and $\delta\in\Hom(\Gamma_F, (\fn, \circ))$, let $m \in \fg \otimes W(F^\perf)$ and $n \in \fn \otimes W(F^\perf)$ be such that:
  \[
    \orb([\gamma]) = [m] \in \Orb{\fg}{W(F^\perf)}{W(F^\perf)}
    \andd
    \orb([\delta]) = [n] \in \Orb{\fn}{W(F^\perf)}{W(F^\perf)}.
  \]
  Then,
  $
    \orb([\gamma\cdot\delta]) = [m+n] \in \Orb{\fg}{W(F^\perf)}{W(F^\perf)}
  $.
\end{lemma}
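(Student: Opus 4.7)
The plan is to reduce the twisting identity to the additivity of $\bwp$ on central elements (\Cref{cor:bwp-additive}). Using the surjectivity of $\bwp$ from \Cref{lem:lsep-is-good}, I would choose lifts $g \in \fg \otimes W(F^\alg)$ and $h \in \fn \otimes W(F^\alg)$ with $\bwp(g) = m$ and $\bwp(h) = n$; these are allowed since $m$ and $n$ are Galois-invariant. By construction, the bijection of \Cref{thm:parametrization} sends the cocycle $\tau \mapsto (-g) \circ \tau(g)$ to $[m]$, and $\tau \mapsto (-h) \circ \tau(h)$ to $[n]$, where the latter simplifies to $\tau(h) - h$ since $\fn \otimes W(F^\alg)$ is abelian.

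The main computation is to verify that $g+h$ serves as a lift for the twist $\gamma \cdot \delta$, i.e., that
\[
  (\gamma \cdot \delta)(\tau) = (-(g+h)) \circ \tau(g+h)
\]
for all $\tau \in \Gamma_F$. Expanding the left-hand side as $((-g) \circ \tau(g)) \circ (\tau(h) - h)$ and using the centrality of $h$ and $\tau(h)$ in $(\fg \otimes W(F^\alg), \circ)$ to freely commute these elements past $\tau(g)$ and $(-g)$, the expression rearranges to $(-g) \circ (-h) \circ \tau(g) \circ \tau(h)$. Centrality also yields $(-g) \circ (-h) = -(g+h)$ and $\tau(g) \circ \tau(h) = \tau(g+h)$, so the two sides agree.

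From this identification, \Cref{thm:parametrization} gives $\orb([\gamma \cdot \delta]) = [\bwp(g+h)]$, and \Cref{cor:bwp-additive} (applicable because $h$ is central) yields $\bwp(g+h) = \bwp(g) + \bwp(h) = m + n$, completing the proof.

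I do not anticipate a serious obstacle: every ingredient has already been established, and the entire argument reduces to systematically exploiting the centrality of $\fn$ to reorder factors in Baker--Campbell--Hausdorff products, together with the observation that $\bwp$ behaves additively on the center.
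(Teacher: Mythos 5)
Your proof is correct and follows essentially the same approach as the paper: choose lifts $g,h$ via $\bwp$, use the centrality of $\fn\otimes W(F^\alg)$ to rearrange and obtain $(\gamma\cdot\delta)(\tau)=(-(g+h))\circ\tau(g+h)$, then conclude via the additivity of $\bwp$ on central perturbations (\Cref{cor:bwp-additive}). The only minor omission is that after choosing $g$ with $\bwp(g)=m$, the cocycle $\tau\mapsto(-g)\circ\tau(g)$ a priori only lies in the $G$-conjugacy class of $\gamma$ rather than equaling $\gamma$ on the nose; the paper picks $g$ so that both conditions hold simultaneously (which is always possible by adjusting $g$ by an element of $\fg\otimes W(F^\perf)$), whereas your argument implicitly uses that $[\gamma\cdot\delta]$ is unchanged under replacing $\gamma$ by a conjugate, which is true here because $\delta$ is central but should be noted.
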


\begin{proof}
  Pick $g\in \fg \otimes W(F^\alg)$ and $h\in \fn \otimes W(F^\alg)$ such that $m = \bwp(g)$ and $n=\bwp(h)$, and such that $\gamma(\tau)=(-g)\circ\tau(g)$ and $\delta(\tau)=(-h)\circ\tau(h)$ for all $\tau \in \Gamma_F$.
  For all $\tau \in \Gamma_F$, we have $(\gamma\cdot\delta)(\tau)=\gamma(\tau)\circ\delta(\tau)=(-g)\circ\tau(g)\circ(-h)\circ\tau(h)=(-g-h)\circ\tau(g+h)$ as $h\in Z(\fg)\otimes W(F^\alg)$.
  Moreover, by \Cref{cor:bwp-additive}, we have $\bwp(g+h)=\bwp(g)+\bwp(h)=m+n$.
  Hence, $\orb([\gamma\cdot\delta])=[m+n]$ by definition.
\end{proof}

\section{Local parametrization}
\label{sn:local}

In this section, we fix a finite field $\kappa$ of characteristic $p > 2$, a local function field $\fF$ with residue field~$\kappa$, and a uniformizer $\pi$ of $\fF$, so that $\fF = \kappa\llpar\pi\rrpar$.
We denote by $\tilde\pi$ the Teichmüller representative $(\pi, 0, 0, \ldots) \in W(\fF) \subseteq W(\fF^\perf)$ of $\pi$.
We also fix a finite Lie $\Z_p$-algebra $\fg \neq 0$ of nilpotency class at most $2$, and we denote by $G \coloneqq (\fg, \circ)$ its associated $p$-group.

Our goal is to study $G$-extensions of the local field~$\fF$.
We refine the parametrization of these extensions by describing an ``approximate fundamental domain'' (\Cref{subsn:fundom,subsn:other-fundom}), and we use Abrashkin's results from \cite{abrashkin-ramification-filtration-3} to describe the last jump in terms of this parametrization (\Cref{subsn:ram-eq,subsn:local-lastjump}).
This will allow us to count extensions in \Cref{sn:local-counting}.

The main results of this section can be summed up as follows: we define $W(\kappa)$-modules $\cD^0, \cD  \subseteq W(\fF^\perf)$ and a surjection $\pr : \cD^0 \twoheadrightarrow \cD$ [cf.~\Cref{def:cD-zero,def:cD}] such that:
\begin{itemize}
  \item
    The action of $(\fg \otimes W(\fF^\perf), \circ)$ on $\fg \otimes W(\fF^\perf)$ restricts to an action of the finite group $(\fg \otimes W(\kappa), \circ)$ on $\fg \otimes \cD^0$.
    We denote by $\Orb{\fg}{\cD^0}{W(\kappa)}$ the set of orbits for the restricted action.
    [cf.~\Cref{prop:kappa-acts}]
  \item
    The inclusion $\cD^0 \subseteq W(\fF^\perf)$ induces a bijection $\Orb{\fg}{\cD^0}{W(\kappa)} \simto \Orb{\fg}{W(\fF^\perf)}{W(\fF^\perf)}$ (the latter is itself in bijection with $\Ext(G,\fF)$ by \Cref{thm:parametrization}).
    [cf.~\Cref{thm:local-fundamental-domain}]
  \item
    The last jump of an element $D \in \fg \otimes \cD^0\subseteq \fg\otimes W(\fF^\perf)$ (in the sense of \Cref{def:lastjump-for-orbits}) is characterized in terms of its ``coordinates'' $D_a \in \fg \otimes W(\kappa)$ by the equations of \Cref{def:property-Jv}.
    [cf.~\Cref{thm:lastjump-Jv}]
  \item
    There is an action of $(\fg \otimes W(\kappa), \circ)$ on $\fg \otimes \cD$ such that the surjection $\pr$ induces a surjection $\pr : \Orb{\fg}{\cD^0}{W(\kappa)} \to \Orb{\fg}{\cD}{W(\kappa)}$ between the respective sets of orbits.
    [cf.~\Cref{prop:action-on-cD}]
  \item
    Each fiber of the surjection $\pr : \fg \otimes \cD^0 \twoheadrightarrow \fg \otimes \cD$ has size $|\fg \otimes W(\kappa)|$ and is formed of elements having the same last jump.
    [cf.~\Cref{rk:property-Jv-for-cD}, \Cref{cor:irrelevance-D0}]
\end{itemize}
This is summarized by the following diagram:
\[\begin{matrix}
  \fg\otimes W(\fF^\perf) &
  \longhookleftarrow &
  \fg\otimes\cD^0 &
  \overset{|\fg\otimes W(\kappa)|:1}{\longtwoheadrightarrow{2}} &
  \fg\otimes\cD \\[0.5em]
  \Orb{\fg}{W(\fF^\perf)}{W(\fF^\perf)} &
  \overset\sim\longleftrightarrow &
  \Orb{\fg}{\cD^0}{W(\kappa)} &
  \longtwoheadrightarrow{2} &
  \Orb{\fg}{\cD}{W(\kappa)} \\[0.5em]
  \lastjump([m]) &
  = &
  \lastjump([D^0]) &
  = &
  \lastjump([D])
\end{matrix}\]

Hence, counting $G$-extensions $K|\fF$ with $\lastjump(K) < v$ essentially amounts to counting elements of $\fg \otimes \cD$ satisfying certain equations (given in \Cref{def:property-Jv}).
This fact, which is made more precise in \Cref{lem:numberofD-numberofexts}, will be used throughout \Cref{sn:local-counting} to count local extensions.

\subsection{Fundamental domain}
\label{subsn:fundom}

In \Cref{thm:parametrization}, we have constructed a bijection between the set $H^1(\Gamma_\fF, G)$ and the set $\Orb{\fg}{W(\fF^\perf)}{W(\fF^\perf)}$ of $(\fg\otimes W(\fF^\perf),\circ)$-orbits of $\fg\otimes W(\fF^\perf)$.
For counting orbits, it is often convenient to work with a fundamental domain consisting of exactly one representative from each orbit.
Here, we do a little less: we define a canonical subset $\fg\otimes\cD^0$ of~$\fg \otimes W(\fF^\perf)$ which is a fundamental domain up to the action of the finite subgroup $(\fg\otimes W(\kappa),\circ)$ of $(\fg \otimes W(\fF^\perf),\circ)$ (see \Cref{thm:local-fundamental-domain}).
This allows us to count orbits using the orbit-stabilizer theorem.
This ``fundamental domain'' is closely related to Abrashkin's notion of ``special elements'' (cf.~\cite[Definition~2.1]{abrashkin-differential}) and to \cite[Lemma~3.2]{imai-wild-ramification-groups}.

\begin{lemma}
  \label{lem:size-tensor}
  The set $\fg\otimes W(\kappa)$ is finite of size $|\kappa|^n$ if $|\fg|=p^n$.
\end{lemma}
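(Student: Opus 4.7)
The plan is to reduce the statement to the cyclic case via the structure theorem for finitely generated modules over the principal ideal domain $\Z_p$, then invoke the properties of Witt vectors recalled in \Cref{subsn:witt-vectors}.

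Concretely, because $\fg$ has finite order $p^n$, it is a torsion $\Z_p$-module, and hence decomposes (as a $\Z_p$-module, ignoring the Lie bracket, which is irrelevant for this statement) as
\[
  \fg \;\simeq\; \bigoplus_{i=1}^{r} \Z/p^{k_i}\Z,
  \qquad \text{with } \sum_{i=1}^r k_i = n.
\]
Since the tensor product $-\otimes_{\Z_p} W(\kappa)$ commutes with finite direct sums, we obtain
\[
  \fg \otimes_{\Z_p} W(\kappa)
  \;\simeq\;
  \bigoplus_{i=1}^r \bigl(\Z/p^{k_i}\Z\bigr) \otimes_{\Z_p} W(\kappa)
  \;\simeq\;
  \bigoplus_{i=1}^r W(\kappa)/p^{k_i} W(\kappa).
\]

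Next, since $\kappa$ is a finite field, it is perfect, so as noted in \Cref{subsn:witt-vectors} the quotient $W(\kappa)/p^k W(\kappa)$ coincides with $W_k(\kappa)$, the ring of Witt vectors of length~$k$ over $\kappa$. By construction, elements of $W_k(\kappa)$ are represented as vectors of length~$k$ with coordinates in $\kappa$, so $|W_k(\kappa)| = |\kappa|^k$. Putting these observations together yields
\[
  |\fg \otimes_{\Z_p} W(\kappa)|
  \;=\;
  \prod_{i=1}^r |\kappa|^{k_i}
  \;=\;
  |\kappa|^{\sum_i k_i}
  \;=\;
  |\kappa|^n,
\]
as claimed. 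No real obstacle is expected here: the only subtle point is making sure to invoke perfectness of $\kappa$ when identifying $W(\kappa)/p^k W(\kappa)$ with $W_k(\kappa)$, which is otherwise false in general.
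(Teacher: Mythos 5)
Your proof is correct and follows essentially the same approach as the paper: reduce to the cyclic case via the structure theorem, use $W(\kappa)/p^k W(\kappa) = W_k(\kappa)$ for the perfect field $\kappa$, and count. The paper compresses the reduction into the phrase ``we may assume that $\fg \simeq \Z/p^n\Z$,'' whereas you spell out the direct-sum decomposition, but the substance is identical.
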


\begin{proof}
  As in the proof of \Cref{lem:witt-props}, we may assume that $\fg\simeq\Z/p^n\Z$.
  Then, $\fg\otimes W(\kappa) = W(\kappa)/p^n W(\kappa) = W_n(\kappa)$ since the finite field $\kappa$ is perfect.
  The ring $W_n(\kappa)$ of Witt vectors of length $n$ over $\kappa$ has size $|\kappa|^n$.
\end{proof}

\begin{definition}
  \label{def:cD-zero}
  We define the following free $W(\kappa)$-submodule of $W(\fF) \subseteq W(\fF^\perf)$:
  \[
    \cD^0 \coloneqq
    \bigoplus_{a\in\Onotp}
      W(\kappa)
      \tilde\pi^{-a}.
  \]
\end{definition}

Note that $\cD^0/p\cD^0 = \bigoplus_{a\in\{0\}\cup\N\setminus p\N}\kappa\pi^{-a} \subseteq \fF^\perf$.
The set $\fg \otimes \cD^0$ is the sub-$W(\kappa)$-module of $\fg \otimes W(\fF^\perf)$ consisting of elements $D$ of the form $\sum_{a \in \Onotp} D_a \tilde\pi^{-a}$, where the coordinates~$D_a$ belong to the Lie $W(\kappa)$-algebra $\fg \otimes W(\kappa)$ and are almost all zero.

\begin{lemma}
  \label{lem:artin-schreier-in-fundom}
  ~
  \begin{enumroman}
    \item
      \label{lem:artin-schreier-in-fundom-ii}
      If $x \in \fF^\perf$ is such that $\wp(x) \in \cD^0/p\cD^0$, then $x \in \kappa$.
    \item
      \label{lem:artin-schreier-in-fundom-iii}
      The map $\cD^0/p\cD^0 \to \fF/\wp(\fF)$ induced by the inclusion $\cD^0/p\cD^0\subseteq \fF$ is a surjection.
  \end{enumroman}
\end{lemma}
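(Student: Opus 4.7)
For \ref{lem:artin-schreier-in-fundom-ii}, I would first reduce from $x \in \fF^\perf$ to $x \in \fF$: since $\wp(x) \in \cD^0/p\cD^0 \subseteq \fF$, the element $x$ is a root of the separable polynomial $y^p - y - \wp(x) \in \fF[y]$, and the argument from the proof of \Cref{lem:artsch-perfclos} (a separable root in a purely inseparable extension must lie in the base) gives $x \in \fF$. I would then write $x = x_- + c_0 + x_+$ with $c_0 \in \kappa$, $x_+$ of strictly positive valuation, and $x_-$ a finite Laurent polynomial in $\pi^{-1}$, so that $\wp(x) = (x_-^p - x_-) + (c_0^p - c_0) + (x_+^p - x_+)$ splits according to the sign of the exponents. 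Since $x_+^p - x_+$ is the only contribution at strictly positive exponents and $\cD^0/p\cD^0$ has none, we get $x_+^p = x_+$; inspecting the lowest-order term then forces $x_+ = 0$. If $x_- \neq 0$, let $m \geq 1$ be maximal with the coefficient $c_{-m}$ of $\pi^{-m}$ in $x_-$ nonzero: then $x_-^p$ contributes $c_{-m}^p \neq 0$ at exponent $-pm$, while $-x_-$ contributes $-c_{-pm} = 0$ (as $pm > m$), so $\wp(x)$ has nonzero coefficient at $\pi^{-pm}$; but $pm \in p\N$ does not lie in $\Onotp$, so this exponent is forbidden in $\cD^0/p\cD^0$, a contradiction. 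Hence $x_- = 0$ and $x = c_0 \in \kappa$.

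For \ref{lem:artin-schreier-in-fundom-iii}, I would reduce an arbitrary $y \in \fF$ modulo $\wp(\fF)$ to an element of $\cD^0/p\cD^0$ in two steps. Splitting $y = y_- + c_0 + y_+$ as above, the element $x \coloneqq -\sum_{n \geq 0} y_+^{p^n}$ converges in $\fF$ because the valuations $p^n v(y_+)$ tend to infinity, and a telescoping computation gives $\wp(x) = y_+$, so $y \equiv c_0 + y_- \pmod{\wp(\fF)}$. For each remaining term $c_{-i}\pi^{-i}$ with $p \mid i$, writing $i = pk$ and using that the finite field $\kappa$ is perfect to take $c_{-i}^{1/p} \in \kappa$, the identity $\wp(c_{-i}^{1/p} \pi^{-k}) = c_{-i} \pi^{-pk} - c_{-i}^{1/p} \pi^{-k}$ yields $c_{-i}\pi^{-i} \equiv c_{-i}^{1/p} \pi^{-k} \pmod{\wp(\fF)}$. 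Iterating until the exponent is coprime to $p$ (or zero) expresses $y$ modulo $\wp(\fF)$ as a finite sum of terms $c\pi^{-a}$ with $a \in \Onotp$, i.e.~as an element of $\cD^0/p\cD^0$.

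The main obstacle is the presence of fractional-exponent expansions in $\fF^\perf$, which would make the support analysis in \ref{lem:artin-schreier-in-fundom-ii} unwieldy if attempted directly; this is cleanly bypassed by the reduction to $\fF$ via \Cref{lem:artsch-perfclos}. The remainder is routine Laurent-series arithmetic relying on the perfection of the residue field $\kappa$.
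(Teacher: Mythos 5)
Your proof of part~\ref{lem:artin-schreier-in-fundom-ii} follows the same strategy as the paper's: first use separability of $y^p-y-\wp(x)$ and pure inseparability of $\fF^\perf|\fF$ to reduce to $x\in\fF$, then do a support/valuation analysis. The paper phrases the second step slightly more compactly (if $v(x)<0$ then $v(\wp(x))=p\cdot v(x)$ is a nonzero multiple of $p$; if $v(x)\geq0$, split off the residue and use that $\cD^0/p\cD^0$ has no positive-exponent terms), but this is the same argument as your three-way split $x=x_-+c_0+x_+$ plus inspection of the extremal coefficients.

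For part~\ref{lem:artin-schreier-in-fundom-iii}, the paper simply cites \cite[Proposition~5.2~(b)]{potthast}, so here you do take a genuinely different route: a short, self-contained computation. The two reductions are both correct --- the geometric series $x=-\sum_{n\geq0}y_+^{p^n}$ converges because $p^nv(y_+)\to\infty$ and telescopes to $\wp(x)=y_+$, and the identity $\wp(c_{-i}^{1/p}\pi^{-k})=c_{-i}\pi^{-pk}-c_{-i}^{1/p}\pi^{-k}$ (using perfection of $\kappa$) lets you replace an exponent $pk$ by $k$. One small point worth making explicit is termination of the second reduction: each step replaces the largest exponent $m$ with $p\mid m$ appearing in the support by $m/p$ (possibly combining with an existing coefficient), so the maximum such exponent strictly decreases and the process halts. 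With that observation spelled out, your argument is complete and has the advantage of being self-contained rather than delegated to an external reference.
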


\begin{proof}
  ~
  \begin{enumroman}
    \item
      Since $\wp(x) = x^p - x$ lies in $\fF$ and $\fF^\perf$ is a purely inseparable extension of $\fF$, we have $x\in \fF$.
      If the ($\pi$-adic) valuation of $x$ is negative, then the valuation of $\wp(x) = x^p - x$ is~$p$ times that of~$x$, hence is a nonzero multiple of~$p$, contradicting $\wp(x) \in \cD^0/p\cD^0$.
      Thus, $x \in \kappa\llbracket\pi\rrbracket$.
      Assume that $x \not\in \kappa$, and write $x = x_0 + x_1$, where $x_0 \in \kappa$ and $x_1$ has positive valuation.
      Then $\wp(x_1) = x_1^p - x_1$ has valuation equal to that of $x_1$, which is positive, contradicting the fact that $\wp(x_1) = \wp(x) - \wp(x_0)$ belongs to $\cD^0/p\cD^0$.
    \item
      This is \cite[Proposition~5.2~(b)]{potthast}.
      \qedhere
  \end{enumroman}
\end{proof}

\begin{lemma}
  \label{lem:local-fundamental-domain-prep}
  ~
  \begin{enumroman}
  \item\label{lem:local-fundamental-domain-prep-inj}
    Let $D \in \fg \otimes \cD^0$ and $g \in \fg \otimes W(\fF^\perf)$.
    Then, there is an equivalence:
    \[
      g.D \in \fg \otimes \cD^0
      \quad\iff\quad
      g \in \fg \otimes W(\kappa).
    \]
  \item\label{lem:local-fundamental-domain-prep-surj}
    The natural map $\fg\otimes\cD^0 \to \Orb{\fg}{W(\fF^\perf)}{W(\fF^\perf)}$ is surjective.
  \end{enumroman}
\end{lemma}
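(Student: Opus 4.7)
Both parts reduce, via \Cref{lem:central-lemma}, to the case where $\fg$ is abelian (so that $\circ = +$ and $g.m = \bwp(g) + m$), and the abelian case is then handled by induction on $|\fg|$ using a short exact sequence $0 \to \fn \to \fg \to \fg/\fn \to 0$ with $|\fn| = p$. The base case $\fg \simeq \Z/p\Z$ of (i) is the first item of \Cref{lem:artin-schreier-in-fundom}, and the base case of (ii) combines the second item of \Cref{lem:artin-schreier-in-fundom} with \Cref{lem:artsch-perfclos}.

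\textbf{Part (i).} The easy direction is a direct BCH computation: since $W(\kappa) \subseteq \cD^0$ (as the $a = 0$ summand) and $\cD^0$ is a $W(\kappa)$-module, each term in the class-$\leq 2$ expansion
\[
  g.D \;=\; \bwp(g) + D + \tfrac{1}{2}[\sigma(g), D] - \tfrac{1}{2}[\sigma(g), g] - \tfrac{1}{2}[D, g]
\]
lies in $\fg \otimes \cD^0$ when $g \in \fg \otimes W(\kappa)$ and $D \in \fg \otimes \cD^0$. For the converse, project the hypothesis to $\fg/[\fg,\fg]$: modulo $[\fg,\fg]$ the action is additive, so $\overline{g.D} = \bwp(\bar g) + \bar D$, and since $g.D, D \in \fg \otimes \cD^0$ we deduce $\bwp(\bar g) \in (\fg/[\fg,\fg]) \otimes \cD^0$; the abelian case then yields $\bar g \in (\fg/[\fg,\fg]) \otimes W(\kappa)$. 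Write $g = g_0 + z$ with $g_0 \in \fg \otimes W(\kappa)$ lifting $\bar g$ and $z \in [\fg,\fg] \otimes W(\fF^\perf) \subseteq Z(\fg) \otimes W(\fF^\perf)$. By \Cref{lem:central-lemma}, $g.D = g_0.D + \bwp(z)$, so using the easy direction to see that $g_0.D \in \fg \otimes \cD^0$, we find $\bwp(z) \in \fg \otimes \cD^0 \cap Z(\fg) \otimes W(\fF^\perf) = Z(\fg) \otimes \cD^0$. Applying the abelian case of (i) to $Z(\fg)$ forces $z \in Z(\fg) \otimes W(\kappa)$, hence $g \in \fg \otimes W(\kappa)$.

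\textbf{Part (ii).} Given $m \in \fg \otimes W(\fF^\perf)$, first apply the abelian case of (ii) to the image $\bar m \in (\fg/Z(\fg)) \otimes W(\fF^\perf)$ to produce $g_0$ with $\overline{g_0.m} \in (\fg/Z(\fg)) \otimes \cD^0$; this is equivalent to $g_0.m = D_0 + z_0$ for some $D_0 \in \fg \otimes \cD^0$ and $z_0 \in Z(\fg) \otimes W(\fF^\perf)$. Then apply the abelian case of (ii) to $Z(\fg)$ to obtain $z \in Z(\fg) \otimes W(\fF^\perf)$ with $\bwp(z) + z_0 \in Z(\fg) \otimes \cD^0$. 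By \Cref{lem:central-lemma}, $(g_0 + z).m = g_0.m + \bwp(z) = D_0 + (z_0 + \bwp(z)) \in \fg \otimes \cD^0$, yielding surjectivity.

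\textbf{Main obstacle: the abelian inductive step.} The remaining substantive work is the inductive step for abelian $\fg$, where projecting to $\fg/\fn$, lifting, and correcting by an element of $\fn \otimes W(\fF^\perf)$ establishes both (i) and (ii) simultaneously. Tensoring $0 \to \fn \to \fg \to \fg/\fn \to 0$ with $\cD^0$ and with $W(\fF^\perf)$ yields a commutative diagram of short exact sequences; this requires (a) that $\cD^0$ is $\Z_p$-flat (being free over $W(\kappa)$, itself $\Z_p$-flat), so the rows remain exact, and (b) the purity $\cD^0 \cap p^n W(\fF^\perf) = p^n \cD^0$ inside $W(\fF^\perf)$, which follows from the $W(\kappa)$-linear independence of the Teichmüller monomials $\tilde\pi^{-a}$ (for $a \in \Onotp$) modulo each $p^n$. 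Together these imply that the vertical map $\fg \otimes \cD^0 \hookrightarrow \fg \otimes W(\fF^\perf)$ is injective for every finite $\Z_p$-module $\fg$ and that $\fg \otimes \cD^0 \cap \fn \otimes W(\fF^\perf) = \fn \otimes \cD^0$, which is the identification that allows the induction to descend from $\fg/\fn$ back to $\fn$. This tensorial bookkeeping, though not deep, is where most of the technical care is needed.
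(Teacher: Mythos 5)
Your proof is correct, but it is organized differently from the paper's. The paper establishes \ref{lem:local-fundamental-domain-prep-inj}($\Rightarrow$) and \ref{lem:local-fundamental-domain-prep-surj} by a single induction on $|\fg|$, peeling off one $\fn\simeq\Z/p\Z$ factor from $Z(\fg)$ at a time and applying \Cref{lem:central-lemma} at each step; it treats general and abelian $\fg$ uniformly. You instead perform a two-stage reduction: first collapse the non-abelian difficulty in one blow, projecting to $\fg/[\fg,\fg]$ for part~(i) and to $\fg/Z(\fg)$ for part~(ii), and correcting by a single central element via \Cref{lem:central-lemma}; then handle the (now abelian) subproblems by induction on $|\fg|$. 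Both routes terminate at \Cref{lem:artin-schreier-in-fundom} (and, for~(ii), \Cref{lem:artsch-perfclos}) as the base case. Your organization makes the use of nilpotency class~$\leq 2$ more visible (it is exactly what makes $\fg/Z(\fg)$ and $[\fg,\fg]\subseteq Z(\fg)$ abelian), and it isolates the commutator correction from the module-theoretic induction. On the other hand, the paper's one-pass induction avoids having to state the abelian case as a separate lemma.

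You are also more explicit than the paper about the intersection fact $\fg\otimes\cD^0\cap\fn\otimes W(\fF^\perf)=\fn\otimes\cD^0$ needed to descend from $\fg/\fn$ back to $\fn$: the paper simply writes ``Since $\delta\in\fn\otimes W(\fF^\perf)$, it follows that $\wp(\delta)\in\fn\otimes\cD^0$'', whereas you correctly identify that this requires $\cD^0\hookrightarrow W(\fF^\perf)$ to be a pure injection, and that purity follows from $\cD^0\cap p^nW(\fF^\perf)=p^n\cD^0$, which itself comes from $W(\kappa)$-linear independence of the Teichm\"uller monomials $\tilde\pi^{-a}$. This is a genuine clarification of a step the paper glosses over. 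The only place where your write-up is lighter than the paper's is that you describe rather than carry out the abelian inductive step; but since you have correctly identified all the tensorial ingredients it uses, this is not a gap in the argument.
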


\begin{proof}
  We first prove \ref{lem:local-fundamental-domain-prep-inj}($\Leftarrow$).
  Write $D = \sum_{a \in \Onotp} D_a \tilde\pi^{-a}$ and assume that $g \in \fg \otimes W(\kappa)$.
  Recall that the Baker--Campbell--Hausdorff formula takes the form $x \circ y = x + y + \frac12 [x,y]$.
  We have:
  \begin{align*}
    g.D
    &=
    \sigma(g) \circ D \circ (-g)
    \\
    &=
    \sigma(g) + D + (-g) +
    \frac12 [\sigma(g),D] +
    \frac12 [D, -g] +
    \frac12 [\sigma(g), -g]
    \\
    &=
    \sigma(g)\circ(-g)
    +
    D
    -
    \frac12
    \big[
      D, \,
      \sigma(g)+g
    \big]
    \\
    &=
    \underbrace{
      \sigma(g) \circ (-g) + D_0 - \frac12[D_0,\sigma(g)+g]
    }_{\in \fg \otimes W(\kappa)} 
    +
    \sum_{a \in \notp}
    \underbrace{
      \left(
        D_a
        -
        \frac12
        \big[
          D_a, \,
          \sigma(g)+g
        \big]
      \right)
    }_{\in \fg \otimes W(\kappa)}
      \tilde\pi^{-a}
    \in \fg \otimes \cD^0.
  \end{align*}
  For $\fg=\Z/p\Z$, \ref{lem:local-fundamental-domain-prep-inj}($\Rightarrow$) and \ref{lem:local-fundamental-domain-prep-surj} follow from \Cref{lem:artin-schreier-in-fundom} since $\fg\otimes\cD^0 = \cD^0/p\cD^0$, $\fg\otimes W(\fF^\perf) = \fF^\perf$, $\fg\otimes W(\kappa) = \kappa$, and $g.D = \wp(g) + D$.
  To show \ref{lem:local-fundamental-domain-prep-inj}($\Rightarrow$) and \ref{lem:local-fundamental-domain-prep-surj} for arbitrary $\fg$, we proceed by induction on the size of $\fg$ as in the proof of \Cref{lem:lsep-is-good}.
  The case $\fg=0$ is clear, so we assume that~$\fg$ is nonzero.
  Pick a subalgebra $\fn \subseteq Z(\fg)$ isomorphic to~$\Z/p\Z$.
  We have already shown the claims for the abelian Lie algebra $\fn$, and the claims hold for the Lie algebra $\fg/\fn$ by the induction hypothesis.

  We now prove \ref{lem:local-fundamental-domain-prep-inj}($\Rightarrow$).
  Assume that $g.D \in \fg \otimes \cD^0$.
  Let $\bar g \in (\fg/\fn) \otimes W(\fF^\perf)$ be the projection of $g$.
  By hypothesis, the projections of both $D$ and~$g.D$ belong to $(\fg/\fn) \otimes \cD^0$, so $\bar g$ belongs to $(\fg/\fn) \otimes W(\kappa)$ by the induction hypothesis.
  Pick an arbitrary lift $\gamma \in \fg \otimes W(\kappa)$ of $\bar g$, and let $\delta \coloneqq g - \gamma$, which belongs to $\fn \otimes W(\fF^\perf) \subseteq Z(\fg) \otimes W(\fF^\perf)$.
  We have:
  \begin{align*}
    g.D
    & = (\gamma + \delta).(D+0)
    \\
    & =
    \gamma.D + \wp(\delta)
    &
    \textnormal{
      by \Cref{lem:central-lemma}.
    }
  \end{align*}
  By hypothesis, $g.D \in \fg \otimes \cD^0$.
  By the implication ($\Leftarrow$) proved above, $\gamma.D \in \fg \otimes \cD^0$.
  Therefore, $\wp(\delta) \in \fg \otimes \cD^0$.
  Since $\delta\in\fn\otimes W(\fF^\perf)$, it follows that $\wp(\delta)\in\fn\otimes\cD^0$.
  As $\fn\simeq\Z/p\Z$ satisfies \ref{lem:local-fundamental-domain-prep-inj}($\Rightarrow$), we conclude that $\delta\in\fn\otimes W(\kappa)$.
  Hence, $g = \gamma + \delta \in \fg \otimes W(\kappa)$.

  Finally, we prove \ref{lem:local-fundamental-domain-prep-surj}.
  Consider an element $m \in \fg \otimes W(\fF^\perf)$ and let $\bar m\in(\fg/\fn)\otimes W(\fF^\perf)$ be its projection.
  By the induction hypothesis, there is an element $\bar g \in (\fg/\fn) \otimes W(\fF^\perf)$ such that the element $\bar n \coloneqq \bar g.\bar m$ belongs to $(\fg/\fn) \otimes \cD^0$.
  Choose lifts $g \in \fg \otimes W(\fF^\perf)$ and $n \in \fg \otimes \cD^0$ of $\bar g$ and~$\bar n$, respectively.
  The element $g.m-n$ belongs to $\fn \otimes W(\fF^\perf)$.
  As $\fn\simeq\Z/p\Z$ satisfies~\ref{lem:local-fundamental-domain-prep-surj}, there is an element $h \in \fn \otimes W(\fF^\perf)$ such that $i \coloneqq h.(g.m-n) \in \fn \otimes \cD^0$.
  By \Cref{lem:central-lemma}, we have $(g+h).m = g.m + h.0$ and $i = h.(g.m-n) = g.m-n+h.0$, so $(g+h).m = n + i \in \fg\otimes\cD^0$.
  We have shown that the $(\fg \otimes W(\fF^\perf), \circ)$-orbit of $m$ intersects $\fg \otimes \cD^0$.
\end{proof}

\begin{remark}
\label{prop:kappa-acts}
  By \iref{lem:local-fundamental-domain-prep}{lem:local-fundamental-domain-prep-inj}($\Leftarrow$), the action of $(\fg\otimes W(\fF^\perf), \circ)$ on~$\fg\otimes W(\fF^\perf)$ restricts to an action of $(\fg\otimes W(\kappa), \circ)$ on~$\fg\otimes\cD^0$.
We denote by $\Orb{\fg}{\cD^0}{W(\kappa)}$ the set of orbits of $\fg \otimes \cD^0$ under the action of $(\fg \otimes W(\kappa), \circ)$.
\end{remark}

\begin{theorem}[Local approximate fundamental domain]
  \label{thm:local-fundamental-domain}
  There is a bijection
  \[
    \alpha^0 :
    \Orb{\fg}{W(\fF^\perf)}{W(\fF^\perf)}
    \overset\sim\longrightarrow
    \Orb{\fg}{\cD^0}{W(\kappa)}
  \]
  whose inverse is the map induced by the inclusion $\fg \otimes \cD^0 \to \fg \otimes W(\fF^\perf)$.
  Moreover, if $[D] = \alpha^0([m])$, then
  $
    \Stab_{(\fg\otimes W(\fF^\perf), \circ)}(m)
    \simeq
    \Stab_{(\fg\otimes W(\kappa), \circ)}(D)
  $.
\end{theorem}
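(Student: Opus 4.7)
The strategy is to show that the inclusion $\iota : \fg \otimes \cD^0 \hookrightarrow \fg \otimes W(\fF^\perf)$ descends to a bijection at the level of orbits, using the two parts of \Cref{lem:local-fundamental-domain-prep} as the main inputs, and then to leverage part~\ref{lem:local-fundamental-domain-prep-inj} once more to pin down the stabilizers.

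First I would check that $\iota$ descends to a well-defined map $\bar\iota : \Orb{\fg}{\cD^0}{W(\kappa)} \to \Orb{\fg}{W(\fF^\perf)}{W(\fF^\perf)}$: this is immediate since $(\fg\otimes W(\kappa),\circ)$ is a subgroup of $(\fg\otimes W(\fF^\perf),\circ)$, so elements in the same $(\fg\otimes W(\kappa),\circ)$-orbit are a fortiori in the same $(\fg\otimes W(\fF^\perf),\circ)$-orbit. Surjectivity of $\bar\iota$ is exactly \iref{lem:local-fundamental-domain-prep}{lem:local-fundamental-domain-prep-surj}. For injectivity, suppose $D, D' \in \fg\otimes\cD^0$ and $g \in \fg\otimes W(\fF^\perf)$ satisfies $g.D = D'$. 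Since $g.D = D' \in \fg\otimes\cD^0$, \iref{lem:local-fundamental-domain-prep}{lem:local-fundamental-domain-prep-inj} forces $g \in \fg\otimes W(\kappa)$, so $D$ and $D'$ lie in the same $(\fg\otimes W(\kappa),\circ)$-orbit. Thus $\bar\iota$ is a bijection, and we define $\alpha^0 \coloneqq \bar\iota^{-1}$.

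For the stabilizer statement, write $H \coloneqq (\fg\otimes W(\fF^\perf),\circ)$ and $K \coloneqq (\fg\otimes W(\kappa),\circ)$, and suppose $[D] = \alpha^0([m])$, i.e., there exists $g_0 \in H$ with $g_0.m = D$. Since stabilizers of elements in the same $H$-orbit are conjugate in $H$, we have $\Stab_H(m) \simeq \Stab_H(D)$. Now apply \iref{lem:local-fundamental-domain-prep}{lem:local-fundamental-domain-prep-inj} with the fundamental element~$D$: any $h \in H$ with $h.D = D \in \fg\otimes\cD^0$ must already lie in $K$. Therefore $\Stab_H(D) \subseteq K$, which yields $\Stab_H(D) = \Stab_K(D)$, and combining with the conjugation isomorphism gives $\Stab_H(m) \simeq \Stab_K(D)$ as claimed.

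There is essentially no obstacle here: all the work has already been done in \Cref{lem:local-fundamental-domain-prep}, whose inductive proof on $|\fg|$ (reducing to the central $\Z/p\Z$ case via \Cref{lem:central-lemma} and \Cref{lem:artin-schreier-in-fundom}) contains the real content. The present theorem is a clean repackaging of that lemma, together with the standard orbit-stabilizer observation that allows one to transport the stabilizer from $m$ to its fundamental-domain representative~$D$.
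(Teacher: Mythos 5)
Your proof is correct and follows essentially the same approach as the paper's: both deduce the bijection by using \iref{lem:local-fundamental-domain-prep}{lem:local-fundamental-domain-prep-surj} for surjectivity and \iref{lem:local-fundamental-domain-prep}{lem:local-fundamental-domain-prep-inj} for injectivity and for the stabilizer identification. You simply spell out the conjugation step for stabilizers and the well-definedness of the descended map, which the paper leaves implicit.
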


\begin{proof}
  \iref{lem:local-fundamental-domain-prep}{lem:local-fundamental-domain-prep-inj} implies that the natural map $\Orb{\fg}{\cD^0}{W(\kappa)} \rightarrow \Orb{\fg}{W(\fF^\perf)}{W(\fF^\perf)}$ is injective and that
  $
    \Stab_{(\fg\otimes W(\kappa),\circ)}(D) =
    \Stab_{(\fg\otimes W(\fF^\perf),\circ)}(D) \simeq
    \Stab_{(\fg\otimes W(\fF^\perf),\circ)}(m)
  $
  if $[D] = \alpha^0([m])$.
  \iref{lem:local-fundamental-domain-prep}{lem:local-fundamental-domain-prep-surj} implies the surjectivity of this map.
\end{proof}

We again extend the definition of the last jump like in \Cref{def:lastjump-for-orbits}:

\begin{definition}
  \label{def:lastjump-in-fundom}
  If $[D] \in \Orb{\fg}{\cD^0}{W(\kappa)}$ is the image of $[m]\in\Orb{\fg}{W(\fF^\perf)}{W(\fF^\perf)}$ under $\alpha^0$, we define $\lastjump([D]) \coloneqq \lastjump([m])$.
  Note that $\lastjump([D]) = \lastjump(D)$.
\end{definition}

\subsection{Ramification equations}
\label{subsn:ram-eq}

In this subsection, we introduce for each $v>0$ a property~$J(v)$ on elements $D \in \fg \otimes \cD^0$.
This property will later be shown to coincide with the condition that $\lastjump(D) < v$ (\Cref{thm:lastjump-Jv}).

For any $b\in\notp$ and $v \in \R_{>0}$, we define
\begin{equation}
\label{eq:def-mu}
  \mu_v(b)
  \coloneqq \max(0,\lceil\log_p(v/b)\rceil)
  = \min\{k\geq 0 \mid b p^k \geq v\}
  = |\{k \geq 0 \mid b p^k < v\}|
\end{equation}
so that $\mu_v(b) = 0 \Leftrightarrow b \geq v$, that $b p^{\mu_v(b)} \geq v$ for all $v$, and that $b p^{\mu_v(b)} < pv$ when $b < pv$.
Note the following property of $\mu_v$:

\begin{lemma}
  \label{lem:sum-mu}
  For any $v > 0$, we have:
  \[
    \sum_{a \in \notp}
      \mu_v(a)
    =
    \lceil v \rceil -1.
  \]
\end{lemma}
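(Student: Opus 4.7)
The plan is to interpret the sum as counting pairs $(a,k)$ with $a \in \notp$ and $k \geq 0$ satisfying $ap^k < v$, and then reindex this count by $n = ap^k$.

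By the definition of $\mu_v(a)$ given in~\eqref{eq:def-mu}, we have
\[
  \sum_{a \in \notp} \mu_v(a)
  =
  \sum_{a \in \notp} \left|\{k \geq 0 \mid ap^k < v\}\right|
  =
  \left|\{(a,k) \in \notp \times \Z_{\geq 0} \mid ap^k < v\}\right|.
\]
The key observation is that every positive integer $n$ admits a unique factorization $n = ap^k$ with $a \in \notp$ and $k \geq 0$ (namely, $p^k$ is the largest power of $p$ dividing $n$). Hence the map $(a,k) \mapsto ap^k$ is a bijection between $\notp \times \Z_{\geq 0}$ and $\N_{>0}$, and the count above equals $|\{n \in \N_{>0} \mid n < v\}|$.

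The latter cardinality is $\lceil v \rceil - 1$: if $v$ is a positive integer, the positive integers strictly less than $v$ are $1, \dots, v-1$, and $\lceil v \rceil - 1 = v - 1$; if $v$ is not an integer, they are $1, \dots, \lfloor v \rfloor$, and $\lceil v \rceil - 1 = \lfloor v \rfloor$. There is essentially no obstacle here: the only thing to check is that the reindexing is well-defined and bijective, which is immediate from unique factorization of integers with respect to the prime $p$.
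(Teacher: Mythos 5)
Your proof is correct and takes essentially the same approach as the paper: both use the identity $\mu_v(a) = |\{k \geq 0 \mid ap^k < v\}|$ to reinterpret the sum as counting pairs $(a,k)$, then invoke the unique factorization $n = ap^k$ to reduce to counting positive integers $n < v$, which is $\lceil v \rceil - 1$.
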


\begin{proof}
  We have $\mu_v(a) = |\{k \geq 0 \mid a p^k < v\}|$.
  As every integer $0 < \gamma < v$ can be uniquely written as $a p^k$ for some $a \in \notp$ and some $k \in \{k \geq 0 \mid a p^k < v\}$, we have:
  \[
    \sum_{a \in \notp}
      \mu_v(a)
    =
    \cardsuchthat{\gamma\in\N}{0<\gamma<v}
    =
    \lceil v \rceil - 1.
    \qedhere
  \]
\end{proof}

If $n_1 \geq n_2$ are integers, we define
\begin{equation}
  \label{eqn:def-eta}
  \eta(n_1, n_2)
  \coloneqq
  \begin{cases}
    1 & \textnormal{if } n_1 > n_2 \\
    \frac12 & \textnormal{if } n_1 = n_2.
  \end{cases}
\end{equation}

\begin{definition}
  \label{def:property-Jv}
  Let $v\in\R_{>0}$,
  and let $D \in \fg \otimes \cD^0$.
  Write
  $
    D =
    \sum_{a\in\Onotp}
      D_a
      \widetilde\pi^{-a}
  $
  with $D_a \in \fg\otimes W(\kappa)$.
  We say that $D$ \emph{satisfies property $J(v)$} if the following equalities hold for all $b \in \notp$:
  \begin{equation}
    \label{eq:cF-2-integer-nu}
    p^{\mu_v(b)}\sigma^{\mu_v(b)}(D_b)
    =
    -
    b^{-1}
    \sum_{\substack{
      a_1,a_2\in\notp,\\
      n_1\geq n_2\geq0:\\
      bp^{\mu_v(b)} = a_1 p^{n_1} + a_2 p^{n_2},\\
      a_1 p^{n_1} < v,\ a_2 p^{n_1} < v
    }}
      \eta(n_1,n_2)
      a_1
      p^{n_1}
      \left[
        \sigma^{n_1}(D_{a_1}),
        \sigma^{n_2}(D_{a_2})
      \right],
  \end{equation}
  \begin{equation}
    \label{eq:cF-2-noninteger-nu}
    0 = \sum_{\substack{
      a_1,a_2\in\notp,\\
      n \geq 0:\\
      b = a_1 p^{n+i} + a_2,\\
      a_1 p^n < v,\ a_2 p^n < v
    }}
      a_1
      p^n
      \left[
        \sigma^{n+i}(D_{a_1}),
        D_{a_2}
      \right]
    \quad\quad\quad
    \textnormal{for any $i > 0$ such that $bp^{-i} \geq v$.}
  \end{equation}
\end{definition}

(The condition $a_2p^{n_1}<v$ in \Cref{eq:cF-2-integer-nu} is stronger than $a_2p^{n_2}<v$. This is not a typo!)

\begin{remark}
  The sum in \Cref{eq:cF-2-integer-nu} can be simplified by remarking that either $n_2 = n_1$ (in which case $\eta(n_1, n_2)=\frac12$ and $n_2 \leq \mu_v(b)$), or $n_1 > n_2$, in which case $n_2$ necessarily equals $\mu_v(b)$ (comparing valuations in $b p^{\mu_v(b)} = a_1p^{n_1} + a_2p^{n_2}$), $\eta(n_1, n_2)=1$, and $a_2 \equiv b \bmod {p^{n_1-\mu_v(b)}}$.
\end{remark}

\begin{example}
  Assume that $\fg$ is abelian.
  Then, the right-hand sides of \Cref{eq:cF-2-integer-nu,eq:cF-2-noninteger-nu} vanish, so $J(v)$ means that $p^{\mu_v(b)} D_b = 0$ for all $b\in\notp$.
  We retrieve a fact from class field theory: the $p$-part of the inertia group of the maximal abelian extension of $\fF$ with $\lastjump<v$ is isomorphic to $\prod_{b\in\N\setminus p\N}\Z/p^{\mu_v(b)}\Z$.
  (See for example \cite[Lemma~4.1]{gundab}.)
\end{example}

\begin{remark}
  When $p \fg=0$ (i.e., $\fg$ is a Lie $\F_p$-algebra), the equations of \Cref{def:property-Jv} take a simpler form, given in \Cref{cor:jv-in-exp-p}.
  These equations are easier to analyze, and considering this special case (detailed in \Cref{subsn:nilpas-expp,subsn:lastjump-in-expp}) is recommended for a first reading.
\end{remark}

\begin{proposition}
  \label{prop:consequences-Jv}
  Let $v>0$, let $D \in \fg \otimes \cD^0$, and assume that $D$ satisfies $J(v)$.
  Then:
  \begin{enumroman}
    \item
      \label{prop:consequences-Jv-ii}
      For all $b \in \notp$, $p^{\mu_v(b)} D_b$ belongs to $[\fg,\fg] \otimes W(\kappa)$, and in particular to $Z(\fg) \otimes W(\kappa)$.
    \item
      \label{prop:consequences-Jv-iii}
      For all $b \in \notp$, $p^{\mu_v(b)} D_b$ is a $p$-torsion element.
    \item
      \label{prop:consequences-Jv-iv}
      For all $b \in \notp$ such that $b \geq 2v$, we have $D_b=0$.
  \end{enumroman}
\end{proposition}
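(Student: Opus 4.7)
Plan: The strategy is to read off (i) and (iii) directly from Equation~\eqref{eq:cF-2-integer-nu}, and to derive (ii) by multiplying that equation by $p$ and showing the right-hand side vanishes term by term. Throughout, note that $\sigma$ acts only on the $W(\kappa)$ factor, so it preserves every subspace of the form $\fh \otimes W(\kappa)$ with $\fh \subseteq \fg$ a $\Z_p$-submodule; hence any conclusion proved for $p^{\mu_v(b)} \sigma^{\mu_v(b)}(D_b)$ transports to $p^{\mu_v(b)} D_b$ after applying $\sigma^{-\mu_v(b)}$.

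For (iii), suppose $b \geq 2v$, so $\mu_v(b) = 0$. The constraints $a_1 p^{n_1} < v$, $a_2 p^{n_1} < v$, and $n_2 \leq n_1$ force $b = a_1 p^{n_1} + a_2 p^{n_2} \leq a_1 p^{n_1} + a_2 p^{n_1} < 2v$, contradicting $b \geq 2v$. Hence the sum on the right of \eqref{eq:cF-2-integer-nu} is empty and $D_b = 0$. For (i), reduce \eqref{eq:cF-2-integer-nu} modulo $[\fg,\fg] \otimes W(\kappa)$: the right-hand side, being a $W(\kappa)$-linear combination of Lie brackets, vanishes, so $p^{\mu_v(b)} D_b \in [\fg,\fg] \otimes W(\kappa)$, which is contained in $Z(\fg) \otimes W(\kappa)$ by the class-$\leq 2$ hypothesis.

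The main obstacle is (ii). Multiplying \eqref{eq:cF-2-integer-nu} by $p$ and using the identity $p[x,y] = [px,y] = [x,py]$, a generic summand of the right-hand side can be written in either of the forms
\[
  \eta(n_1,n_2)\, a_1\, [\sigma^{n_1}(p^{n_1+1} D_{a_1}),\, \sigma^{n_2}(D_{a_2})]
  \quad\text{or}\quad
  \eta(n_1,n_2)\, a_1\, [\sigma^{n_1}(D_{a_1}),\, \sigma^{n_2}(p^{n_2+1} D_{a_2})].
\]
The constraints $a_i p^{n_1} < v$ give $\mu_v(a_i) \geq n_1 + 1$ for $i \in \{1,2\}$. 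I would then split cases based on the size of $a_1 p^{n_1}$. If $a_1 p^{n_1} \geq v/p$, then $a_1 p^{n_1+1} \geq v$, forcing $\mu_v(a_1) = n_1+1$; so $p^{n_1+1} D_{a_1} = p^{\mu_v(a_1)} D_{a_1}$ lies in $[\fg,\fg] \otimes W(\kappa)$ by (i), is central, and the first bracket vanishes. Otherwise $a_1 p^{n_1} < v/p$, and since $bp^{\mu_v(b)} \geq v$ we get $a_2 p^{n_2} = bp^{\mu_v(b)} - a_1 p^{n_1} > v(p-1)/p$, hence $a_2 p^{n_2+1} > v$ and $\mu_v(a_2) \leq n_2+1$; combined with $\mu_v(a_2) \geq n_1+1 \geq n_2+1$ this forces $\mu_v(a_2) = n_2+1$, and (i) applied to $a_2$ makes the second form of the bracket vanish. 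Thus every summand is zero, so $p^{\mu_v(b)+1}\sigma^{\mu_v(b)}(D_b)=0$ and hence $p^{\mu_v(b)+1} D_b = 0$, proving (ii). The main subtlety is spotting this case split and the complementary use of the two forms $p[x,y] = [px,y] = [x,py]$ to push the extra $p$ onto whichever factor can be rendered central via (i).
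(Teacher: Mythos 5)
Your proof is correct and takes essentially the same route as the paper: (i) and (iii) are proved identically, and for (ii) you use the same device of multiplying \eqref{eq:cF-2-integer-nu} by $p$ and pushing the extra power onto one bracket factor so that (i) makes it central. The only cosmetic difference is that the paper phrases (ii) as a single contradiction (a surviving term would need both $\mu_v(a_1)>n_1+1$ and $\mu_v(a_2)>n_1+1$, forcing $a_1p^{n_1}+a_2p^{n_2}<2v/p<v$, which is impossible), whereas you split this disjunction into two cases; the content is the same.
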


\begin{proof}
  ~
  \begin{enumroman}
    \item
      Applying $\sigma^{-\mu_v(b)}$ to \Cref{eq:cF-2-integer-nu} ($\sigma$ is an automorphism of $W(\kappa)$),  we can express $p^{\mu_v(b)} D_b$ as a sum of elements of $[\fg,\fg] \otimes W(\kappa)$.
    \item
      By \Cref{eq:cF-2-integer-nu}, checking that $p^{\mu_v(b)} D_b$ is a $p$-torsion element amounts to the vanishing of the following sum:
      \[
        \sum_{\substack{
          a_1,a_2\in\notp,\\
          n_1\geq n_2\geq0:\\
          b p^{\mu_v(b)} = a_1 p^{n_1} + a_2 p^{n_2},\\
          a_1 p^{n_1} < v,\ a_2 p^{n_1} < v
        }}
          \eta(n_1,n_2) a_1 p^{n_1 + 1} [\sigma^{n_1}(D_{a_1}), \sigma^{n_2}(D_{a_2})].
      \]
      By \ref{prop:consequences-Jv-ii}, the elements $p^{\mu_v(a_1)} D_{a_1}$ and $p^{\mu_v(a_2)} D_{a_2}$ are central, and in particular the commutator $p^{n_1 + 1} [\sigma^{n_1}(D_{a_1}), \sigma^{n_2}(D_{a_2})]$ vanishes as soon as $n_1 + 1 \geq \mu_v(a_1)$ or $n_1 + 1 \geq \mu_v(a_2)$.
      Therefore, non-zero terms can only occur when $\mu_v(a_1)>n_1+1$ and $\mu_v(a_2)>n_1+1$, i.e., when $a_1 p^{n_1} < \frac vp$ and $a_2 p^{n_1} < \frac vp$.
      But then, the two inequalities $a_1 p^{n_1} + a_2 p^{n_2} < 2 \frac vp < v$ and $bp^{\mu_v(b)} \geq v$ (by definition of $\mu_v(b)$) contradict the equality $a_1 p^{n_1} + a_2 p^{n_2} = bp^{\mu_v(b)}$, meaning that there are no non-zero terms in the sum.
    \item
      The inequality $b \geq 2v$ implies $\mu_v(b) = 0$.
      By \Cref{eq:cF-2-integer-nu}, we have:
      \[
        D_b
        =
        - b^{-1}
        \sum_{\substack{
          a_1,a_2\in\notp,\\
          n_1\geq n_2\geq0:\\
          b = a_1 p^{n_1} + a_2 p^{n_2},\\
          a_1 p^{n_1} < v,\ a_2 p^{n_1} < v
        }}
          \eta(n_1,n_2) a_1 p^{n_1} [\sigma^{n_1}(D_{a_1}), \sigma^{n_2}(D_{a_2})].
      \]
      An equality $b = a_1 p^{n_1} + a_2 p^{n_2}$ with $a_1 p^{n_1}, a_2p^{n_1} < v$ would imply $b < 2v$, which is not true.
      Hence, the sum is empty and $D_b = 0$.
      \qedhere
  \end{enumroman}
\end{proof}

\subsection{The second fundamental domain.}
\label{subsn:other-fundom}

We define a second ``fundamental domain'' $\fg \otimes \cD$, which is ``lossy'' (it essentially forgets about the unramified part and thus several $G$-extensions are mapped to the same element).
However, it turns out that it retains enough information to determine the last jump of extensions (cf.~\Cref{cor:irrelevance-D0}).
This second fundamental domain will be useful to establish our local--global principle and for counting.

\begin{definition}
  \label{def:cD}
  We define the following free $W(\kappa)$-submodule of $W(\fF) \subseteq W(\fF^\perf)$:
  \[
    \cD^0
    \coloneqq
    \bigoplus_{a\in\notp}
      W(\kappa)
      \tilde\pi^{-a}
  \]
  and we denote by
  $
  \pr : \cD^0 \twoheadrightarrow \cD
  $
  the natural projection, discarding the summand for $a = 0$.
\end{definition}

The surjection $\pr : \cD^0 \twoheadrightarrow \cD$ induces a surjection $\fg \otimes \cD^0 \twoheadrightarrow \fg \otimes \cD$, which we also denote by~$\pr$.
Concretely, it maps an element $D_0+\sum_{a\in\notp}D_a\widetilde\pi^{-a}$ of $\fg \otimes \cD^0$ to the element $\sum_{a\in\notp}D_a\widetilde\pi^{-a}$.
In particular, each fiber of $\pr$ has (finite) size $|\fg \otimes W(\kappa)|$.

Note that the variable $D_0$ does not appear in \Cref{eq:cF-2-integer-nu,eq:cF-2-noninteger-nu}.
Therefore, whether an element $D \in \fg \otimes \cD^0$ satisfies~$J(v)$ only depends on its projection $\pr(D) \in \fg \otimes \cD$, and it makes sense to extend the definition of property $J(v)$ to $\fg\otimes\cD$ as follows:

\begin{definition}
  \label{rk:property-Jv-for-cD}
  Let $v>0$.
  We say that an element $D = \sum_{a\in\N\setminus p\N}D_a\tilde\pi^{-a}$ of $\fg\otimes\cD$ \emph{satisfies property $J(v)$} if \Cref{eq:cF-2-integer-nu,eq:cF-2-noninteger-nu} hold for all $b\in\N\setminus p\N$.
\end{definition}

\begin{proposition}
  \label{prop:action-on-cD}
  If $g \in \fg \otimes W(\kappa)$ and $D \in \fg \otimes \cD^0$, then the element $\pr(g.D) \in \cD$ depends only on $g$ and $\pr(D)$.
  In other words, there is a (unique) action of $(\fg \otimes W(\kappa), \circ)$ on $\fg \otimes \cD$ such that $\pr(g.D) = g.\pr(D)$ for all $D \in \fg \otimes \cD^0$.
  Moreover:
  \begin{enumroman}
    \item
      This action is $W(\kappa)$-linear, i.e., it is an action on the $W(\kappa)$-module $\fg \otimes \cD$;
    \item
      \label{prop:action-on-cD-ii}
      $\fg \otimes W(\kappa)$ acts trivially on $Z(\fg) \otimes \cD$, so that orbits of central elements are of size $1$;
    \item
      \label{prop:action-on-cD-iii}
      Let $v > 0$.
      For any $g \in \fg \otimes W(\kappa)$ and any $D \in \fg \otimes \cD$ satisfying $J(v)$, the element $g.D$ satisfies~$J(v)$.
  \end{enumroman}
\end{proposition}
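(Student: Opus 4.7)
The plan is to derive an explicit closed-form formula for $g.D$ with $g \in \fg \otimes W(\kappa)$ and $D \in \fg \otimes \cD^0$, and then read off the $\notp$-components to establish the claimed action on $\fg \otimes \cD$. Using that $[\fg,\fg] \subseteq Z(\fg)$ kills all triple brackets, the truncated Baker--Campbell--Hausdorff formula collapses $g.D = \sigma(g) \circ D \circ (-g)$ to
\[
  g.D \;=\; \bwp(g) \;+\; D \;-\; \tfrac{1}{2}\bigl[D,\,\sigma(g)+g\bigr],
\]
by the same computation underlying the proof of \iref{lem:local-fundamental-domain-prep}{lem:local-fundamental-domain-prep-inj}. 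Decomposing $D = D_0 + \sum_{a\in\notp} D_a\tilde\pi^{-a}$ and using $W(\fF^\perf)$-bilinearity of the bracket, the term $\bwp(g) \in \fg \otimes W(\kappa)$ is supported in the $a = 0$ summand, while the $\tilde\pi^{-a}$-coefficient of $g.D$ for $a \in \notp$ equals $D_a - \tfrac{1}{2}[D_a,\sigma(g)+g]$, depending only on $g$ and $D_a$. This yields the unique action on $\fg \otimes \cD$ compatible with $\pr$.

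From this formula the sub-claims~(i) and~(ii) are immediate. The map $D_a \mapsto D_a - \tfrac{1}{2}[D_a, \sigma(g)+g]$ is $W(\kappa)$-linear in $D_a$ by bilinearity of the bracket, so $g$ acts as a $W(\kappa)$-linear automorphism of $\fg \otimes \cD$; and when $D_a \in Z(\fg)\otimes W(\kappa)$ for every $a \in \notp$, the bracket $[D_a, \sigma(g)+g]$ vanishes, forcing $g.D = D$ and reducing orbits of central elements to singletons.

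The only sub-claim that requires genuine effort is~(iii), and it is where I expect to invest the most care. Set $E \coloneqq g.D$ and $c_a \coloneqq E_a - D_a = -\tfrac{1}{2}[D_a, \sigma(g)+g]$; the crucial observation is that $c_a \in [\fg,\fg]\otimes W(\kappa) \subseteq Z(\fg)\otimes W(\kappa)$, thanks to nilpotency class $\leq 2$. Because each $c_a$ is central, the brackets $[\sigma^{n_1}(E_{a_1}), \sigma^{n_2}(E_{a_2})]$ coincide with $[\sigma^{n_1}(D_{a_1}), \sigma^{n_2}(D_{a_2})]$, so the right-hand sides of \Cref{eq:cF-2-integer-nu,eq:cF-2-noninteger-nu} are identical for $D$ and for $E$. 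Equation~\eqref{eq:cF-2-noninteger-nu} thus holds for $E$ automatically, and \eqref{eq:cF-2-integer-nu} reduces to the single verification that $p^{\mu_v(b)}\sigma^{\mu_v(b)}(c_b) = 0$, which by additivity of $\sigma$ amounts to $[p^{\mu_v(b)} D_b, \sigma(g)+g] = 0$. This is precisely what \iref{prop:consequences-Jv}{prop:consequences-Jv-ii} supplies: $p^{\mu_v(b)} D_b$ is central because $D$ satisfies $J(v)$. The main potential obstacle is that the equations of \Cref{def:property-Jv} look asymmetric between their left- and right-hand sides, but the fact that the right sides are brackets (hence annihilate central shifts of the arguments) and that the left sides are annihilated by multiplication by $p^{\mu_v(b)}$ on central elements makes both sides shift compatibly, so (iii) follows cleanly.
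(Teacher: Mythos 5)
Your proof is correct and takes essentially the same route as the paper: the explicit BCH collapse $g.D = \bwp(g) + D - \tfrac12[D,\sigma(g)+g]$, reading off the $\notp$-coordinates to define the action on $\fg \otimes \cD$, and for~(iii) observing that $g.D$ and $D$ differ by a central element so that the right-hand sides of the defining equations are unchanged, leaving only the left-hand side of \eqref{eq:cF-2-integer-nu} to check via centrality of $p^{\mu_v(b)}D_b$. One small note: you correctly invoke \iref{prop:consequences-Jv}{prop:consequences-Jv-ii} for centrality, whereas the paper's proof points to item~(iii) of that proposition (the $p$-torsion statement) --- that looks like a cross-reference slip in the paper, and your citation is the intended one.
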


\begin{proof}
  The computation in the proof of \iref{lem:local-fundamental-domain-prep}{lem:local-fundamental-domain-prep-inj}~($\Leftarrow$) shows that, for every $D \in \fg \otimes \cD^0$:
  \[
    \pr(g.D)
    =
    \sum_{a \in \notp}
      \left(
        D_a
        -
        \frac12
        \big[
          D_a, \,
          \sigma(g)+g
        \big]
      \right)
      \tilde\pi^{-a}.
  \]
  Since $D_0$ does not appear in this formula, $\pr(g.D)$ only depends on $g$ and $D' \coloneqq \pr(D)$.
  This dependency is given by the following action of $(\fg \otimes W(\kappa), \circ)$ on elements $D' \in \fg \otimes \cD$:
  \begin{equation}
    \label{eqn:action-on-cD}
    g.D'
    \coloneqq D' -
    \frac12
    \big[
      D', \,
      \sigma(g)+g
    \big].
  \end{equation}
  That action is visibly $W(\kappa)$-linear in $D'$, and the action on a central element $D' \in Z(\fg) \otimes \cD$ is indeed trivial.
  With this definition, we have $\pr(g.D)=g.\pr(D)$.

  It remains to check that $g.D$ satisfies $J(v)$ if $g \in \fg\otimes W(\kappa)$ and $D \in \fg \otimes \cD$ satisfies $J(v)$.
  As illustrated by \Cref{eqn:action-on-cD}, $g.D$ and $D$ only differ by a commutator (hence an element of~$Z(\fg) \otimes \cD$).
  Thus, the invariance of \Cref{eq:cF-2-noninteger-nu} is immediate.
  In \Cref{eq:cF-2-integer-nu}, the right-hand side is also left unchanged, so it suffices to prove that
  \[
    p^{\mu_v(b)}D_b
    =
    p^{\mu_v(b)}
    \!\left(
      D_b - \frac12 [D_b, \sigma(g)+g]
    \right).
  \]
  By \iref{prop:consequences-Jv}{prop:consequences-Jv-iii}, the element $p^{\mu_v(b)} D_b$ is central.
  Therefore, $p^{\mu_v(b)}[D_b, \sigma(g)+g] = 0$, which proves the claim.
\end{proof}

\begin{definition}
  \label{def:orb-cD}
  We denote by $\Orb{\fg}{\cD}{W(\kappa)}$ the set of orbits of $\fg \otimes \cD$ under the $(\fg \otimes W(\kappa), \circ)$-action of \Cref{prop:action-on-cD}.
  For any $v > 0$, we say that an orbit $[D] \in \Orb{\fg}{\cD}{W(\kappa)}$ \emph{satisfies property $J(v)$} if $D$ satisfies property $J(v)$ (by \iref{prop:action-on-cD}{prop:action-on-cD-iii}, this is independent of the choice of $D$).
\end{definition}

\begin{definition}
  \label{def:alpha}
  We define the map $\alpha : \Orb{\fg}{W(\fF^\perf)}{W(\fF^\perf)} \to \Orb{\fg}{\cD}{W(\kappa)}$ as the following composition:
  \[\begin{tikzcd}
    \Orb{\fg}{W(\fF^\perf)}{W(\fF^\perf)} \rar{\alpha^0}[swap]{\sim} \ar[swap,->>]{rd}{\alpha} & \Orb{\fg}{\cD^0}{W(\kappa)} \dar[->>]{\pr} \\
    & \Orb{\fg}{\cD}{W(\kappa)}
  \end{tikzcd}\]
\end{definition}

\begin{lemma}[Twisting]
  \label{lem:twisting-alpha}
  Let $\fn$ be a Lie subalgebra of the center of $\fg$.
  For any $m\in \fg\otimes W(\fF^\perf)$ and $n\in \fn\otimes W(\fF^\perf)$, if
  \[
    \alpha([m]) = [D] \in \Orb{\fg}{\cD}{W(\kappa)}
    \andd
    \alpha([n]) = [E] \in \Orb{\fn}{\cD}{W(\kappa)},
  \]
  then
  \[
    \alpha([m+n]) = [D+E] \in \Orb{\fg}{\cD}{W(\kappa)}.
  \]
\end{lemma}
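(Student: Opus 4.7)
The proof adapts that of \Cref{lem:twisting-orb} with an additional step to descend from $\cD^0$ to $\cD$ via the projection $\pr$. First, I would apply \Cref{thm:local-fundamental-domain} (once to $\fg$ and once to $\fn$) to pick representatives $D^0 \in \fg\otimes\cD^0$ of $\alpha^0([m])$ and $E^0 \in \fn\otimes\cD^0$ of $\alpha^0([n])$. Since the inverse of $\alpha^0$ is induced by inclusion, there exist $g \in \fg\otimes W(\fF^\perf)$ and $h \in \fn\otimes W(\fF^\perf)$ with $g.m = D^0$ and $h.n = E^0$.

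Next, since $\fn \subseteq Z(\fg)$, both $h$ and $n$ lie in $Z(\fg)\otimes W(\fF^\perf)$, so \Cref{lem:central-lemma} yields
\[
  (g+h).(m+n) = g.m + h.n = D^0 + E^0 \in \fg\otimes\cD^0.
\]
This shows that $\alpha^0([m+n]) = [D^0 + E^0]$ in $\Orb{\fg}{\cD^0}{W(\kappa)}$, and applying $\pr$ gives
\[
  \alpha([m+n]) = [\pr(D^0) + \pr(E^0)]
  \quad\textnormal{in } \Orb{\fg}{\cD}{W(\kappa)}.
\]

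It remains to identify this orbit with $[D+E]$. By hypothesis, $\pr(D^0)$ lies in the orbit $[D]$, so $\pr(D^0) = g_1.D$ for some $g_1 \in \fg \otimes W(\kappa)$. Moreover, $E \in \fn\otimes\cD \subseteq Z(\fg)\otimes\cD$ has singleton orbit by \iref{prop:action-on-cD}{prop:action-on-cD-ii}, so $\pr(E^0) = E$. Using the $W(\kappa)$-linearity of the action (\Cref{prop:action-on-cD}) and $g_1.E = E$, we get
\[
  \pr(D^0) + \pr(E^0) = g_1.D + g_1.E = g_1.(D+E),
\]
which lies in $[D+E]$ as required.

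The argument is essentially bookkeeping; the only conceptual ingredient is \Cref{lem:central-lemma} (already used in \Cref{lem:twisting-orb}), while the extra work of descending through $\pr$ is made routine by the triviality of the action on central elements. I do not anticipate a serious obstacle beyond keeping track of representatives in the chain $W(\fF^\perf) \supseteq \cD^0 \twoheadrightarrow \cD$.
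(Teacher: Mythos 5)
Your proof is correct and takes essentially the same approach as the paper's, with the identity $(g+h).(m+n)=g.m+h.n$ from \Cref{lem:central-lemma} doing all the work. The only difference is cosmetic: the paper chooses $g,h$ so that $\pr(g.m)=D$ and $\pr(h.n)=E$ exactly (always possible, since an orbit representative in $\fg\otimes W(\kappa)$ moving $\pr(g.m)$ to $D$ can be absorbed into $g$), which eliminates your final step relating $\pr(D^0)+\pr(E^0)$ to $D+E$ via $g_1$ and \Cref{prop:action-on-cD}.
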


\begin{proof}
  By definition of $\alpha$, there are elements $g\in \fg\otimes W(\fF^\perf)$ and $h\in \fn\otimes W(\fF^\perf)$ such that $\pr(g.m)=D$ and $\pr(h.n)=E$.
  By \Cref{lem:central-lemma}, we have $\pr((g+h).(m+n)) = \pr(g.m+h.n) = \pr(g.m) + \pr(h.n) = D+E$, which belongs to $\fg\otimes\cD$.
  So $\alpha([m+n])=[D+E]$.
\end{proof}

\subsection{Determination of the last jump}
\label{subsn:local-lastjump}

The goal of this subsection is to deduce the following theorem from the results of \cite{abrashkin-ramification-filtration-3}:

\begin{theorem}
  \label{thm:lastjump-Jv}
  Let $m \in \fg \otimes W(\fF^\perf)$ and $v \in \R_{>0}$.
  Then, the following are equivalent:
  \begin{enumalpha}
    \item
      \label{item:lastjump-Jv-a}
      $\lastjump(m)<v$.
    \item
      \label{item:lastjump-Jv-b}
      $\alpha^0([m])$ satisfies property $J(v)$.
    \item
      \label{item:lastjump-Jv-c}
      $\alpha([m])$ satisfies property $J(v)$.
  \end{enumalpha}
\end{theorem}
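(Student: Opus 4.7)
I would split the equivalence into the easy part (b) $\Leftrightarrow$ (c) and the substantive part (a) $\Leftrightarrow$ (b). Since neither \Cref{eq:cF-2-integer-nu} nor \Cref{eq:cF-2-noninteger-nu} involves the coefficient $D_0$, property $J(v)$ on $\fg\otimes\cD^0$ descends along $\pr$ to the property $J(v)$ on $\fg\otimes\cD$ (as already anticipated in \Cref{rk:property-Jv-for-cD}); combined with $\alpha = \pr\circ\alpha^0$ from \Cref{def:alpha}, this yields (b) $\Leftrightarrow$ (c) immediately.

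For (a) $\Leftrightarrow$ (b), the plan is to invoke the main theorem of \cite{abrashkin-ramification-filtration-3}, which provides an explicit description of the upper-numbering ramification filtration in terms of the nilpotent Artin--Schreier parametrization. Unwinding \Cref{subsn:last-jump}, the condition $\lastjump(m) < v$ is equivalent to the triviality of $\gamma|_{\Gamma_\fF^v}$, where $\gamma(\tau) = (-g)\circ\tau(g)$ with $\bwp(g) = m$. Abrashkin's criterion recasts this triviality as an explicit system of polynomial equations on the Laurent expansion coefficients of a suitably normalized representative of $[m]$. Choosing $D = \alpha^0([m]) = \sum_{a\in\Onotp} D_a\tilde\pi^{-a}$ as the representative in $\fg\otimes\cD^0$ furnished by \Cref{thm:local-fundamental-domain} --- whose shape already matches the ``special form'' used by Abrashkin --- one then identifies Abrashkin's equations with those in \Cref{def:property-Jv}.

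The identification splits into two regimes. \Cref{eq:cF-2-integer-nu} records the obstruction at each ``critical level'' $bp^{\mu_v(b)}$, i.e.\ the smallest $p$-power multiple of $b$ which reaches or exceeds $v$, and collects commutator contributions from all decompositions $bp^{\mu_v(b)} = a_1 p^{n_1} + a_2 p^{n_2}$ with both $a_i p^{n_1} < v$; \Cref{eq:cF-2-noninteger-nu} handles the intermediate levels $b$ where $bp^{-i} \geq v$ and no ``on-diagonal'' $D_b$ term appears. The weight $\eta(n_1,n_2)$, equal to $\tfrac12$ precisely when $n_1 = n_2$, arises from the Baker--Campbell--Hausdorff coefficient in $x\circ y = x + y + \tfrac12[x,y]$ --- this is the step requiring $p > 2$ --- and the multiplicative factors $a_1 p^{n_1}$ together with the accompanying Frobenius twists come from Abrashkin's recipe for reading off the leading behaviour of $\bwp(g)$ at each level from the coefficients of $g$.

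The main obstacle is the bookkeeping needed to translate Abrashkin's general framework --- phrased using Cohen rings and arbitrary nilpotency class $< p$ (cf.~\Cref{rk:no-cohen-rings}) --- into the Witt-vector, class-$2$ setting adopted here, and to verify every indexing convention carefully. In particular, the asymmetric condition $a_2 p^{n_1} < v$ (rather than $a_2 p^{n_2} < v$) in \Cref{eq:cF-2-integer-nu}, flagged explicitly as ``not a typo'', reflects the convention of attaching each commutator term to the larger Frobenius twist; matching such fine points between Abrashkin's formulation and the equations above is where the bulk of the work lies, but is essentially a mechanical computation once the correct dictionary has been set up.
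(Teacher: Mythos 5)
Your decomposition into (b) $\Leftrightarrow$ (c) and (a) $\Leftrightarrow$ (b) matches the paper, and the core idea of invoking Abrashkin's ramification-filtration theorem for (a) $\Leftrightarrow$ (b) is the same. But there is a genuine gap in your proposal: Abrashkin's theorem, as used here (\Cref{thm:abrashkins-theorem}), is stated only under the hypothesis that the $G$-extension associated to $D$ is a \emph{field}, i.e.\ that the corresponding homomorphism $f:\Gamma_\fF\to G$ is surjective. Your argument ``unwinds'' $\lastjump(m)<v$ as triviality of $\gamma|_{\Gamma_\fF^v}$ and then feeds that directly into Abrashkin's criterion, without addressing what happens when $f$ is not onto. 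The paper devotes a separate step to this: it passes to the Lie subalgebra $\fh\subseteq\fg$ corresponding to the image $H$ of $f$ (so that the associated $H$-extension \emph{is} a field), applies the surjective case over $\fh$, and then transfers the equivalence back to $\fg$ using the facts that $D$ and the $\fh$-representative $D'$ lie in the same $(\fg\otimes W(\kappa),\circ)$-orbit and that property $J(v)$ is invariant under this action (\iref{prop:action-on-cD}{prop:action-on-cD-iii}). Without this reduction, the argument simply does not apply to non-surjective $m$.

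A second, smaller issue: you describe the translation from Abrashkin's $\cF_{\gamma,-N}$ to the equations of \Cref{def:property-Jv} as ``essentially mechanical,'' but the key content of \Cref{prop:jv-F-vanishes} is not purely notational. One must first establish the consequences of $J(v)$ recorded in \Cref{prop:consequences-Jv} (centrality and $p$-torsion of $p^{\mu_v(b)}D_b$), use them to prune the sum in $\cF_{bp^m,-N}(D)$, and also show that $J(v)$ forces vanishing of $\cF_{bp^m,-N}(D)$ for $m>\mu_v(b)$ (not just $m=\mu_v(b)$). Your reading of the asymmetric condition $a_2p^{n_1}<v$ is also slightly off: it is not a ``convention of attaching to the larger Frobenius twist'' but rather a consequence of the fact that the overall scalar factor $p^{n_1}$ multiplying the bracket annihilates the term as soon as \emph{either} $p^{n_1}D_{a_1}$ or $p^{n_1}D_{a_2}$ is central, which happens once $a_1p^{n_1}\geq v$ or $a_2p^{n_1}\geq v$ respectively.
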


We briefly recall Abrashkin's results in his own notation.
For a non-increasing list of integers
\[
  \nbar
  = (n_1,\dots,n_s)
  = (\underbrace{m_1,\dots,m_1}_{d_1},\dots,\underbrace{m_k,\dots,m_k}_{d_k}) \in \Z^s
\]
with $m_1>\cdots>m_k$, define the rational number (generalizing \Cref{eqn:def-eta}):
\[
  \eta(\nbar)
  \coloneqq \frac{1}{|\Stab_{\mathfrak S_s}\!(\nbar)|}
  = \frac{1}{d_1! \cdots d_k!}.
\]
For any integer $N\geq0$, any rational number $\gamma>0$, and any element $D = \sum_{a\in\Onotp} D_a\widetilde\pi^{-a}$ of $\fg\otimes\cD^0$ (with $D_a\in \fg\otimes W(\kappa)$), define the following element of $\fg \otimes W(\kappa)$:
\begin{equation}
  \label{eq:def-cF}
  \cF_{\gamma,-N}(D)
  \coloneqq
  \sum_{\substack{
    1 \leq s < p\\
    \abar, \nbar
  }}
    \eta(\nbar) a_1 p^{n_1} [\sigma^{n_1}(D_{a_1}),\dots,\sigma^{n_s}(D_{a_s})],
\end{equation}
where the sum is over all lengths $s \in \{1, \ldots, p-1\}$, all $s$-tuples $\abar = (a_1,\dots,a_s)\in(\Onotp)^s$, and all $s$-tuples $\nbar = (n_1,\dots,n_s)\in\Z^s$ satisfying $n_1\geq0$, $n_1\geq \cdots\geq n_s\geq-N$ and
\begin{equation}
\label{eq:abrashkin-sum}
a_1 p^{n_1} + \cdots + a_s p^{n_s} = \gamma.
\end{equation}

The main result from \cite{abrashkin-ramification-filtration-3} implies the following formula for the last jump (valid for all Lie $\Z_p$-algebras $\fg$ of nilpotency class $<p$):

\begin{theorem}[Abrashkin's theorem]
  \label{thm:abrashkins-theorem}
  Let $D\in \fg\otimes\cD^0$, and assume that the corresponding $G$-extension of $\fF$ is a field.
  Then:
  \[
    \lastjump(D)
    =
    \sup\!\Big(
      \{0\}\cup
      \big\{
        \gamma > 0
        \ \big\vert\ 
        \exists N_0 \geq 0, \
        \forall N \geq N_0, \
        \cF_{\gamma,-N}(D) \neq 0
      \big\}
    \Big).
  \]
\end{theorem}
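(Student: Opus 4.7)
The plan is to deduce this statement as a translation of the main theorem of \cite{abrashkin-ramification-filtration-3}, which provides an explicit description of the upper-numbering ramification filtration on the $G$-extension parametrized by $D$. Since the statement is attributed to Abrashkin, essentially all the work goes into matching conventions rather than into genuinely new mathematics.

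First, I would restate Abrashkin's result in our language. For every rational $v > 0$, he describes $\gamma(\Gamma_\fF^v)$ (the image in $G = (\fg,\circ)$ of the $v$-th ramification subgroup) as the subgroup corresponding via the Lazard correspondence to the Lie subalgebra of $\fg$ generated by the values $\cF_{\gamma, -N}(D)$ for $\gamma \geq v$ and $N$ large enough. An immediate consequence is that $\gamma(\Gamma_\fF^v) = 1$ if and only if, for every $\gamma \geq v$ and every sufficiently large $N$, one has $\cF_{\gamma, -N}(D) = 0$. Plugging this into the definition $\lastjump(D) = \inf\{v \geq 0 : \gamma(\Gamma_\fF^v) = 1\}$ gives exactly the claimed supremum, with the element $0$ adjoined to handle the unramified case in which no positive $\gamma$ contributes. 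A minor supplementary point, which I would also verify, is that the eventual-nonvanishing condition ``$\exists N_0,\ \forall N \geq N_0,\ \cF_{\gamma,-N}(D)\neq 0$'' coincides with ``$\cF_{\gamma,-N}(D)\neq 0$ for arbitrarily large $N$''; this follows from a stabilization argument, since the new terms introduced when $N$ grows involve ever higher powers of $p$ and eventually vanish in the finite module $\fg \otimes W(\kappa)$.

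The main obstacle is precisely this conventions matching. In \cite{abrashkin-ramification-filtration-3} the parametrization uses the Cohen ring $O(\fF) \subseteq W(\fF)$ rather than the Witt vectors $W(\fF^\perf)$ used here (cf.\ \Cref{rk:no-cohen-rings}), and Abrashkin expresses his formulas in terms of ``special elements'' in the sense of \cite[Definition 2.1]{abrashkin-differential} instead of the fundamental domain $\fg\otimes\cD^0$ from \Cref{def:cD-zero}. To apply his result, I would check that (i) the canonical maps make the two parametrizations compatible on the level of orbits and of the attached Galois representations $\gamma$, so that the ramification filtration on either side coincides; (ii) each element of $\fg \otimes \cD^0$ differs from a genuine special element only by the action of $(\fg\otimes W(\kappa),\circ)$, which does not change $\gamma$ up to conjugation and so preserves the filtration; and (iii) under our choice of uniformizer $\pi$ and Teichmüller lift $\tilde\pi$, the combinatorial sum in \Cref{eq:def-cF} coincides term-for-term with Abrashkin's function (including the multiplicities $\eta(\nbar)$, the ordering constraints $n_1 \geq \cdots \geq n_s$, and the cutoff $n_s \geq -N$). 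Once these identifications are in place, Abrashkin's description transports verbatim to the formula of the theorem.
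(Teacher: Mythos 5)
Your proposal is correct and takes essentially the same approach as the paper: both deduce the statement by invoking Abrashkin's description of the image of $\Gamma_\fF^v$ in $G$ in terms of the functions $\cF_{\gamma,-N}(D)$, and then observe that $\lastjump(D) = \sup(\{0\}\cup\{v>0 : \fg^v \neq 0\})$. The paper sidesteps most of the conventions-matching you worry about by citing \cite[Theorem~3.1]{abrashkin-differential} (the ``covariant'' reformulation of \cite[Theorem~B]{abrashkin-ramification-filtration-3}), which is already phrased in terms compatible with the Witt-vector parametrization; your plan to verify the translation from Cohen rings and special elements by hand is a valid but longer route to the same place.
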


\begin{proof}
  Consider a surjective continuous group homomorphism $f : \Gamma_\fF \to G$ in the class $\orb^{-1}([D])$.
  For every $v > 0$, let $\fg^{v}$ be the Lie subalgebra of $\fg$ corresponding to the image in $G$ of the ramification subgroup~$\Gamma_{\fF}^{v}$ under~$f$.
  By definition, we have $\lastjump(D) = \inf\{v > 0 \mid \fg^{v} = 0\}  = \sup\left( \{0\} \cup \{v > 0 \mid \fg^{v} \neq 0\} \right)$.
  For each $v > 0$, by \cite[Theorem~3.1]{abrashkin-differential} (which is the ``covariant'' version of \cite[Theorem~B]{abrashkin-ramification-filtration-3}), there is an integer $N$ such that~$\fg^{v}$ is the smallest ideal of~$\fg$ whose extension of scalars $\fg \otimes W(\kappa)$ contains~$\cF_{\gamma,-N}(D)$ for all $\gamma \geq v$.
  Therefore, the condition $\fg^{v}=0$ is equivalent to the vanishing of $\cF_{\gamma,-N}(D)$ for all $\gamma \geq v$ and $N$ large enough.
  The result follows directly.
\end{proof}

We now specialize the definition of $\cF_{\gamma,-N}(D)$ (\Cref{eq:def-cF}) to our situation.
Note that \Cref{eq:abrashkin-sum} can only hold if $\gamma \in \N[\frac1p]$.
Hence, for $\gamma\not\in\N[\frac1p]$, we always have $\cF_{\gamma,-N}(D) = 0$.
We now assume that $\gamma = bp^m$ for some $b\in\notp$ and $m \in \Z$.
In our case, since~$\fg$ has nilpotency class $\leq 2$, we need only to consider lengths $s \leq 2$ as all commutators involving more elements vanish.
\begin{itemize}
  \item
    For $s=1$, the equality $a_1 p^{n_1} = \gamma$ (with $a_1 \in \notp$, $n_1 \geq 0$) can only happen if $\gamma$ is an integer ($m \geq 0$).
    In that case, $a_1 = b$ and $n_1 = m$, and the corresponding term is $bp^m\sigma^m(D_b)$.
  \item
    For $s=2$, the equality $a_1 p^{n_1} + a_2 p^{n_2} = \gamma$ (with $n_1 \geq 0$) implies:
    \begin{itemize}
      \item
        If $\gamma$ is an integer ($m \geq 0$), then either $a_2 = 0$ (and then $a_1 = b$, $n_1 = m$), or $a_2 \neq 0$ and $n_2 \geq 0$.
      \item
        If $\gamma$ is not an integer ($m < 0$), then $n_2 < 0$ and $a_2 \neq 0$.
        In particular, $n_2 < n_1$ and thus $\eta(n_1, n_2) = 1$.
        Comparing valuations shows that $n_2 = m$ (there are no terms if $N \leq -m$).
    \end{itemize}
\end{itemize}
Thus, we have the following expression for $\cF_{bp^m,-N}(D)$ when $b \in \notp$ and $m \geq 0$:
\begin{equation*}
  \begin{aligned}
    \cF_{b p^m,-N}(D)
    ={}&
    b p^m\sigma^m(D_b)
    & \textnormal{(for $s=1$)}
    \\
    & + \sum_{n=-N}^m
      \eta(m,n) b p^m [\sigma^m(D_b),\sigma^n(D_0)]
    & \textnormal{(for $s=2$, $a_2 = 0$)}
    \\
    & + \sum_{\substack{
          a_1,a_2\in\notp,\\
          n_1\geq n_2\geq0:\\
          bp^m = a_1p^{n_1}+a_2p^{n_2}
      }}
      \eta(n_1,n_2)a_1p^{n_1}[\sigma^{n_1}(D_{a_1}),\sigma^{n_2}(D_{a_2})]
    & \textnormal{(for $s=2$, $a_2 \neq 0$)}
  \end{aligned}
\end{equation*}
and the following expression for $\cF_{bp^m,-N}(D)$ when $b \in \notp$ and $m<0$, assuming $N \geq -m$:
\begin{equation*}
  \begin{aligned}
    \cF_{bp^m,-N}(D)
    ={}&
    \sum_{\substack{
      a_1,a_2\in\notp,\\
      n_1\geq0:\\
      bp^m = a_1p^{n_1} + a_2p^m
    }}
      a_1
      p^{n_1}
      [
        \sigma^{n_1}(D_{a_1}),
        \sigma^m(D_{a_2})
      ].
  \end{aligned}
\end{equation*}

\begin{proposition}
  \label{prop:jv-F-vanishes}
  Let $D \in \fg \otimes \cD^0$.
  Then, $D$ satisfies property $J(v)$ if and only if, for all $b \in \notp$ and $m \in \Z$ such that $bp^m \geq v$, and for every sufficiently large $N$, we have $\cF_{bp^m, -N}(D) = 0$.
\end{proposition}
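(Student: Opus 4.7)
The plan is to directly compare the explicit formulas for $\cF_{bp^m,-N}(D)$ recalled just above with the equations~\eqref{eq:cF-2-integer-nu}--\eqref{eq:cF-2-noninteger-nu} defining $J(v)$, splitting into the cases $m\geq 0$ (integer $\gamma$) and $m<0$ (non-integer $\gamma$). At first sight the two systems are asymmetric: $\cF_{bp^m,-N}(D)$ contains commutators involving $D_0$ and $s=2$ commutators with no upper bound on $a_ip^{n_1}$, whereas $J(v)$ features no $D_0$ terms and is restricted to $a_ip^{n_1}<v$. I expect the discrepancy to be entirely accounted for by the centrality of $p^{\mu_v(a)}D_a$ for every $a\in\notp$, which is supplied either by \Cref{prop:consequences-Jv} or by passing to the abelian quotient $\fg/Z(\fg)$, which is abelian because $\fg$ has nilpotency class $\leq 2$.

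For the direction $(\Rightarrow)$, I would first invoke \Cref{prop:consequences-Jv} to know that every $p^{\mu_v(a)}D_a$ is central and $p$-torsion. The key manipulation is moving scalars in and out of the Lie bracket via its $W(\kappa)$-bilinearity and the fact that $\sigma$ commutes with multiplication by elements of $\Z_p$: this rewrites $bp^m[\sigma^m(D_b),\sigma^n(D_0)]$ as $[\sigma^m(bp^mD_b),\sigma^n(D_0)]$, which kills the entire $D_0$-contribution since $p^mD_b$ is central for $m\geq\mu_v(b)$. The same trick applied to either argument kills every $s=2$ commutator with $a_1p^{n_1}\geq v$ or $a_2p^{n_1}\geq v$. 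A short counting argument using $p\geq 2$ shows that for $m>\mu_v(b)$ no restricted $s=2$ term can survive either (since $a_ip^{n_1}<v$ for $i=1,2$ would force $bp^m<2v$ against $bp^m\geq pv$), so $\cF=0$ in that case; for $m=\mu_v(b)$, what remains of $\cF_{bp^{\mu_v(b)},-N}(D)$ is exactly $b$ times~\eqref{eq:cF-2-integer-nu}. The case $m<0$ reduces symmetrically, after applying $\sigma^{-m}$, to~\eqref{eq:cF-2-noninteger-nu}.

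For $(\Leftarrow)$, the hard part is that to simplify $\cF_{bp^m,-N}(D)=0$ into the equations of $J(v)$ one needs the centrality of $p^{\mu_v(a)}D_a$ in advance, yet this centrality is itself a consequence of $J(v)$. The main obstacle is therefore bootstrapping this centrality directly out of the hypothesis. My plan is to pass to the abelian quotient $\fg/Z(\fg)$: there all commutators vanish and $\cF_{bp^m,-N}(\bar D)$ collapses to the single term $bp^m\sigma^m(\bar D_b)$ for $m\geq 0$ (and to $0$ for $m<0$), whose vanishing at $m=\mu_v(b)$ yields $p^{\mu_v(b)}\bar D_b=0$, i.e., $p^{\mu_v(b)}D_b\in Z(\fg)\otimes W(\kappa)$, for every $b\in\notp$. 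Feeding this back into the same bilinearity arguments as in $(\Rightarrow)$ kills the $D_0$ terms and the unrestricted $s=2$ terms, so that $\cF_{bp^{\mu_v(b)},-N}(D)=0$ is equivalent to $b\cdot$\eqref{eq:cF-2-integer-nu} and $\cF_{bp^{-i},-N}(D)=0$ (after applying $\sigma^i$) is equivalent to~\eqref{eq:cF-2-noninteger-nu}, yielding $J(v)$. The cleanness of this abelian-quotient shortcut is specific to nilpotency class~$\leq 2$, which suggests that higher-class analogues of this proposition would need a more delicate induction.
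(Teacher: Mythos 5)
Your proposal is correct and follows essentially the same route as the paper: extract centrality of $p^{\mu_v(b)}D_b$ from either hypothesis, then compare $\cF_{bp^m,-N}(D)$ term by term with \eqref{eq:cF-2-integer-nu}--\eqref{eq:cF-2-noninteger-nu} after cancelling the $D_0$-contributions and the unrestricted $s=2$ terms by bilinearity, and finally dispose of $m>\mu_v(b)$ by the counting contradiction together with the $p$-torsion from \iref{prop:consequences-Jv}{prop:consequences-Jv-iii}. Your abelian-quotient derivation of centrality in the $(\Leftarrow)$ direction is a harmless repackaging of the paper's observation that $bp^{\mu_v(b)}\sigma^{\mu_v(b)}(D_b)$ is a sum of commutators (so it lands in $[\fg,\fg]\otimes W(\kappa)$, slightly stronger than your $Z(\fg)\otimes W(\kappa)$, though only centrality is used afterwards); and the counting contradiction for $m>\mu_v(b)$ requires $p>2$ (which the paper assumes throughout), not merely $p\geq2$ as you wrote.
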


\begin{proof}
  For every $b \in \notp$, both property $J(v)$ and the equality $\cF_{bp^{\mu_v(b)},-N}(D)=0$ independently imply that $p^{\mu_v(b)} D_b \in [\fg,\fg]\otimes W(\kappa)$ (see \iref{prop:consequences-Jv}{prop:consequences-Jv-ii} and the expression for $\cF_{bp^m,-N}(D)$ above).
  We may hence prove the desired equivalence under the assumption that $p^{\mu_v(b)} D_b \in [\fg,\fg] \otimes W(\kappa)$ for all $b\in\notp$.
  In particular, $p^m D_b\in Z(\fg)\otimes W(\kappa)$ for all $m\geq0$ such that $bp^m\geq v$.

  Under this assumption, every term in $\cF_{bp^m,-N}(D)$ with $a_1 p^{n_1} \geq v$ or $a_2 p^{n_1} \geq v$ can be omitted, as the corresponding commutator vanishes once multiplied by $p^{n_1}$.
  Thus, the vanishing of $\cF_{bp^m,-N}(D)$ for every $m < 0$ such that $b p^m \geq v$ is equivalent to \Cref{eq:cF-2-noninteger-nu} (with $i\coloneq -m$).
  Similarly, for all $m \geq \mu_v(b)$, the sum over $n$ in $\cF_{bp^m,-N}(D)$ (the one involving $D_0$) can be omitted, and the vanishing of $\cF_{bp^{\mu_v(b)},-N}(D)$ is equivalent to \Cref{eq:cF-2-integer-nu}.

  All that is left to do is to check that property $J(v)$ implies the vanishing of $\cF_{bp^m,-N}(D)$ when $m > \mu_v(b)$.
  In that case, the inequality $b p^m \geq p v > 2v$ makes it impossible to simultaneously have $a_1 p^{n_1} + a_2 p^{n_2} = b p^m$ and $a_1 p^{n_1}, a_2 p^{n_1} < v$.
  Thus, all terms in $\cF_{bp^m,-N}(D)$ vanish except for $bp^m\sigma^m(D_b)$, but that term also vanishes because $m > \mu_v(b)$ and $p^{\mu_v(b)} D_b$ is $p$-torsion by \iref{prop:consequences-Jv}{prop:consequences-Jv-iii}.
\end{proof}

Finally, we prove \Cref{thm:lastjump-Jv}:

\begin{proof}[Proof of \Cref{thm:lastjump-Jv}.]
  The equivalence \ref{item:lastjump-Jv-b} $\Leftrightarrow$ \ref{item:lastjump-Jv-c} is clear (see~\Cref{rk:property-Jv-for-cD,def:orb-cD})

  Pick a $D \in \fg \otimes \cD^0$ such that $\alpha^0([m]) = [D]$.
  By definition, $m$ and $D$ belong to the same $(\fg \otimes W(\fF^\perf), \circ)$-orbit, and thus $\lastjump(m) = \lastjump(D)$ (they define the same extension!).
  Pick a $g \in \fg \otimes W(\fF^\alg)$ such that $\bwp(g) = D$, and consider the continuous group homomorphism $f : \Gamma_\fF \to G$ defined by $\tau \mapsto (-g) \circ \tau(g)$.

  If $f$ is surjective, the corresponding $G$-extension of $\fF$ is a field, and the result follows from \Cref{thm:abrashkins-theorem} and \Cref{prop:jv-F-vanishes} (recall that $\cF_{\gamma,-N}(D)$ always vanishes if $\gamma$ is not of the form~$bp^m$ for $b \in \notp$ and $m \in \Z$).

  Assume now that $f$ is not surjective.
  The image of $f$ is then a certain subgroup $H \subset G$, corresponding to a Lie subalgebra $\fh \subset \fg$.
  Then, the $G$-extension associated to $f$ is not a field, but the corresponding $H$-extension is a field.
  In that case, by \Cref{thm:parametrization} applied to $\fh$, there is a $D' \in \fh \otimes \cD^0$ and a $g' \in \fh \otimes W(\fF^\alg)$ such that $D' = \bwp(g')$, and $f(\tau) = (-g') \circ \tau(g')$.
  By the surjective case above (applied to the Lie algebra $\fh$), we have the equivalence:
  \[
    \lastjump(D') < v
    \quad\quad\Longleftrightarrow\quad\quad
    D' \textnormal{ satisfies property } J(v).
  \]
  Since the elements $D$ and $D'$ induce the same element $[f]$ of $H^1(\Gamma_\fF, G)$, we have $\lastjump(D) = \lastjump(D')$, and $D$ and $D'$ are in the same $(\fg \otimes W(\fF^\perf), \circ)$-orbit by \Cref{thm:parametrization}.
  By \Cref{prop:kappa-acts}, $D$ and~$D'$ are then in the same $(\fg \otimes W(\kappa), \circ)$-orbit, and by \iref{prop:action-on-cD}{prop:action-on-cD-iii} we have:
  \[
    D \textnormal{ satisfies property } J(v)
    \quad\quad\Longleftrightarrow\quad\quad
    D' \textnormal{ satisfies property } J(v).
  \]
  Assembling everything together, we obtain \ref{item:lastjump-Jv-a} $\Leftrightarrow$ \ref{item:lastjump-Jv-b}.
\end{proof}

\begin{corollary}[Irrelevance of $D_0$]
  \label{cor:irrelevance-D0}
  Let $m \in \fg\otimes\cD$.
  By \Cref{thm:lastjump-Jv}, all elements $m' \in \fg \otimes \cD^0$ such that $\pr(m') = m$ have the same last jump $\lastjump(m')$, namely
  \[
    \inf\!\Big(
      \{0\}
      \cup
      \!\big\{
        v > 0
        \ \big\vert\ 
        m \textnormal{ satisfies property } J(v)
      \big\}
    \Big).
  \]
\end{corollary}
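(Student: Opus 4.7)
The plan is to derive the statement as an essentially immediate consequence of \Cref{thm:lastjump-Jv}, in two small observations.

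First, I would note that property $J(v)$ on elements $D \in \fg \otimes \cD^0$ is, by inspection of \Cref{def:property-Jv}, a condition involving only the coordinates $D_b$ for $b \in \notp$: neither \Cref{eq:cF-2-integer-nu} nor \Cref{eq:cF-2-noninteger-nu} contains the $D_0$ component. Consequently, whether an element $m' \in \fg \otimes \cD^0$ satisfies $J(v)$ depends only on its projection $\pr(m') \in \fg \otimes \cD$, and in fact coincides with the notion of $J(v)$ on $\fg \otimes \cD$ introduced in \Cref{rk:property-Jv-for-cD}. This is the ``independence of $D_0$'' at the level of the defining equations.

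Next, I would promote this to the level of the last jump by invoking \Cref{thm:local-fundamental-domain,thm:lastjump-Jv}. For any $m' \in \fg \otimes \cD^0 \subseteq \fg \otimes W(\fF^\perf)$, the map $\alpha^0$ sends $[m']$ to $[m'] \in \Orb{\fg}{\cD^0}{W(\kappa)}$, since $\alpha^0$ is the inverse of the bijection induced by the inclusion $\fg \otimes \cD^0 \hookrightarrow \fg \otimes W(\fF^\perf)$. Therefore \Cref{thm:lastjump-Jv}~\ref{item:lastjump-Jv-a}$\Leftrightarrow$\ref{item:lastjump-Jv-b} gives the equivalence
\[
  \lastjump(m') < v
  \quad\Longleftrightarrow\quad
  m' \text{ satisfies } J(v)
  \quad\Longleftrightarrow\quad
  m = \pr(m') \text{ satisfies } J(v),
\]
where the second equivalence is the first observation. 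Since the right-hand condition depends only on $m$, so does the left-hand inequality $\lastjump(m') < v$; letting $v$ vary, we conclude that $\lastjump(m')$ itself is determined by $m$ alone.

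Finally, the explicit formula follows by taking the infimum over all $v > 0$ for which $m$ satisfies $J(v)$, noting that $\lastjump(m') \geq 0$ by definition (so adjoining $\{0\}$ serves as a safety net when the extension is unramified or the set is empty). No step here poses a real obstacle: all the genuine work—Abrashkin's description of the ramification filtration and the conversion into the equations of $J(v)$—has already been carried out to prove \Cref{thm:lastjump-Jv}, and this corollary is just the bookkeeping that transfers the conclusion from $\fg \otimes \cD^0$ to $\fg \otimes \cD$.
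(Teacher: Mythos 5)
Your argument is correct and follows exactly the route the paper intends: property $J(v)$ (as in \Cref{def:property-Jv}) does not involve $D_0$, the inverse of $\alpha^0$ is induced by the inclusion $\fg\otimes\cD^0\hookrightarrow\fg\otimes W(\fF^\perf)$ so $\alpha^0([m'])=[m']$ for $m'\in\fg\otimes\cD^0$, and \iref{thm:lastjump-Jv}{item:lastjump-Jv-b} then converts this into the assertion that $\lastjump(m')$ is determined by $\pr(m')$.

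One small point in your final paragraph does not hold up, however. The set $\{v>0\mid m\text{ satisfies }J(v)\}$ equals $(\lastjump(m'),\infty)$, hence is never empty (the last jump of a finite extension is finite), so there is no ``empty set'' case against which $\{0\}$ guards. Moreover, adjoining $\{0\}$ and then taking an infimum would force the answer to $0$ regardless of whether the extension is ramified, which is not the last jump. The value you actually want (and which your chain of equivalences delivers) is $\inf\{v>0\mid m\text{ satisfies }J(v)\}$, with no $\{0\}\cup$; the $\{0\}\cup$ in the printed corollary looks like an artifact carried over from the $\sup$-formula of \Cref{thm:abrashkins-theorem}, where it is genuinely needed to handle the unramified case. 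You should either point out this discrepancy in the paper's displayed formula or simply drop the parenthetical about the $\{0\}$ being a safety net, as stated it is incorrect.
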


\begin{definition}
  \label{def:lastjump-cD}
  If $m \in \fg\otimes\cD$, we define $\lastjump(m)$ to be the common value of $\lastjump(m')$ for all elements $m' \in \pr^{-1}(m)$ (cf.~\Cref{cor:irrelevance-D0}).
  Similarly, we let $\lastjump([m]) \coloneqq \lastjump(m)$ where $[m] \in \Orb{\fg}{\cD}{W(\kappa)}$ is the $(\fg \otimes W(\kappa),\circ)$-orbit of $m$ (this is well-defined by \iref{prop:action-on-cD}{prop:action-on-cD-iii}).
\end{definition}

\section{Counting local extensions}
\label{sn:local-counting}

The setting and notations are identical to those of \Cref{sn:local}.
This section deals with obtaining estimates of the number
\[
  \sum_{\substack{
    K \in \Ext(G, \fF):\\
    \lastjump(K) < v
  }}
    \frac1{|\Aut(K)|}
\]
for various values of $v$, thus describing the distribution of last jumps of $G$-extensions of $\fF$.
The starting point is the following observation:

\begin{lemma}
  \label{lem:numberofD-numberofexts}
  Let $v > 0$.
  Using the notation introduced in \Cref{def:lastjump-cD}, we have the equality
  \[
    \sum_{\substack{
      K \in \Ext(G, \fF):\\
      \lastjump(K) < v
    }}
      \frac1{|\Aut(K)|}
    =
    \cardsuchthat{
      D \in \fg \otimes \cD
    }{
      \lastjump(D) < v
    },
  \]
  as well as the analogous equality if the condition $<v$ is replaced by $=v$.
\end{lemma}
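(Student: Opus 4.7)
The plan is to follow the chain of bijections summarised in the diagram preceding the lemma, translating the weighted sum on the left into a plain cardinality on the right via the orbit--stabiliser theorem applied to the action of the finite group $(\fg \otimes W(\kappa), \circ)$ on $\fg \otimes \cD^0$.

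First, I would reindex the sum. By \Cref{lem:etext-bij-cohomology}, \Cref{thm:parametrization} and \Cref{thm:local-fundamental-domain}, there is a bijection
\[
  \Ext(G,\fF)
  \ \overset\sim\longleftrightarrow\
  \Orb{\fg}{\cD^0}{W(\kappa)}
\]
under which, if $K \in \Ext(G,\fF)$ corresponds to an orbit $[D^0]$ with $D^0\in\fg\otimes\cD^0$, then $|\Aut(K)| = |\Stab_{(\fg\otimes W(\kappa),\circ)}(D^0)|$ (combining the stabiliser compatibilities in \Cref{lem:etext-bij-cohomology}, \Cref{thm:parametrization} and \Cref{thm:local-fundamental-domain}). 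Moreover, by \Cref{thm:lastjump-Jv} together with \Cref{def:lastjump-in-fundom,def:lastjump-cD}, the condition $\lastjump(K) < v$ is equivalent to $\lastjump(D^0) < v$, and this only depends on the orbit.

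Next, I apply the orbit--stabiliser theorem: for each orbit $[D^0]$ representing $K$,
\[
  \frac{1}{|\Aut(K)|}
  =
  \frac{1}{|\Stab_{(\fg\otimes W(\kappa),\circ)}(D^0)|}
  =
  \frac{|[D^0]|}{|\fg\otimes W(\kappa)|}.
\]
Summing over orbits with last jump $<v$, the orbit sizes add up to the total count of elements, giving
\[
  \sum_{\substack{K\in\Ext(G,\fF):\\\lastjump(K)<v}}
    \frac{1}{|\Aut(K)|}
  =
  \frac{\cardsuchthat{D^0\in\fg\otimes\cD^0}{\lastjump(D^0)<v}}{|\fg\otimes W(\kappa)|}.
\]

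Finally, I pass from $\cD^0$ to $\cD$ via the projection $\pr:\fg\otimes\cD^0 \twoheadrightarrow \fg\otimes\cD$. This projection has fibres of size exactly $|\fg\otimes W(\kappa)|$ (parametrised by the discarded coordinate $D_0$), and by \Cref{cor:irrelevance-D0} it preserves the last jump: $\lastjump(D^0) = \lastjump(\pr(D^0))$. Hence
\[
  \cardsuchthat{D^0\in\fg\otimes\cD^0}{\lastjump(D^0)<v}
  =
  |\fg\otimes W(\kappa)|\cdot
  \cardsuchthat{D\in\fg\otimes\cD}{\lastjump(D)<v},
\]
and the factors of $|\fg\otimes W(\kappa)|$ cancel, yielding the stated equality. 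The same argument, replacing ``$<v$'' by ``$=v$'' at every step, handles the variant. No step is really an obstacle here: all the real content has been assembled in the earlier subsections, and the lemma is essentially a bookkeeping consequence of orbit--stabiliser combined with the $|\fg\otimes W(\kappa)|$-to-$1$ map $\pr$ being last-jump-preserving.
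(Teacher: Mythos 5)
Your proof is correct and follows essentially the same route as the paper's: compose the three stabiliser-preserving bijections to identify $\Ext(G,\fF)$ with $\Orb{\fg}{\cD^0}{W(\kappa)}$, apply the orbit--stabiliser theorem to convert the weighted orbit sum into a plain count over $\fg\otimes\cD^0$ divided by $|\fg\otimes W(\kappa)|$, and then cancel that factor against the $|\fg\otimes W(\kappa)|$-to-one, last-jump-preserving projection $\pr$ onto $\fg\otimes\cD$.
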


\begin{proof}
  We focus on the first claim, as the second claim directly follows from it.
  Using the three (stabilizer-preserving) bijections of \Cref{lem:etext-bij-cohomology}, \Cref{thm:parametrization} and \Cref{thm:local-fundamental-domain} and the notation of \Cref{def:lastjump-in-fundom}, the left-hand side rewrites as follows:
  \begin{align*}
    \sum_{\substack{
      K \in \Ext(G, \fF):\\
      \lastjump(K) < v
    }}
      \frac1{|\Aut(K)|}
    &
    =
    \sum_{\substack{
      [D] \in \Orb{\fg}{\cD^0}{W(\kappa)}:\\
      \lastjump([D]) < v
    }}
      \frac1{|\Stab_{(\fg \otimes W(\kappa), \circ)}(m)|}
    \\
    &=
    \frac{
      \cardsuchthat{
        D\in \fg\otimes\cD^0
      }{
        \lastjump(D) < v
      }
    }{
      |\fg\otimes W(\kappa)|
    }
    &
    \textnormal{by the orbit-stabilizer theorem.}
  \end{align*}
  Recalling \Cref{def:lastjump-cD} and the fact that the projection map $\pr : \fg \otimes \cD^0 \twoheadrightarrow \fg \otimes \cD$ is $|\fg \otimes W(\kappa)|$-to-one, we see that this equals
  $
    \cardsuchthat{
      D \in \fg \otimes \cD
    }{
      \lastjump(D) < v
    }
  $
  as claimed.
\end{proof}

Let
\[
  r \coloneqq \dim_{\F_p}(\fg[p])
  \andd
  M \coloneqq
  \begin{cases}
    1& \textnormal{if } \fg[p] \textnormal{ is abelian},\\
    1+p^{-1}& \textnormal{otherwise},
  \end{cases}
\]
as in \Cref{thm:intro-counting} for $G = (\fg, \circ)$.
\Cref{cor:slightly-ramified} below gives a natural interpretation of the number $M$ as the largest possible last jump smaller than $2$ for $G$-extensions of~$\fF$.

The main results of this section are the following estimates for the number of elements of~$\fg \otimes \cD$ with bounded last jump (this is motivated by \Cref{lem:numberofD-numberofexts}):

\begin{theorem}
  \label{thm:local-counting}
  For some (explicit) constant $E \geq 1$ (which does not depend on $\kappa$), the following estimates hold:
  \begin{enumalpha}
  \item
    \label{item:count-unramified}
    $
      |\{D\in \fg\otimes\cD \mid \lastjump(D)=0\}|
      = |\{D\in \fg\otimes\cD \mid \lastjump(D)<1\}|
      = 1
    $.
  \item
    \label{item:count-least-ramified}
    $
      |\{D\in \fg\otimes\cD \mid \lastjump(D)\leq M\}|
      = |\{D\in \fg\otimes\cD \mid \lastjump(D)<2\}|
      = |\kappa|^r
    $.
  \item
    \label{item:count-more-ramified-integer}
    For any $l\in\Z_{\geq 0}$, we have
    $
      |\{D \in \fg\otimes\cD \mid \lastjump(D) < l+1\}|
      \leq
      E^l |\kappa|^{r l}
    $.
  \item
    \label{item:count-more-ramified-noninteger}
    Assume that $\fg[p]$ is non-abelian.
    Then:
    \begin{enumroman}
      \item
        \label{item:count-more-ramified-large}
        for all integers $l \geq p$, we have
        $
          |\{D \in \fg\otimes\cD \mid \lastjump(D) < l+1\}|
          \leq
          E^l
          |\kappa|^{r l - 1}
        $.
      \item
        \label{item:count-more-ramified-small}
        for all $l \in \{1, \ldots, p-1\}$, we have:
        \[
          |\{D \in \fg\otimes\cD \mid \lastjump(D) < l(1+p^{-1})\}|
          =
          \cO\!\left(
            E^l |\kappa|^{r l-1}
          \right)
        \]
        where the implied constant in the $\cO$-estimate depends only on $\fg[p]$.
    \end{enumroman}
  \end{enumalpha}
\end{theorem}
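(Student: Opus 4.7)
The plan is to prove all four parts by direct analysis of the algebraic conditions in \Cref{def:property-Jv}, using \Cref{thm:lastjump-Jv} to replace each last-jump bound by the corresponding property $J(v)$.

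For (a), at $v=1$ the constraint $a_1 p^{n_1}, a_2 p^{n_1} < 1$ in $\notp$ is unsatisfiable, so \Cref{eq:cF-2-integer-nu} forces $D_b = 0$ for every $b$, leaving only $D=0$. For (b), a similar case analysis at $v=2$ gives $p D_1 = 0$ and $D_b = 0$ for $b \geq 2$: the only decomposition $p = 1 + (p-1)$ at $b=1$ fails $a_2 p^{n_1} < 2$ since $p-1 \geq 2$ (using $p>2$); meanwhile \Cref{eq:cF-2-noninteger-nu} is vacuous at $v=2$ because the forced candidate $b = p^i + 1$ falls short of $2 p^i$ for $i \geq 1$. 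Thus $\{\lastjump < 2\}$ parametrizes the $p$-torsion of $\fg \otimes W(\kappa)$, of cardinality $|\kappa|^r$. To identify $\{\lastjump \leq M\}$ with $\{\lastjump < 2\}$, I would observe that the \Cref{eq:cF-2-noninteger-nu} constraints $[\sigma^i D_1, D_1]=0$ activate only for $v \leq 1+p^{-1}$, making $J(v)$ constant on $(1+p^{-1}, 2]$ (so $M = 1+p^{-1}$ in the non-abelian case); when $\fg[p]$ is abelian, $p D_1 = 0$ forces $D_1 \in \fg[p] \otimes \kappa$, rendering the commutator constraints automatic and extending constancy to $(1,2]$, so $M=1$.

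For (c), given $D \in \fg \otimes \cD$ satisfying $J(l+1)$, \iref{prop:consequences-Jv}{prop:consequences-Jv-iv} restricts the support to $b \in \notp$ with $b < 2(l+1)$. Setting $k_b \coloneqq \mu_{l+1}(b)$, \Cref{eq:cF-2-integer-nu} expresses $p^{k_b}\sigma^{k_b}(D_b)$ as a polynomial in $\{D_a : a \in \notp,\, a < l+1\}$; given those, the set of admissible $D_b$ forms a coset of $\ker(p^{k_b} \mid \fg \otimes W(\kappa))$, whose cardinality is $|\kappa|^{\sum_i \min(k_b, n_i)} \leq |\kappa|^{r k_b}$ (writing $\fg = \bigoplus_i \Z/p^{n_i}\Z$, so that $r$ is the number of summands). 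Multiplying over the relevant $b$ and applying \Cref{lem:sum-mu} ($\sum_b k_b = l$) yields the bound $|\kappa|^{rl}$; the factor $E^l$ absorbs the $\cO(l)$ multiplicative overhead from solvability checks and the bounded number of coefficients involved.

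For (d), when $\fg[p]$ is non-abelian, \Cref{eq:cF-2-noninteger-nu} produces an additional factor $|\kappa|^{-1}$. In case (i), $l \geq p$ makes the equation at $v = l+1$, $i=1$, $n=0$ nontrivial: the constraint $a_1 p + a_2 = b \geq p(l+1)$ with $a_1, a_2 \in \notp \cap \{1, \dots, l\}$ is solvable iff $l \geq p$, and each solution yields a linear relation among commutators $[\sigma(D_{a_1}), D_{a_2}]$ that is not tautological when $\fg[p]$ is non-abelian, cutting the count by $|\kappa|$. In case (ii) with $l \leq p-1$, I would work at the finer scale $v = l(1+p^{-1}) < l+1$, where \Cref{eq:cF-2-noninteger-nu} becomes active at smaller $b$ (such as $b = p^i + 1$) and yields analogous commutator identities; the implied $\cO$-constant depends on $\fg[p]$ because it measures the fraction of $\fg[p] \otimes \kappa$ on which the relevant commutator forms vanish. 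The main obstacle is precisely this verification: showing that at least one \Cref{eq:cF-2-noninteger-nu} equation imposes a genuinely non-trivial constraint on the free variables (primarily $D_1$) rather than restating relations already produced by \Cref{eq:cF-2-integer-nu}. This reduces to non-degeneracy of the commutator form on $\fg[p]$ and is what dictates both the threshold $l \geq p$ in~(i) and the passage to the finer scale $v = l(1+p^{-1})$ in~(ii).
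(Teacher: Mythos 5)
Parts (a) and (b) are correct and follow the same route as the paper (via \Cref{cor:small-v}, i.e.~the simplified form of property $J(v)$ at small $v$).

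Parts (c) and (d), however, have genuine gaps.

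\textbf{Part (c).}
Your count treats the equations as if, for each $b<l+1$, the constraint~\eqref{eq:cF-2-integer-nu} pins down $p^{k_b}\sigma^{k_b}(D_b)$ in terms of ``the other'' coordinates, leaving $D_b$ free in a coset of $\ker(p^{k_b})$. That is not how the system works: the right-hand side of~\eqref{eq:cF-2-integer-nu} is a sum over $a_1,a_2<l+1$, which \emph{includes} $a_1=b$ or $a_2=b$, so the constraint for $D_b$ couples back into $D_b$ itself; moreover, as one unwinds Witt-vector coordinates, the commutator terms mix the ``free'' low coordinates with the ``determined'' high coordinates in a way that does not respect your naive splitting. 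The paper resolves exactly this: in \Cref{prop:bound-rank} it introduces the grading in which $D_a$ has degree $p^{-\mu_v(a)}$, observes that the LHS coordinate $\big(D_a^{(ij)}\big)^{p^{2\mu_v(a)}}$ has strictly larger degree than every RHS term, and uses this to build a strongly normalizing rewriting system bounding the rank of $\cR/\cI_{l+1}$ over $\cR^\bullet_l$ by $|G|^{2l}$. Your phrase ``the factor $E^l$ absorbs the $\cO(l)$ multiplicative overhead from solvability checks'' is the assertion of this rank bound, not a proof of it; the grading and well-foundedness argument (or a substitute for it) is the essential content of (c) and is missing.

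\textbf{Part (d)(i).}
Your mechanism here is different from the paper's and does not work. You want to extract a nontrivial instance of~\eqref{eq:cF-2-noninteger-nu} at $v=l+1$, $i=1$, $n=0$, i.e.~a solution of $a_1p+a_2=b\geq p(l+1)$ with $a_1,a_2\in\notp$, $a_1,a_2\leq l$. This fails already at $l=p$: then $a_1,a_2\leq p-1$, so $a_1p+a_2\leq p^2-1<p^2+p=p(l+1)$, and there is no such solution (the same happens at $l=2p$). Even when solutions exist, the claim that the resulting relation ``cuts the count by $|\kappa|$'' would need a geometric argument (non-degeneracy, independence from the constraints already imposed by~\eqref{eq:cF-2-integer-nu}) that you do not supply. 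The paper's route is structurally different and does not need~\eqref{eq:cF-2-noninteger-nu} at all: since $\fg[p]$ is non-abelian, the argument $[px,py]=[p^2x,y]=0$ forces $p\cdot\fg[p^2]\subsetneq\fg[p]$, hence some $n_i=1$ in the decomposition $\fg\simeq\prod_i\Z/p^{n_i}\Z$; combined with $\mu_{l+1}(1)\geq2$ for $l\geq p$, this makes the inequality in~\eqref{eqn:size-omega-bullet} strict, giving $|\Omega^\bullet_l|\leq rl-1$ directly and concluding by \Cref{cor:counting-bounded-last-jump}.

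\textbf{Part (d)(ii).}
You correctly identify the key step (a nontrivial relation on $D_l$ coming from~\eqref{eq:small-v-3} at $v=l(1+p^{-1})$), but you explicitly defer the hard part, which is proving a saving of a full power of $|\kappa|$. In the paper this is done by observing that~\eqref{eq:small-v-1} at $b=l$ forces $D_l\in\fg[p]\otimes\kappa$, that~\eqref{eq:small-v-3} forces $[\sigma(D_l),D_l]=0$, and then invoking a Lang--Weil type bound (\cite[Lemma~1]{langweilig}): because $\fg[p]$ is non-abelian, the variety cut out by $[\sigma(y),y]=0$ in $r$-space has dimension $\leq r-1$, giving $\cO(|\kappa|^{r-1})$ choices for $D_l$. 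Without this input the conclusion is not reached.

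In short, (a) and (b) match the paper; (c) needs the grading/rank argument of \Cref{prop:bound-rank}; (d)(i) rests on a solvability claim that is false for $l=p$; (d)(ii) names but does not prove the needed dimension drop.
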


Points \ref{item:count-unramified} and \ref{item:count-least-ramified} follow from \Cref{lem:unramified} and \Cref{cor:slightly-ramified} respectively.
The proof of points~\ref{item:count-more-ramified-integer} and \ref{item:count-more-ramified-noninteger} will be the goal of \Cref{subsn:counting-locext}.

\Cref{thm:local-counting} is later used for the global results of \Cref{sn:global}, where it is applied at each place~$P$ of the global function field $\F_q(T)$ by letting~$\fF$ be its completion at~$P$ and~$\pi$ be a uniformizer.

\subsection{Smallness criteria for the last jump}
\label{subsn:smallness-criteria}

We use \Cref{thm:lastjump-Jv} to characterize elements $D = \sum_{a\in\notp}D_a\tilde\pi^{-a} \in \fg \otimes \cD$ whose last jump is ``small'':

\begin{corollary}[Small $v$]
  \label{cor:small-v}
  Let $v\leq p$.
  We have $\lastjump(D) < v$ if and only if:
  \begin{align}
    p\sigma(D_b)
    &=
    -(2b)^{-1}
    \sum_{\substack{
      1 \leq a_1,a_2 < v: \\
      b p = a_1 + a_2
    }}
      a_1
      [D_{a_1},D_{a_2}]
    &\textnormal{for all $b\in\notp$ with $b<v$},
    \label{eq:small-v-1}
    \\
    D_b
    &=
    -(2b)^{-1}
    \sum_{\substack{
      1 \leq a_1, a_2 < v:\\
      b = a_1+a_2
    }}
      a_1
      [D_{a_1}, D_{a_2}]
    &\textnormal{for all $b\in\notp$ with $b\geq v$},
    \label{eq:small-v-2}
    \\
    0
    &=
    [\sigma^i D_{\lfloor v\rfloor}, D_{a}]
    &\hspace{-1.2cm}\begin{array}{l}\textnormal{for all $1\leq a<v$ and $i>0$ with $a p^{-i} \geq v - \lfloor v\rfloor$}\\\textnormal{if $v$ is not an integer}.\end{array}
    \label{eq:small-v-3}
  \end{align}
\end{corollary}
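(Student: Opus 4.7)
The plan is to combine \Cref{thm:lastjump-Jv}, which identifies the condition $\lastjump(D)<v$ with $D$ satisfying property $J(v)$, with a direct simplification of the defining \eqref{eq:cF-2-integer-nu} and \eqref{eq:cF-2-noninteger-nu} under the hypothesis $v\leq p$. The key observation to exploit is that, for any $a\in\notp$ and $n\geq 0$, the inequality $ap^n<v\leq p$ forces $n=0$, since $a\geq 1$ and $p^n\geq p$ whenever $n\geq 1$. Consequently, every summation index $n_1,n_2$ in \eqref{eq:cF-2-integer-nu} and $n$ in \eqref{eq:cF-2-noninteger-nu} collapses to zero. Alongside this, I would use $\mu_v(b)=0$ when $b\geq v$, and $\mu_v(b)=1$ when $b<v$ (indeed then $b\leq p-1$, so $bp\geq p\geq v$).

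I would then split the analysis of \eqref{eq:cF-2-integer-nu} along these two cases. In the case $b\geq v$, the left-hand side simplifies to $D_b$, and the surviving right-hand side retains only the $n_1=n_2=0$ contributions, with $\eta(n_1,n_2)=\frac12$; the factor $a_1p^{n_1}$ becomes $a_1$, yielding exactly \eqref{eq:small-v-2}. In the case $b<v$, the left-hand side becomes $p\sigma(D_b)$, and the identical index collapse produces \eqref{eq:small-v-1}. Note that the implicit conditions $a_1,a_2\in\notp$ are automatic from $a_i<v\leq p$, since any positive integer below $p$ is coprime to $p$, so the summation ranges match those in the statement.

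For \eqref{eq:cF-2-noninteger-nu}, once $n=0$ is forced, the equation becomes a sum over pairs $(a_1,a_2)$ of positive integers smaller than $v$ with $b=a_1p^i+a_2$. The hypothesis $bp^{-i}\geq v$ then reads $a_1+a_2p^{-i}\geq v$, and since $a_2<v\leq p$ and $i\geq 1$ one has $a_2p^{-i}<1$, so $a_1$ must lie in $(v-1,v)$. If $v$ is an integer, this interval contains no integer and the equation is vacuous; otherwise the unique choice is $a_1=\lfloor v\rfloor$, the constraint on $a_2$ reduces to $a_2p^{-i}\geq v-\lfloor v\rfloor$, and dividing by the unit $\lfloor v\rfloor\in\{1,\ldots,p-1\}$ (which is coprime to $p$ since $v<p$ when $v$ is not an integer and $v\leq p$) recovers precisely \eqref{eq:small-v-3}.

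The argument is essentially careful bookkeeping built on a single index-truncation step. The only mildly delicate point I anticipate is ensuring that the boundary between the $b\geq v$ and $b<v$ cases, along with the borderline values $v=p$ and $b=p-1$, is handled uniformly, so that the appearances of $\eta$, $\mu_v$, and the constraints $a_i<v$, $a_i\in\notp$ align exactly with those written in \eqref{eq:small-v-1}--\eqref{eq:small-v-3}.
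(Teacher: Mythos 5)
Your proposal is correct and follows essentially the same route as the paper: apply \Cref{thm:lastjump-Jv}, observe that $\mu_v(b)=\mathbbm1_{b<v}$ and that $ap^n<v\le p$ forces $n=0$, which collapses \eqref{eq:cF-2-integer-nu} to \eqref{eq:small-v-1}/\eqref{eq:small-v-2} and \eqref{eq:cF-2-noninteger-nu} to a single-term sum with $a_1=\lfloor v\rfloor$. Your treatment of the noninteger case (bounding $a_1\in(v-1,v)$ via $a_2p^{-i}<1$, then dividing out the unit $\lfloor v\rfloor$) is the same argument the paper gives, made slightly more explicit about the coprimality of the surviving indices to $p$.
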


\begin{proof}
  We apply \Cref{thm:lastjump-Jv}.
  Here,
  $\mu_v(b) = \mathbbm1_{b<v}$.
  In all summands in \Cref{eq:cF-2-integer-nu,eq:cF-2-noninteger-nu}, the conditions of the form $a p^n < v$ are equivalent to $n = 0$, $a < v$.
  \Cref{eq:small-v-1,eq:small-v-2} thus correspond to \Cref{eq:cF-2-integer-nu} (note that $\eta(n_1, n_2) = \frac12$ as $n_1=n_2=0$), and \Cref{eq:small-v-3} corresponds to \Cref{eq:cF-2-noninteger-nu}.
  (For $i>0$, any $b\geq vp^i$ can be written in at most one way as $b=a_1p^i+a_2$ with $1\leq a_1,a_2<v\leq p$, namely $a_2$ must be the remainder of $b$ modulo $p$, and $a_1$ must be $\lfloor v\rfloor$.
  Hence, at most one summand appears in each instance of \Cref{eq:cF-2-noninteger-nu}.
  If $v$ is an integer, then $a_1 = \lfloor v\rfloor= v$ is not strictly smaller than $v$.)
\end{proof}

\begin{corollary}
  \label{cor:large-Da-determined}
  Let $v\leq p$.
  Given elements $D_a \in \fg \otimes W(\kappa)$ for all $a<v$, there is at most one choice of the remaining elements $D_a$ for $a\geq v$ such that the element $D \coloneqq \sum D_a \tilde\pi^{-a}$ satisfies $\lastjump(D)<v$.
\end{corollary}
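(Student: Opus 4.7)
The plan is to read off the statement directly from equation~(\ref{eq:small-v-2}) in \Cref{cor:small-v}. That equation is one of the defining conditions for $\lastjump(D) < v$, and it takes the shape of a formula
\[
  D_b = -(2b)^{-1} \sum_{\substack{1 \leq a_1, a_2 < v:\\ b = a_1+a_2}} a_1\, [D_{a_1}, D_{a_2}]
  \qquad\text{for all } b \in \notp \text{ with } b \geq v,
\]
whose right-hand side involves only indices $a_1, a_2$ strictly less than $v$. Hence, given the data $(D_a)_{a \in \notp,\, a < v}$, each $D_b$ with $b \geq v$ is prescribed, and the full element $D = \sum_{a \in \notp} D_a \tilde\pi^{-a}$ is therefore uniquely determined. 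Concretely, if two elements $D, D' \in \fg \otimes \cD$ both satisfy $\lastjump(\cdot) < v$ and agree on all coordinates indexed by $a < v$, a single appeal to~(\ref{eq:small-v-2}) applied to each of them yields $D_b = D_b'$ for every $b \in \notp$ with $b \geq v$, so $D = D'$.

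The only checks to perform are minor. First, the scalar $(2b)^{-1}$ makes sense in $W(\kappa)$: since $p > 2$ (fixed at the start of \Cref{sn:local}) and $b \in \notp$, the integer $2b$ is coprime to $p$ and hence a unit in $\Z_p \subseteq W(\kappa)$. Second, the sum is empty (and therefore $D_b = 0$ is forced) as soon as $b \geq 2v$, which is consistent with \iref{prop:consequences-Jv}{prop:consequences-Jv-iv} and shows that only finitely many $D_b$ with $v \leq b < 2v$ are actually at stake. Note that the statement asserts only uniqueness (``at most one''), not existence: the prescribed $D_b$ need not in general be compatible with the remaining equations~(\ref{eq:small-v-1}) and~(\ref{eq:small-v-3}), but this does not affect the corollary. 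There is no real obstacle here; the content lies entirely in \Cref{cor:small-v}, and the present statement is simply its ``solve for $D_b$'' reformulation.
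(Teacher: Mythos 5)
Your proof is correct and takes exactly the paper's route: the paper's own proof is the one-liner ``This follows immediately from \Cref{eq:small-v-2},'' i.e., read off $D_b$ for $b\geq v$ from the right-hand side of~(\ref{eq:small-v-2}), which only involves the $D_a$ with $a<v$. Your added sanity checks (invertibility of $2b$, finiteness via $b<2v$, and the uniqueness-only nature of the claim) are all accurate but not logically necessary.
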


\begin{proof}
  This follows immediately from \Cref{eq:small-v-2}.
\end{proof}

We obtain the following characterization of unramified extensions, which directly implies \iref{thm:local-counting}{item:count-unramified}:

\begin{lemma}[Unramified extensions]
  \label{lem:unramified}
  The following are equivalent:
  \begin{enumroman}
    \item
      \label{lem:unramified-i}
      $\lastjump(D)<1$.
    \item
      \label{lem:unramified-ii}
      $D_a=0$ for all $a\in\notp$.
    \item
      \label{lem:unramified-iii}
      $\lastjump(D)=0$.
  \end{enumroman}
\end{lemma}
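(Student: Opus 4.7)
My plan is to prove the cycle of implications (iii) $\Rightarrow$ (i) $\Rightarrow$ (ii) $\Rightarrow$ (iii), where the first arrow is trivial (since $0 < 1$) and the last two are short applications of results already available in the paper. The key observation is that the condition $\lastjump(D) < 1$ is the smallest nontrivial instance of \Cref{cor:small-v}, so the equations there degenerate in a very restrictive way.

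For (i) $\Rightarrow$ (ii), I would apply \Cref{cor:small-v} with $v = 1$. Because every $b \in \notp$ satisfies $b \geq 1$, the hypothesis ``$b < v$'' in~\Cref{eq:small-v-1} is never satisfied, so that equation is vacuous. Because $v = 1$ is an integer, the clause~\Cref{eq:small-v-3} does not apply. The remaining equation~\Cref{eq:small-v-2} has to hold for every $b \in \notp$, but its right-hand side is a sum over pairs $(a_1, a_2)$ with $a_1, a_2< 1$, which is empty. Hence $D_b = 0$ for every $b \in \notp$, which is exactly condition (ii).

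For (ii) $\Rightarrow$ (iii), if $D_a = 0$ for all $a \in \notp$, then $D = 0$ in $\fg \otimes \cD$. Pick $D' = 0 \in \fg\otimes\cD^0$ as a lift under $\pr$; by \Cref{def:lastjump-cD} it suffices to show $\lastjump(D') = 0$. Under the bijections of \Cref{thm:local-fundamental-domain} and \Cref{thm:parametrization}, the orbit $[0]$ corresponds to the trivial homomorphism $\Gamma_{\fF} \to G$, i.e., to the trivial (hence unramified) $G$-extension of $\fF$. By the definition of $\lastjump$ in \Cref{subsn:last-jump}, the trivial homomorphism satisfies $\gamma(\Gamma_{\fF}^v) = 1$ for every $v \geq 0$, so $\lastjump(D') = 0$ and thus $\lastjump(D) = 0$.

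There is no real obstacle here: both nontrivial implications follow directly from previously established machinery, once one notices that the defining equations of property $J(1)$ are essentially empty. The most delicate point to double-check is the bookkeeping in~\Cref{cor:small-v} (namely that no $b \in \notp$ lies in $[0,1)$ and that $v = 1$ is integral), but this is purely formal.
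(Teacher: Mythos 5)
Your proof is correct, and the implications (iii)~$\Rightarrow$~(i) and (i)~$\Rightarrow$~(ii) follow the paper exactly (the latter via \Cref{cor:small-v} with $v=1$). For (ii)~$\Rightarrow$~(iii) you diverge slightly: the paper also stays within \Cref{cor:small-v}, applying it with $v\to 0^+$ (for any $0<v<1$, every $b\in\notp$ satisfies $b\geq v$ and the sum in \Cref{eq:small-v-2} is empty, so $D_b=0$ for all $b$ is equivalent to $\lastjump(D)<v$; letting $v\to 0^+$ gives $\lastjump(D)=0$). You instead trace $D=0$ back through the chain of bijections $\alpha^0$ and $\orb$ to the trivial cocycle and hence the trivial, unramified $G$-extension. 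Both arguments are short and valid; the paper's is slightly more uniform in that it reuses the same corollary for both directions, while yours makes explicit what the zero element means under the parametrization. This is a cosmetic difference, not a different strategy.
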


\begin{proof}
  The implication \ref{lem:unramified-iii}$\Rightarrow$\ref{lem:unramified-i} is obvious.
  The implications \ref{lem:unramified-i}$\Rightarrow$\ref{lem:unramified-ii} and \ref{lem:unramified-ii}$\Rightarrow$\ref{lem:unramified-iii} follow from \Cref{cor:small-v} with $v=1$ and $v \rightarrow 0^+$, respectively.
\end{proof}

\begin{proposition}
  \label{prop:precise-slightly-ramified}
  For any $m \geq 0$, the following are equivalent:
  \begin{enumroman}
    \item
      $\lastjump(D) < 1 + p^{-m}$
    \item
      $\lastjump(D) \leq 1 + p^{-(m+1)}$
    \item
      \label{prop:small-v-iii}
      $D_a=0$ for all $a\geq2$, $pD_1=0$, and $[\sigma^i D_1, D_1] = 0$ for all $0 < i \leq m$.
  \end{enumroman}
\end{proposition}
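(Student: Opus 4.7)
The plan is to deduce both (i)~$\iff$~(iii) and (ii)~$\iff$~(iii) from a direct application of \Cref{cor:small-v}, combined with \Cref{thm:lastjump-Jv} (via \Cref{cor:irrelevance-D0}), which identifies property $J(v)$ with the condition $\lastjump(D)<v$. Since $1+p^{-m}\leq 2\leq p$ for all $m\geq 0$, we lie in the regime where \Cref{cor:small-v} is available.

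First, I will analyze what property $J(v)$ amounts to for $v$ in the half-open interval $(1+p^{-(m+1)},\, 1+p^{-m}]$. The key observation is that the only $a\in\notp$ with $a<v$ is $a=1$, which trivializes the sums appearing in \Cref{cor:small-v}. Concretely, \Cref{eq:small-v-1} reduces to $p\sigma(D_1)=0$ (the right-hand sum is empty since $1+1=2\neq p$), equivalently $pD_1=0$ as $\sigma$ is an automorphism of $W(\kappa)$. \Cref{eq:small-v-2} forces $D_a=0$ for all $a\in\notp$ with $a\geq 2$: for $a=2$ the lone term involves the vanishing commutator $[D_1,D_1]$, and for $a\geq 3$ the sum is empty. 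Finally, \Cref{eq:small-v-3} (vacuous when $v$ is an integer) yields $[\sigma^i D_1, D_1]=0$ for $0<i\leq m$, since the condition $p^{-i}\geq v-1$ combined with $v-1\in(p^{-(m+1)},\,p^{-m}]$ amounts exactly to $i\leq m$. These are precisely the conditions in~(iii).

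The equivalence (i)~$\iff$~(iii) then follows by specializing $v=1+p^{-m}$ and invoking \Cref{thm:lastjump-Jv}. For (ii)~$\iff$~(iii), I will use that $\lastjump(D)\leq 1+p^{-(m+1)}$ if and only if $\lastjump(D)<v$ for every $v>1+p^{-(m+1)}$, i.e., if and only if $D$ satisfies $J(v)$ for every such $v$. By the monotonicity of $J$ (which follows from the same equivalence, since $\lastjump(D)<v$ implies $\lastjump(D)<v'$ for $v'\geq v$), this reduces to checking $J(v)$ for a single $v$ slightly above $1+p^{-(m+1)}$, which by the analysis above is equivalent to~(iii).

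No step here is a real obstacle; the only mild subtlety is the degenerate case $m=0$, where $v=1+p^{-m}=2$ is an integer so \Cref{eq:small-v-3} imposes no conditions, consistent with the fact that (iii) for $m=0$ contains no commutator conditions (the index set $\{0<i\leq 0\}$ being empty).
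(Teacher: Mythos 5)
Your proof is correct and follows essentially the same route as the paper: invoke \Cref{cor:small-v} (whose applicability follows from $1+p^{-m}\leq 2\leq p$), observe that for $v\in(1,2]$ the only index $a\in\notp$ with $a<v$ is $a=1$ so that all commutator sums either collapse to $[D_1,D_1]=0$ or are empty, identify \Cref{eq:small-v-1} with $pD_1=0$, \Cref{eq:small-v-2} with $D_b=0$ for $b\geq2$, and \Cref{eq:small-v-3} with the commutation conditions for $0<i\leq m$, then conclude by noting the resulting conditions are constant over the interval $v\in(1+p^{-(m+1)},1+p^{-m}]$. The only slight blemish is the phrasing of the (ii)$\iff$(iii) step: monotonicity of $J$ alone does not reduce the quantifier to a single $v$ slightly above the endpoint (it only gives $J(v')$ for $v'\geq v$), but since you have shown $J(v)$ is \emph{independent of $v$} on the whole interval and equivalent to~(iii), the intended conclusion still holds; the paper phrases this more directly as ``for any $v$ in $(1+p^{-(m+1)},1+p^{-m}]$, (iii) is equivalent to $\lastjump(D)<v$.''
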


\begin{proof}
  Apply \Cref{cor:small-v}.
  For any $v \in (1,2]$, the inequalities $1 \leq a_1, a_2 < v$ imply that $a_1=a_2=1$, and in particular commutators $[D_{a_1}, D_{a_2}]$ vanish.
  Therefore, \Cref{eq:small-v-1} is equivalent to $p D_1 = 0$, and \Cref{eq:small-v-2} is equivalent to $D_b = 0$ for $b \geq 2$.
  Finally, \Cref{eq:small-v-3} amounts to $[\sigma^i D_1, D_1]=0$ for all $i > 0$ such that $p^{-i} \geq v-1$.
  Thus, for any $v \in (1+p^{-(m+1)}, 1+p^{-m}]$, \ref{prop:small-v-iii} is equivalent to $\lastjump(D) < v$, yielding the result.
\end{proof}

\begin{remark}
  \label{rk:amk}
  Let $m \geq 0$.
  For an element $D \in \fg \otimes \cD$, condition \ref{prop:small-v-iii} of \Cref{prop:precise-slightly-ramified} means that~$D$ is of the form $D_1 \tilde\pi^{-1}$ where $D_1$ is an element of $\fg[p]\otimes W(\kappa) = \fg[p] \otimes_{\F_p} \kappa$ whose projection~$x$ to $(\fg[p]/Z(\fg[p])) \otimes_{\F_p} \kappa$ belongs to the set
  \[
    A_m(\kappa)
    \coloneqq
    \suchthat{
      x \in (\fg[p]/Z(\fg[p])) \otimes \kappa
    }{
      [\sigma^i(x), x] = 0 \textnormal{ for all } 0 < i \leq m
    }.
  \]
  In particular, \Cref{prop:precise-slightly-ramified} implies the following equality:
  \[
    \cardsuchthat{
      D \in \fg \otimes \cD
    }{
      \lastjump(D) < 1+p^{-m}
    }
    =
    \underbrace{
      |A_m(\kappa)|
    }_{\textnormal{choices of } x}
    \ 
    \cdot
    \underbrace{
      |Z(\fg[p]) \otimes \kappa|.
    }_{\textnormal{choices of } D_1 \textnormal{ once } x \textnormal{ is fixed}}
  \]
\end{remark}

From \Cref{prop:precise-slightly-ramified}, we deduce the following characterization, which directly implies \iref{thm:local-counting}{item:count-least-ramified} (we have $|\fg[p] \otimes_{\Z_p} W(\kappa)| = |\fg[p] \otimes_{\F_p} \kappa| = |\kappa|^r$).

\begin{corollary}[Slightly ramified extensions]
  \label{cor:slightly-ramified}
  The following are equivalent:
  \begin{enumroman}
    \item
      \label{prop:slightly-ramified-i}
      $\lastjump(D)<2$.
    \item
      $\lastjump(D)\leq M$.
    \item
      \label{prop:slightly-ramified-iii}
      $D_a=0$ for all $a\geq2$, and $pD_1=0$.
  \end{enumroman}
\end{corollary}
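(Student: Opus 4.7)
The corollary should follow essentially as an immediate consequence of \Cref{prop:precise-slightly-ramified}, with one extra observation needed in the abelian case. My plan is as follows.

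First, I would apply \Cref{prop:precise-slightly-ramified} with $m = 0$. In this case the commutator condition ``$[\sigma^i D_1, D_1] = 0$ for all $0 < i \leq m$'' is vacuous, so the proposition gives the chain of equivalences
\[
  \lastjump(D) < 2
  \;\iff\;
  \lastjump(D) \leq 1 + p^{-1}
  \;\iff\;
  \big(D_a = 0 \text{ for all } a \geq 2 \text{ and } pD_1 = 0\big).
\]
This already proves the equivalence (i)$\iff$(iii), and also proves (ii)$\iff$(iii) in the non-abelian case, where $M = 1 + p^{-1}$ by definition.

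It remains to address the abelian case $M = 1$, where I need to show that (iii) actually forces $\lastjump(D) \leq 1$ (rather than merely $\lastjump(D) \leq 1 + p^{-1}$). Assuming (iii), the element $D_1$ lies in $\fg[p] \otimes W(\kappa)$ since $pD_1 = 0$. Because $\fg[p]$ is abelian (and $\sigma$ preserves $\fg[p]$), the commutators $[\sigma^i D_1, D_1]$ vanish for every $i > 0$. Hence (iii) implies condition~(iii) of \Cref{prop:precise-slightly-ramified} for every $m \geq 0$, from which $\lastjump(D) < 1 + p^{-m}$ for every $m \geq 0$, and therefore
\[
  \lastjump(D) \;\leq\; \inf_{m \geq 0} \big(1 + p^{-m}\big) \;=\; 1 \;=\; M.
\]
Conversely, (ii)$\Rightarrow$(i) is immediate since $M < 2$ in both cases.

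No step is truly an obstacle here; the content is packaged into \Cref{prop:precise-slightly-ramified}. The only thing worth being careful about is the definition of $M$: I would structure the proof by first deriving (i)$\iff$(iii) (which is case-free), then distinguishing the two cases to prove the equivalence with (ii), since the role of the abelianness of $\fg[p]$ enters only through upgrading the bound $\lastjump(D) \leq 1 + p^{-1}$ to $\lastjump(D) \leq 1$ via the vanishing of all the Frobenius-twisted self-commutators of $D_1$.
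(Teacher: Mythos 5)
Your proof is correct and takes essentially the same route as the paper: both reduce the non-abelian case to the $m=0$ case of \Cref{prop:precise-slightly-ramified}, and both handle the abelian case by observing that $pD_1 = 0$ forces $D_1 \in \fg[p]\otimes W(\kappa)$, so the abelianness of $\fg[p]$ kills all the Frobenius-twisted self-commutators and upgrades the bound $\lastjump(D) \leq 1+p^{-1}$ to $\lastjump(D) \leq 1$.
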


\begin{proof}
  If $\fg[p]$ is non-abelian, this is simply the case $m=0$ of \Cref{prop:precise-slightly-ramified}.
  Assume now that~$\fg[p]$ is abelian.
  Then, when $pD_1=0$, all commutators $[\sigma^i D_1, D_1]$ vanish, making condition~\ref{prop:small-v-iii} of \Cref{prop:precise-slightly-ramified} independent of the value of $m$ (it matches condition~\ref{prop:slightly-ramified-iii} here).
  It remains only to observe that $\lastjump(D) < 2$ implies~\ref{prop:small-v-iii} and hence implies $\lastjump(D) < 1+p^{-m}$ for all $m \geq 0$, meaning that $\lastjump(D) \leq 1$.
\end{proof}

Finally, we generalize the implication \ref{prop:slightly-ramified-i} $\Rightarrow$ \ref{prop:slightly-ramified-iii} of \Cref{prop:precise-slightly-ramified} to slightly larger values of the last jump:

\begin{proposition}
  \label{prop:cor-l-plus}
  Let $1\leq l\leq p-1$ and $m\geq1$.
  \begin{enumroman}
    \item\label{item:cor-l-plus-1}
      If $\lastjump(D) < l + p^{-m}$, then $[\sigma^i D_l,D_a] = 0$ for all $1\leq i\leq m$ and $1\leq a\leq l$.
    \item\label{item:cor-l-plus-l}
      If $\lastjump(D) < l + lp^{-m}$, then $[\sigma^i D_l,D_a] = 0$ for all $1\leq i\leq m-1$ and $1\leq a\leq l-1$ and $[\sigma^i D_l,D_l] = 0$ for all $1\leq i\leq m$.
  \end{enumroman}
\end{proposition}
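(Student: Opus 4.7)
The plan is to derive both parts from \Cref{thm:lastjump-Jv}: the hypothesis on $\lastjump(D)$ implies that $D$ satisfies property $J(v)$ for the appropriate threshold $v$ (namely $v \coloneqq l + p^{-m}$ for part~\ref{item:cor-l-plus-1}, and $v \coloneqq l + lp^{-m}$ for part~\ref{item:cor-l-plus-l}). I then plan to evaluate \Cref{eq:cF-2-noninteger-nu} at carefully chosen pairs $(b, i)$ so that the sum on the right-hand side collapses to a single commutator, yielding the desired vanishing after multiplying by the unit $l^{-1} \in \Z_p$ (recall $1 \leq l \leq p-1$).

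For part~\ref{item:cor-l-plus-1}, given $1 \leq a \leq l$ and $1 \leq i \leq m$, I would take $b \coloneqq lp^i + a$. Since $1 \leq a \leq l \leq p-1$ one has $b \equiv a \not\equiv 0 \pmod p$, so $b \in \notp$, and $bp^{-i} = l + ap^{-i} \geq l + p^{-m} = v$, so \Cref{eq:cF-2-noninteger-nu} applies. For part~\ref{item:cor-l-plus-l}, the same choice $b \coloneqq lp^i + a$ works when $1 \leq a \leq l-1$ and $1 \leq i \leq m-1$, because the condition $bp^{-i} \geq v$ reduces to $ap^{m-i} \geq l$, which holds since $p^{m-i} \geq p \geq l+1$; for the exceptional case $a = l$ with $1 \leq i \leq m$, I would instead take $b \coloneqq l(p^i+1) = lp^i + l$, which still lies in $\notp$ (as $p^i + 1 \equiv 1 \pmod p$) and satisfies $bp^{-i} = l + lp^{-i} \geq v$.

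The main task is then a short uniqueness calculation: in each case, I need to check that the only decomposition $b = a_1 p^{n+i} + a_2 p^n$ with $a_1, a_2 \in \notp$ and $a_1 p^n, a_2 p^n < v$ is the ``obvious'' one $(a_1, a_2, n) = (l, a, 0)$ (or $(l, l, 0)$ in the exceptional case). The bound $b < p \cdot p^i$ immediately forces $n = 0$, and the constraint $a_1, a_2 < v < l+1$ forces $a_1, a_2 \leq l$; then from the relation $(l - a_1) p^i = a_2 - a$, any choice $a_1 < l$ produces $a_2 \geq p^i + a > l$, contradicting $a_2 \leq l$, while $a_1 > l$ is directly excluded. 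This leaves a single term in \Cref{eq:cF-2-noninteger-nu} of the form $l \cdot [\sigma^i(D_l), D_a] = 0$ (respectively $l \cdot [\sigma^i(D_l), D_l] = 0$), from which the claim follows by cancelling the unit $l$.

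The only (mild) subtlety I foresee is that the bound $l \leq p - 1$ is doing essential work in the uniqueness argument: it is precisely what keeps the constraints $a_1, a_2 \leq l < p$ tight enough to exclude any competing decomposition. This explains the appearance of the hypothesis $l \leq p - 1$ in the statement, and suggests that extending the result to larger $l$ would require genuinely handling multiple contributing terms rather than isolating one.
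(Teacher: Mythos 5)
Your proof is correct and follows essentially the same route as the paper: both reduce to \Cref{thm:lastjump-Jv} and then exploit \Cref{eq:cF-2-noninteger-nu} at suitable pairs $(b,i)$, the key numerical checks being exactly the inequalities $ap^{-i}\geq p^{-m}$ (resp.\ $ap^{-i}\geq lp^{-m}$ and $lp^{-i}\geq lp^{-m}$). The only difference is organizational: the paper cites \Cref{cor:small-v}, which already performs the collapse of \Cref{eq:cF-2-noninteger-nu} to a single-term identity for $v\leq p$ (including the uniqueness-of-decomposition argument you redo by hand), so the paper's proof is just the verification of the hypotheses of \Cref{eq:small-v-3}; your version inlines that uniqueness computation instead of citing it. (Minor remark: in your ``short uniqueness calculation'' the decomposition should read $b = a_1p^{n+i} + a_2$, not $a_1p^{n+i}+a_2p^n$, matching \Cref{eq:cF-2-noninteger-nu}; your subsequent reasoning already uses the correct form once $n=0$ is forced.)
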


\begin{proof}
  ~
  \begin{enumroman}
    \item
      For any $1\leq i\leq m$ and $1\leq a\leq l$, we have $ap^{-i} \geq p^{-m}$.
      Hence, \Cref{eq:small-v-3} implies $[\sigma^i D_l,D_a] = 0$.
    \item
      This follows in the same way since $ap^{-i}\geq lp^{-m}$ for $1\leq i\leq m-1$ and $1\leq a\leq l-1$ and since $lp^{-i}\geq lp^{-m}$ for $1\leq i\leq m$.
    \qedhere
  \end{enumroman}
\end{proof}

\subsection{Equations in Witt vectors}
\label{subsn:eq-witt-vec}

Before we start proving points \ref{item:count-more-ramified-integer} and \ref{item:count-more-ramified-noninteger} of \Cref{thm:local-counting}, we advise the reader to first read the proof in the case where~$\fg$ has exponent~$p$, which is given in \Cref{prop:large-v-expp}: this special case is much easier to deal with, and gives a simple illustration of the rough idea of our general proof --- namely that fixing $D_b$ for the smallest few values of $b$ essentially determines $D_b$ for all $b$.

Let $r \coloneqq \dim_{\F_p}(\fg[p])$.
The finite $\Z_p$-module $\fg$ decomposes into a product:
\[
  \fg
  \simeq
  \prod_{i=1}^r
    \Z/p^{n_i}\Z
  \quad\quad
  \textnormal{as $\Z_p$-modules.}
\]
We have $\fg\otimes W(\kappa)\simeq\prod_i W_{n_i}(\kappa)$ as $\Z_p$-modules (see also \Cref{lem:size-tensor}).
If $X$ is an element of $\fg \otimes W(\kappa)$, $1\leq i\leq r$, and $0 \leq j < n_i$, we denote by $X^{(ij)} \in \kappa$ the $j$-th coordinate of the $i$-th Witt vector (in $W_{n_i}(\kappa)$) associated to $X$.

Let $v \in \R_{>0}$.
According to \Cref{thm:lastjump-Jv}, equations characterizing elements $D \in \fg \otimes \cD$ such that $\lastjump(D) < v$ are given in \Cref{def:property-Jv}.
We show below that these equations are polynomial equations%
\footnote{
  As equations over the Witt vectors, these equations are not polynomial as they involve the absolute Frobenius endomorphism~$\sigma$, which is why we rephrase them as equations over $\kappa$.
  They are, however, ``algebraic difference equations'' (in the sense of \cite{belair,langweiltordu,hils2024langweiltypeestimatesfinite}) over the difference ring $(W(\kappa), \sigma)$.
}
in the indeterminates $D_a^{(ij)} \in \kappa$, where the triple $(a,i,j)$ belongs to the countable set
\[
  \Omega
  \coloneqq
  \suchthat{
    (a,i,j)
  }{
    a\in\notp,\ 1\leq i\leq r,\ 0\leq j<n_i
  }.
\]
Moreover, these equations do not depend on anything besides the Lie algebra $\fg$ and the real number $v$ (they depend neither on $\fF$ nor on $\kappa$, and have coefficients in $\F_p$).

Consider the following polynomial ring in countably many variables:
\[
  \cR \coloneq \F_p[(D_a^{(ij)})_{(a,i,j) \in \Omega}]
\]
and its perfect closure
\[
  \cR^\perf = \F_p[((D_a^{(ij)})^{p^{-\infty}})_{(a,i,j)\in\Omega}].
\]
Since $\prod_i W_{n_i}(\cR)$ is the image of the Lie algebra $\fg\otimes W(\cR)$ in $\fg\otimes W(\cR^\perf) = \prod_i W_{n_i}(\cR^\perf)$, it naturally inherits a Lie algebra structure.
Define $D_a$ to be the element of $\prod_i W_{n_i}(\cR)$ for which the $j$-th coordinate of the $i$-th Witt vector is the indeterminate $D_a^{(ij)}$.
Then, each side of the \Cref{eq:cF-2-integer-nu,eq:cF-2-noninteger-nu} defining property $J(v)$ (for given $b$ and $i$) can be interpreted as an element of~$\prod_i W_{n_i}(\cR)$.
Equating the coordinates of these elements for all admissible values of~$b$ and~$i$, we obtain (polynomial) equations in the indeterminates $D_a^{(ij)}$.
We let $\cI_v$ be the ideal of~$\cR$ associated to these equations.

By construction, elements $D \in \fg\otimes\cD$ such that $\lastjump(D)<v$ are in one-to-one correspondence with the solutions to these polynomial equations in $\kappa^\Omega$, i.e., with the $\kappa$-points of the affine $\F_p$-scheme $\J_v \coloneq \Spec(\cR/\cI_v)$.
(That all but finitely many coordinates vanish actually follows from the equations, cf.~\iref{prop:consequences-Jv}{prop:consequences-Jv-iv}.)

The following remark is not used in this paper, but it suggests a moduli space interpretation:

\begin{remark}
  \label{rk:mod-space}
  Let $\J^0_v = \mathbb A^{\log_p\!|\fg|}_{\F_p} \times \J_v$ (reincorporating the variables $D_0^{(ij)}$).
  By \Cref{thm:lastjump-Jv}, there is a bijection~$\J^0_v(\kappa) \simeq \suchthat{D \in \fg \otimes \cD^0}{\lastjump(D) < v}$.
  Up to some action, the space~$\J^0_v$
  thus seems to admit an interpretation as a moduli space parametrizing ramified $G$-covers of the one-dimensional infinitesimal neighborhood of a single point at which the ``depth'' of wild ramification is $<v$.
  This bears some analogy to Hurwitz spaces, which parametrize tamely ramified covers of the line with a fixed number of branch points.

\end{remark}

\noindent\textbf{Grading.}~
We define a $\Q_{\geq0}$-grading on the $\F_p$-algebra $\cR^\perf$ by assigning the degree $p^{-\mu_v(a)+j}$ to the generator~$D_a^{(ij)}$.
Note that~$\mu_v(a)$ is bounded, so the degrees of monomials in $\cR$ form a discrete (hence well-founded) subset of $\R_{\geq0}$.
The following general remark shows how the grading on~$\cR^\perf$ induces a partial grading on $\fg\otimes W(\cR^\perf)$, for which $D_a\in\fg\otimes W(\cR^\perf)$ is homogeneous of degree $p^{-\mu_v(a)}$:

\begin{remark}
\label{rmk:witt-grading}
  Let $R = \bigoplus_{d\in\Q_{\geq0}} R_d$ be a $\Q_{\geq0}$-graded ring of characteristic~$p$.
  For any $d\in\Q_{\geq0}$, let $V_d\subseteq W(R)$ be the set of Witt vectors over $R$ whose $j$-th coordinate is homogeneous of degree $dp^j$ for all $j$:
  \[
    V_d
    \coloneqq
    \suchthat{
      (x_0,x_1,\dots) \in W(R)
     }{
      x_j \in R_{dp^j} \textnormal{ for all }j\geq0
    }.
  \]
  Looking at the definition of addition and multiplication of Witt vectors, one verifies that each~$V_d$ is a $\Z_p$-submodule of $W(R)$, that if $(x_0,x_1,\dots)\in\bigoplus_{d'\leq d}V_{d'}$, then $x_j\in \bigoplus_{d'\leq d} R_{d'p^j}$ for all $j\geq0$, and that $V_d\cdot V_e\subseteq V_{d+e}$.
  Moreover, if $\sigma$ is the Frobenius endomorphism of $R$, then $\sigma(V_d)\subseteq V_{dp}$.
  If $\fg$ is a Lie $\Z_p$-algebra, we obtain $\Z_p$-submodules $\fg\otimes V_d$ of $\fg\otimes W(R)$ satisfying $[\fg\otimes V_d, \fg\otimes V_e] \subseteq \fg\otimes V_{d+e}$.
  We call the elements of $V_d$ or of $\fg\otimes V_d$ \emph{homogeneous of degree $d$}.
\end{remark}

\subsection{Bounding the number of extensions with large last jump.}
\label{subsn:counting-locext}
~

\npar{Proving \iref{thm:local-counting}{item:count-more-ramified-integer}.}
Fix an integer $l \geq 0$, and define the following finite subset of $\Omega$:
\[
  \Omega^\bullet_l
  \coloneqq
  \suchthat{
    (a,i,j) \in \Omega
  }{
    n_i - \mu_{l+1}(a) \leq j
  },
\]
whose cardinality satisfies:
\begin{equation}
  \label{eqn:size-omega-bullet}
  |\Omega^\bullet_l|
  =
  \sum_{a\in\notp}
    \sum_{i=1}^r
      \min(n_i, \, \mu_{l+1}(a))
  \leq
  r \sum_{a\in\notp}\mu_{l+1}(a)
  \underset{\textnormal{\Cref{lem:sum-mu}}}{=\joinrel=}
  r l.
\end{equation}
We shall show that, if $D \in \fg \otimes \cD$ is such that $\lastjump(D) < l+1$, then the coordinates $D_a^{(ij)}$ for triples $(a,i,j)\in\Omega^\bullet_l$ determine $D$ up to at most $|G|^{2l}$ choices.
To this end, we introduce the following polynomial ring in $|\Omega^\bullet_l|$ variables:
\[
\cR^\bullet_l \coloneq \F_p[(D_a^{(ij)})_{(a,i,j) \in \Omega^\bullet_l}].
\]

\begin{proposition}
  \label{prop:bound-rank}
  The $k$-algebra $\cR/\cI_{l+1}$ is a finite $\cR^\bullet_l$-module of rank at most $|G|^{2l}$.
\end{proposition}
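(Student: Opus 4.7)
The plan is to extract from \Cref{eq:cF-2-integer-nu} an ``integral equation'' for each non-bullet variable and then use a weighted-degree induction to reduce $\cR/\cI_{l+1}$ to a finite $\cR^\bullet_l$-module. First I would unpack \Cref{eq:cF-2-integer-nu} coordinate by coordinate. Using $p = \sigma \circ \Ver = \Ver \circ \sigma$ on Witt vectors, the $j'$-th coordinate of the $i$-th component of $p^{\mu_{l+1}(b)}\sigma^{\mu_{l+1}(b)}(D_b)$ equals $(D_b^{(i,\,j'-\mu_{l+1}(b))})^{p^{2\mu_{l+1}(b)}}$ when $\mu_{l+1}(b) \le j' < n_i$, and vanishes for $j' < \mu_{l+1}(b)$. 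Setting $k = j' - \mu_{l+1}(b)$, the range $\mu_{l+1}(b) \le j' < n_i$ is precisely $0 \le k < n_i - \mu_{l+1}(b)$, i.e., $(b,i,k) \notin \Omega^\bullet_l$. Thus, for each non-bullet triple, the $(i,j')$-coordinate of \Cref{eq:cF-2-integer-nu} yields a relation
\[
  (D_b^{(i,k)})^{p^{2\mu_{l+1}(b)}} \equiv P_{b,i,k} \pmod{\cI_{l+1}}
\]
for some polynomial $P_{b,i,k} \in \cR$.

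The key step is to bound the weighted degree of $P_{b,i,k}$ under the grading $\deg(D_a^{(ij)}) = p^{j - \mu_{l+1}(a)}$. By \Cref{rmk:witt-grading}, $D_a \in V_{p^{-\mu_{l+1}(a)}}$ and $\sigma^n(D_a) \in V_{p^{n-\mu_{l+1}(a)}}$; the Lie bracket sends $V_{d_1} \times V_{d_2}$ into $V_{d_1+d_2}$, and multiplication by the integer $a_1 p^{n_1}$ preserves the Witt-degree. The constraints $a_1 p^{n_1}, a_2 p^{n_1} < l+1$ force $n_1 < \mu_{l+1}(a_1)$ and $n_2 \le n_1 < \mu_{l+1}(a_2)$, so each summand on the RHS of \Cref{eq:cF-2-integer-nu} lies in $V_d$ for some $d \le 2p^{-1} < 1$ (this uses $p > 2$). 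Its $(i,j')$-coordinate thus has weighted degree at most $d\cdot p^{j'} < p^{j'}$, whence $P_{b,i,k}$ has weighted degree strictly smaller than the weighted degree $p^{k+\mu_{l+1}(b)}$ of $(D_b^{(i,k)})^{p^{2\mu_{l+1}(b)}}$.

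By induction on weighted degree, every element of $\cR/\cI_{l+1}$ then reduces to an $\cR^\bullet_l$-linear combination of monomials $\prod_{(b,i,k)\notin\Omega^\bullet_l}(D_b^{(i,k)})^{e_{b,i,k}}$ with $0 \le e_{b,i,k} < p^{2\mu_{l+1}(b)}$: whenever a monomial contains $(D_b^{(i,k)})^{p^{2\mu_{l+1}(b)}}$ for some non-bullet $(b,i,k)$, the above relation rewrites it as a sum of monomials of strictly smaller weighted degree, and the induction terminates because weighted degrees lie in a well-ordered discrete subset of $\Q_{\ge 0}$. The number of such generating monomials is
\[
  \prod_{(b,i,k)\notin\Omega^\bullet_l} p^{2\mu_{l+1}(b)}
  \;\le\; p^{2(\sum_i n_i)(\sum_{b \in \notp} \mu_{l+1}(b))}
  \;=\; p^{2 l \log_p |G|}
  \;=\; |G|^{2l},
\]
using the bound $(n_i - \mu_{l+1}(b))_+\, \mu_{l+1}(b) \le n_i\, \mu_{l+1}(b)$ together with \Cref{lem:sum-mu}. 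The main subtlety is the grading computation of the second paragraph, which crucially relies on the hypothesis $p > 2$.
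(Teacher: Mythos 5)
Your proposal is correct and follows essentially the same route as the paper: you introduce the same weighted grading $\deg D_a^{(ij)} = p^{j-\mu_{l+1}(a)}$, observe that the left-hand side of \Cref{eq:cF-2-integer-nu} is homogeneous of degree $1$ while the right-hand side has degree at most $2/p < 1$ (using $p>2$), extract the coordinate relations $(D_b^{(i,k)})^{p^{2\mu_{l+1}(b)}} \equiv P_{b,i,k}$ as rewriting rules, and use well-foundedness of the degree set to conclude, with the same monomial count. The only difference is cosmetic (you index by $k = j' - \mu_{l+1}(b)$ rather than by $j$ and $j+\mu_{l+1}(a)$).
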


\begin{proof}
  We are going to show that every polynomial in $\cR$ is congruent, modulo $\cI_{l+1}$, to a linear combination with coefficients in $\cR^\bullet_l$ of monomials of the form
  \begin{equation}
    \label{eqn:form-of-final-monomials}
    \prod_{(a,i,j)\in\Omega\setminus\Omega^\bullet_l}
      \left(D_a^{(i j)}\right)^{e_{a i j}}
    \qquad\qquad
    \textnormal{where }
    0\leq e_{a i j} < p^{2 \mu_{l+1}(a)}.
  \end{equation}
  As the number of such monomials is
  $
    \prod_{(a,i,j)\in\Omega\setminus\Omega^\bullet_l}
      p^{2 \mu_{l+1}(a)}
  $,
  which by \Cref{lem:sum-mu} is at most
  $
    p^{2l\sum_i n_i}
    =
    |G|^{2l}
  $,
  this implies the result.

  Recall that $D_a$ is homogeneous of degree $p^{-\mu_{l+1}(a)}$ for any $a$, and recall the properties listed in \Cref{rmk:witt-grading}.
  We see that the left-hand side $p^{\mu_{l+1}(b)}\sigma^{\mu_{l+1}(b)}(D_b)$ of \Cref{eq:cF-2-integer-nu} is homogeneous of degree $p^{\mu_{l+1}(b)}\cdot p^{-\mu_{l+1}(b)}=1$.
  (The multiplication by $p^{\mu_{l+1}(b)}$ does not change the degree. The Frobenius homomorphism $\sigma^{\mu_{l+1}(b)}$ multiplies the degree by $p^{\mu_{l+1}(b)}$.)
  The right-hand side is a sum of terms which are homogeneous of degree at most~$\frac2p<1$.
  ($\sigma^{n_1}(D_{a_1})$ is homogeneous of degree $p^{n_1}\cdot p^{-\mu_{l+1}(a_1)}$, which is $\leq\frac1p$ as $a_1 p^{n_1} < l+1$, and similarly for $\sigma^{n_2}(D_{a_2})$, so each summand is homogeneous of degree at most~$\frac2p$.)


  Assume now that $(a,i,j)$ belongs to $\Omega \setminus \Omega^\bullet_l$, i.e., that $j + \mu_{l+1}(a) < n_i$, and consider the $(i, \, j + \mu_{l+1}(a))$-coordinate of \Cref{eq:cF-2-integer-nu} for $b=a$.
  Recall that multiplication by $p$ of Witt vectors coincides with ${\Ver} \circ \sigma$.
  The left-hand side
  \begin{align*}
    \Big(
      p^{\mu_{l+1}(a)}
      \sigma^{\mu_{l+1}(a)}D_a
    \Big)^{(i, \, j + \mu_{l+1}(a))}
    &=
    \Big(
      \Ver^{\mu_{l+1}(a)}
      \sigma^{2\mu_{l+1}(a)}
      D_a
    \Big)^{(i, \, j + \mu_{l+1}(a))} \\
    &=
    \Big(
      \sigma^{2\mu_{l+1}(a)}
      D_a
    \Big)^{(i j)}
    =
    \left(
      D_a^{(i j)}
    \right)^{p^{2\mu_{l+1}(a)}}
  \end{align*}
  has degree $p^{2 \mu_{l+1}(a)} \cdot p^{-\mu_{l+1}(a)+j} = p^{j+\mu_{l+1}(a)}$, and the right-hand side has strictly smaller degree by the previous paragraph.
  For all triples $(a,i,j) \in \Omega \setminus \Omega^\bullet_l$, interpret the $(i, \, j+\mu_{l+1}(a))$-coordinate of \Cref{eq:cF-2-integer-nu} as a rewriting rule (from left to right): any monomial in $\cR$ which is divisible by
  $
    \left(
      D_a^{(i j)}
    \right)^{p^{2\mu_{l+1}(a)}}
  $
  can be rewritten by replacing this factor with the right-hand side.
  All the rules obtained for all triples $(a,i,j) \in \Omega \setminus \Omega^\bullet_l$ define a rewriting system on $\cR$.
  The strict inequality of degrees shown above together with the well-foundedness of the set of possible degrees of monomials implies that this rewriting system is strongly normalizing.

  Consider an arbitrary $P \in \cR$, and let $\tilde P \in \cR$ be a normal form of $P$, obtained by rewriting~$P$ arbitrarily until it cannot be rewritten any further.
  We have $P \equiv \tilde P \bmod {\cI_{l+1}}$ by definition of~$\cI_{l+1}$ and of the rewriting system.
  By definition, $\tilde P$ cannot be rewritten and thus does not contain any monomial divisible by $\left(D_a^{(ij)}\right)^{p^{2 \mu_{l+1}(a)}}$ for any $(a,i,j) \in \Omega\setminus\Omega^\bullet_l$.
  This means that $\tilde P$ belongs to the $\cR^\bullet_l$-module spanned by monomials of the form given in \Cref{eqn:form-of-final-monomials}, concluding the proof.
\end{proof}

\Cref{prop:bound-rank} yields the following corollary, which implies \iref{thm:local-counting}{item:count-more-ramified-integer}:

\begin{corollary}
  \label{cor:counting-bounded-last-jump}
  For every tuple $(d_{a,i,j})_{(a,i,j)\in\Omega^\bullet_l} \in \kappa^{|\Omega^\bullet_l|}$, we have:
  \[
    \cardsuchthat{
      D \in \fg \otimes \cD
    }{
      \begin{matrix}
        D_a^{(ij)} = d_{a,i,j} \textnormal{ for all } (a,i,j) \in \Omega^\bullet_l \\
        \lastjump(D) < l+1
      \end{matrix}
    }
    \leq
    |G|^{2l}.
  \]
  In particular, we have \iref{thm:local-counting}{item:count-more-ramified-integer} with $E = |G|^2$:
  \[
    \cardsuchthat{
      D \in \fg \otimes \cD
    }{
      \lastjump(D) < l+1
    }
    \leq
    |G|^{2l}
    |\kappa|^{|\Omega^\bullet_l|}
    \leq
    |G|^{2l}
    |\kappa|^{r l}.
  \]
\end{corollary}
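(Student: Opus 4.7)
The plan is to read off the corollary directly from \Cref{prop:bound-rank} by interpreting ``finite $\cR^\bullet_l$-module of rank at most $|G|^{2l}$'' in terms of $\kappa$-points. Recall that, by the construction preceding that proposition, elements $D \in \fg \otimes \cD$ with $\lastjump(D) < l+1$ are in one-to-one correspondence with $\F_p$-algebra homomorphisms $\varphi : \cR/\cI_{l+1} \to \kappa$, namely $\varphi \mapsto (\varphi(D_a^{(ij)}))_{(a,i,j) \in \Omega}$. The natural inclusion $\cR^\bullet_l \hookrightarrow \cR/\cI_{l+1}$ means that specifying a tuple $(d_{a,i,j})_{(a,i,j) \in \Omega^\bullet_l} \in \kappa^{|\Omega^\bullet_l|}$ is the same as specifying an $\F_p$-algebra homomorphism $\varphi_0 : \cR^\bullet_l \to \kappa$.

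For the first inequality, I would fix such a $\varphi_0$ and count its extensions to $\cR/\cI_{l+1}$. These extensions are in bijection with the $\kappa$-algebra homomorphisms from the finite $\kappa$-algebra $A \coloneqq (\cR/\cI_{l+1}) \otimes_{\cR^\bullet_l} \kappa$ (with $\cR^\bullet_l$ acting on $\kappa$ via $\varphi_0$) to $\kappa$. By \Cref{prop:bound-rank}, $\cR/\cI_{l+1}$ is generated as an $\cR^\bullet_l$-module by at most $|G|^{2l}$ elements, so $\dim_\kappa A \leq |G|^{2l}$. It is then a standard fact that for any finite-dimensional $\kappa$-algebra $A$, the number of $\kappa$-algebra homomorphisms $A \to \kappa$ is bounded by $\dim_\kappa A$: decompose $A$ into a product of local Artinian $\kappa$-algebras, and observe that each $\kappa$-point picks out a single factor whose residue field equals $\kappa$, contributing at least $1$ to the $\kappa$-dimension. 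This yields the per-tuple bound of $|G|^{2l}$.

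The second inequality follows immediately by summing over the $|\kappa|^{|\Omega^\bullet_l|}$ possible tuples and invoking the estimate $|\Omega^\bullet_l| \leq r l$ already established in~\Cref{eqn:size-omega-bullet}. No step presents a genuine obstacle, as \Cref{prop:bound-rank} has already done the real work; the only subtlety worth spelling out is the elementary algebra fact bounding the number of $\kappa$-points of a finite $\kappa$-algebra by its $\kappa$-dimension, which is what converts the module-theoretic generator count into an actual count of extensions $\varphi$.
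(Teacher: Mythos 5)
Your argument is correct and follows essentially the same route as the paper: invoke \Cref{prop:bound-rank}, reduce to a finite algebra over a field by base change along the point of $\cR^\bullet_l$, and then bound the number of $\kappa$-points by the $\kappa$-dimension via the CRT/Artinian decomposition. The paper isolates this last step as \Cref{lem:bound-size-fiber}, stated in terms of maximal ideals rather than $\kappa$-algebra homomorphisms, but the content is the same.
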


\begin{proof}
  Denote by $f_l : \cR^\bullet_l \to \cR/\cI_{l+1}$ the ring homomorphism obtained as the composition $\cR^\bullet_l \hookrightarrow \cR \twoheadrightarrow \cR/\cI_{l+1}$.
  By \Cref{prop:bound-rank}, the map $f_l$ is finite of degree at most $|G|^{2l}$.
  The tuple~$d_{a,i,j}$ corresponds to a ring homomorphism $d : \cR^\bullet_l \to \kappa$ (induced by $D_a^{(ij)} \mapsto d_{a,i,j}$), and thus to a maximal ideal $\fm \subseteq \cR^\bullet_l$ (namely $\fm \coloneqq \ker(d)$) whose residue field is a subfield of $\kappa$ (the image of~$d$).
  We have:
  \begin{align*}
    &
    \cardsuchthat{
      D \in \fg \otimes \cD
    }{
      \begin{matrix}
        D_a^{(ij)} = d_{a,i,j}
        \textnormal{ for all } (a,i,j) \in \Omega^\bullet_l \\
        \lastjump(D) < l+1
      \end{matrix}
    }
    \\
    = \quad
    &
    \cardsuchthat{
      D : \cR/\cI_{l+1} \to \kappa
    }{
      D \circ f_l = d
    }
    &
    \textnormal{by definition of } \cI_{l+1}
    \\
    = \quad
    &
    \cardsuchthat{
      \fn \textnormal{ maximal ideal of } \cR/\cI_{l+1}
    }{
      \begin{matrix}
        f_l^{-1}(\fn) = \fm \\
        (\cR/\cI_{l+1})/\fn \subseteq \kappa
      \end{matrix}
    }
    \\
    \leq \quad
    &
    \rank_{\cR^\bullet_l} \
      \cR/\cI_{l+1}
      &
      \textnormal{by \Cref{lem:bound-size-fiber} below}
    \\
    \leq \quad
    &
    |G|^{2l}
    &
    \textnormal{by \Cref{prop:bound-rank}.}
    &
    \qedhere
  \end{align*}
\end{proof}

In the proof, we have used the following lemma:

\begin{lemma}
  \label{lem:bound-size-fiber}
  Let $f : A \to B$ be a finite ring homomorphism, and let $\fm$ be a maximal ideal of~$A$.
  Then, the number of maximal ideals of $B$ containing~$f(\fm)$ is at most $\rank_A B$.
\end{lemma}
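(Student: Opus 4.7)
The plan is to reduce the problem to counting maximal ideals in the fiber over $\fm$, and then to bound that count by the dimension of the fiber as a vector space over the residue field $k \coloneq A/\fm$.

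First, I would observe that maximal ideals $\fn$ of $B$ containing $f(\fm)$ correspond bijectively, under the quotient map $B \twoheadrightarrow B/f(\fm)B$, to maximal ideals of the fiber ring $B \otimes_A k \cong B/f(\fm)B$. Since $f$ is finite, $B$ is a finitely generated $A$-module; tensoring any finite $A$-generating family of $B$ with $k$ produces a $k$-spanning family of $B \otimes_A k$. In particular $\dim_k(B \otimes_A k) \leq \rank_A B$, where $\rank_A B$ is interpreted as the minimal number of generators of $B$ as an $A$-module (this interpretation matches the way \Cref{prop:bound-rank} bounds $\cR/\cI_{l+1}$ by exhibiting an explicit generating family over $\cR^\bullet_l$).

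Next, I would use the fact that a finite-dimensional $k$-algebra $R$ has at most $\dim_k R$ maximal ideals. This is a standard consequence of the Chinese Remainder Theorem: if $\fn_1, \ldots, \fn_s$ are distinct maximal ideals of $R$, they are pairwise coprime, so the natural surjection $R \twoheadrightarrow \prod_{i=1}^s R/\fn_i$ gives $\sum_i \dim_k(R/\fn_i) \leq \dim_k R$, and since $\dim_k(R/\fn_i) \geq 1$, we get $s \leq \dim_k R$.

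Combining these two steps yields the desired inequality, with no real obstacles — the argument is entirely standard commutative algebra. The only subtlety worth flagging is ensuring that the notion of $\rank_A B$ used throughout the surrounding text (the minimal size of a generating family over $A$) matches what dominates the fiber dimension; this is immediate from right-exactness of $- \otimes_A k$.
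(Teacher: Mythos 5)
Your proposal is correct and follows essentially the same route as the paper: reduce to the fiber ring $B\otimes_A (A/\fm)$ over the residue field, then bound its number of maximal ideals by its $k$-dimension via the Chinese Remainder Theorem. The only cosmetic difference is that the paper first invokes Artinianness and phrases the CRT step via the Jacobson radical, whereas you apply CRT directly to an arbitrary finite collection of distinct maximal ideals — same argument, same bound.
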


\begin{proof}
  Replacing $A$ by $A/\fm$ and $B$ by $B \otimes_A A/\fm$ (this may only decrease the rank of $B$), we may assume that $A$ is a field and that $\fm = 0$.
  Since $B$ is finite over the field $A$, it is Artinian and hence has finitely many maximal ideals.
  We want to show that~$B$ has at most $\dim_A B$ maximal ideals.
  The Chinese remainder theorem gives an isomorphism:
  \[
    \bigoplus_{\fn \subseteq B \textnormal{ maximal}}
      B/\fn
    \simeq
    B/J(B)
  \]
  where $J(B)$ is the Jacobson radical of $B$.
  We have:
  \[
    |\{\fn \subseteq B \textnormal{ maximal}\}|
    \leq
    \sum_{\fn \subseteq B \textnormal{ maximal}}
      \dim_A (B/\fn)
    =
    \dim_A (B/J(B))
    \leq
    \dim_A B.
    \qedhere
  \]
\end{proof}

\npar{Proving \iref{thm:local-counting}{item:count-more-ramified-noninteger}.}
We now assume that $\fg[p]$ is non-abelian.

\begin{proof}[Proof of \customref{\Cref*{thm:local-counting}~\ref*{item:count-more-ramified-noninteger}\ref*{item:count-more-ramified-large}}{item:count-more-ramified-large}]
  Let $l \geq p$.
  For any $x,y \in \fg[p^2]$, we have $[px,py]=[p^2x,y]=0$, so $p \cdot \fg[p^2]$ is an abelian subalgebra of $\fg[p]$.
  Since $\fg[p]$ is non-abelian, the subalgebra $p \cdot \fg[p^2]$ is strictly contained in $\fg[p]$, which implies that some exponent $n_i$ in the decomposition $\fg\simeq\prod_i\Z/p^{n_i}\Z$ equals $1$.
  Since $l \geq p$, we have $\mu_{l+1}(1) \geq 2$.
  Therefore, the inequality in \Cref{eqn:size-omega-bullet} is strict: we have $|\Omega^\bullet_{l+1}| \leq rl-1$, and \Cref{cor:counting-bounded-last-jump} gives the desired bound.
\end{proof}

\begin{proof}[Proof of \customref{\Cref*{thm:local-counting}~\ref*{item:count-more-ramified-noninteger}\ref*{item:count-more-ramified-small}}{item:count-more-ramified-small}]
  Let $l \in \{1, \ldots, p-1\}$.
  We apply \Cref{cor:small-v} with $v = l+lp^{-1}$.
  For $b \neq l$, \Cref{eq:small-v-1,eq:small-v-2} are identical to the equations obtained for $v = l+1$, which were analyzed above.
  For $b = l$, \Cref{eq:small-v-1} says that $p \sigma(D_l)=0$ (the sum is empty because $a_1, a_2 < v$ implies that $a_1+a_2 \leq 2 l < l p$), so $D_l \in \fg[p]\otimes\kappa$.
  \Cref{eq:small-v-3} says that $[\sigma(D_l),D_l]=0$ (only $i=1$ is possible).

  To restrict the possibilities for $D_l$, we look for an upper bound on the number of elements of $x \in \fg[p] \otimes \kappa$ satisfying $[\sigma(x), x]=0$.
  For $x,y\in \fg[p]\otimes\kappa$, we can interpret $[x,y]=0$ as a system of bilinear polynomial equations in the $r$ coordinates $x_i, y_i \in \kappa$ of $x$ and $y$.
  By assumption, the Lie $\F_p$-algebra~$\fg[p]$ is non-abelian, so one of these polynomial equations is non-trivial.
  Consider any non-zero monomial $c x_i y_j$ in this equation.
  Substituting $x = \sigma(y)$, the resulting polynomial equation (in the $r$ variables $y_i$) has a non-zero monomial $c y_i^p y_j$, so in particular the closed subvariety (of $r$-dimensional affine space) cut out by the equation $[\sigma(y),y]$ has dimension at most $r-1$.
  By \cite[Lemma~1]{langweilig}, the number of solutions $D_l\in \fg[p]\otimes \kappa$ to $[\sigma(D_l),D_l]=0$ is then $\cO(|\kappa|^{r-1})$ (where the implied constant depends only on $\fg[p]$).
  
  Consider one such solution $D_l \in \fg[p]\otimes \kappa$.
  For the other elements $D_b\in \fg\otimes W(\kappa)$ (with $b \neq l$), we reason exactly like in the proof of \iref{thm:local-counting}{item:count-more-ramified-integer}, using \Cref{cor:counting-bounded-last-jump}.
  The analogue of~$\Omega^\bullet_l$ is now given by the set
  $
    \suchthat{
      (a,i,j) \in \Omega
    }{
      a \neq l
      \textnormal{ and }
      n_i - \mu_{l+1}(a) \leq j
    }
  $
  which has cardinality:
  \[
    \sum_{\substack{
      a \neq l\\
      a \in \notp
    }}
      \sum_{i=1}^r
        \min(n_i, \, \mu_{l+1}(a))
    \leq
    r
    \sum_{\substack{
      a \neq l\\
      a \in \notp
    }}
      \mu_{l+1}(a)
    =
    r
    \Big(
      l
      -
      \underbrace{
        \mu_{l+1}(l)
      }_{=1}
    \Big)
    = r (l-1).
  \]
  Finally, we obtain the desired bound on the total number of solutions:
  \begin{align*}
    \cardsuchthat{
      D \in \fg \otimes \cD
    }{
      \lastjump(D) < l(1+p^{-1})
    }
    &=
    \cO\Big(
      \underbrace{
        |\kappa|^{r-1}
      }_{\textnormal{choices of $D_l$}}
      \cdot
      \underbrace{
        E^{l+lp^{-1}-1}
        |\kappa|^{r(l-1)}
      }_{\textnormal{choices of $(D_b)_{b \neq l}$ once $D_l$ is fixed}}
    \Big)
    \\
    &=
    \cO\!\left(
      E^l|\kappa|^{r l-1}
    \right).
    \qedhere
  \end{align*}
\end{proof}

\npar{Refinement of the value of $E$.}
Finally, we note that the bound of \Cref{thm:local-counting} (points~\ref{item:count-more-ramified-integer} and~\ref{item:count-more-ramified-noninteger}) can sometimes%
\footnote{
  We did not manage to find any Lie algebra for which the inequality does not hold with $E=1$.
}
be refined:

\begin{proposition}
  \label{prop:better-bound}
  Assume that there is a Lie subalgebra $\fn \subseteq Z(\fg)$ containing $[\fg,\fg]$ such that, for all~$n \geq 1$:
  \begin{equation}
    \label{eqn:weird-hyp}
    \fn \cap p^n \fg = p^n \fn.
  \end{equation}
  Then, for all integers $l \geq 0$, we have \iref{thm:local-counting}{item:count-more-ramified-integer} with $E=1$:
  \[
    \cardsuchthat{
      D \in \fg \otimes \cD
    }{
      \lastjump(D) < l+1
    }
    \leq
    |\kappa|^{r l}.
  \]
\end{proposition}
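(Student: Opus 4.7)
The plan is to use the hypothesis to split $\fg$ as a $\Z_p$-module and then decompose the defining equations of $J(l+1)$ into two decoupled families. The condition $\fn \cap p^n\fg = p^n\fn$ for all $n \geq 1$ is precisely the condition for $\fn \hookrightarrow \fg$ to be a \emph{pure} embedding of $\Z_p$-modules; as $\fg$ is finitely generated over $\Z_p$, this embedding splits, giving a $\Z_p$-module decomposition $\fg = \fn \oplus \fm$. Note that $\fm$ is not required to be a Lie subalgebra; since $[\fg,\fg]\subseteq\fn$, the bracket $[\fm,\fm]$ only lands in $\fn$. Tensoring with $W(\kappa)$ (which is $\Z_p$-flat) yields $\fg\otimes W(\kappa) = (\fn\otimes W(\kappa))\oplus(\fm\otimes W(\kappa))$.

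For each $b\in\notp$, write $D_b = N_b + M_b$ with $N_b \in \fn\otimes W(\kappa)$ and $M_b \in \fm\otimes W(\kappa)$. Since $\fn$ is central, every commutator $[D_{a_1},D_{a_2}]$ equals $[M_{a_1},M_{a_2}]$; thus the right-hand side $R_b$ of \Cref{eq:cF-2-integer-nu} lies in $\fn\otimes W(\kappa)$ and depends only on $(M_a)_{a\in\notp}$. Projecting \Cref{eq:cF-2-integer-nu} onto the two factors (and using that $\sigma$ is an automorphism of $W(\kappa)$) yields the decoupled pair
\[
  p^{\mu_{l+1}(b)}M_b = 0
  \andd
  p^{\mu_{l+1}(b)}\sigma^{\mu_{l+1}(b)}(N_b) = R_b.
\]

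I then count in two stages. Write $\fn \simeq \bigoplus_j \Z/p^{N_j}\Z$ and $\fm \simeq \bigoplus_k \Z/p^{m_k}\Z$, so $\{n_i\} = \{N_j\} \sqcup \{m_k\}$ as multisets. The number of $(M_b)_b$ satisfying the first equation (i.e., $M_b$ being $p^{\mu_{l+1}(b)}$-torsion) is at most $\prod_b |\kappa|^{\sum_k \min(m_k,\mu_{l+1}(b))}$; additional constraints from \Cref{eq:cF-2-noninteger-nu} can only decrease this. Given valid $(M_b)$, the second equation determines $\sigma^{\mu_{l+1}(b)}(N_b)$, and hence $N_b$, modulo the $p^{\mu_{l+1}(b)}$-torsion of $\fn\otimes W(\kappa)$, so for each $b$ it has at most $|\kappa|^{\sum_j \min(N_j,\mu_{l+1}(b))}$ solutions. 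For $b\geq l+1$ we have $\mu_{l+1}(b)=0$, so $M_b=0$ and $N_b$ is uniquely determined, contributing a factor of $1$. Multiplying the bounds,
\[
  \cardsuchthat{D\in\fg\otimes\cD}{\lastjump(D) < l+1}
  \leq
  \prod_{\substack{b\in\notp \\ b<l+1}}
    |\kappa|^{\sum_i \min(n_i,\mu_{l+1}(b))}
  \leq
  |\kappa|^{r\sum_b \mu_{l+1}(b)}
  =
  |\kappa|^{rl}
\]
by \Cref{lem:sum-mu}. The main point to check carefully is the decoupling of the equations into their $\fn$- and $\fm$-components, which rests entirely on $R_b$ being a sum of commutators and thus lying in $[\fg,\fg]\otimes W(\kappa) \subseteq \fn\otimes W(\kappa)$; without the purity hypothesis one cannot choose a $\Z_p$-module complement $\fm$ and the projection argument breaks down.
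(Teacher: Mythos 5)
Your proof is correct, and it takes a somewhat different route from the paper's. You observe that the hypothesis $\fn \cap p^n\fg = p^n\fn$ for all $n$ is precisely the condition that $\fn$ be a \emph{pure} $\Z_p$-submodule of the finite $\Z_p$-module $\fg$, hence a direct summand: $\fg = \fn \oplus \fm$ as $\Z_p$-modules. This gives a transparent decoupling of \eqref{eq:cF-2-integer-nu} into an $\fm$-component ($p^{\mu_{l+1}(b)}M_b = 0$) and an $\fn$-component (which pins down $N_b$ modulo $\fn[p^{\mu_{l+1}(b)}]\otimes W(\kappa)$), and the final bookkeeping $\sum_i \min(n_i, \mu_{l+1}(b)) \leq r\,\mu_{l+1}(b)$ followed by \Cref{lem:sum-mu} closes the argument. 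The paper instead works directly with the quotient $\fg/\fn$ without ever invoking a splitting: it first counts $\bar D_b \in (\fg/\fn)[p^{\mu_{l+1}(b)}]\otimes W(\kappa)$, then counts the lift $D_b$ modulo $\fn[p^{\mu_{l+1}(b)}]\otimes W(\kappa)$, and uses \eqref{eqn:weird-hyp} via the exact sequence $0 \to \fg[p^n]/\fn[p^n] \to (\fg/\fn)[p^n] \xrightarrow{\times p^n} (\fn\cap p^n\fg)/p^n\fn \to 0$ to identify $(\fg/\fn)[p^n]$ with $\fg[p^n]/\fn[p^n]$. The two approaches produce the same numerical bound (since $\fm \cong \fg/\fn$ when the sequence splits), but they buy different things: your argument makes the role of the hypothesis conceptually cleaner (purity $\Rightarrow$ splitting, though this relies on the structure theorem for pure submodules of finitely generated $\Z_p$-modules), while the paper's exact-sequence argument is more self-contained and avoids appealing to that structure theorem.
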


\Cref{eqn:weird-hyp} is automatically satisfied for $n \geq \max_i n_i$.
In particular, the proposition applies automatically (for example, taking $\fn = Z(\fg)$) when $\fg = \fg[p]$ is of exponent~$p$.
(This case will be discussed again in \Cref{subsn:lastjump-in-expp}.)
It also applies if $\fg$ is abelian, taking $\fn = \fg$.

\begin{proof}
  Modulo $\fn \otimes W(\kappa)$, \Cref{eq:cF-2-integer-nu} becomes $p^{\mu_{l+1}(b)} \bar D_b = 0$ for all $b$ (as $\fn$ contains $[\fg,\fg]$).
  First choose $\bar D_b \in (\fg/\fn)[p^{\mu_{l+1}(b)}] \otimes W(\kappa)$ for all $b$.
  An upper bound for the number of choices for~$\bar D_b$ is:
  \[
    \prod_{b \in \notp}
      \Big|
        (\fg/\fn)[p^{\mu_{l+1}(b)}] \otimes W(\kappa)
      \Big|.
  \]
  Having determined $D_b$ modulo $\fn \otimes W(\kappa)$ for all $b$ (and hence modulo $Z(\fg) \otimes W(\kappa)$), it follows from \Cref{eq:cF-2-integer-nu} that $p^{\mu_{l+1}(b)} D_b$ is determined for all $b$.
  Hence, $D_b$ is determined modulo $(\fg[p^{\mu_{l+1}(b)}] \cap \fn) \otimes W(\kappa) = \fn[p^{\mu_{l+1}(b)}]\otimes W(\kappa)$.
  An upper bound for the number of remaining choices for $D_b$ (once $\bar D_b$ is fixed for all $b$) is:
  \[
    \prod_{b \in \notp}
      \Big|
        \fn[p^{\mu_{l+1}(b)}] \otimes W(\kappa)
      \Big|.
  \]
  Note the exact sequence (for any $n \geq 0$):
  \[
    0
    \to
    \fg[p^n]/\fn[p^n]
    \to
    (\fg/\fn)[p^n]
    \overset{\times p^n}\to
    (\fn \cap p^n \fg)/p^n \fn
    \to
    0.
  \]
  By hypothesis, the quotient $(\fn \cap p^n \fg)/p^n \fn$ is trivial, so that $(\fg/\fn)[p^n] = \fg[p^n]/\fn[p^n]$.
  Therefore, the total number of choices for~$D_b$ is bounded above by:
  \begin{align*}
    \prod_{b \in \notp}
      \Big|
        (\fg[p^{\mu_{l+1}(b)}]/\fn[p^{\mu_{l+1}(b)}])
        \otimes W(\kappa)
      \Big|
      \cdot
      \Big|
        \fn[p^{\mu_{l+1}(b)}] \otimes W(\kappa)
      \Big|
    &
    =
    \prod_{b \in \notp}
      \Big|
        \fg[p^{\mu_{l+1}(b)}] \otimes W(\kappa)
      \Big|
    \\
    &=
    |\kappa|^{\sum \log_p |\fg[p^{\mu_{l+1}(b)}]|}.
  \end{align*}
  It remains only to estimate the exponent, using the inequalities $|\fg[p^k]|\leq|\fg[p]|^k$ for $k\geq0$:
  \begin{align*}
    \sum_{b \in \notp}
      \log_p |\fg[p^{\mu_{l+1}(b)}]|
    &\leq
    \sum_{b \in \notp}
      r \mu_{l+1}(b)
  \underset{\textnormal{\Cref{lem:sum-mu}}}{=\joinrel=}
    r l.
    \qedhere
  \end{align*}
\end{proof}

\section{Global asymptotics}
\label{sn:global}

We fix a rational function field $F \coloneqq \F_q(T)$ of characteristic~$p$ and a finite Lie $\Z_p$-algebra $\fg \neq 0$ of nilpotency class at most $2$.
Let $G \coloneqq (\fg, \circ)$ be the corresponding $p$-group.
For each place~$P$ of $F = \F_q(T)$, we make the following definitions:
\begin{itemize}
  \item
    $F_P$ is the completion of $F$ at $P$, $\kappa_P$ is its residue field, and $\pi_P$ is a uniformizer of $F_P$, so that $F_P = \kappa_P\llpar\pi_P\rrpar$.
    Note that $|\kappa_P| = q^{\deg(P)}$ is the absolute norm of the prime $P$;
  \item
    $\tilde\pi_P = (\pi_P, 0, 0, \ldots)$ is the Teichmüller representative of~$\pi_P$ in $W(F_P) \subseteq W(F_P^\perf)$.
  \item
    The following objects are defined exactly as in \Cref{sn:local} in the case $(\fF, \pi) = (F_P, \pi_P)$, adding a subscript $P$ to the notation:
    \begin{itemize}
      \item
        The $W(\kappa_P)$-submodules $\cD^0_P$ and $\cD_P$ of $W(F_P) \subseteq W(F_P^\perf)$
        (cf.~\Cref{def:cD-zero,def:cD})
      \item
        The maps $\alpha^0_P : \Orb{\fg}{W(F_P^\perf)}{W(F_P^\perf)} \simto \Orb{\fg}{\cD^0_P}{W(\kappa_P)}$
        (cf.~\Cref{thm:local-fundamental-domain}) and $\alpha_P : \Orb{\fg}{W(F_P^\perf)}{W(F_P^\perf)} \twoheadrightarrow \Orb{\fg}{\cD_P}{W(\kappa_P)}$ (cf.~\Cref{def:alpha}).
    \end{itemize}
    We also use the notation $\lastjump(D)$ if $D \in \fg \otimes \cD^0_P$ (cf.~\Cref{def:lastjump-in-fundom}) or if $D \in \fg \otimes \cD_P$ (cf.~\Cref{def:lastjump-cD}).
\end{itemize}

\medskip

This section is organized as follows:
In \Cref{subsn:locglob}, we prove the local--global principle (\Cref{thm:intro-local--global}).
Then, in \Cref{subsn:analytic}, we establish a general analytic lemma (\Cref{lem:analytic-lemma}), which lets one deduce global asymptotics from local estimates when combined with the local--global principle.
Finally, we prove our main counting result (\Cref{thm:intro-counting}) in \Cref{subsn:proof-main-thm}.
Note that \Cref{thm:intro-local--global} and \Cref{lem:analytic-lemma} are also used in \Cref{sn:more-groups} to prove \Cref{thm:heisenberg-count}.

\subsection{Local--global principle}
\label{subsn:locglob}

Before proving \Cref{thm:intro-local--global}, we introduce some notation.
At each place $P$, the inclusion $F \hookrightarrow F_P$ induces a map
$
  \Orb{\fg}{W(F^\perf)}{W(F^\perf)}
  \to
  \Orb{\fg}{W(F_P^\perf)}{W(F_P^\perf)}
$, corresponding to the natural map $H^1(\Gamma_F, G) \to H^1(\Gamma_{F_P}, G)$ (see \iref{rmk:subfield}{rk:func-completion}).
Combining this map with the maps $\alpha_P : \Orb{\fg}{W(F_P^\perf)}{W(F_P^\perf)} \to \Orb{\fg}{\cD_P}{W(\kappa_P)}$ for all places $P$, we define a map
$
   \Orb{\fg}{W(F^\perf)}{W(F^\perf)}
   \to
   \prod_P
    \Orb{\fg}{\cD_P}{W(\kappa_P)}
$.

Consider an element $[m] \in \Orb{\fg}{W(F^\perf)}{W(F^\perf)}$, and let $K$ be the associated $G$-extension of~$F$.
For all but finitely many places $P$ of $F$, the extension $K|F$ is unramified, so the corresponding element of $\Orb{\fg}{W(F_P^\perf)}{W(F_P^\perf)}$ (the $(\fg \otimes W(F_P^\perf), \circ)$-orbit of $m$)
is mapped by $\alpha_P : \Orb{\fg}{W(F_P^\perf)}{W(F_P^\perf)} \to \Orb{\fg}{\cD_P}{W(\kappa_P)}$ to the trivial orbit $\{0\}$ (see \Cref{lem:unramified}).
We have thus described a map
\begin{equation}
  \label{eqn:def-globalpha}
  \alpha:
  \Orb{\fg}{W(F^\perf)}{W(F^\perf)}
  \longrightarrow
  \rprod
  \Orb{\fg}{\cD_P}{W(\kappa_P)}
  ,
\end{equation}
where the restricted product $\rprod
\Orb{\fg}{\cD_P}{W(\kappa_P)}$ is the following subset of $\prod_P
\Orb{\fg}{\cD_P}{W(\kappa_P)}$:
\[
  \rprod
    \Orb{\fg}{\cD_P}{W(\kappa_P)}
  \coloneqq
  \Big\{
    \big(
      [D_P]
    \big)_{P \textnormal{ place of } F}
    \ \Big\vert\ 
    [D_P] = \{0\} \textnormal{ for all but finitely many places } P
  \Big\}.
\]

\begin{lemma}
  \label{lem:direct-local--global-principle}
  For every $([D_P]) \in \rprod
  \Orb{\fg}{\cD_P}{W(\kappa_P)}$, we have:
  \begin{equation}
    \label{eqn:locglob}
    \sum_{\substack{
      [\gamma]\in H^1(\Gamma_F, G):\\
      \alpha(\orb([\gamma])) = ([D_P])
    }}
      \frac1{|\Stab_G(\gamma)|}
    \ =\ 
    \prod_P |[D_P]|,
  \end{equation}
  where $|[D_P]|$ denotes the size of the $(\fg \otimes W(\kappa_P), \circ)$-orbit of $D_P$.
\end{lemma}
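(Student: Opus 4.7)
The plan is to prove the lemma by induction on $|\fg|$, treating the case $\fg \cong \Z/p\Z$ as a separate base step. The trivial case $\fg = 0$ makes both sides equal to $1$.

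For the base case $\fg = \fn \cong \Z/p\Z$, Artin--Schreier theory identifies $H^1(\Gamma_F, \fn)$ with $F/\wp(F)$. For $F = \F_q(T)$, the partial-fractions decomposition together with \Cref{lem:artsch-perfclos} yields a canonical direct sum
\[
F/\wp(F) \;\cong\; \F_q/\wp(\F_q) \,\oplus\, \bigoplus_P \cD_P/p\cD_P,
\]
where the first summand (of order $p$) corresponds to the unique unramified $\Z/p\Z$-extension of $F$, and $\alpha$ is the projection to the second factor. Each fiber of $\alpha$ therefore has size $p = |\fn|$, and since stabilizers all equal $\fn$ and orbits are singletons, $\text{LHS} = 1 = \prod_P |[D_P]| = \text{RHS}$.

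For the inductive step with $|\fg| > p$, pick $\fn \subseteq Z(\fg)$ with $\fn \cong \Z/p\Z$ (existing by nilpotence) and set $\bar\fg \coloneqq \fg/\fn$, $\bar G \coloneqq (\bar\fg, \circ)$. Orbit-stabilizer gives $\text{LHS} = \frac{1}{|G|} N(\mathbf{D})$ where $N(\mathbf{D}) \coloneqq \#\{\gamma \in \Hom(\Gamma_F, G) : \alpha(\orb([\gamma])) = \mathbf{D}\}$. Stratifying by the projection $\bar\gamma \in \Hom(\Gamma_F, \bar G)$ and using naturality (\Cref{rmk:functoriality}), only $\bar\gamma$'s with image $\bar{\mathbf{D}} \coloneqq ([\bar D_P])_P$ contribute; the induction hypothesis applied to $\bar\fg$ then yields $|\bar G| \prod_P |[\bar D_P]|$ such $\bar\gamma$'s. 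For each, the set of lifts is a $\Hom(\Gamma_F, \fn)$-torsor under twisting (\Cref{rmk:twisting}), and by \Cref{lem:twisting-alpha}, $\alpha_P(\orb([\gamma_0 \cdot \delta])_P) = [D_P]$ iff the local Artin--Schreier data $E_P(\delta) \in \fn \otimes \cD_P$ of $\delta$ lies in a specific coset $-F_P + B_P$, where $F_P \coloneqq D_P^0 - D_P$ (for representatives chosen with matching $\bar\fg$-projections) and $B_P \subseteq \fn \otimes \cD_P$ is the subgroup of ``bracket displacements'' $h.D_P - D_P$ as $h$ ranges over elements of $\fg \otimes W(\kappa_P)$ projecting to $\Stab_{\bar\fg \otimes W(\kappa_P)}(\bar D_P)$. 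A double application of orbit-stabilizer, using centrality of $\fn$ (which forces $\fn \otimes W(\kappa_P) \subseteq \Stab_{\fg \otimes W(\kappa_P)}(D_P)$) and the multiplicativity $|\fg \otimes W(\kappa_P)| = |\fn \otimes W(\kappa_P)| \cdot |\bar\fg \otimes W(\kappa_P)|$, yields the clean identity $|B_P| = |[D_P]|/|[\bar D_P]|$. Combined with the $\fn$-base case (preimages of size $|\fn|$ over any target coset in the restricted product), this gives $|\fn| \prod_P |[D_P]|/|[\bar D_P]|$ valid $\delta$'s per $\bar\gamma$. Multiplying, $N(\mathbf{D}) = |G| \prod_P |[D_P]|$, and hence $\text{LHS} = \prod_P |[D_P]|$.

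The main obstacle I anticipate is the precise orbit-stabilizer bookkeeping leading to the identity $|B_P| = |[D_P]|/|[\bar D_P]|$, together with verifying that $F_P \in B_P$ for almost all $P$, which ensures that the coset $\prod_P (-F_P + B_P)$ intersects the restricted product $\rprod \fn \otimes \cD_P$ nontrivially and that the global count via the $\fn$-base case is actually applicable.
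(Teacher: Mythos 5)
Your proposal is correct and follows the same route as the paper's proof: a base case for $\fg\cong\Z/p\Z$ via the Artin--Schreier exact sequence (your partial-fractions splitting of $F/\wp(F)$ is equivalent to the exact sequence the paper cites from Potthast), followed by an induction along a central $\Z/p\Z$-subalgebra using lifting, twisting, and an orbit-stabilizer count that is notationally dual to the paper's fiber-size computation for the projection $[D_P]\to[\bar D_P]$ but yields the same quantity $|[D_P]|/|[\bar D_P]|$. Both of your flagged obstacles resolve as you anticipate: the double orbit-stabilizer argument (with $H$ the preimage of $\Stab(\bar D_P)$, $|H|=|\fn\otimes W(\kappa_P)|\cdot|\Stab(\bar D_P)|$, and $\Stab(D_P)\subseteq H$) does give $|B_P|=|[D_P]|/|[\bar D_P]|$, and $F_P=0\in B_P$ at all but finitely many $P$ because both $D_P$ and $U_P$ vanish at places where the extension corresponding to $\gamma_0$ is unramified.
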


\begin{proof}
  We first prove the claim in the case $\fg=\Z/p\Z$, where $G = \Z/p\Z$.
  In this case, the orbits~$[D_P]$ all have size $1$ by \iref{prop:action-on-cD}{prop:action-on-cD-ii}.
  Let $\cO_P = \kappa_P \llbracket \pi_P \rrbracket$ be the ring of integers of~$F_P$.
  We have the following exact sequence, which is established in \cite[Corollary~5.3]{potthast}:
  \begin{equation}
    \label{artin-schreier-locglob}
    0
    \;\longrightarrow\;
    \F_q/\wp(\F_q)
    \;\longrightarrow\;
    F/\wp(F)
    \;\overset{\alpha}{\longrightarrow}\;
    \bigoplus_P F_P/(\cO_P+\wp(F_P))
    \;\longrightarrow\;
    0.
  \end{equation}
  Here, we have identified the surjection $F/\wp(F) \to \bigoplus_P F_P/(\cO_P+\wp(F_P))$ with the map $\alpha$ of \Cref{eqn:def-globalpha} as follows:
  \begin{itemize}
    \item
      On the one hand, we have an identification
      \[
        F/\wp(F)
        \underset{\textnormal{\Cref{lem:artsch-perfclos}}}\simeq
        F^\perf/\wp(F^\perf)
        =
        \Orb{\fg}{W(F^\perf)}{W(F^\perf)}.
      \]
    \item
      On the other hand, we have identifications:
      \[
        F_P/\wp(F_P)
        \;\underset{\textnormal{\Cref{lem:artsch-perfclos}}}{\simeq}\;
        F_P^\perf/\wp(F_P^\perf)
        \;=\;
        \Orb{\fg}{W(F_P^\perf)}{W(F_P^\perf)}
        \;\underset{\textnormal{\Cref{thm:local-fundamental-domain}}}{\simeq}\;
        \Orb{\fg}{\cD^0_P}{W(\kappa_P)}.
      \]
      Recall that $\fg\otimes\cD^0_P = \cD^0_P/p\cD^0_P = \bigoplus_{a\in\{0\}\cup\N\setminus p\N}\kappa_P\pi_P^{-a}$, so $\Orb{\fg}{\cD^0_P}{W(\kappa_P)} = (\cD^0_P/p\cD^0_P)/\wp(\kappa_P)$.
      Similarly, $\fg\otimes\cD_P = \bigoplus_{a\in\N\setminus p\N}\kappa_P\pi_P^{-a} = \fg\otimes\cD^0_P/\kappa_P$.
      As $\cO_P\cap(\cD^0_P/p\cD^0_P) = \kappa_P$, we obtain an identification
      \[
        F_P/(\cO_P+\wp(F_P)) \simeq \Orb{\fg}{\cD_P}{W(\kappa_P)}.
      \]
      Assembling the identifications $F_P/(\cO_P + \wp(F_P)) \simeq \Orb{\fg}{\cD_P}{W(\kappa_P)}$ for all places $P$ yields the desired identification $\bigoplus_P F_P/(\cO_P + \wp(F_P)) \simeq \prod'_P \Orb{\fg}{\cD_P}{W(\kappa_P)}$.
  \end{itemize}
  Using the exact sequence (\ref{artin-schreier-locglob}), one sees that there are exactly $|\F_q/\wp(\F_q)|=p$ cosets $[m]\in F/\wp(F)$ with $\alpha([m]) = ([D_P])$.
  Since $\orb:H^1(\Gamma_F,G)\rightarrow F/\wp(F)$ is a bijection, there are exactly~$p$ classes $[\gamma]\in H^1(\Gamma_F,G)$ with $\alpha(\orb([\gamma]))=([D_P])$ for all places~$P$.
  Since $G$ is abelian, each of them has stabilizer $\Stab_G(\gamma)=G$ of size~$p$.
  The claim then follows:
  \[
    \sum_{\substack{
      [\gamma]\in H^1(\Gamma_F, G):\\
      \alpha(\orb([\gamma])) = ([D_P])
    }}
      \frac1{|\Stab_G(\gamma)|}
    =
    p \cdot \frac1p
    = 1
    = \prod_P |[D_P]|.
  \]

  We now treat the case of a general Lie algebra $\fg$.
  Let $A$ be the left-hand side of \Cref{eqn:locglob}.
  By the orbit-stabilizer theorem, we have:
  \[
    A
    \;=\;
    \frac1{|G|} \,
    \Big|\Big\{
      \gamma\in \Hom(\Gamma_F, G)
      \ \Big\vert\ 
      \alpha(\orb([\gamma])) = ([D_P])
    \Big\}\Big|.
  \]
  We show that $A=\prod_P|[D_P]|$ by induction over the size of the nonzero finite Lie algebra $\fg$.
  Pick any Lie subalgebra $\fn \subseteq Z(\fg)$ isomorphic to~$\Z/p\Z$, and let $N \coloneqq (\fn, \circ)$ be the corresponding subgroup of $Z(G)$.
  We denote the corresponding projection maps (modulo $\fn \otimes \cD_P$ or $N$) by $\boldsymbol\pi$.
  Since $\cD_P$ is a flat $\Z_p$-module, we have an exact sequence
  \[
    0 \rightarrow \fn\otimes\cD_P \rightarrow \fg\otimes\cD_P \stackrel{\boldsymbol\pi}\rightarrow (\fg/\fn)\otimes\cD_P \rightarrow 0.
  \]
  We can write
  \[
    A = \frac1{|G/N|}
      \sum_{\substack{
        \varepsilon\in\Hom(\Gamma_F,G/N):\\
        \alpha(\orb([\varepsilon])) = ([\boldsymbol\pi(D_P)])
      }}
        a(\varepsilon)
  \]
  where we have defined
  \[
    a(\varepsilon)
    \;\coloneqq\;
    \frac{1}{|N|} \,
    \Big|
      \Big\{
        \gamma\in\Hom(\Gamma_F,G)
        \ \Big\vert\ 
        \boldsymbol\pi\circ\gamma = \varepsilon
        \textnormal{ and }
        \alpha(\orb([\gamma])) = ([D_P])
      \Big\}
    \Big|.
  \]

  Let $\varepsilon \in \Hom(\Gamma_F, G/N)$ be such that $\alpha(\orb([\varepsilon]))=([\boldsymbol\pi(D_P)])$.
  The surjectivity of the natural map $\Orb{\fg}{W(F^\perf)}{W(F^\perf)} \rightarrow \Orb{(\fg/\fn)}{W(F^\perf)}{W(F^\perf)}$ implies the surjectivity of the corresponding map $H^1(\Gamma_F,G)\rightarrow H^1(\Gamma_F,G/N)$ (cf.~\iref{rmk:functoriality}{rmk:functoriality-i}), and therefore of $\Hom(\Gamma_F,G)\rightarrow\Hom(\Gamma_F,G/N)$.
  (This reflects the fact that there is no obstruction to the embedding problem for $G\twoheadrightarrow G/N$.)
  Thus, we can pick an arbitrary preimage $\gamma_0 \in\Hom(\Gamma_F,G)$ such that $\boldsymbol\pi \circ \gamma_0 = \varepsilon$.
  The entire fiber
  $\suchthat{
    \gamma \in \Hom(\Gamma_F,G)
  }{
    \boldsymbol\pi\circ\gamma = \varepsilon
  }$
  can then be described as the set of twists $\gamma_0\cdot\delta$ with $\delta\in\Hom(\Gamma_F,N)$ (cf.~\Cref{rmk:twisting}).
  We therefore have:
  \[
    a(\varepsilon) =
    \frac{1}{|N|}
    \Big|
      \Big\{
        \delta\in\Hom(\Gamma_F,N)
        \ \Big\vert\ 
        \alpha(\orb([\gamma_0\cdot\delta])) = ([D_P])
      \Big\}
    \Big|.
  \]
  Since $\alpha(\orb([\varepsilon]))=([\boldsymbol\pi(D_P)])$ and $\gamma_0$ lifts $\varepsilon$, we have $\alpha(\orb([\gamma_0]))=([U_P])\in\rprod\Orb{\fg}{\cD_P}{W(\kappa_P)}$ for some $U_P\in \fg\otimes\cD_P$ with $\boldsymbol\pi(U_P)=\boldsymbol\pi(D_P)$.
  If, for a given $\delta\in\Hom(\Gamma_F,N)$, we write $\alpha(\orb([\delta])) = ([V_P]) \in \rprod \Orb{\fn}{\cD_P}{W(\kappa_P)}$, then $\alpha(\orb([\gamma_0\cdot\delta])) = ([U_P+V_P]) \in \rprod \Orb{\fg}{\cD_P}{W(\kappa_P)}$ by \Cref{lem:twisting-orb} and \Cref{lem:twisting-alpha}.
  We obtain
  \bgroup\allowdisplaybreaks
  \begin{align*}
    a(\varepsilon)
    &=
    \sum_{\substack{
      ([V_P]) \in \rprod \Orb{\fn}{\cD_P}{W(\kappa_P)}:\\
      ([U_P + V_P]) = ([D_P])
    }}
      \underbrace{
        \frac{1}{|N|}
        \Big|
          \Big\{
            \delta\in\Hom(\Gamma_F,N)
            \ \Big\vert\ 
            \alpha(\orb([\delta])) = ([V_P])
          \Big\}
        \Big|
      }_{= 1 \textnormal{ by the base case, as } \fn \simeq \Z/p\Z}
    \\
    &=
    \cardsuchthat{
      ([V_P]) \in \rprod \Orb{\fn}{\cD_P}{W(\kappa_P)}
    }{
      ([U_P + V_P]) = ([D_P])
    }
    \\
    &=
    \prod_P
      \Big|\Big\{
        V_P \in \fn\otimes\cD_P
        \;\Big|\;
        U_P+V_P \in [D_P]
      \Big\}\Big|
    \tag*{by \iref{prop:action-on-cD}{prop:action-on-cD-ii}}
    \\
    &=
    \prod_P
      \cardsuchthat{
        x \in [D_P]
      }{
        x - U_P \in \fn \otimes \cD_P
      }
    \tag*{\textnormal{by the change of variables $x = U_P+V_P$}}
    \\
    &=
    \prod_P
      \cardsuchthat{
        x \in [D_P]
      }{
        \boldsymbol\pi(x) = \boldsymbol\pi(U_P)
      }
  \end{align*}
  \egroup
  The number of such $x$ is the size of the fiber of the map $\boldsymbol\pi: [D_P]\rightarrow[\boldsymbol\pi(D_P)]$, $x \mapsto \boldsymbol\pi(x)$ above $\boldsymbol\pi(U_P) = \boldsymbol\pi(D_P)$.
  All the fibers of that map have the same size since $[D_P]$ is a single $(\fg\otimes W(\kappa_P), \circ)$-orbit.
  Hence:
  \[
    a(\varepsilon) = \prod_P \frac{|[D_P]|}{|[\boldsymbol\pi(D_P)]|}.
  \]
  In particular, $a(\varepsilon)$ does not on depend on $\varepsilon$.
  We can finally conclude:
  \[
    A =
    \underbrace{
      \frac{1}{|G/N|}
      \Big|\Big\{
        \varepsilon\in\Hom(\Gamma_F,G/N)
        \;\Big|\;
        \alpha(\orb([\varepsilon])) = ([\boldsymbol\pi(D_P)])
      \Big\}\Big|
    }_{ = \prod_P |[\boldsymbol\pi(D_P)]| \textnormal{ by the induction hypothesis}}
    \cdot
    \prod_P
      \frac
        {|[D_P]|}
        {|[\boldsymbol\pi(D_P)]|}
    =
    \prod_P
      |[D_P]|.
    \qedhere
  \]
\end{proof}

The local--global principle for the last jump (\Cref{thm:intro-local--global}) follows readily from \Cref{lem:direct-local--global-principle}:

\begin{theorem}[Local--global principle]
  \label{thm:local--global-principle}
  Pick rational numbers $N_P\in\Q_{\geq0}$ for every place $P$ of $F=\F_q(T)$, so that $N_P=0$ for all but finitely many places $P$.
  Then,
  \[
    \sum_{\substack{
      K \in \Ext(G,F):\\
      \forall P,\ \lastjump_P(K) = N_P
    }}
      \frac{1}{|\Aut(K)|}
    = \prod_P \sum_{\substack{
      K_P \in \Ext(G,F_P):\\
      \lastjump(K_P) = N_P
    }}
      \frac{1}{|\Aut(K_P)|}.
  \]
\end{theorem}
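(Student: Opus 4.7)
The plan is to deduce \Cref{thm:local--global-principle} from \Cref{lem:direct-local--global-principle} by reindexing both sides as sums over tuples of local orbits $([D_P]) \in \rprod \Orb{\fg}{\cD_P}{W(\kappa_P)}$ with prescribed last jumps.

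First I would rewrite the right-hand side. At each place $P$, \Cref{lem:numberofD-numberofexts} (applied to the local field $F_P$ and the equality version $\lastjump = N_P$) identifies the $P$-th factor with $|\{D_P \in \fg \otimes \cD_P : \lastjump(D_P) = N_P\}|$. When $N_P = 0$, \Cref{lem:unramified} guarantees that this quantity equals $1$ (the unique contributing element being $D_P = 0$), so the restriction $N_P = 0$ for almost all $P$ makes the infinite product well-defined and reduces it to a finite sum. Grouping the $D_P$'s into their $(\fg \otimes W(\kappa_P), \circ)$-orbits at each place gives
\[
  \text{RHS}
  = \sum_{\substack{([D_P]) \in \rprod \Orb{\fg}{\cD_P}{W(\kappa_P)}:\\ \forall P,\ \lastjump([D_P]) = N_P}}\ \prod_P |[D_P]|,
\]
where the condition $\lastjump([D_P]) = N_P$ makes sense on orbits thanks to \iref{prop:action-on-cD}{prop:action-on-cD-iii} (cf.\ \Cref{def:lastjump-cD}).

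Next I would apply \Cref{lem:direct-local--global-principle} termwise: each inner factor $\prod_P |[D_P]|$ becomes $\sum_{[\gamma]} \frac{1}{|\Stab_G(\gamma)|}$, where the sum is over classes $[\gamma] \in H^1(\Gamma_F,G)$ with $\alpha(\orb([\gamma])) = ([D_P])$. Swapping the order of summation then collapses the double sum over $([D_P])$ and $[\gamma]$ into a single sum over $[\gamma] \in H^1(\Gamma_F, G)$ constrained by $\lastjump(\alpha_P(\orb([\gamma]))) = N_P$ at every place $P$. By \iref{rmk:subfield}{rk:func-completion}, the image of $\orb([\gamma])$ in $\Orb{\fg}{W(F_P^\perf)}{W(F_P^\perf)}$ parametrizes the completion $K \otimes_F F_P$, and then \Cref{def:lastjump-cD} identifies the constraint with $\lastjump_P(K) = N_P$.

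Finally, the bijection of \Cref{lem:etext-bij-cohomology} transports the sum from $H^1(\Gamma_F,G)$ back to $\Ext(G,F)$ and matches $|\Stab_G(\gamma)|$ with $|\Aut(K)|$, yielding the desired left-hand side. There is no real obstacle: all the substantive work has been done in \Cref{lem:direct-local--global-principle}, which already handles the non-abelian twisting and the subtle inductive step using a central $\Z/p\Z$-subalgebra. The present proof is purely a reindexing, whose only delicate point is the compatibility of the local and global $\orb$/$\alpha$ maps at the completions (provided by \iref{rmk:functoriality}{rmk:functoriality-ii}) together with the finiteness ensured by the hypothesis on $(N_P)_P$.
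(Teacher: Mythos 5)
Your proposal is correct and matches the paper's proof in substance: both pass through the tuples $([D_P])$ of local orbits, invoke \Cref{lem:direct-local--global-principle} for the inner sum $\prod_P |[D_P]|$, and translate between local and global via \Cref{lem:numberofD-numberofexts}, \Cref{lem:etext-bij-cohomology}, and \iref{rmk:subfield}{rk:func-completion}. The only difference is the direction of the chain of equalities (you go RHS $\to$ LHS, the paper goes LHS $\to$ RHS), which is purely cosmetic.
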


\begin{proof}
  Using the bijection $\Ext(G,F)\simto H^1(\Gamma_F,G)$ from \Cref{lem:etext-bij-cohomology} and the fact that $\lastjump_P(K)=\lastjump(D_P)$ if $K$ corresponds to $[\gamma]$ and $\alpha_P(\orb([\gamma]))=[D_P]$ (cf.~\Cref{def:lastjump-for-orbits,def:lastjump-in-fundom,def:lastjump-cD}), we have:
  \[
    \sum_{\substack{
      K \in \Ext(G,F):\\
      \forall P,\ \lastjump_P(K) = N_P
    }}
      \frac{1}{|\Aut(K)|}
    =
    \sum_{\substack{
      ([D_P]) \in \rprod \Orb{\fg}{\cD_P}{W(\kappa_P)}:\\
      \forall P,\, \lastjump([D_P]) = N_P
    }}
      \sum_{\substack{
        [\gamma] \in H^1(\Gamma_F, G):\\
        \alpha(\orb([\gamma])) = ([D_P])
      }}
        \frac{1}{|\Stab_G(\gamma)|}
  \]
  By \Cref{lem:direct-local--global-principle}, the inner sum equals $\prod_P |[D_P]|$.
  Therefore:
  \begin{align*}
    \sum_{\substack{
      K \in \Ext(G,F):\\
      \forall P:\ \lastjump_P(K) = N_P
    }}
      \frac{1}{|\Aut(K)|}
    & =
    \prod_P
      \cardsuchthat{
        D_P \in \fg\otimes\cD_P
      }{
        \lastjump(D_P) = N_P
      }
    \\
    & =
    \prod_P
      \sum_{\substack{
        K_P \in \Ext(G,F_P):\\
        \lastjump(K_P) = N_P
      }}
        \frac{1}{|\Aut(K_P)|}
    \qquad\qquad
    \textnormal{by \Cref{lem:numberofD-numberofexts}.}
    \qedhere
  \end{align*}
\end{proof}

\begin{remark}
  In the proof of the local--global principle, we crucially made use of the fact that, for any $D \in \fg \otimes \cD^0$, the number $\lastjump(D)$ is completely determined by the projection $\pr(D)  \in \fg \otimes \cD$.
  (See \Cref{cor:irrelevance-D0}.)
  Our method of proof does not imply a similar local--global principle for the discriminant, as the discriminant in general cannot be determined from $\pr(D)$.
  For the same reason, our method of proof fails for groups of nilpotency class larger than $2$.
\end{remark}

\subsection{Analytic lemma}
\label{subsn:analytic}

We now prove a general analytic lemma that allows us to combine local estimates into global asymptotics when there is a local--global principle.
We write the lemma and its proof in such a way that it is valid for any function field $F$ of characteristic~$p$ with field of constants $\F_q$, without having to assume that $F = \F_q(T)$.

\begin{lemma}
  \label{lem:analytic-lemma}
  Let $K\geq1$ be an integer.
  For every place $P$ of $F$ and every $n\in\frac1K\Z_{\geq0}$, let $a_{P,n}\in\R_{\geq 0}$, with $a_{P,0}=1$.
  For every $n \in \frac1K \Z_{>0}$, let $b_n\in\Z_{\geq0}$ and $e_n,k_n\in\R$ be such that for all places~$P$, we have
  \[
    a_{P,n} = b_n|\kappa_P|^{e_n} + \cO(|\kappa_P|^{k_n}),
  \]
  where the implied constant factor in the $\cO$-estimate depends neither on the place $P$ nor on the number $n$.
  Assume that $b_n=0$ for all but finitely many numbers $n$, but $b_n\neq0$ for at least one number $n$.
  Now, define
  \[
    A \coloneq \max\left\{\tfrac{e_n+1}{n} \mid n\in\tfrac1K\Z_{>0}\textnormal{ with }b_n\neq0\right\}
  \]
  and assume that
  \begin{equation}
    \label{eqn:hypothesis-on-sup}
    \sup\{\tfrac{k_n+1}{n}\mid n\in\tfrac1K\Z_{>0}\} < A.
  \end{equation}
  Let
  \[
    S \coloneq
    \{ n\in\tfrac1K\Z_{>0} \mid b_n\neq0\textnormal{ and } A = \tfrac{e_n+1}{n} \},
  \]
  define
  \begin{equation}
    \label{eqn:def-B}
    B \coloneq \sum_{n\in S} b_n,
  \end{equation}
  and let $M\in\frac1K\Z_{>0}$ be the least common integer multiple of the numbers $n\in S$, so that $M\Z = \bigcap_{n \in S} n\Z$.
  Finally, define:
  \[
    a_N
    \coloneqq
    \sum_{\substack{(n_P)_P\in\prod_P\frac1K\Z_{\geq0}:\\\sum_P n_P\deg(P) = N}}
      \prod_P a_{P,n_P}.
  \]
  Then, there is a function $C:\Q/M\Z\rightarrow\R_{\geq0}$ with $C(0)>0$, such that for rational $N$ going to infinity, we have
  \[
    a_N = C(N\bmod M)\cdot q^{AN} N^{B-1} + o(q^{AN}N^{B-1}).
  \]
\end{lemma}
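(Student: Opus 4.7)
The plan is to set up a Dirichlet-style generating function and extract the coefficient asymptotics by a residue calculation, in the spirit of a function-field Wiener--Ikehara. Introduce
\[
  Z(v) := \sum_{N} a_N v^{KN} \;=\; \prod_P f_P(v^{\deg P}),
  \qquad f_P(x) := \sum_n a_{P,n} x^{Kn},
\]
the Euler product being immediate from the convolution definition of $a_N$. For each $n$ with $b_n \neq 0$, introduce the model factor
\[
  Z_n(v) := \prod_P (1 - |P|^{e_n} v^{Kn\deg P})^{-b_n}
         \;=\; \zeta_F\!\left(q^{e_n} v^{Kn}\right)^{b_n},
\]
which is a rational function of $v$; its relevant poles come from the simple pole of $\zeta_F$ at $s=1$ and lie on the circle $|v| = q^{-(e_n+1)/(Kn)}$, coinciding with the critical circle $|v|=q^{-A/K}$ exactly when $n\in S$ and sitting strictly outside otherwise. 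Factor
\[
  Z(v) = W(v) \cdot \!\!\prod_{n:\, b_n \neq 0}\!\! Z_n(v),
  \qquad W(v) := \prod_P g_P(v^{\deg P}),
  \qquad g_P(x) := f_P(x) \!\!\prod_{n:\, b_n \neq 0}\!\! (1 - |P|^{e_n} x^{Kn})^{b_n}.
\]

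The key technical step is to show $W$ is holomorphic on a disk strictly larger than $\{|v| \leq q^{-A/K}\}$. Fix $A' \in (\sup_n (k_n+1)/n,\, A)$, allowed by \eqref{eqn:hypothesis-on-sup} and chosen close enough to $A$ that $A - A' < 1/\max_{n \in S} n$, so that the ``outer'' poles of the $Z_n$'s stay outside $\{|v| \leq q^{-A'/K}\}$. When $g_P$ is expanded as a power series in $x$, the principal terms $b_n|P|^{e_n}$ in $f_P$ cancel exactly against the linear terms of $\prod_n (1-|P|^{e_n}x^{Kn})^{b_n}$, leaving (i) the error remainders $r_{P,n} = a_{P,n} - b_n|P|^{e_n}$, of size $O(|P|^{k_n})$, and (ii) quadratic cross-terms of the form $|P|^{e_n+e_{n'}} x^{K(n+n')}$. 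For $|v| \leq q^{-A'/K}$, both types of contributions are uniformly $O(|P|^{-1-\varepsilon})$ for some $\varepsilon > 0$ (using the finiteness of $\{n : b_n \neq 0\}$); together with the function-field bound $O(q^d/d)$ on the number of places of degree $d$, this yields absolute and uniform convergence of $\prod_P g_P(v^{\deg P})$ on compact subsets of $\{|v| < q^{-A'/K}\}$, defining a holomorphic $W$ there. Moreover, at $v_0 := q^{-A/K}$, every factor $g_P(v_0^{\deg P})$ is strictly positive (since $a_{P,n} \geq 0$ and $1-|P|^{-1} > 0$), so $W(v_0) > 0$.

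Apply Cauchy's formula $a_N = \tfrac{1}{2\pi i}\oint_{|v|=r} Z(v) v^{-KN-1}\,dv$ and push the contour from $r < q^{-A/K}$ outward to $|v| = r'$ with $q^{-A/K} < r' < q^{-A'/K}$, collecting the residues of $\prod_n Z_n$ at the poles $v_\zeta := \zeta q^{-A/K}$ with $\zeta^{Kn}=1$ for some $n\in S$; each such pole has order $\sum_{n \in S:\, \zeta^{Kn}=1} b_n$, reaching the maximum $B$ precisely for $\zeta \in \mu_g$ with $g := \gcd_{n \in S}(Kn)$. A direct residue computation yields
\[
  a_N = q^{AN} N^{B-1}\, C(N) + O(q^{AN} N^{B-2}) + O(q^{A'N}),
\]
where $C(N) := \sum_{\zeta \in \mu_g} c_\zeta \zeta^{-KN}$ and $c_\zeta$ is proportional to $W(v_\zeta)$ with the same (positive) proportionality constant for every $\zeta$. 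Since $\zeta^{-KN}$ depends only on $KN \bmod g$, and since $K \mid g \mid KM$ (because $n \mid M$ for each $n \in S$), the function $C$ descends to $\Q/M\Z$. The principal residue gives $c_1 > 0$ (as $W(v_0) > 0$ times the positive factor coming from the residue of $\zeta_F$ at $s=1$), and the period average $\tfrac{1}{g}\sum_{j=0}^{g-1} C(j/K) = c_1 > 0$ follows by character orthogonality.

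The final and most delicate point is upgrading this to $C(0) > 0$, since the non-principal residues are \emph{a priori} complex and could destructively interfere at $N \equiv 0 \pmod M$. I will rule this out by producing an explicit lower bound $a_N \gg q^{AN} N^{B-1}$ along the subsequence $N \equiv 0 \pmod M$. Specifically, since $a_{P,n} \geq \tfrac{1}{2} b_n |P|^{e_n}$ for $|P|$ large, dropping from the Euler product all contributions except those arising from a single term $b_n |P|^{e_n} v^{Kn\deg P}$ with $n \in S$ per place yields a positive lower bound of the form $\tilde a_N = [v^{KN}] \prod_P \bigl(1 + \sum_{n \in S}\tfrac{1}{2} b_n|P|^{e_n}v^{Kn\deg P}\bigr)$ (with the implicit convention that excluded ``small'' places contribute trivially). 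The generating function of the $\tilde a_N$ is structurally identical to $\prod_{n\in S} Z_n$ but with non-negative local factors; the same residue analysis applied to it gives $\tilde a_N \gg q^{AN} N^{B-1}$ along $N \equiv 0 \pmod M$. Comparing with $a_N \geq \tilde a_N$ and the asymptotic formula forces $C(0) > 0$.
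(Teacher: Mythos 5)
Your overall strategy mirrors the paper's: form the Euler product, factor out the $\zeta_F$-powers, show the remaining Euler factor $W$ is holomorphic and nonvanishing past the critical circle, and perform a singularity/residue analysis. The genuinely delicate point is indeed the positivity $C(0) > 0$, and you correctly flag it. However, your proposed argument for it has a real gap.

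The issue is with the truncated Euler product
\[
  \tilde{Z}(v) = \prod_P \Big(1 + \sum_{n\in S} \tfrac12 b_n |\kappa_P|^{e_n} v^{Kn\deg P}\Big).
\]
The factor $\tfrac12$ is fatal: it destroys the exact cancellation of leading terms that makes $W$ converge in the main argument, and it changes the nature of the singularity at the critical radius. Writing $\log\tilde Z$ and isolating the dominant sum over places, one finds (using $N_d \sim q^d/d$) that for $n\in S$,
\[
  \sum_P \tfrac12 b_n |\kappa_P|^{e_n} v^{Kn\deg P} \;\approx\; -\tfrac12 b_n \log\!\big(1 - q^{An} v^{Kn}\big) + (\text{holomorphic}),
\]
so $\tilde Z(v) \approx (\text{holomorphic nonvanishing}) \cdot \prod_{n\in S}(1 - q^{An}v^{Kn})^{-b_n/2}$, i.e.\ a \emph{branch singularity} of exponent $-B/2$ at $v = q^{-A/K}$, not a pole of order $B$. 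Standard singularity analysis then gives $\tilde a_N = \cO\!\big(q^{AN}\,N^{B/2-1}\big)$, which is $o(q^{AN}N^{B-1})$ for every $B\geq 1$. So $\tilde a_N$ is \emph{not} $\gg q^{AN}N^{B-1}$, and the claimed lower bound fails. (There is also a secondary circularity: you assert that ``the same residue analysis'' gives $\tilde a_N\gg q^{AN}N^{B-1}$ for $N\equiv 0 \bmod M$, but that would again require knowing $\tilde C(0)>0$, which is the same difficulty you started with; the needed extra input --- that $\tilde a_N$ is supported on multiples of $\gcd_{n\in S} n$ and that the $N^{B-1}$-term has exactly that period --- is not stated.)

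The paper avoids this by keeping the exact $a_{P,n}$ in the truncation rather than replacing them by $\tfrac12 b_n |\kappa_P|^{e_n}$: it restricts to $n_P\in\{0\}\cup S$ over a cofinite set $\Omega$ of places, so the cancellation of leading terms against $\prod_{n\in S}(1-\cdots)^{b_n}$ is preserved and the singularity at $q^{-A}$ remains a genuine pole of order exactly $B$. It then observes that the nonzero residue class $r$ for the truncated count must be divisible by $\nu := \gcd_{n\in S} n$, and uses a finite stock of auxiliary places $Q_1,\dots,Q_L$ of degree $\equiv 1\bmod M$ to shift $r$ to $0$, yielding $C(0)>0$. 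Replacing your $\tfrac12$-modified truncation with this exact truncation (plus the support/shift argument) is what is needed to close the gap.
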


\begin{proof}
  Rescaling by a factor of $K$, we can assume without loss of generality that $K=1$.
  We then only need to consider integers $N$ since $a_N=0$ for $N\notin\Z_{\geq0}$.
  Moreover, we can assume without loss of generality that $b_n=0$ for all $n\notin S$.
  (For the finitely many $n \not\in S$ with $b_n\neq0$, replace $b_n$ by $0$ and $k_n$ by $\max(e_n,k_n)$, and observe that Inequality~(\ref{eqn:hypothesis-on-sup}) still holds.)
  Let $\delta\coloneq A - \sup\{\frac{k_n+1}{n}\mid n\in\Z_{>0}\} > 0$.
  By definition of the numbers $a_N$, the generating function $f(X) \coloneqq \sum_{N \geq 0} a_N X^N$ factors as an Euler product
  \[
    f(X) = \prod_P f_P(X^{\deg(P)}),
    \qquad\qquad
    \textnormal{where}
    \qquad
    f_P(X) \coloneq \sum_{n\geq0} a_{P,n} X^n.
  \]
  We estimate the local factor at a place $P$, for a complex number $X$:
  \begin{align*}
    f_P(|\kappa_P|^{-A}X)
    &= 1 + \sum_{n\geq1} a_{P,n} |\kappa_P|^{-An}X^n
    \\
    &= 1 + \sum_{n\in S} b_n|\kappa_P|^{e_n-An}X^n
    + \sum_{n\geq1} \cO(|\kappa_P|^{k_n-An}X^n) \\
    &= 1 + \sum_{n\in S} b_n|\kappa_P|^{-1}X^n
    + \sum_{n\geq1} \cO(|\kappa_P|^{-1-\delta n}X^n).
  \end{align*}
  Recall that the constant in the $\cO$-estimate is independent of both $n$ and $P$.
  We obtain:
  \begin{align*}
    f_P(|\kappa_P|^{-A}X)
    &= 1 + \sum_{n\in S} b_n|\kappa_P|^{-1}X^n
    + \cO\!\left(
      \sum_{n\geq1} |\kappa_P|^{-1-\delta n}X^n
    \right)
    \\
    &= 1 + |\kappa_P|^{-1} \sum_{n\in S} b_nX^n
    + |\kappa_P|^{-1-\delta} X \cdot
    \cO\!\left(
      \sum_{n\geq0}
        \left(|\kappa_P|^{-\delta}X\right)^n
    \right)
  \end{align*}
  For $|X|<|\kappa_P|^{\delta/2}$, we have $|\kappa_P|^{-\delta}|X| \leq |\kappa_P|^{-\delta/2} \leq q^{-\delta/2}$, and then
  \[
    \left|
      \sum_{n\geq0}
        \left(|\kappa_P|^{-\delta}X\right)^n
    \right|
    \leq
    \sum_{n \geq 0}
      q^{-n\delta/2}
    =
    \frac1{1-q^{-\delta/2}},
  \]
  which means that the $\cO$-factor is bounded by a constant independent of $P$.
  Note also that $|\kappa_P|^{-1-\delta} |X| \leq |\kappa_P|^{-1-\delta/2}$.
  We have obtained the estimate
  \[
    f_P(|\kappa_P|^{-A}X)
    = 1 + |\kappa_P|^{-1}\sum_{n\in S}b_nX^n
    + \cO(|\kappa_P|^{-1-\delta/2}),
  \]
  which implies, under the additional assumption that $|X|<|\kappa_P|^{1/3n}$ for all $n\in S$, that
  \[
    f_P(|\kappa_P|^{-A}X)
    \prod_{n\in S}\left(1-|\kappa_P|^{-1}X^n\right)^{b_n}
    = 1 + \cO(|\kappa_P|^{-1-\varepsilon})
  \]
  for $\varepsilon \coloneq \min(\frac13, \frac\delta2)$.
  Let $\delta' \coloneq \min\!\Big(\big\{\frac\delta2\big\}\cup\big\{\frac1{3n}\mid n\in S\big\}\Big)$.
  For $|X|<q^{\delta'}$, we obtain
  \begin{equation}
    \label{estimate-product}
    f(q^{-A}X)
    =
    \prod_P
      f_P\!\left(
        |\kappa_P|^{-A}X^{\deg(P)}
      \right)
    = \prod_P \frac{
      1 + \cO(|\kappa_P|^{-1-\varepsilon})
    }{
      \prod_{n\in S} \left(1 - (q^{-1}X^n)^{\deg(P)}\right)^{b_n}
    }.
  \end{equation}
  The product of the numerators is absolutely convergent and hence defines a holomorphic function for $|X|<q^{\delta'}$.
  Moreover, that product does not vanish for $X=1$ as none of the factors $f_P(|\kappa_P|^{-A}) = 1 + \sum_{n\geq1} a_{P,n} |\kappa_P|^{-An} \geq 1$ vanish (the coefficients $a_{P,n}$ are nonnegative).
  The Hasse--Weil zeta function
  \[
    Z_F(X) = \prod_P \frac1{1-X^{\deg(P)}}
  \]
  is holomorphic for $|X|\leq\frac1q$ except for a simple pole at $X=\frac1q$.
  Using this function, \Cref{estimate-product}, rewrites as
  \[
    f(q^{-A}X)
    \;=\;
    \underbrace{
      \prod_P
        \Big(
          1 + \cO(|\kappa_P|^{-1-\varepsilon})
        \Big)
    }_{\substack{
      \textnormal{holomorphic for $|X|<q^{\delta'}$}\\
      \textnormal{non-vanishing at $X=1$}
    }}
    \,\cdot\,
    \prod_{n \in S}
      \,
      \underbrace{
        Z_F(q^{-1}X^n)^{b_n}.
      }_{\substack{
        \textnormal{holomorphic for $|X| \leq 1$, except for}\\
        \textnormal{poles of order $b_n$ at $n$-th roots of $1$}\\
      }}
  \]
  Thus, the function $f(q^{-A}X)\prod_{n\in S}(1-X^n)^{b_n}$ is holomorphic for $|X|\leq1$ and non-vanishing at $X=1$.
  It follows that $f(X)$ is holomorphic for $|X|\leq q^{-A}$ except for poles of order at most $B$ at $q^{-A},q^{-A}\cdot\zeta,\dots,q^{-A}\cdot\zeta^{M-1}$ for the $M$-th roots of unity $1,\zeta,\dots,\zeta^{M-1}$, and the pole at $q^{-A}$ has order exactly $B$.
  By \cite[Theorem~IV.10]{flajolet-sedgewick-analytic-combinatorics}, this implies that
  \[
    a_N
    = \sum_{i=0}^{M-1} D_i (q^{-A}\zeta^i)^{-N}N^{B-1} + o(q^{AN}N^{B-1})
  \]
  for some constants $D_0,\dots,D_{M-1}$, with $D_0\neq0$.
  We conclude that
  \[
    a_N = C(N) q^{AN} N^{B-1} + o(q^{AN}N^{B-1}),
  \]
  where
  $
    C(N) \coloneq \sum_{i=0}^{M-1} D_i \zeta^{-iN}
  $
  only depends on the remainder of $N$ modulo $M$.
  Since $D_0\neq0$, we have $C(N)\neq0$ for some $N$.
  Since $a_N\geq0$ for all $N$, we have $C(N)\geq0$ for all $N$.
  
  \medskip

  We show $C(0)>0$ as follows:
  Let $\nu$ be the greatest common divisor of the numbers $n\in S$.
  Let $L\geq0$ be large enough so that every residue class modulo $M$ which is divisible by $\nu$ can be written as the sum of at most $L$ (not necessarily distinct) elements of $S$.
  
  Inequality (\ref{eqn:hypothesis-on-sup}) implies $e_n > k_n$ for $n\in S$, so $a_{P,n}>0$ for all but finitely many places $P$.
  By the Weil bound, for any sufficiently large integer $d$, there is a place of degree $d$.
  In particular, there are infinitely many places with $\deg(P)\equiv1\mod M$.
  Pick any $L$ places $Q_1,\dots,Q_L$ with $\deg(Q_i)\equiv1\mod M$ and $a_{Q_i,n}>0$ for all $n\in S$, and let $\Omega$ be the set of remaining places $P\neq Q_1,\dots,Q_L$.
  Restricting to places in $\Omega$ and elements $n_P$ of $S$, define
  \[
    a_N'
    \coloneqq
    \sum_{\substack{(n_P)_P\in\prod_{P\in\Omega}S:\\\sum_{P\in\Omega} n_P\deg(P) = N}}
      \prod_{P\in\Omega} a_{P,n_P}.
  \]
  We have $a_N' = C'(N\bmod M)\cdot q^{AN}N^{B-1}+o(q^{AN}N^{B-1})$ by the same argument as above, again with $C'(r)>0$ for some residue class $r$ modulo $M$.
  By definition, $a_N'=0$ unless $N$ is a sum of elements of $S$, and hence divisible by $\nu$.
  Thus, $C'(r)>0$ implies that $r$ is divisible by~$\nu$.
  Now, write $-r\equiv n_{Q_1}+\cdots+n_{Q_L}\mod M$ with $n_{Q_1},\dots,n_{Q_L}\in\{0\}\cup S$.
  The inequality
  \[
    a_{n_{Q_1}\deg(Q_1)+\cdots+n_{Q_L}\deg(Q_L) + N}
    \geq a_{Q_1,n_1}\cdots a_{Q_L,n_L} a_N'
  \]
  for $N\equiv r \mod M$ implies that
  $
    C(0) \geq a_{Q_1,n_1}\cdots a_{Q_L,n_L} C'(r) > 0
  $.
\end{proof}

\begin{remark}
  Note the similarities between our \Cref{eqn:hypothesis-on-sup,eqn:def-B} and \cite[Equations~(8.1) and (8.2)]{darda-yasuda-wild}.
  In that regard, our local--global principle (\Cref{thm:local--global-principle}) seems to be compatible with Darda and Yasuda's ``Main Speculation''.
\end{remark}

\subsection{Main counting result}
\label{subsn:proof-main-thm}

\begin{theorem}
  \label{thm:proof-counting}
  Let
  \[
    r \coloneqq \log_p|G[p]|
    \andd
    M \coloneqq
    \begin{cases}
      1 & \textnormal{if } G[p]\textnormal{ is abelian},\\
      1+p^{-1} & \textnormal{otherwise}.\\
    \end{cases}
  \]
  If $G[p]$ is non-abelian, assume that $|G[p]|\leq p^{p-1}$.
  Assume moreover that $q$ is a large enough power of~$p$ (depending on the group $G$).
  Then, there is a function
  $
    C:\Q/M\Z\rightarrow\R_{\geq0}
  $
  with $C(0) \neq 0$, such that for rational $N\rightarrow\infty$, we have
  \[
    \sum_{\substack{
      K\in\Ext(G,F):\\
      \lastjump(K)=N
    }}
      \frac1{|\Aut(K)|}
    \quad=\quad
    C\!\left(N\bmod M\right)\cdot
    q^{\frac{r+1}{M}\cdot N}
    +
    o\!\left(
      q^{\frac{r+1}{M}\cdot N}
    \right).
  \]
\end{theorem}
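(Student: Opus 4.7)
The plan is to combine the local--global principle (\Cref{thm:local--global-principle}) with the analytic lemma (\Cref{lem:analytic-lemma}), using the local estimates of \Cref{thm:local-counting} as input. For each place $P$ of $F$ and each $n \in \frac{1}{|G|}\Z_{\geq 0}$, set
\[
  a_{P, n} \coloneqq \cardsuchthat{D \in \fg \otimes \cD_P}{\lastjump(D) = n},
\]
which by \Cref{lem:numberofD-numberofexts} equals the local sum $\sum_{K_P \in \Ext(G, F_P),\,\lastjump(K_P) = n} 1/|\Aut(K_P)|$. Applying \Cref{thm:local--global-principle} and summing over tuples $(N_P)_P$ with $\sum_P N_P \deg(P) = N$, the left-hand side of the theorem becomes
\[
  a_N \;=\; \sum_{\substack{(n_P)_P \in \prod_P \frac{1}{|G|}\Z_{\geq 0}:\\ \sum_P n_P \deg(P) = N}} \prod_P a_{P, n_P},
\]
which matches exactly the shape studied by \Cref{lem:analytic-lemma} (with the integer there taken to be $|G|$).

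Next, I derive the local estimates needed. First, $a_{P, 0} = 1$ by \iref{thm:local-counting}{item:count-unramified}. For $n = M$, combining \iref{thm:local-counting}{item:count-least-ramified} with \Cref{prop:precise-slightly-ramified} and \Cref{rk:amk} yields $a_{P, M} = |\kappa_P|^r - |A_1(\kappa_P)| \cdot |Z(\fg[p]) \otimes \kappa_P|$; when $\fg[p]$ is non-abelian the variety $A_1$ is cut out by the non-trivial equation $[\sigma(x), x] = 0$, so Lang--Weil (as in the proof of \iref{thm:local-counting}{item:count-more-ramified-noninteger}\ref{item:count-more-ramified-small}) gives $|A_1(\kappa_P)| = O(|\kappa_P|^{\dim_{\F_p}(\fg[p]/Z(\fg[p])) - 1})$ and hence $a_{P, M} = |\kappa_P|^r + O(|\kappa_P|^{r - 1})$; in the abelian case $M = 1$ and $a_{P, 1} = |\kappa_P|^r - 1$ directly. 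For every other $n > 0$, I take the best available upper bound from \iref{thm:local-counting}{item:count-more-ramified-integer}, \iref{thm:local-counting}{item:count-more-ramified-noninteger}\ref{item:count-more-ramified-large}, and \iref{thm:local-counting}{item:count-more-ramified-noninteger}\ref{item:count-more-ramified-small}, obtaining a bound of the form $a_{P, n} \leq C_0 \cdot E^{l(n)} |\kappa_P|^{r l(n) - \eta(n)}$ where $\eta(n) \in \{0,1\}$ (with $\eta(n) = 1$ whenever $\fg[p]$ is non-abelian and the relevant sub-case of \iref{thm:local-counting}{item:count-more-ramified-noninteger} applies). The hypothesis $|G[p]| \leq p^{p-1}$ gives $r \leq p-1$, hence $\delta \coloneqq (p - r)/(p + 1) > 0$; picking $\epsilon \in (0, \delta)$ and requiring $q \geq E^{1/\epsilon}$ yields $E^{l(n)} \leq |\kappa_P|^{l(n) \epsilon}$ uniformly in $P$, so $a_{P, n} = O(|\kappa_P|^{k_n})$ with $k_n \coloneqq r l(n) - \eta(n) + l(n) \epsilon$.

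Finally, I invoke \Cref{lem:analytic-lemma} with $b_M = 1$, $e_M = r$, $k_M = r - 1$, and $b_n = 0$ together with the $k_n$ above for $n \neq M$. Then $A = (e_M+1)/M = (r+1)/M$ is the maximum of $(e_n+1)/n$, attained only at $n = M$, so $S = \{M\}$ and $B = 1$. The key inequality $\sup_n (k_n+1)/n < A$ follows by a short case analysis: for the finitely many $n$ in any bounded range, using the optimal local bound reduces the inequality to $l(n)(r + \epsilon)/n < (r+1)p/(p+1)$, which one verifies directly using the explicit form of $l(n)$; as $n \to \infty$ one is in the regime of \iref{thm:local-counting}{item:count-more-ramified-noninteger}\ref{item:count-more-ramified-large} (forcing $l(n) \leq \lceil n \rceil$), so $(k_n+1)/n \to r + \epsilon < r + \delta = A$. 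Applying \Cref{lem:analytic-lemma} yields $a_N = C(N \bmod M) q^{AN} N^{B-1} + o(q^{AN}N^{B-1})$ with $B - 1 = 0$, as claimed. The main obstacle is precisely this uniform strict bound: the $-1$ saving in the exponent provided by \iref{thm:local-counting}{item:count-more-ramified-noninteger} (ultimately coming from the Lang--Weil bound on $[\sigma(x), x] = 0$, which uses the non-abelianness of $\fg[p]$) is essential, and the hypothesis $|G[p]| \leq p^{p-1}$ is exactly what opens the positive gap $\delta$ into which the small $\epsilon$ needed to absorb the exponentially growing factors $E^{l(n)}$ can be fit; without these two ingredients, the contributions from extensions with arbitrarily large last jump would overwhelm the main term, regardless of the size of $q$.
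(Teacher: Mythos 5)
Your overall strategy is the same as the paper's: combine the local--global principle (\Cref{thm:local--global-principle}) with \Cref{lem:analytic-lemma}, using the local estimates from \Cref{thm:local-counting}. The computation of $a_{P,M}$ via $|A_1(\kappa_P)|$ and Lang--Weil is fine (it is a lightly repackaged version of what the paper does), and the choice of $k_n$ via the factor $E^{l(n)} \leq |\kappa_P|^{l(n)\epsilon}$ is also exactly the paper's device. However, there is a genuine gap in the verification of hypothesis~(\ref{eqn:hypothesis-on-sup}) of \Cref{lem:analytic-lemma}, stemming from the constraints you place on~$\epsilon$.

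You take $\epsilon \in (0,\delta)$ with $\delta \coloneqq (p-r)/(p+1)$, and you reduce the key inequality to $l(n)(r+\epsilon)/n < (r+1)p/(p+1)$. That reduction is only correct for $n$ with $\eta(n) = 1$. When $\eta(n) = 0$ — which happens for $l + lp^{-1} \leq n < l+1$ with $2 \leq l < p$ in the non-abelian case, and for every $l \leq n < l+1$ with $l \geq 2$ in the abelian case — one has instead $(k_n+1)/n = (r l(n) + l(n)\epsilon + 1)/n$, and since $n < l+1$ the $+1$ does not disappear. Working it out: for $2 \leq l < p$ and $n \geq l(1+p^{-1})$ one gets $(k_n+1)/n \leq (\epsilon + r + 1/l)/(1+p^{-1})$, and the requirement that this be $< A = (r+1)/(1+p^{-1})$ is equivalent to $\epsilon + 1/l < 1$; the worst case $l=2$ forces $\epsilon < 1/2$. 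The paper explicitly imposes this extra constraint $\epsilon < 1/2$ alongside $\epsilon < (p-r)/(p+1)$, and your proposal omits it; when $r$ is small (say $r \leq (p-1)/2$) one has $\delta \geq 1/2$, so "$\epsilon < \delta$" does not give you $\epsilon < 1/2$ and the bound genuinely fails for those $n$.

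Separately, your handling of the abelian case breaks down: there the hypothesis $|G[p]| \leq p^{p-1}$ is \emph{not} assumed, so $r$ is unbounded and your $\delta = (p-r)/(p+1)$ may be non-positive, making "$\epsilon \in (0,\delta)$" vacuous. Moreover your limit computation "$(k_n+1)/n \to r+\epsilon < r + \delta = A$" uses $r+\delta = A$, which is only true in the non-abelian case ($A = p(r+1)/(p+1)$); in the abelian case $A = r+1 \neq r + (p-r)/(p+1)$. In the abelian case the only constraint needed is in fact $\epsilon < 1/2$, and the paper treats the two cases with separate tables of $(b_n, e_n, k_n)$ and separate checks of inequality~(\ref{eqn:hypothesis-on-sup}). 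So the local estimates in your proposal are right, but the final step — pinning down the constraints on $\epsilon$ so that the analytic lemma applies — is both incomplete (missing $\epsilon < 1/2$) and structurally wrong in the abelian case.
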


\begin{proof}
For every $N \in \Q_{\geq0}$, define:
\[
  a_N
  \coloneqq
  \sum_{\substack{
    K\in\Ext(G,F):\\
    \lastjump(K)=N
  }}
    \frac1{\Aut(K)}.
\]
Combining \Cref{eq:def-global-lastjump}, \Cref{thm:local--global-principle}, and \Cref{lem:numberofD-numberofexts}, we see that
\[
  a_N
  =
  \sum_{\substack{
    (n_P)_P\in\prod_P\frac1{|\fg|}\Z_{\geq0}:\\
    \sum_P n_P\deg(P) = N
  }}
    \prod_P
      a_{P,n_P}
  \qquad\qquad
  \textnormal{where}
  \qquad
  a_{P,n}
  \coloneqq
  \cardsuchthat{
    D\in\fg\otimes\cD_P
  }{
    \lastjump(D)=n
  }.
\]
Our goal is now to use \Cref{lem:analytic-lemma} (with $K = |\fg|$) in order to obtain estimates for $a_N$.
For this, we use the following estimates arising from \Cref{thm:local-counting}:

\bgroup
\allowdisplaybreaks
\textbf{If $\fg[p]$ is non-abelian, then:}
\begin{align*}
  a_{P,n} &= 1 && \textnormal{for }n=0,\\
  a_{P,n} &= 0 && \textnormal{for }0<n<1,\\
  a_{P,n} &= |\kappa_P|^r + \cO(|\kappa_P|^{r-1}) && \textnormal{for }n=1+p^{-1},\\
  a_{P,n} &= 0 && \textnormal{for }1+p^{-1}<n<2, \\
  a_{P,n} &= \cO(E^l|\kappa_P|^{l r-1}) && \textnormal{for }l\leq n<l+l p^{-1}\textnormal{ with }l=1,\dots,p-1, \\
  a_{P,n} &= \cO(E^l|\kappa_P|^{l r-1}) && \textnormal{for }l\leq n<l+1\textnormal{ with }l=p,p+1,\dots, \\
  a_{P,n} &= \cO(E^l|\kappa_P|^{l r}) && \textnormal{for }l+l p^{-1}\leq n<l+1\textnormal{ with }l=2,\dots,p-1.
\end{align*}

\textbf{If $\fg[p]$ is abelian, then:}
\begin{align*}
  a_{P,n} &= 1 && \textnormal{for }n=0, \\
  a_{P,n} &= 0 && \textnormal{for }0<n<1, \\
  a_{P,n} &= |\kappa_P|^r + \cO(1) && \textnormal{for }n=1, \\
  a_{P,n} &= 0 && \textnormal{for }1<n<2, \\
  a_{P,n} &= \cO(E^l|\kappa_P|^{l r}) && \textnormal{for }l\leq n<l+1\textnormal{ with }l=2,3,\dots.
\end{align*}
\egroup
We now fix a real $\varepsilon>0$ satisfying $\varepsilon < \frac12$ and, if $\fg[p]$ is non-abelian (in which case we have assumed $r \leq p-1$), also satisfying $\varepsilon < \frac{p-r}{p+1}$.
We assume that $q \geq E^{1/\varepsilon}$, so that  $E \leq |\kappa_P|^\varepsilon$ for all places $P$ (note that any $q$ works if $E=1$).

\textbf{If $\fg[p]$ is non-abelian:}
The estimates above show that we can take the following values of~$b_n, e_n, k_n$ in order to apply \Cref{lem:analytic-lemma}:
\[\begin{tabular}{c|*{5}{@{\hspace{.3cm}}c}}
  $n$ & \begin{tabular}{c} $0<n<1$ \\ or $1+ p^{-1} < n <2$ \end{tabular} & $1+ p^{-1}$ & \begin{tabular}{c}$l+lp^{-1} \leq n < l+1$\\ with $2 \leq l < p$ \end{tabular} & \begin{tabular}{c} $l \leq n < l+lp^{-1}$ with $1 \leq l < p$ \\ or $l \leq n < l+1$ with $l \geq p$ \end{tabular} \\
  \hline
  $b_n$ & $0$       & $1$   & $0$                 & $0$ \\
  $e_n$ & $0$       & $r$   & $0$                 & $0$ \\
  $k_n$ & $-\infty$ & $r-1$ & $l\varepsilon + lr$ & $l\varepsilon + lr - 1$
\end{tabular}\]
In the notation of \Cref{lem:analytic-lemma}, we have $S = \{1+p^{-1}\}$, $A = \frac{r+1}{1+p^{-1}}$, $B = 1$ and $M = 1+p^{-1}$.
We verify Inequality~(\ref{eqn:hypothesis-on-sup}):
\begin{align*}
  &\frac{l\varepsilon+lr+1}{l+lp^{-1}} \leq \frac{\varepsilon+r+1/2}{1+p^{-1}} < \frac{r+1}{1+p^{-1}} = A
  &&\textnormal{for }2\leq l < p, \\
  &\frac{l\varepsilon+lr}{l} = \varepsilon+r < \frac{p-r}{p+1} + r = \frac{r+1}{1+p^{-1}} = A
  &&\textnormal{for }l\geq1.
\end{align*}

\textbf{If $\fg[p]$ is abelian:}
The estimates above show that we can take the following values of~$b_n, e_n, k_n$ in order to apply \Cref{lem:analytic-lemma}:
\[\begin{tabular}{c|*{4}{@{\hspace{.8cm}}c}}
  $n$ & $0<n<1$ & $1$ & $1<n<2$ & $l \leq n < l+1$ with $l \geq 2$\\
  \hline
  $b_n$ & $0$ & $1$ & $0$ & $0$  \\
  $e_n$ & $0$ & $r$ & $0$ & $0$  \\
  $k_n$ & $-\infty$ & $0$ & $-\infty$ & $l\varepsilon + lr$
\end{tabular}\]
We have $S = \{1\}$, $A = r+1$, $B = 1$ and $M = 1$.
We verify Inequality~(\ref{eqn:hypothesis-on-sup}):
\begin{align*}
  &\frac{l\varepsilon+lr+1}{l} \leq \varepsilon+r+\frac12 < r+1 = A
  &&\textnormal{for }l\geq2.
  \qedhere
\end{align*}
\end{proof}

\section{Groups of exponent~$p$: the example of Heisenberg groups}
\label{sn:more-groups}

When $\fg$ has exponent~$p$, our main result (\Cref{thm:intro-counting}) only applies to finitely many $p$-groups due to the assumption that $|\fg|=|\fg[p]|\leq p^{p-1}$.
To illustrate how this restriction may be overcome, we are going to deal with the infinite family of Heisenberg groups $H_k(\F_p)$ of exponent~$p$.

The section is organized as follows:
in \Cref{subsn:nilpas-expp,subsn:lastjump-in-expp}, we review the simpler form taken by nilpotent Artin--Schreier theory and by the equations of \Cref{def:property-Jv} when focusing on groups of exponent~$p$.
In particular, we show how a stronger version of \Cref{cor:counting-bounded-last-jump} follows immediately from the equations (\Cref{prop:large-v-expp}).
We then introduce generalized Heisenberg groups in \Cref{subsn:heisenberg-groups}, we obtain estimates for the number of elements of $H_k(\F_q)$ which commute with (part of) their Frobenius orbit in \Cref{subsn:elements-commuting-with-frob}, and we use these estimates to obtain asymptotics for the number of $H_k(\F_p)$-extensions of function fields in \Cref{subsn:local-counting-heisenberg,subsn:heisenberg-global-asymptotics}, proving our main theorem (\Cref{thm:heisenberg-count}, which is \Cref{thm:intro-heisenberg}).

\subsection{Nilpotent Artin--Schreier theory in exponent~$p$}
\label{subsn:nilpas-expp}

When we focus only on $p$-groups~$G$ of exponent~$p$ and of nilpotency class at most $2$ (corresponding to a Lie $\F_p$-algebra $\fg$), we do not need Witt vectors at all, as we shall briefly explain.

Let $\fg \neq 0$ be a finite Lie $\F_p$-algebra of nilpotency class $\leq 2$.
For every field of characteristic~$p$, we have a bijection $\orb: H^1(\Gamma_F, G) \simto \Orb{\fg}{F}{F}$, where $\Orb{\fg}{F}{F}$ is the set of orbits of $\fg \otimes_{\F_p} F$ under the $(\fg \otimes F, \circ)$-action given by $g.m \coloneqq \sigma(g) \circ m \circ (-g)$.
In other words, compared to previous sections, we can ignore every coordinate of every Witt vector besides the first, and we do not have to take the perfect closure of the base field (see \Cref{lem:artsch-perfclos}).
All proofs are similar to the proofs of \Cref{sn:preliminaries}, but are largely simplified.

When we have fixed a finite field $\kappa$, we let $\cD^0$ (resp.~$\cD$) be the $\kappa$-linear subspace of~$\kappa\llpar\pi\rrpar$ spanned by the elements $\pi^{-a}$ for $a \in \Onotp$ (resp.~for $a \in \notp$).
Then, the obvious analogues of all results from \Cref{sn:local} hold, with simplified proofs.
In particular, we have a bijection $\alpha^0 : \Orb{\fg}{F}{F} \simto \Orb{\fg}{\cD^0}{\kappa}$ and a $|\fg \otimes \kappa|$-to-one surjection $\alpha : \Orb{\fg}{F}{F} \simto \Orb{\fg}{\cD}{\kappa}$.

\subsection{The distribution of last jumps in exponent~$p$}
\label{subsn:lastjump-in-expp}

Let as above $\fg$ be a nonzero finite Lie $\F_p$-algebra of nilpotency class $\leq 2$ and let $r\coloneq\dim_{\F_p}\fg$.
By \Cref{thm:lastjump-Jv}, the condition that an element $D \in \fg \otimes \cD$ satisfies $\lastjump(D) < v$ is equivalent to the equations given in \Cref{def:property-Jv}.
Moreover, in this setting, these equations take a much simpler form:

\begin{corollary}
  \label{cor:jv-in-exp-p}
  Consider an element $D \in \fg \otimes \cD$, written as $D = \sum_{b \in \notp} D_b \pi^{-b}$ with $D_b \in \fg \otimes \kappa$.
  Then, $D$ satisfies $\lastjump(D) < v$ if and only if the following equations hold, for all $b \in \N\setminus p \N$:
  \begin{align}
    \label{ex:jv-in-exp-p-i}
    D_b
    &=
    -(2b)^{-1}\sum_{\substack{
      a_1,a_2\in\notp:\\
      b = a_1 + a_2,\\
      a_1<v,\ a_2 < v
    }}
    a_1 [D_{a_1}, D_{a_2}]
    &
    \textnormal{
      if $b\geq v$
    }
    \\
    \label{ex:jv-in-exp-p-ii}
    0
    &=
    \sum_{\substack{
      a_1,a_2\in\notp:\\
      b = a_1 + a_2
    }}
    a_1 [D_{a_1}, D_{a_2}]
    &
    \textnormal{
      if $b<v$
    }
    \\
    \label{ex:jv-in-exp-p-iii}
    0
    &=
    \sum_{\substack{
      a_1,a_2\in\notp:\\
      b=a_1 p^i + a_2,\\
      a_1<v,\ a_2<v
    }}
      a_1
      [\sigma^i(D_{a_1}),D_{a_2}]
    &
    \textnormal{
      for all $i > 0$ such that $bp^{-i} \geq v$.
    }
  \end{align}
\end{corollary}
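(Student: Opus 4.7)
The plan is to prove this corollary as a direct specialization of Theorem~\ref{thm:lastjump-Jv} to the exponent-$p$ setting. By that theorem, $\lastjump(D) < v$ is equivalent to $D$ satisfying property $J(v)$, i.e., the two families of equations \eqref{eq:cF-2-integer-nu} and \eqref{eq:cF-2-noninteger-nu} from Definition~\ref{def:property-Jv}. All that remains is to verify that, under the hypothesis $p\fg = 0$, these simplify to the three stated forms.

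The key input is that when $\fg$ is a Lie $\F_p$-algebra, the tensor product $\fg \otimes W(\kappa)$ identifies with $\fg \otimes_{\F_p} \kappa$ and is annihilated by multiplication by $p$. Consequently, every summand in \eqref{eq:cF-2-integer-nu} or \eqref{eq:cF-2-noninteger-nu} carrying an explicit factor $p^{n_1}$ (resp.\ $p^n$) with positive exponent vanishes identically. Hence only contributions with $n_1 = n_2 = 0$ (resp.\ $n = 0$) survive, and the combinatorial weight $\eta(n_1, n_2)$ collapses to $\tfrac{1}{2}$.

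Next, I will split \eqref{eq:cF-2-integer-nu} according to the sign of $\mu_v(b)$. When $b \geq v$, one has $\mu_v(b) = 0$, so the left-hand side equals $D_b$; after absorbing the factor $-b^{-1} \cdot \tfrac{1}{2}$ this produces exactly \eqref{ex:jv-in-exp-p-i}. When $b < v$, one has $\mu_v(b) \geq 1$, so the left-hand side vanishes since $p$ annihilates $D_b$, and the resulting relation among the remaining commutators rearranges into \eqref{ex:jv-in-exp-p-ii}. Applying the same reduction $n = 0$ to \eqref{eq:cF-2-noninteger-nu} yields \eqref{ex:jv-in-exp-p-iii} essentially without change.

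The proof is thus mostly an exercise in bookkeeping, with no serious obstacle: the substantive content is already provided by Theorem~\ref{thm:lastjump-Jv}. The only delicate step is tracking the index sets carefully so that the antisymmetry of the Lie bracket reproduces the coefficient $a_1$ and the constant $-(2b)^{-1}$ in the stated form.
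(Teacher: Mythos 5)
Your outline is correct for \eqref{ex:jv-in-exp-p-i} and \eqref{ex:jv-in-exp-p-iii}: in both cases, only the $n_1 = n_2 = 0$ (resp.\ $n = 0$) summands survive when $p\fg = 0$, the factor $\eta(0,0) = \tfrac12$ appears, and the resulting equations are literally those stated. But the sentence ``the resulting relation among the remaining commutators rearranges into \eqref{ex:jv-in-exp-p-ii}'' hides the one step that actually fails, and you owe the reader a computation there rather than a wave at ``bookkeeping.''

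For $b < v$ one has $\mu_v(b) \geq 1$, so the left-hand side of \eqref{eq:cF-2-integer-nu} does vanish, as you say. But the surviving right-hand side carries the constraint
\[
  bp^{\mu_v(b)} = a_1 + a_2, \qquad a_1 < v,\ a_2 < v,
\]
not $b = a_1 + a_2$. Since $\mu_v(b)\geq 1$, the target of the sum is $bp^{\mu_v(b)}\in[v,pv)$, a multiple of $p$ strictly larger than $b$ --- this does not ``rearrange into'' the constraint $b = a_1 + a_2$ by any cosmetic step. A concrete instance makes the mismatch visible: take $p=5$, $v=4$, $b=3$. Then $\mu_4(3)=1$ and $bp^{\mu_v(b)}=15$, so the sum in \eqref{eq:cF-2-integer-nu} (over $a_1+a_2=15$ with $a_1,a_2<4$) is empty and the equation is vacuous. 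Meanwhile \eqref{ex:jv-in-exp-p-ii} with $b=3$ reads $0 = [D_1,D_2] + 2[D_2,D_1] = -[D_1,D_2]$, i.e.\ $[D_1,D_2]=0$, a constraint not imposed by \eqref{eq:cF-2-integer-nu} for this $b$. Conversely, the nontrivial constraint that $J(4)$ does impose at $b<v$ comes from $b=1$ (with $1\cdot p^{\mu_4(1)}=5 = 2+3$), namely $[D_2,D_3]=0$ --- and this is not reproduced by \eqref{ex:jv-in-exp-p-ii} for any $b<v$. So the set of equations you would actually obtain by specializing \eqref{eq:cF-2-integer-nu} at $b<v$ is not the set \eqref{ex:jv-in-exp-p-ii}; the ``rearrangement'' you appeal to does not exist. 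In fact, the specialization gives, for each $b\in\notp$ with $b < v$,
\[
  0 = \sum_{\substack{a_1,a_2\in\notp:\\ a_1 + a_2 = bp^{\mu_v(b)},\\ a_1 < v,\ a_2 < v}} a_1\,[D_{a_1}, D_{a_2}],
\]
and it is this (reindexed by the multiples of $p$ in $[v,pv)$, say) that should replace \eqref{ex:jv-in-exp-p-ii}. Either the corollary needs this correction, or a separate argument is needed to show the two families of equations cut out the same locus given \eqref{ex:jv-in-exp-p-i} and \eqref{ex:jv-in-exp-p-iii}; as the example shows, they do not in general. Your proof must therefore address this step explicitly rather than treat it as routine.
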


This simpler form of the equations can be used to directly obtain a stronger form of \Cref{cor:counting-bounded-last-jump}/\Cref{prop:better-bound} when the exponent is~$p$ (with a much simpler proof):

\begin{proposition}
  \label{prop:large-v-expp}
  For all $v > 0$, we have the upper bound:
  \begin{align*}
    \cardsuchthat{
      D \in \fg \otimes \cD
    }{
      \lastjump(D) < v
    }
    &
    =
    \cardsuchthat{
      (D_a)_{p \nmid a < v} \in (\fg \otimes \kappa)^{\lceil v \rceil - \lceil \frac vp \rceil}
    }{
      \textnormal{\Cref{ex:jv-in-exp-p-ii,ex:jv-in-exp-p-iii} hold}
    }
    \\
    & \leq
    |\fg \otimes \kappa|^{
      \lceil v \rceil - \lceil \frac v p \rceil
    }
    =
    |\kappa|^{
      r
      \left(
        \lceil v \rceil - \lceil \frac v p \rceil
      \right)
    }.
  \end{align*}
\end{proposition}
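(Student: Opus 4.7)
The plan is to invoke \Cref{cor:jv-in-exp-p} directly. Since $\fg$ has exponent $p$, the three types of equations there fully characterize the condition $\lastjump(D) < v$, and the critical observation is that \Cref{ex:jv-in-exp-p-i} is not a constraint but a formula: for every $b \in \notp$ with $b \geq v$, it expresses $D_b$ explicitly as a $\kappa$-linear combination of brackets $[D_{a_1}, D_{a_2}]$ where $a_1, a_2 \in \notp$ and $a_1, a_2 < v$.

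I would therefore define a map
\[
  \Psi : \suchthat{(D_a)_{a \in \notp,\, a < v} \in (\fg \otimes \kappa)^{\{a \in \notp \,:\, a < v\}}}{\textnormal{\Cref{ex:jv-in-exp-p-ii,ex:jv-in-exp-p-iii} hold}} \longrightarrow \suchthat{D \in \fg \otimes \cD}{\lastjump(D) < v}
\]
by extending each tuple via \Cref{ex:jv-in-exp-p-i}. This is well-defined as an element of $\fg \otimes \cD$ because the sum in \Cref{ex:jv-in-exp-p-i} is empty whenever $b \geq 2v$, so only finitely many $D_b$ are nonzero (compare \iref{prop:consequences-Jv}{prop:consequences-Jv-iv}). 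By construction, the resulting $D$ satisfies \Cref{ex:jv-in-exp-p-i,ex:jv-in-exp-p-ii,ex:jv-in-exp-p-iii}, and hence $\lastjump(D) < v$ by \Cref{cor:jv-in-exp-p}. Conversely, any $D$ with $\lastjump(D) < v$ restricts to a tuple in the domain, and \Cref{ex:jv-in-exp-p-i} forces $\Psi$ applied to that restriction to reproduce $D$. Thus $\Psi$ is a bijection, establishing the first equality in the statement.

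The inequality then reduces to counting the index set. I would compute $|\{a \in \notp : 0 < a < v\}|$ by inclusion--exclusion: the number of positive integers less than $v$ equals $\lceil v \rceil - 1$, and among these the multiples of $p$ are in bijection with positive integers less than $v/p$, of which there are $\lceil v/p \rceil - 1$. Subtracting gives $\lceil v \rceil - \lceil v/p \rceil$. (Alternatively, this follows from \Cref{lem:sum-mu} noting that in exponent~$p$ the relevant values of $\mu_v$ are zero or one.) The upper bound $|\fg \otimes \kappa|^{\lceil v \rceil - \lceil v/p \rceil} = |\kappa|^{r(\lceil v \rceil - \lceil v/p \rceil)}$ is then immediate since each $D_a$ ranges over a set of size $|\fg \otimes \kappa| = |\kappa|^r$.

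There is essentially no serious obstacle here: the argument is a direct rephrasing of \Cref{cor:jv-in-exp-p}, and the only mildly delicate point is confirming that the bijection $\Psi$ produces elements of $\fg \otimes \cD$ (rather than a formal infinite expansion), which is handled by the vanishing of \Cref{ex:jv-in-exp-p-i} for $b \geq 2v$.
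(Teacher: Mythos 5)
Your proof is correct and takes essentially the same approach as the paper's: the key observation in both is that \Cref{ex:jv-in-exp-p-i} determines $D_b$ for $b \geq v$ in terms of $(D_a)_{a<v}$, while $D_b$ for $b \geq v$ never appears in \Cref{ex:jv-in-exp-p-ii,ex:jv-in-exp-p-iii}. The paper's proof is two sentences; you spell out the same bijection more explicitly (including the well-definedness check via the emptiness of the sum for $b \geq 2v$, and the inclusion--exclusion count of $\lceil v\rceil - \lceil v/p\rceil$), which is fine but adds no genuinely different idea.
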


\begin{proof}
  The elements $D_b$ for $b \geq v$ are uniquely expressed in terms of the elements~$D_b$ for~$b < v$ using \Cref{ex:jv-in-exp-p-i}, and they never appear in \Cref{ex:jv-in-exp-p-ii,ex:jv-in-exp-p-iii}.
  This implies the first equality.
  The following inequalities are clear.
\end{proof}

In order to apply \Cref{lem:analytic-lemma} without assuming that $r$ is small, we need to better understand the distribution of extensions with small last jump, and hence study the sets $A_m(\kappa)$ appearing in \Cref{rk:amk} (as $\fg$ has exponent~$p$, we have $\fg[p] = \fg$ and $\fg \otimes_{\Z_p} W(\kappa) = \fg \otimes_{\F_p} \kappa$).
Since the sizes of these sets heavily depend on the Lie algebra (we are essentially counting elements commuting with their Frobenii), a unified statement for the asymptotics seems out of reach.%
\footnote{
  The main dependence in the Lie algebra $\fg$ certainly comes from its ``commuting variety'', i.e., the subvariety $C \subseteq (\mathbb A^r_{\F_p})^2$ cut out by the equations corresponding to $[x,y]=0$.
  Indeed, for example, in the case of $A_1(\F_q)$, we are counting $\F_q$-points of the intersection of $C$ with the graph of the Frobenius map.
  Interpreting this as an intersection number, and using the Grothendieck--Lefschetz trace formula (following the methods of \cite{langweiltordu} or of \cite{hils2024langweiltypeestimatesfinite}) might relate this number to the geometry of $C$.
}

\subsection{Higher Heisenberg groups and their Lie algebras}
\label{subsn:heisenberg-groups}

Let $p\neq2$ and $k \geq 1$.
For us, the Heisenberg group $H_k(\F_p)$ is the group defined by the following matrix representation%
\footnote{
  Be aware that this generalization of the Heisenberg group is distinct from the generalization considered in \cite{muller-thesis} (where the rank of the center is not always $1$).
}:
\begin{equation}
  \label{def:heisenberg-group}
  H_k(\F_p)
  \coloneqq
  \suchthat{
    \begin{pmatrix}
      1 & \vec a^t & c \\
      0 & I_k      & \vec b \\
      0 & 0        & 1
    \end{pmatrix}
    \in
    \GL_{k+2}(\F_p)
  }{
    \vec a, \vec b \in \F_p^k, \,
    c \in \F_p
  }.
\end{equation}
Equivalently, it is the unique group (up to isomorphism) with exponent~$p$, center $\Z/p\Z$, nilpotency class $2$, and order $p^{2k+1}$.
We denote the corresponding Lie $\F_p$-algebra by $\fh_k$, which as an $\F_p$-vector space can be decomposed as $(\F_p^k)^2 \oplus \F_p$ (the $\F_p$-factor is the center, corresponding to the coefficient $c$, and the two $\F_p^k$-factors correspond respectively to $\vec a$ and $\vec b$).
The Lie bracket of~$\fh_k$ is determined by the induced map $(\fh_k/Z(\fh_k))^2 \to Z(\fh_k)$, which is a nondegenerate $\F_p$-bilinear alternating form $f_k : (\F_p^k)^2 \to \F_p$.
Concretely, if $(\vec a, \vec b)$ and $(\vec \alpha, \vec \beta)$ are two elements of~$(\F_p^k)^2$, one can check that
\[
  f_k\!\left(
    (\vec a, \vec b), \,
    (\vec {a'}, \vec {b'})
  \right)
  \quad=\quad
  \vec a \cdot \vec {b'} - \vec b \cdot \vec {a'}
  \quad=\quad
  \sum_{i=1}^k
    (a_i b'_i - b_i a'_i).
\]
($f_k$ is the only non-degenerate bilinear alternating form up to automorphisms of $(\F_p^k)^2$.)

\subsection{Elements commuting with their Frobenii.}
\label{subsn:elements-commuting-with-frob}

For any finite field $\F_q$ of characteristic~$p$ and any $m\geq0$, extend the bilinear form $f_k$ to $\F_q^{2k}$ and define the set
\begin{equation}
  \label{eqn:def-anm}
  A_{k,m}(\F_q)
  \coloneqq
  \suchthat{
    x \in \F_q^{2k}
  }{
    f_k(\sigma^i(x),x) = 0
    \textnormal{ for } i\in\{1,\dots,m\}
  }.
\end{equation}
Clearly, $A_{k,0}(\F_q) \supseteq A_{k,1}(\F_q) \supseteq A_{k,2}(\F_q) \supseteq \cdots$.
The definition of $A_{k,m}(\F_q)$ is motivated by its appearance in \Cref{rk:amk}, which implies the following equality when the base field is $\F_q\llpar\pi\rrpar$:
\begin{align}
  \cardsuchthat{
    D \in \fh_k \otimes \cD
  }{
    \lastjump(D) < 1+p^{-m}
  }
  & =
  |Z(\fh_k) \otimes \F_q|
  \cdot
  |A_{k,m}(\F_q)|
  \nonumber
  \\
  & =
  q
  \cdot
  |A_{k,m}(\F_q)|.
  \label{eqn:count-anm}
\end{align}

We say that a subspace $W$ of $\F_q^{2k}$ is \emph{isotropic} if $f_k(x,y)=0$ for all $x,y\in W$.
Note that all maximal isotropic subspaces of $\F_q^{2k}$ are $k$-dimensional.

\begin{lemma}[Large $m$]
  \label{lem:Anm-large-m}
  We have
  \[
    A_{k,m}(\F_q)
    = \bigcup_{\substack{
      W\subseteq\F_p^{2k}\\
      \textnormal{isotropic}
    }}
      (W \otimes_{\F_p} \F_q)
    \qquad\textnormal{for }m \geq k.
  \]
\end{lemma}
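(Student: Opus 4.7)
The plan is to establish both inclusions of the claimed equality. The inclusion $\supseteq$ is immediate: if $W \subseteq \F_p^{2k}$ is isotropic and $x \in W \otimes_{\F_p} \F_q$, fix a decomposition $x = \sum_j w_j \otimes c_j$ with $w_j \in W$ and $c_j \in \F_q$. Since $\sigma$ acts only on the $\F_q$-coordinates, $\sigma^i(x) = \sum_j w_j \otimes \sigma^i(c_j) \in W \otimes \F_q$, and therefore $f_k(\sigma^i x, x) = \sum_{j,j'} \sigma^i(c_j)\, c_{j'}\, f_k(w_j, w_{j'}) = 0$ because $f_k$ vanishes on $W$. This holds for every $i$, independently of~$m$.

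For the converse inclusion $\subseteq$, fix $x \in A_{k,m}(\F_q)$. The $\F_q$-span of the Frobenius orbit $\{\sigma^j x\}_{j \geq 0}$ is a $\sigma$-stable subspace of $\F_q^{2k}$, which by Galois descent descends to a unique smallest $\F_p$-subspace $W \subseteq \F_p^{2k}$ with $x \in W \otimes_{\F_p} \F_q$. Setting $n := \dim_{\F_p} W$, the vectors $x, \sigma x, \ldots, \sigma^{n-1} x$ form an $\F_q$-basis of $W_q := W \otimes \F_q$. The goal is to show that $W$ is isotropic, equivalently that the Gram matrix $G = (G_{ab})_{0 \leq a, b \leq n-1}$ with $G_{ab} := f_k(\sigma^a x, \sigma^b x)$ vanishes identically.

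Applying $\sigma^j$ to the hypothesis $f_k(\sigma^i x, x) = 0$ for $i = 1, \ldots, m$ and using the Frobenius equivariance $G_{a+1, b+1} = \sigma(G_{ab})$ together with the alternating property forces $G_{ab} = 0$ whenever $|a - b| \leq m$. If $n \leq m + 1$, every entry $G_{ab}$ with $0 \leq a, b \leq n-1$ lies in this vanishing band, so $G \equiv 0$ and $W$ is isotropic. If instead $n \geq m + 2$, the aim is to derive a contradiction. Since $W \subseteq \F_p^{2k}$ we have $n \leq 2k$, and combining with the hypothesis $m \geq k$ gives $n \leq 2k \leq 2m$. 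The only rows of $G$ that can be nonzero are then those indexed by $\{0, \ldots, n-m-2\} \cup \{m+1, \ldots, n-1\}$, these two sets being disjoint, which yields the structural bound $\rank(G) \leq 2(n - m - 1)$. On the other hand, by the non-degeneracy of $f_k$ on $\F_q^{2k}$, one has $\dim W_q^\perp = 2k - n$, whence $\rank(G) = n - \dim(W_q \cap W_q^\perp) \geq n - (2k - n) = 2(n - k)$. Combining the two estimates gives $2(n-k) \leq 2(n-m-1)$, i.e., $m \leq k-1$, contradicting $m \geq k$.

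The main obstacle is this second case $n \geq m + 2$: neither the diagonal-band vanishing nor a direct dimension count alone forces $W$ to be isotropic, and the argument requires confronting the structural upper bound on $\rank(G)$ coming from the vanishing band with the lower bound imposed by the non-degeneracy of the symplectic form on the ambient $\F_q^{2k}$. It is precisely in this rank comparison that the hypothesis $m \geq k$ is used.
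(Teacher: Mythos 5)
Your proof is correct, and the $(\supseteq)$ direction matches the paper's.
For $(\subseteq)$, your route is genuinely different in structure even though it uses the same two basic facts, namely the band vanishing $f_k(\sigma^a x,\sigma^b x)=0$ for $0<|a-b|\le m$ and the bound $\dim\le k$ for isotropic subspaces. The paper works with the $\F_q$-span $W'$ of the first $k+1$ Frobenius iterates $x,\dots,\sigma^k x$: the hypothesis $m\ge k$ immediately makes $W'$ isotropic, so $\dim_{\F_q}W'\le k$, hence the $k+1$ generators are dependent, hence $W'$ is $\sigma$-stable and descends. This shows in one stroke that the descended space has dimension $\le k$, so your second case $n\ge m+2$ never arises. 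You instead start from the full Frobenius span (which descends by construction, without any dimension bound a priori) and split on $n$; when $n\le m+1$ band vanishing kills the whole Gram matrix, and when $n\ge m+2$ you derive a contradiction by sandwiching $\rank(G)$ between $2(n-k)$ (from non-degeneracy of $f_k$ and $\dim W_q^\perp=2k-n$) and $2(n-m-1)$ (from the band structure, using $n\le 2m$ to ensure the two nonzero-row index ranges are disjoint). I checked this rank computation and it is correct, but it is heavier machinery than the paper needs: the paper's argument replaces the contradiction step by directly bounding the dimension early, which avoids the case split and the Gram-matrix bookkeeping. Both arguments crucially consume $m\ge k$, just at different points.
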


\begin{proof}
  ~
  \begin{itemize}
    \item[$(\supseteq)$]
      Let $W\subseteq\F_p^{2k}$ be isotropic and let $x\in W\otimes\F_q$.
      Then, $\sigma^i(x) \in W\otimes\F_q$ for all $i\in\Z$.
      Hence, $f_k(\sigma^i(x),x)=0$ for all $i\in\Z$.
    \item[$(\subseteq)$]
      Let $x\in A_{k,m}(\F_q)$.
      We consider the vector space $W'$ over $\F_q$ generated by $x,\sigma(x),\dots,\sigma^k(x)$.
      By assumption, $f_k(\sigma^j(x),\sigma^i(x)) = \sigma^i(f_k(\sigma^{j-i}(x),x)) = 0$ for all $0\leq i<j\leq k$
      , so $W'$ is isotropic.
      The maximal isotropic subspaces of $\F_q^{2k}$ are $k$-dimensional, so $\dim_{\F_q}(W') \leq k$.
      Hence, the $k+1$ vectors $x,\sigma(x),\dots,\sigma^k(x)$ are linearly dependent over $\F_q$, so $\sigma(W') = W'$.
      By Galois descent for vector spaces, $W'=W\otimes\F_q$ for some isotropic $\F_p$-subspace~$W = (W')^\sigma$ of $\F_p^{2k}$.
      The claim follows as $x$ lies in $W' = W\otimes\F_q$.
      \qedhere
  \end{itemize}
\end{proof}

\begin{lemma}[Counting maximal isotropic subspaces]
  \label{lem:count-maximal-isotropic}
  There are exactly $\prod_{i=1}^k (p^i + 1)$ maximal (i.e., $k$-dimensional) isotropic subspaces of $\F_p^{2k}$.
\end{lemma}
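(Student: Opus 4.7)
The plan is to prove this by induction on $k$ via a double-counting argument on pairs $(v, W)$, where $v$ is a nonzero vector lying in a maximal isotropic subspace $W$. Let $L_k$ denote the number of maximal isotropic subspaces of $\F_p^{2k}$ with respect to $f_k$.

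For the base case $k=1$, any nondegenerate alternating form on $\F_p^2$ vanishes on every line (since the form is alternating), so the maximal isotropic subspaces are exactly the $1$-dimensional subspaces, of which there are $\frac{p^2-1}{p-1} = p+1$, matching the formula.

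For the inductive step, I would count the set $\mathcal{P}_k \coloneqq \{(v, W) \mid 0 \neq v \in W,\ W \text{ maximal isotropic}\}$ in two ways. Grouping by $W$ gives $|\mathcal{P}_k| = L_k (p^k - 1)$, since each $W$ has dimension $k$. Grouping by $v$ first requires noting that every nonzero vector of $\F_p^{2k}$ is isotropic (as $f_k$ is alternating), giving $p^{2k}-1$ choices for $v$. For a fixed such $v$, the orthogonal $v^\perp$ has dimension $2k-1$ (by nondegeneracy of $f_k$) and contains $\langle v \rangle$, so $V' \coloneqq v^\perp / \langle v \rangle$ is a $(2k-2)$-dimensional $\F_p$-vector space on which $f_k$ descends to a nondegenerate alternating form (the kernel of the descended form is precisely the image of the radical of $f_k|_{v^\perp}$, which is $\langle v \rangle$ itself). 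The key bijective correspondence is: maximal isotropic subspaces $W$ of $\F_p^{2k}$ containing $v$ correspond bijectively to maximal isotropic subspaces of $V'$, via $W \mapsto W/\langle v\rangle$. This gives $L_{k-1}$ such $W$ for each $v$, hence $|\mathcal{P}_k| = (p^{2k}-1) L_{k-1}$.

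Equating the two counts yields
\[
  L_k(p^k - 1) = (p^{2k}-1) L_{k-1} = (p^k-1)(p^k+1) L_{k-1},
\]
so $L_k = (p^k + 1) L_{k-1}$, and the formula $L_k = \prod_{i=1}^k (p^i+1)$ follows by induction.

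The only point requiring care is the bijection between maximal isotropics of $\F_p^{2k}$ containing $v$ and Lagrangians in the quotient $V'$: one must check that such $W$ automatically lies in $v^\perp$ (true because $v \in W$ and $W$ is isotropic), that $W/\langle v\rangle$ is isotropic for the induced form in $V'$ (immediate), that its dimension is $k-1$, and that the correspondence is inverted by taking preimages under $v^\perp \twoheadrightarrow V'$. These verifications are routine and constitute the main (though minor) technical content of the argument.
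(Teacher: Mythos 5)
Your proof is correct, but it takes a genuinely different route from the paper's. You prove the recursion $L_k = (p^k+1)L_{k-1}$ by a double count of pairs $(v,W)$ with $0 \neq v \in W$ Lagrangian, using symplectic reduction $v \mapsto v^\perp/\langle v\rangle$ to identify the Lagrangians through a fixed $v$ with those of a smaller symplectic space; the checks you flag as routine (that $W \subseteq v^\perp$, that the quotient form is nondegenerate with radical exactly $\langle v\rangle$, and that the correspondence is inverted by taking preimages) are indeed all fine. The paper instead counts ordered bases $(b_1,\dots,b_k)$ of Lagrangians directly — there are $\prod_{i=0}^{k-1}(p^{2k-i}-p^i)$ such tuples — and then divides by $|\GL_k(\F_p)|$, since each Lagrangian has that many bases. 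Your approach has the advantage of conceptual transparency (the recursion falls out of a clean geometric construction and generalizes readily to other finite classical groups), while the paper's is a one-shot computation that avoids setting up the quotient space and the induction. Both are standard and both give the closed form $\prod_{i=1}^k(p^i+1)$.
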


\begin{proof}
  We count tuples $(b_1,\dots,b_k)$ of linearly independent vectors such that $f_k(b_i,b_j) = 0$ for all $1\leq i,j\leq k$.
  After choosing linearly independent vectors $b_1,\dots,b_i$, there are exactly~$p^{2k-i}$ vectors $b_{i+1}$ such that $f_k(b_{i+1}, b_j)$ for $j=1,\dots,i$, as the orthogonal complement of the $i$-dimensional space $\langle b_1,\dots,b_i\rangle$ has dimension $2k-i$.
  Exactly $p^i$ of these vectors $b_{i+1}$ lie in $\langle b_1,\dots,b_i\rangle$, so that there are $p^{2k-i}-p^i$ choices for $b_{i+1}$.
  Hence, the number of tuples $(b_1,\dots,b_k)$ as above is
  \[
    \prod_{i=0}^{k-1}
      (p^{2k-i}-p^i)
    =
    \prod_{i=0}^{k-1}
      p^i(p^{2(k-i)}-1).
  \]
  Each maximal isotropic subspace $W$, being $k$-dimensional, has exactly
  \[
    |\GL_k(\F_p)|
    =
    \prod_{i=0}^{k-1}
      (p^k - p^i)
    =
    \prod_{i=0}^{k-1}
      p^i (p^{k-i} - 1)
  \]
  bases $(b_1,\dots,b_k)$.
  Hence, the number of maximal isotropic subspaces is
  \[
    \prod_{i=0}^{k-1}
      \frac
        {p^i(p^{2(k-i)}-1)}
        {p^i(p^{k-i}-1)}
    =
    \prod_{i=0}^{k-1}
      (p^{k-i}+1)
    =
    \prod_{i=1}^k
      (p^i+1).
    \qedhere
  \]
\end{proof}

\begin{lemma}[Small $m$]
  \label{lem:small-m-counting}
  We have $|A_{k,m}(\F_q)| = q^{2k-m} + \cO_k(q^k)$ for $0\leq m<k$.
\end{lemma}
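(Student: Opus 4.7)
The plan is to apply Deligne's bound for nondegenerate exponential sums. Fixing a nontrivial additive character $\psi: \F_q \to \mathbb{C}^\times$ and using character orthogonality, one writes
\[
  |A_{k,m}(\F_q)|
  = \frac{1}{q^m}
    \sum_{t \in \F_q^m}
      S(t),
  \qquad
  S(t) \coloneqq \sum_{x \in \F_q^{2k}}
    \psi\!\left(
      \sum_{i=1}^m
        t_i\, f_k(\sigma^i(x), x)
    \right).
\]
The contribution from $t = 0$ equals $q^{2k-m}$, yielding the main term. For each nonzero $t$, let $i_0 = i_0(t) \in \{1,\dots,m\}$ denote the largest index with $t_{i_0} \neq 0$. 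The inner polynomial $g_t(x) = \sum_i t_i f_k(\sigma^i(x), x)$ has total degree $d = p^{i_0} + 1$, which is coprime to $p$, with homogeneous top-degree part $t_{i_0} f_k(\sigma^{i_0}(x), x) = t_{i_0} \sum_j (a_j^{p^{i_0}} b_j - a_j b_j^{p^{i_0}})$.

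The key verification is that this top-degree form defines a smooth projective hypersurface in $\mathbb{P}^{2k-1}$. Computing partial derivatives in characteristic $p$, every term carrying a factor of $p^{i_0}$ vanishes, leaving $\partial_{a_j} = -t_{i_0}\, b_j^{p^{i_0}}$ and $\partial_{b_j} = t_{i_0}\, a_j^{p^{i_0}}$. These vanish simultaneously only at the origin of $\mathbb{A}^{2k}$, so the top-degree form is nondegenerate. Deligne's theorem for such exponential sums then yields $|S(t)| \leq (d-1)^{2k} q^k = p^{2ki_0} q^k \leq p^{2km}\, q^k$.

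Summing over the $q^m - 1$ nonzero values of $t$ and dividing by $q^m$ yields the error term $\cO_k(q^k)$, with implied constant depending only on $p$, $k$, and $m \leq k - 1$. The main obstacle is thus simply the characteristic-$p$ smoothness verification of the top-degree form, which is gratifyingly immediate thanks to the vanishing of $p^{i_0}$ in the derivatives. Note that the hypothesis $m < k$ plays no role in the argument itself; it merely ensures that the stated main term $q^{2k-m}$ dominates the error $q^k$, so that the formula is informative.
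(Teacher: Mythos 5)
Your proof is correct, but for the error term it uses a genuinely different ingredient than the paper does. Both arguments begin with the same detection sum over $t \in \F_q^m$ and extract the main term $q^{2k-m}$ from the $t=0$ contribution. For $t \neq 0$, you invoke Deligne's estimate for exponential sums, after checking that the top-degree homogeneous part $t_{i_0}\, f_k(\sigma^{i_0}(x),x)$, of degree $p^{i_0}+1$ coprime to $p$, is nondegenerate; as you point out, the vanishing of $p^{i_0}$ in the partial derivatives makes this check immediate. The paper avoids Deligne's theorem entirely: it exploits the block-diagonal structure of $f_k$ to factor the $2k$-variable inner sum as $F(t)^k$ for a two-variable sum $F(t)$, then analyzes $F(t)$ elementarily --- splitting off $b=0$, substituting $r=a/b$ for $b \neq 0$, and using orthogonality to show that $F(t,b)$ is nonzero (and then equal to $q$) precisely when $b$ is a root of an explicit nonzero polynomial of degree at most $p^{2m}+p^m$, hence $F(t)=\cO(q)$. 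Your route is shorter once Deligne's theorem is granted as a black box, while the paper's is self-contained and avoids algebraic geometry; both yield the error $\cO_k(q^k)$ with constants depending only on $p$ and $k$.
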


\begin{proof}
  We handle the conditions $f_k(\sigma^i(x),x)=0$ for $i=1,\dots,m$ using the following sum over the $q^m$ characters $\chi$ of the (additive) group $\F_q^m$:
  \[
    |A_{k,m}(\F_q)|
    =
    \frac{1}{q^m}
    \sum_\chi
    \sum_{x\in(\F_q^{k})^2}
      \chi\Big(
        f_k(\sigma(x),x), \,
        \dots, \,
        f_k(\sigma^m(x),x)
      \Big).
  \]
  Writing $x = (\vec a, \vec b)$ with $\vec a, \vec b \in \F_q^k$, we have by definition $f_k(\sigma^i(x),x) = \sigma^i(\vec a) \cdot \vec b - \sigma^i(\vec b) \cdot \vec a = \sum_{j=1}^k \big( \sigma^i(a_j) b_j - \sigma^i(b_j) a_j \big)$.
  We can then factor the inner sum to obtain the following expression:
  \[
    |A_{k,m}(\F_q)|
    =
    \frac{1}{q^m}
    \sum_\chi
      \left(
        \sum_{a,b\in\F_q}
          \chi\Big(
            \sigma(a)b-\sigma(b)a, \,
            \dots, \,
            \sigma^m(a)b-\sigma^m(b)a
          \Big)
      \right)^k.
  \]
  Using the non-degenerate trace form $(x,y)\mapsto\Tr_{\F_q|\F_p}(xy)$, we can identify the characters $\chi$ of~$\F_q^m$ with vectors $t\in\F_q^m$, so that
  \[
    |A_{k,m}(\F_q)|
    = \frac{1}{q^m} \sum_{t\in\F_q^m} F(t)^k,
  \]
  where
  \[
    F(t)
    \coloneqq
    \sum_{a,b\in\F_q}
      e_p\!\left(
        \sum_{i=1}^m
        \Tr\!\Big(
          t_i
          \big(
            \sigma^i(a)b-\sigma^i(b)a
          \big)
        \Big)
      \right)
  \]
  with $e_p(x \bmod p) \coloneqq e^{2\pi i x/p}$.
  For $t=0$ (corresponding to the trivial character), we clearly have $F(0)=q^2$.
  For $t\neq0$, we split up the sum $F(t)$ into sub-sums $F(t,b)$ according to the choice of $b\in\F_q$.
  Clearly, $F(t,0)=q$.
  For $b\neq0$, the substitution $r \coloneqq a/b$ shows
  \begin{align*}
    F(t,b)
    &=
    \sum_{r\in\F_q}
      e_p\!\left(
        \sum_{i=1}^m
        \Tr\!\Big(
          t_i\sigma^i(b)b\cdot\big(\sigma^i(r)-r\big)
        \Big)
      \right) \\
    &=
    \sum_{r\in\F_q}
      e_p\!\left(
        \sum_{i=1}^m\Big(
          \Tr\!\big(
            t_i\sigma^i(b)b\sigma^i(r)
          \big) -
          \Tr\!\big(
            t_i\sigma^i(b) b r
          \big)
        \Big)
      \right) \\
    &=
    \sum_{r\in\F_q}
      e_p\!\left(
      \sum_{i=1}^m\Big(
        \Tr\!\left(\sigma^{-i}[t_i\sigma^i(b)b]r\right) -
        \Tr\!\left( t_i\sigma^i(b) b r \right)
      \Big)
    \right)
    & \textnormal{as the trace is $\sigma$-invariant} \\
    &=
    \sum_{r\in\F_q}
      e_p\!\left(
        \Tr\!\Big(
          \sum_{i=1}^m
            \big(
              \sigma^{-i}[t_i\sigma^i(b)b]-t_i\sigma^i(b)b
            \big)
          \cdot r
        \Big)
      \right).
  \end{align*}
  By orthogonality of characters of $\F_q$ and non-degeneracy of the trace form, we obtain
  \begin{align*}
    F(t,b) &=
    \begin{cases}
      q &\textnormal{if } \sum_{i=1}^m \big(\sigma^{-i}[t_i\sigma^i(b)b]-t_i\sigma^i(b)b\big) = 0,\\
      0 &\textnormal{otherwise}.
    \end{cases}
  \end{align*}
  Applying the bijection $\sigma^m$ to the condition $\sum_{i=1}^m\big(\sigma^{-i}[t_i\sigma^i(b)b]-t_i\sigma^i(b)b \big) = 0$ turns it into the following polynomial equation in the variable $b$:
  \[
    \sum_{i=1}^m \left(\sigma^{m-i}(t_i) b^{p^m + p^{m-i}} - \sigma^m(t_i) b^{p^{m+i}+p^m}\right) = 0.
  \]
  For any fixed $t\neq0$, the left-hand side is a non-zero polynomial in $b$ of degree at most $p^{2m} + p^m$.
  Hence, for any $t\neq0$, there are at most $p^{2m} + p^m=\cO(1)$ values $b\in\F_q^\times$ with $F(t,b)\neq0$; as seen above, we have $F(t,b) = q$ in this case.
  We conclude that $F(t) = \sum_{b\in\F_q} F(t,b) = q + \sum_{b\in\F_q^\times} F(t,b) = \cO(q)$ for all $t\neq0$.
  Combining this with the fact that $F(0) = q^2$, we obtain
  \[
    |A_{k,m}(\F_q)|
    = \frac{1}{q^m} \left(q^{2k} + \cO(q^{m+k})\right)
    = q^{2k-m} + \cO(q^k).
    \qedhere
  \]
\end{proof}

\subsection{Local counting}
\label{subsn:local-counting-heisenberg}

In this subsection, we fix a finite field $\kappa$, and we use the estimates of the sizes of the sets $A_{k,m}(\kappa)$ obtained in the previous subsection in order to estimate the distribution of $H_k(\F_p)$-extensions of the local function field $\kappa\llpar\pi\rrpar$ (cf.~\Cref{rk:amk} and \Cref{lem:numberofD-numberofexts}).
More precisely, we prove the following lemma, in the spirit of \Cref{thm:local-counting}:

\begin{lemma}
  \label{lem:heisenberg-local-counting}
  Consider the local field $\fF = \kappa\llpar\pi\rrpar$.
  For any $v\geq0$, let $N(=v)$, $N(<v)$, $N(\leq v)$ be the number of $D\in\fg\otimes\cD$ such that $\lastjump(D)=v$, $\lastjump(D)<v$, or $\lastjump(D)\leq v$, respectively.
  We have:
  \begin{enumalpha}
  \item
    $
      N(=0)
      = N(<1)
      = 1
    $.
  \item
    $
      N(\leq1)
      = N(<1+p^{-k})
      = \prod_{i=1}^k(p^i+1)\cdot|\kappa|^{k+1}\cdot(1+\cO_k(|\kappa|^{-1}))
    $.
  \item
    $
      N(\leq1+p^{-m-1})
      = N(<1+p^{-m})
      = |\kappa|^{2k+1-m}\cdot(1+\cO_k(|\kappa|^{-1}))
    $
    \hfill
    for $0\leq m < k$.
  \item
    $
      N(<l+1)
      =
      \cO_k\!\left(
        |\kappa|^{(l-\lfloor l/p\rfloor)(2k+1)}
      \right)
    $
    \hfill
    for $l \geq 0$.
  \item
    $
      N(<l+p^{-m})
      =
      \cO_k\!\left(
        |\kappa|^{l(2k+1-m)} + |\kappa|^{(l-1)(2k+1)+1}
      \right)
    $
    \hfill
    for $2\leq l < p$ and $1\leq m\leq k$.
  \item
    $
      N(< l + l p^{-m})
      =
      \cO_k\!\left(
        |\kappa|^{l(2k+2-m)-1} + |\kappa|^{(l-1)(2k+1)+1}
      \right)
    $
    \hfill
    for $2\leq l < p$ and $1\leq m\leq k$.
  \end{enumalpha}
\end{lemma}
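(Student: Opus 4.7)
The overall strategy is to translate each of the six bounds into a question about the sets $A_{k,m}(\F_q)$ studied in \Cref{subsn:elements-commuting-with-frob}. A central simplification comes from decomposing each element $D_a \in \fh_k \otimes \F_q$ as $(\bar D_a, c_a)$ with $\bar D_a \in \F_q^{2k}$ and $c_a \in \F_q$: since commutators in $\fh_k$ land in the one-dimensional center, all constraints from \Cref{cor:jv-in-exp-p} involve only the $\bar D_a$ (via the alternating form $f_k$), while each central component $c_a$ is free and contributes an unconstrained factor of $q$.

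Parts (a)--(d) follow quickly. Part (a) is immediate from \Cref{lem:unramified}. For (b) and (c), the identity $N({<}1{+}p^{-m}) = q \cdot |A_{k,m}(\F_q)|$ recorded in \Cref{eqn:count-anm} reduces the problem to estimating $|A_{k,m}(\F_q)|$; for (b), I would use \Cref{lem:Anm-large-m} (from which $A_{k,m}$ stabilizes at $m = k$, giving $N({\leq}1) = N({<}1{+}p^{-k})$), \Cref{lem:count-maximal-isotropic}, and a union bound (distinct maximal isotropic subspaces meet in dimension $\leq k{-}1$) to obtain the claimed asymptotic; for (c), \Cref{lem:small-m-counting} applies directly. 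The equalities of the form $N({\leq}v) = N({<}v')$ come from \Cref{prop:precise-slightly-ramified}. Part (d) is a direct application of \Cref{prop:large-v-expp} with $v = l{+}1$ and $r = 2k{+}1$, once one checks the elementary identity $\lceil l{+}1 \rceil - \lceil (l{+}1)/p \rceil = l - \lfloor l/p \rfloor$.

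For (e) and (f), \Cref{prop:cor-l-plus} furnishes the needed necessary conditions: under $\lastjump(D) < l{+}p^{-m}$ (resp.\ ${<}\, l{+}lp^{-m}$), the $r = l$ clause forces $\bar D_l \in A_{k,m}(\F_q)$, and for $1 \leq r \leq l-1$ the element $\bar D_r$ is orthogonal (via $f_k$) to $\sigma^i(\bar D_l)$ for $1 \leq i \leq m$ (resp.\ $\leq m{-}1$). Since $l < p$, the relevant variables are $D_1, \ldots, D_l$ with no indices discarded. I would split by whether $\bar D_l = 0$: Case A ($\bar D_l = 0$) gives the bound $|\kappa|^{(l-1)(2k+1)+1}$ via trivial bounds on each remaining $\bar D_r$ together with the $q^l$ factor from central components; Case B ($\bar D_l \in A_{k,m}(\F_q) \setminus \{0\}$), restricted to the generic stratum where $\sigma(\bar D_l), \ldots, \sigma^m(\bar D_l)$ (resp.\ up to $\sigma^{m-1}$) are $\F_q$-linearly independent, yields the main term $|\kappa|^{l(2k+1-m)}$ (resp.\ $|\kappa|^{l(2k+2-m)-1}$) using $|A_{k,m}(\F_q)| = O_k(q^{2k-m})$ from \Cref{lem:small-m-counting}.

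The main obstacle will be controlling the non-generic strata of Case B in which the Frobenius iterates $\sigma^i(\bar D_l)$ are linearly dependent. The extreme case, where $\bar D_l$ is a $\sigma$-eigenvector in $\F_q^{2k}$, can be handled by a direct count: such eigenvectors number $O_k(q)$ (any nonzero eigenvalue must be a $(p{-}1)$-th power in $\F_q^\times$, and for each such eigenvalue the eigenspace is a finite union of affine $\F_p$-subspaces), so this stratum contributes $O_k(q^{(l-1)\cdot 2k + 2})$, which is absorbed into the $|\kappa|^{(l-1)(2k+1)+1}$ term for $l \geq 2$. For intermediate strata $2 \leq e \leq m-1$, the locus $\{\bar D_l : \dim_{\F_q}\langle \sigma^i(\bar D_l)\rangle \leq e\} \cap A_{k,m}(\F_q)$ is a $\sigma$-determinantal subvariety; a stratification argument (each additional linear dependence among the $\sigma^i(\bar D_l)$ imposing codimension at least one, combined with the uniform bound $|A_{k,m}^{(e)}| \leq |A_{k,e}|$) should show that every intermediate stratum's contribution fits inside one of the two stated terms.
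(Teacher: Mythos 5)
Parts (a)--(d) of your proposal follow essentially the paper's argument (using \Cref{eqn:count-anm}, \Cref{lem:Anm-large-m}, \Cref{lem:count-maximal-isotropic}, \Cref{lem:small-m-counting}, \Cref{prop:precise-slightly-ramified}, and \Cref{prop:large-v-expp}), and that part is fine.

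For parts (e) and (f), your setup is correct --- you invoke \Cref{cor:large-Da-determined} and \Cref{prop:cor-l-plus} to reduce to counting tuples $(D_1,\dots,D_l)$, and your generic-stratum computation (when $\sigma(\bar D_l),\dots,\sigma^m(\bar D_l)$ are linearly independent) does give the first term in each bound. But the treatment of the non-generic strata is a genuine gap, and you acknowledge it yourself (``should show''). The missing idea is the following Galois-descent observation, which is what the paper actually uses: if the $\F_q$-span $V$ of the iterates $\sigma^i(\bar D_l)$, $i\geq 1$, has dimension $d < m$, then some $\sigma^{e+1}(\bar D_l)$ already lies in the span of $\sigma(\bar D_l),\dots,\sigma^e(\bar D_l)$, which forces $V$ to be $\sigma$-stable and hence \emph{defined over $\F_p$}. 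Since $\F_p^{2k}$ has only $\cO_k(1)$ subspaces of each dimension $d$, and $\bar D_l$ must lie in $V\otimes\F_q$, there are at most $\cO_k(|\kappa|^d)$ such $\bar D_l$ (hence $\cO_k(|\kappa|^{d+1})$ choices of $D_l$), and each $\bar D_r$ ($r<l$) is then constrained only to the $(2k-d)$-dimensional orthogonal complement of $V$. Maximizing $(l-1)(2k+1-d)+(d+1)$ over $d$ gives $d=0$ (since $l\geq 2$), yielding the second term $|\kappa|^{(l-1)(2k+1)+1}$. Your ``each additional linear dependence imposes codimension one'' stratification is not sound as stated: the dimension of the degenerate locus does not drop by one per dependence, and your invoked bound $|A_{k,m}^{(e)}|\leq|A_{k,e}|$ (undefined) does not capture the needed restriction on $\bar D_l$ itself. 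The rationality-over-$\F_p$ consequence is what lets you simultaneously shrink the count of valid $D_l$ and account for the enlarged freedom in $D_1,\dots,D_{l-1}$, and without it the intermediate strata are not controlled. (Your eigenvector count in the $d=1$ case is correct and is a special instance of this observation, but it does not generalize by a naive determinantal argument.)
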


\begin{proof}
  Let $\boldsymbol{\pi}$ be the projection $\fg=\F_p^{2k}\oplus\F_p\rightarrow\F_p^{2k}$.
  \begin{enumalpha}
  \item
    See \iref{thm:local-counting}{item:count-unramified}.
  \item
    \Cref{eqn:count-anm} implies that $N(<1+p^{-m}) = |\kappa| \cdot |A_{k,m}(\kappa)|$, which by \Cref{lem:Anm-large-m} does not depend on $m$ as soon as $m \geq k$.
    Therefore, $N(\leq 1) = N(<1+p^{-k}) = |\kappa| \cdot |A_{k,k}(\kappa)|$.
    The claim follows using \Cref{lem:Anm-large-m,lem:count-maximal-isotropic}.
  \item
    By \Cref{prop:precise-slightly-ramified} and \Cref{eqn:count-anm}, we have $N(\leq 1+p^{-m-1}) = N(< 1+p^{-m}) = |\kappa| \cdot |A_{k,m}(\kappa)|$.
    The claim follows using \Cref{lem:small-m-counting}.
  \item
    See \Cref{prop:large-v-expp} (we have $l+1 - \lceil\frac{l+1}p\rceil = l - \lfloor \frac lp \rfloor$, and $r = 2k+1$ in this case).
  \item
    By \Cref{cor:large-Da-determined} and \iref{prop:cor-l-plus}{item:cor-l-plus-1}, $N(<l+p^{-m})$ is at most the number of tuples $(D_1,\dots,D_l)$ of elements of $\fg\otimes\kappa$ such that $[\sigma^i(D_l),D_a]=0$ for $i=1,\dots,m$ and $a=1,\dots,l$.
    We will first pick $D_l$ and then $D_1,\dots,D_{l-1}$.

    Let $V_i(D_l)$ be the $\kappa$-span of $\sigma(\boldsymbol{\pi}(D_l)),\dots,\sigma^i(\boldsymbol{\pi}(D_l))$.
    If $V_i(D_l)=V_{i+1}(D_l)$ for some~$i$, then $V_i(D_l)$ is stable under $\sigma$ and hence defined over $\F_p$ by Galois descent for vector spaces; this implies that $V_j(D_l) = V_i(D_l)$ for all $j\geq i$.
    Let $V(D_l) \coloneq \bigcup_{i\geq1} V_i(D_l)$, which is defined over $\F_p$ for the same reason.
    Let $d(D_l) \coloneq \dim(V(D_l))$.
    We then have $\dim(V_i(D_l)) = \min(i, d(D_l))$.

    The conditions $[\sigma^i(D_l),D_a]=0$ for $i=1,\dots,m$ mean that~$\boldsymbol{\pi}(D_a)$ has to lie in the orthogonal complement of $V_m(D_l)$ with respect to the alternating bilinear form~$f_k$.
    This orthogonal complement has dimension $2k-\dim(V_m(D_l)) = 2k-\min(m,d(D_l))$, so the number of valid $D_a$ is $|\kappa|^{2k+1-\min(m,d(D_l))}$ for $a=1,\dots,l-1$.

    Focusing first on tuples $(D_1,\dots,D_l)$ with $d(D_l)\geq m$, we bound the number of $D_l$ such that $[\sigma^i(D_l),D_l]=0$ for $i=1,\dots,m$ using \Cref{lem:Anm-large-m,lem:count-maximal-isotropic,lem:small-m-counting}: the number of such~$D_l$ is $\cO_k(|\kappa|^{2k+1-m})$, so the total number of valid tuples $(D_1,\dots,D_l)$ with $d(D_l)\geq m$ is $\cO_k(|\kappa|^{(l-1)(2k+1-m)+(2k+1-m)}) = \cO_k(|\kappa|^{l(2k+1-m)})$.

    Now, we fix some $d<m$ and focus on the case $d(D_l)=d$.
    In this case, we will have more choices for $D_1,\dots,D_{l-1}$.
    However, we will have fewer choices for $D_l$, as $\boldsymbol\pi(D_l)$ must lie in the $d$-dimensional subspace $V(D_l)$, which is defined over $\F_p$.
    For any of the $\cO_k(1)$ $d$-dimensional subspaces $V$ of $\kappa^{2k}$ defined over $\F_p$, there are $|\kappa|^{d+1}$ choices of $D_l$ such that $\boldsymbol\pi(D_l)\in V$, so there are at most $|\kappa|^{d+1}$ choices of $D_l$ such that $V(D_l) = V$.
    Hence, for any given dimension $d$, we have $\cO_k(|\kappa|^{(l-1)(2k+1-d)+(d+1)})$ valid tuples $(D_1,\dots,D_l)$ with $d(D_l) = d$.
    The exponent $(l-1)(2k+1-d)+(d+1)$ is maximal for $d=0$.
    Thus, the number of valid tuples $(D_1,\dots,D_l)$ with $d(D_l)<m$ is $\cO_k(|\kappa|^{(l-1)(2k+1)+1})$.
  \item
    By \Cref{cor:large-Da-determined} and \iref{prop:cor-l-plus}{item:cor-l-plus-l}, $N(<l+lp^{-m})$ is at most the number of tuples $(D_1,\dots,D_l)$ of elements of $\fg\otimes\kappa$ such that $[\sigma^i(D_l),D_a]=0$ for $i=1,\dots,m-1$ and $a=1,\dots,l-1$ and such that $[\sigma^i(D_l),D_l]=0$ for $i=1,\dots,m$.
    We distinguish the same two types of tuples as before:

    If $d(D_l)\geq m$, then $V_{m-1}(D_l)$ has dimension $m-1$, and thus the number of valid tuples with $d(D_l)\geq m$ is $\cO_k(|\kappa|^{(l-1)(2k+1-(m-1))+(2k+1-m)}) = \cO_k(|\kappa|^{l(2k+2-m)-1})$.

    If $d(D_l)<m$, then $V_{m-1}(D_l)=V_m(D_l)$, so the number of valid tuples with $d(D_l)<m$ is $\cO_k(|\kappa|^{(l-1)(2k+1)+1})$ as before.
  \qedhere
  \end{enumalpha}
\end{proof}

\subsection{Global asymptotics}
\label{subsn:heisenberg-global-asymptotics}

Define the following numbers:
\begin{align*}
  n(m) &\coloneq 1+p^{-m-1}, &
  e'_m &\coloneq 2k+1-m, &
  b'_m &\coloneq 1 &
  \textnormal{for }0\leq m\leq k-1,\\
  n(k) &\coloneq 1, &
  e'_k &\coloneq k+1, &
  b'_k &\coloneq \prod_{i=1}^k(p^i+1).
\end{align*}
Define $A\coloneq\max\{\frac{e'_m + 1}{n(m)}\mid 0\leq m\leq k\}$ and $S' = \suchthat{0\leq m\leq k}{\frac{e'_m + 1}{n(m)} = A}$.
Let $B\coloneq\sum_{m \in S'} b'_m$, and let $M$ be the least common integer multiple of the rational numbers $b'_m$ for $m \in S'$.

\begin{theorem}[cf.~\Cref{thm:intro-heisenberg}]
  \label{thm:heisenberg-count}
  There is a function $C:\Q/M\Z\rightarrow\R_{\geq0}$ with $C(0)\neq0$, such that for rational $N\rightarrow\infty$, we have
  \[
    \sum_{\substack{
      K\in\Ext(H_k(\F_p), \, F):\\
      \lastjump(K)=N
    }}
      \frac1{|\Aut(K)|}
    \quad=\quad
    C\!\left(N\bmod M\right)\cdot
    q^{AN}
    N^{B-1}
    +
    o\!\left(
      q^{AN}
      N^{B-1}
    \right).
  \]
\end{theorem}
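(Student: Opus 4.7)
The plan is to combine the local--global principle (\Cref{thm:local--global-principle}) with the analytic lemma (\Cref{lem:analytic-lemma}), using the local counts of \Cref{lem:heisenberg-local-counting} as input. Following the template of \Cref{subsn:proof-main-thm}, I would set $a_{P,n} \coloneqq \cardsuchthat{D \in \fh_k\otimes\cD_P}{\lastjump(D)=n}$, which by \Cref{lem:numberofD-numberofexts} equals $\sum_{K_P:\lastjump(K_P)=n}|\Aut(K_P)|^{-1}$. Together with the definition of the global last jump, \Cref{thm:local--global-principle} then yields
\[
  \sum_{\substack{K\in\Ext(H_k(\F_p),F):\\\lastjump(K)=N}}\frac{1}{|\Aut(K)|}
  \;=\;
  \sum_{\substack{(n_P):\\\sum_P n_P\deg(P)=N}}\ \prod_P a_{P,n_P},
\]
which has the shape required by \Cref{lem:analytic-lemma} applied with the integer parameter $|\fh_k|=p^{2k+1}$.

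Next I would extract the data $(b_n,e_n,k_n)$ needed to feed into the analytic lemma. Subtracting $N(<v)$ from $N(\leq v)$ in parts (b) and (c) of \Cref{lem:heisenberg-local-counting} gives, for $0\leq m\leq k$,
\[
  a_{P,n(m)} \;=\; b'_m\, |\kappa_P|^{e'_m} + \cO_k(|\kappa_P|^{e'_m-1}),
\]
so I set $b_{n(m)}\coloneqq b'_m$, $e_{n(m)}\coloneqq e'_m$, $k_{n(m)}\coloneqq e'_m-1$; for every other $n$ I set $b_n\coloneqq 0$. The remaining $a_{P,n}$ vanish for $n\in(0,1)\cup(1+p^{-1},2)$ by \Cref{lem:unramified} and \Cref{cor:slightly-ramified}. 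For $n\geq 2$, part (d) of the lemma gives $a_{P,n} \leq \cO_k(|\kappa_P|^{(l-\lfloor l/p\rfloor)(2k+1)})$ with $l=\lfloor n\rfloor$ when $n\notin\Z$ and $l=n$ otherwise; in the intermediate ranges $2\leq l < p$, the sharper bounds (e) and (f) provide strictly smaller admissible exponents $k_n$.

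The decisive step is to verify the hypothesis $\sup_n (k_n+1)/n < A$ of \Cref{lem:analytic-lemma}. For $n=n(m)$ it is automatic, since $(k_{n(m)}+1)/n(m)=e'_m/n(m) < (e'_m+1)/n(m)\leq A$. For integers $l\geq 2$ with $l\leq n<l+1$, (d) yields $(k_n+1)/n \leq ((l-\lfloor l/p\rfloor)(2k+1)+1)/l$, which tends to $(p-1)(2k+1)/p$ as $l\to\infty$; since $A\geq (e'_0+1)/n(0) = (2k+2)p/(p+1)$, the required inequality $(p-1)(2k+1)/p < (2k+2)p/(p+1)$ reduces to $-(2k+1)<p^2$ and always holds. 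The main obstacle is the finitely many small values $2\leq l<p$: the crude bound (d) can fail to suffice (for instance, whenever $p\leq 4k+3$), and one must substitute the sharper bounds (e)--(f) and perform a case analysis on the pair $(l,m)$ to confirm that each resulting exponent, divided by the corresponding $n$, stays strictly below $A$. This is precisely why the bounds (e) and (f) were built into \Cref{lem:heisenberg-local-counting} in the first place.

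Once this finite check is complete, \Cref{lem:analytic-lemma} applies directly and produces the asymptotic $a_N = C(N\bmod M)\,q^{AN}N^{B-1} + o(q^{AN}N^{B-1})$ with $C(0)\neq 0$. The constants $A$, $B$ and $M$ output by the lemma coincide with those defined just before the statement, completing the proof.
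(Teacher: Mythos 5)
Your overall strategy is the same as the paper's: combine \Cref{thm:local--global-principle} with \Cref{lem:analytic-lemma}, using the estimates from \Cref{lem:heisenberg-local-counting}. The set-up of the numbers $a_{P,n}$ and the identification of $(b_n,e_n,k_n)$ and of $A,B,M,S'$ are all correct. However, your verification of hypothesis (\ref{eqn:hypothesis-on-sup}) contains a genuine error in the range $l\geq p$, in addition to deferring the finitely many cases $2\leq l<p$.

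For $l\geq p$ you bound $(k_n+1)/n \leq \frac{(l-\lfloor l/p\rfloor)(2k+1)+1}{l}$ and then compare the \emph{limit} of the right-hand side as $l\to\infty$, namely $\frac{(p-1)(2k+1)}{p}$, against $A$. But hypothesis (\ref{eqn:hypothesis-on-sup}) requires a bound on the \emph{supremum}, and the function of $l$ on the right-hand side is not monotone: on each block $[np,(n+1)p-1]$ it is increasing, and its supremum over $l\geq p$ is attained at $l=2p-1$, where it equals $\frac{2(p-1)(2k+1)+1}{2p-1}$, strictly larger than the limit you used. Worse, the lower bound $A\geq\frac{e'_0+1}{n(0)}=\frac{(2k+2)p}{p+1}$ you compare against is too weak to dominate even this supremum: with $p=3$ and $k=5$ both quantities equal $9$ (so the strict inequality fails), and with $p=3$ and $k=6$ the supremum is $\frac{53}{5}=10.6 > 10.5 = \frac{(2k+2)p}{p+1}$. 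The paper closes this by comparing to the sharper lower bound $A\geq\frac{e'_1+1}{n(1)}=\frac{2k+1}{1+p^{-2}}$ when $k\geq 2$ (and $A\geq k+2$ when $k=1$), via the chain $\frac{(l-\lfloor l/p\rfloor)(2k+1)+1}{l}\leq\frac{2p-2+1/3}{2p-1}(2k+1)<(1-p^{-2})(2k+1)$. Finally, for $2\leq l<p$ you explicitly acknowledge that one must substitute the refined bounds (e) and (f) of \Cref{lem:heisenberg-local-counting} and perform a case analysis on $(l,m)$, but you do not carry it out; this case analysis (split into the subintervals $[l,l+p^{-k})$, $[l+p^{-m},l+lp^{-m})$, $[l+lp^{-m-1},l+p^{-m})$) is the technical core of the proof and cannot be left implicit.
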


\begin{proof}
  We apply \Cref{lem:analytic-lemma} just as in \Cref{subsn:proof-main-thm} (in the proof of \Cref{thm:intro-counting}), and with the same notation, but using the following estimates arising from \Cref{lem:heisenberg-local-counting}:
  \bgroup
  \allowdisplaybreaks
  \begin{align*}
    a_{P,n} &= 1
    && \textnormal{for }n=0, \\
    a_{P,n} &= 0
    && \textnormal{for }0<n<1, \\
    a_{P,n} &= \prod_{i=1}^k (p^i+1)\cdot|\kappa_P|^{k+1} + \cO(|\kappa_P|^k)
    && \textnormal{for }n=1, \\
    a_{P,n} &= 0
    && \textnormal{for }1<n<1+p^{-k}, \\
    a_{P,n} &= |\kappa_P|^{2k+1-m} + \cO(|\kappa_P|^{2k-m})
    && \textnormal{for }n=1+p^{-m-1}\textnormal{ with }0\leq m\leq k-1, \\
    a_{P,n} &= 0
    && \textnormal{for }1+p^{-m-1}<n<1+p^{-m}\\&&&\textnormal{ with }0\leq m\leq k-1, \\
    a_{P,n} &= \cO\!\left(|\kappa_P|^{\max\!\big(l(k+1),\, (l-1)(2k+1)+1\big)}\right) && \textnormal{for }l\leq n<l+p^{-k}\textnormal{ with }2\leq l\leq p-1,
    \tag*{[Case I]}\label{case-1}\\
    a_{P,n} &= \cO\!\left(|\kappa_P|^{\max\!\big(l(2k+2-m)-1,\, (l-1)(2k+1)+1\big)}\right) && \textnormal{for }l+p^{-m}\leq n<l+l p^{-m}\\&&&\textnormal{ with }2\leq l\leq p-1\textnormal{ and }1\leq m\leq k,
    \tag*{[Case II]}\label{case-2} \\
    a_{P,n} &= \cO\!\left(|\kappa_P|^{\max\!\big(l(2k+1-m),\, (l-1)(2k+1)+1\big)}\right) && \textnormal{for }l+l p^{-m-1}\leq n<l+p^{-m}\\&&&\textnormal{ with }2\leq l\leq p-1\textnormal{ and }0\leq m\leq k-1,
    \tag*{[Case III]}\label{case-3} \\
    a_{P,n} &= \cO\!\left(|\kappa_P|^{(l-\lfloor l/p\rfloor)(2k+1)}\right) && \textnormal{for }l\leq n<l+1\textnormal{ with }p\leq l.
    \tag*{[Case IV]}\label{case-4}
  \end{align*}
  The numbers $A, B, M$ defined above are the same as the numbers $A,B,M$ defined in \Cref{lem:analytic-lemma}.
  Inequality~(\ref{eqn:hypothesis-on-sup}) is verified as follows:
  \begin{align*}
    &\textbf{For \ref{case-1} (first argument of max):}\\
    &\frac{l(k+1)+1}{l} < k+2 \leq A &&\textnormal{for }2\leq l\leq p-1, \\[8pt]
    &\textbf{For \ref{case-2} (first argument of max):}\\
    &\frac{l(2k+2-m)}{l+p^{-m}} < \left\{\begin{matrix}\dfrac{2k+2-m}{1+p^{-m-1}}&\textnormal{if }m\leq k-1,\\k+2&\textnormal{if }m=k\end{matrix}\right\} \leq A &&\textnormal{for }2\leq l\leq p-1\textnormal{ and }1\leq m\leq k, \\[8pt]
    &\textbf{For \ref{case-3} (first argument of max):}\\
    &\frac{l(2k+1-m)+1}{l+lp^{-m-1}} < \frac{2k+2-m}{1+p^{-m-1}} \leq A&&\textnormal{for }2\leq l\leq p-1\textnormal{ and }0\leq m\leq k-1, \\[8pt]
    &\textbf{For [Cases I--III] (second argument of max):}\\
    &\frac{(l-1)(2k+1)+2}{l} < k+2 \leq A &&\textnormal{for }l=2, \\[5pt]
    &\frac{(l-1)(2k+1)+2}{l} \leq \left(1-\frac1l\right)(2k+2) \\&\qquad < \left(1-\frac1p\right)(2k+2) < \frac{2k+2}{1+\frac1p} \leq A &&\textnormal{for }3\leq l\leq p-1, \\[8pt]
    &\textbf{For \ref{case-4}:}\\
    &\frac{(l-\lfloor l/p\rfloor)(2k+1)+1}{l}\leq \frac{l-\lfloor l/p\rfloor+1/3}{l}(2k+1)\\&\qquad \leq \frac{2p-2+1/3}{2p-1}(2k+1) < \left(1-\frac1{p^2}\right)(2k+1) \\&\qquad < \left\{\begin{matrix}\dfrac{2k+1}{1+p^{-2}}&\textnormal{if }k\geq2\\k+2&\textnormal{if }k=1\end{matrix}\right\} \leq A&&\textnormal{for }p\leq l.
    \qedhere
  \end{align*}
  \egroup
\end{proof}

Finally, we describe the integer $B$ more concretely:

\begin{proposition}
  \label{prop:B-is-often-1}
  If $(p,k)=(3,1)$, then $A=3$ and $B=5$.
  If $(p,k)=(3,\frac{3^{m+2}+2m-1}{4})$ for some $m\geq0$, then $A=\frac{e'_m + 1}{n(m)} = \frac{e'_{m+1} + 1}{n(m+1)}$ and $B=2$.
  In all other cases, $B=1$.
\end{proposition}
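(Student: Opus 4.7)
The plan is to analyze the function $f : \{0, 1, \ldots, k\} \to \Q$ defined by $f(m) \coloneqq (e'_m+1)/n(m)$, since by definition $A = \max f$, $S' = f^{-1}(A)$, and $B = \sum_{m \in S'} b'_m$. Concretely, $f(m) = (2k+2-m)p^{m+1}/(p^{m+1}+1)$ for $0 \leq m \leq k-1$, and $f(k) = k+2$. The task reduces to locating where the maximum of $f$ is attained.

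\textbf{Unimodality on the interior.}
A direct computation shows that $f(m+1) - f(m)$ has the same sign as $h(m) \coloneqq (2k+1-m)(p-1) - (p^{m+2}+1)$, and $h$ is strictly decreasing in $m$. Hence $f$ restricted to $\{0, \ldots, k-1\}$ strictly increases up to at most one ``plateau'' of length two (where $h$ vanishes) and then strictly decreases, so $|S' \cap \{0, \ldots, k-1\}|$ is either one or two. Solving $h(m) = 0$ for $k$ yields $k = \tfrac{p^{m+2} + 1 + (m-1)(p-1)}{2(p-1)}$. Reducing modulo $p-1$ and using $p^{m+2} \equiv 1 \pmod{p-1}$ shows that this value of $k$ can be an integer only when $p - 1 \mid 2$, which (as $p$ is odd) forces $p = 3$; in that case $k = (3^{m+2}+2m-1)/4$ is indeed an integer for every $m \geq 0$, as a quick check on the parity of $m$ confirms.

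\textbf{Comparison with the boundary.}
Rewriting the difference $f(k-1) - f(k)$ shows it has the same sign as $p^k - (k+2)$. A straightforward induction on $k$ yields $p^k \geq k+2$ for all $p \geq 3$ and $k \geq 1$, with equality only at $(p, k) = (3, 1)$. Thus in every case other than $(3,1)$ the interior value $f(k-1)$ strictly exceeds the boundary value $f(k)$, so the maximum of $f$ is attained in $\{0,\ldots,k-1\}$ only.

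\textbf{Conclusion.}
These three observations determine $S'$ and $B$ in every case. At $(p,k) = (3,1)$ the interior reduces to the singleton $\{0\}$ and one computes $f(0) = f(k) = 3$, so $S' = \{0, 1\}$ and $B = b'_0 + b'_k = 1 + (3+1) = 5$. When $p = 3$ and $k = (3^{m+2}+2m-1)/4$ for some $m \geq 0$ (necessarily with $k \geq 2$, so $(p,k) \neq (3,1)$), the plateau indices $m$ and $m+1$ both lie in $\{0, \ldots, k-1\}$ (using the easily verified inequality $k \geq m+2$), while $f(k) < f(k-1)$ by the boundary comparison, so $S' = \{m, m+1\}$ and $B = b'_m + b'_{m+1} = 1 + 1 = 2$. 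In every remaining case the plateau equation has no integer solution and $f(k) < f(k-1)$, so $S'$ is a single interior index and $B = b'_{m^*} = 1$. The delicate point is the accounting in the sporadic case $(3,1)$, where coincidentally the boundary value $f(k)$ joins the single interior value in $S'$ and contributes the much larger coefficient $b'_k = 4$, producing $B = 5$ rather than $B = 2$.
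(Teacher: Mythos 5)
Your proof is correct and follows essentially the same route as the paper's: establish unimodality of $m \mapsto (e'_m+1)/n(m)$ on the interior indices, exclude $m=k$ from $S'$ by a boundary comparison, and solve the plateau equation modulo $p-1$ to force $p=3$ and the displayed formula for $k$. The only cosmetic differences are that you work with discrete differences (the sign function $h(m)$) instead of the paper's continuous derivative to get unimodality, you compare $f(k-1)$ against $f(k)$ rather than $f(0)$ against $f(k)$ for the boundary exclusion, and you explicitly verify the inequality $k\geq m+2$ ensuring the plateau indices lie in $\{0,\dots,k-1\}$, a point the paper leaves implicit.
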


\begin{proof}
  The case $(p,k)=(3,1)$ is clear.
  Otherwise, at least one of the inequalities in
  \[
    \frac{e'_0 + 1}{n(0)} = \frac{2k+2}{1+p^{-1}} \geq \frac{3}{4}(2k+2) = \frac32(k+1) \geq k+2 = \frac{e'_k + 1}{n(k)},
  \]
  is strict, so that $k \not\in S'$, and then:
  \[
    A =
    \max\suchthat{
      \frac{e'_m + 1}{n(m)}
    }{
      0\leq m\leq k-1
    }
    =
    \max\suchthat{
      \frac{2k+2-m}{1+p^{-m-1}}
    }{
      0\leq m\leq k-1
    }.
  \]

  Consider the derivative
  \[
    \frac{\dd}{\dd m}\frac{2k+2-m}{1+p^{-m-1}}
    = \frac{-p^{m+1}-1+(2k+2-m)\log p}{p^{m+1}(1+p^{-m-1})^2}.
  \]
  The denominator is positive, and the numerator is a strictly decreasing function of $m$.
  Hence, there is a single threshold $0\leq r\leq k-1$ such that $\frac{2k+2-m}{1+p^{-m-1}}$ is strictly increasing for real $0\leq m\leq r$ and strictly decreasing for real $r\leq m\leq k-1$.

  This implies that $\frac{e'_m+1}{n(m)}$ can be maximal for at most two integers $0\leq m\leq k-1$, and these integers would need to be consecutive.
  Conversely, if two consecutive integers produce the same value $\frac{e'_m+1}{n(m)}$, then this value must be $A$.

  If $\frac{e'_m+1}{n(m)}$ is maximal for only one integer $0\leq m\leq k-1$, then $B=1$ as claimed, so assume $\frac{e'_m+1}{n(m)}=\frac{e'_{m+1}+1}{n(m+1)}$ with $0\leq m\leq k-2$.
  This is equivalent to
  \[
    \frac{2k+2-m}{1+p^{-m-1}} = \frac{2k+1-m}{1+p^{-m-2}},
  \]
  so $1 + p^{-m-2} = (2k+1-m)\left(p^{-m-1}-p^{-m-2}\right)$, and hence $p^{m+2} + 1 = (2k+1-m)(p-1)$.
  In particular, $p-1$ has to divide $p^{m+2}+1$, but $p^{m+2}+1\equiv2\mod p-1$, so $p=3$.
  Plugging $p=3$ back in and solving for $k$, we obtain $k = \frac{3^{m+2}+2m-1}{4}$.
\end{proof}

\clearpage
\appendix
\section*{Chart of notations}
\label{notation-chart}

{
  \renewcommand{\arraystretch}{1.2}
	\centering\hskip-1.5cm
	\begin{tabularx}{1.2\textwidth}{
		|>{\centering\arraybackslash}p{0.4cm}
		|>{\centering\arraybackslash}p{3.5cm}
		|>{\centering\arraybackslash}p{2.6cm}
		|>{\centering\arraybackslash}X|}
		\hline
    \bf &
		\bf Notation &
		\bf Reference &
		\bf Short description \\
		\hline
    \parbox[t]{2mm}{\multirow{6}{*}{\rotatebox[origin=c]{90}{\small\bf Everywhere}}}
    & $p, \sigma, \wp$ & & An odd prime, the absolute Frobenius, the map $\sigma - \mathrm{id}$ \\
    & $\Ext(G,F)$ & \Cref{subsn:extensions-and-cohomology} & The set of isomorphism classes of (étale) $G$-extensions of $F$ \\
    & $\lastjump(K)$ & \Cref{subsn:last-jump} & Last jump of an extension $K|F$ \\
    & $R^\perf$ & \Cref{subsn:perfect-closure} & Perfect closure of a ring $R$ \\
    & $W(R)$, $\Ver$ & \Cref{subsn:witt-vectors} & Witt vectors over $R$, the Verschiebung map \\
    & $W_n(R)$ & \Cref{subsn:witt-vectors} & Witt vectors of length $n$ over $R$ \\
    \hline
    \parbox[t]{2mm}{\multirow{9}{*}{\rotatebox[origin=c]{90}{\centering\small\bf Sections \ref{sn:local}--\ref{sn:more-groups}}}}
    & $G$ & & A finite $p$-group of nilpotency class $\leq2$ \\
		& $\fg$ & & The finite Lie $\Z_p$-algebra associated to $G$ \\
    & $\circ$ & & The group law on $\fg$ such that $G = (\fg, \circ)$ \\
    & $\bwp$ & & The multiplicative Artin--Schreier map $x \mapsto \sigma(x) \circ (-x)$ \\
    & $g.m$ & & $\sigma(g) \circ m \circ (-g)$, for $g,m \in \fg \otimes W(F^\perf)$ \\
    & $G[p], \fg[p]$ & \Cref{lem:p-torsion-is-group} & The $p$-torsion subgroup of $G$ (resp.~ideal of $\fg$) \\
    & $\Orb{\fg}{W(F^\perf)}{W(F^\perf)}$ & & The set of $(\fg \otimes W(F^\perf), \circ)$-orbits of $\fg \otimes W(F^\perf)$ \\
    & $\orb$ & & The bijection $H^1(\Gamma_F, G) \simto \Orb{\fg}{W(F^\perf)}{W(F^\perf)}$ \\
    & $\lastjump(D)$ & & Last jump of the $G$-extension of $F$ associated to $D \in \fg \otimes W(F^\perf)$ \\
    \hline
    \parbox[t]{2mm}{\multirow{12}{*}{\rotatebox[origin=c]{90}{\centering\small\bf \Cref{sn:local} and \Cref{sn:local-counting}}}}
		& $\fF, \kappa, \pi$ & & A local function field of char.~$p$, its residue field, a uniformizer \\
    & $\tilde\pi$ & & The Teichmüller representative of $\pi$ in $W(\fF)$ \\
    & $\cD^0$ & \Cref{def:cD-zero} & $W(\kappa)$-module spanned by $\tilde\pi^{-a}$ for $a \in \Onotp$ \\
    & $\Orb{\fg}{\cD^0}{W(\kappa)}$ & \Cref{prop:kappa-acts} & Set of $(\fg \otimes W(\kappa), \circ)$-orbits of $\fg \otimes \cD^0$ \\
    & $\alpha^0$ & \Cref{thm:local-fundamental-domain} & The bijection $\Orb{\fg}{W(\fF^\perf)}{W(\fF^\perf)} \simto \Orb{\fg}{\cD^0}{W(\kappa)}$ \\
    & $\mu_v(b)$ & \Cref{eq:def-mu} & Smallest integer $k\geq0$ such that $bp^k \geq v$ \\
    & $\cD$ & \Cref{def:cD} & $W(\kappa)$-module spanned by $\tilde\pi^{-a}$ for $a \in \notp$ \\
    & $\pr$ & \Cref{def:cD} & The projection~$\cD^0 \twoheadrightarrow \cD$ \\
    & $g.D$ & \Cref{prop:action-on-cD} & $D - \frac12[D, \sigma(g)+g]$, for $g \in \fg \otimes W(\kappa)$ and $D \in \fg \otimes \cD$ \\
    & $\Orb{\fg}{\cD}{W(\kappa)}$ & \Cref{def:orb-cD} & Set of $(\fg \otimes W(\kappa), \circ)$-orbits of $\fg \otimes \cD$ \\
    & $\alpha$ & \Cref{def:alpha} & The surjection $\Orb{\fg}{W(\fF^\perf)}{W(\fF^\perf)} \twoheadrightarrow \Orb{\fg}{\cD}{W(\kappa)}$ \\
    & $\lastjump(D)$ & \Cref{def:lastjump-cD} & Common last jump of all $G$-extensions associated to $D \in \fg \otimes \cD$ \\
		\hline
    \parbox[t]{2mm}{\multirow{4}{*}{\rotatebox[origin=c]{90}{\centering\small\bf \Cref{sn:global}}}}
		& $q, F$ & & A power of~$p$, the global function field $\F_q(T)$ \\
		& $P, F_P$ & & A place of $F$, the completion of $F$ at $P$ \\
		& $\pi_P, \cD_P, \alpha_P, \ldots$ & & The associated local objects at $P$ (cf.~\Cref{sn:local})\\
		& $\alpha$ & \Cref{eqn:def-globalpha} & The map $\Orb{\fg}{W(F^\perf)}{W(F^\perf)}
    \to
    \rprod
     \Orb{\fg}{\cD_P}{W(\kappa_P)}$  \\
		\hline
    \parbox[t]{2mm}{\multirow{2}{*}{\rotatebox[origin=c]{90}{\centering\small\bf \Cref{sn:more-groups}}}}
		& $H_k(\F_p), \fh_k$ & \Cref{subsn:heisenberg-groups} & Higher Heisenberg group, the corresponding Lie $\F_p$-algebra \\
		& $f_k$ & \Cref{subsn:heisenberg-groups} & The bilinear form $(\F_p^k)^2 \to \F_p$ inducing the Lie bracket of $\fh_k$ \\
		& $A_{k,m}(\F_q)$ & \Cref{eqn:def-anm} & Set of $x\in(\F_q^k)^2$ such that $f_k(\sigma^i(x),x)=0$ for $i=1,\dots,m$ \\
		\hline
	\end{tabularx}
}

We also sum up in a diagram the three main bijections which we construct in this paper:
\[
  \Ext(G,F)
  \underset{\textnormal{
    \Cref{lem:etext-bij-cohomology}
  }}{
    \overset\sim\longleftrightarrow
  }
  H^1(\Gamma_F, G)
  \underset{\textnormal{
    \Cref{thm:parametrization}
  }}{
    \overset{\orb}{
      \overset\sim\longrightarrow
    }
  }
  \Orb{\fg}{W(F^\perf)}{W(F^\perf)}
  \underbrace{
    \underset{\textnormal{
      \Cref{thm:local-fundamental-domain}
    }}{
      \overset{\alpha^0}{\overset\sim\longrightarrow}
    }
    \Orb{\fg}{\cD^0}{W(\kappa)}
  }_{\textnormal{case $F = \kappa\llpar\pi\rrpar$ local}}.
\]
When $F = \kappa\llpar\pi\rrpar$ is a local field, there is also the surjection $\alpha : \Orb{\fg}{W(F^\perf)}{W(F^\perf)} \twoheadrightarrow \Orb{\fg}{\cD}{W(\kappa)}$ from \Cref{def:alpha}, which is finite-to-one (fibers have size $|\fg \otimes W(\kappa)|$), and all elements in a given fiber correspond to extensions with the same last jump (cf.~\Cref{cor:irrelevance-D0}).

\renewcommand{\addcontentsline}[3]{}
\emergencystretch=1em
\bibliographystyle{alphaurl}
\bibliography{paper.bib}

\begin{thebibliography}{CdGVL12}

\bibitem[Abr95a]{abrashkin-ramification-filtration}
Victor~A. Abrashkin.
\newblock The ramification filtration of the {Galois} group of a local field.
\newblock {\em Trudy Sankt-Peterburgskogo Matematicheskogo Obshchestva}. 3,
  pp.~47--127. 1995.

\bibitem[Abr95b]{abrashkin-ramification-filtration-2}
Victor~A. Abrashkin.
\newblock A ramification filtration of the {G}alois group of a local field.
  {II}.
\newblock {\em Trudy Matematicheskogo Instituta imeni V. A. Steklova}. 208,
  pp.~18--69. 1995.

\bibitem[Abr98]{abrashkin-ramification-filtration-3}
Victor~A. Abrashkin.
\newblock A ramification filtration of the {G}alois group of a local field.
  {III}.
\newblock {\em Izvestiya Rossiiskoi Akademii Nauk. Seriya Matematicheskaya}.
  62(5), pp.~3--48. 1998.
\newblock DOI: \href {https://doi.org/10.1070/im1998v062n05ABEH000207}
  {\path{10.1070/im1998v062n05ABEH000207}}.

\bibitem[Abr10]{abrashkin-grothendieck}
Victor~A. Abrashkin.
\newblock Modified proof of a local analogue of the {Grothendieck} conjecture.
\newblock {\em Journal de th\'eorie des nombres de Bordeaux}. 22(1), pp.~1--50.
  2010.
\newblock DOI: \href {https://doi.org/10.5802/jtnb.703}
  {\path{10.5802/jtnb.703}}.

\bibitem[Abr23]{abrashkin-differential}
Victor~A. Abrashkin.
\newblock Ramification filtration and differential forms.
\newblock {\em Izvestiya Rossiiskoi Akademii Nauk, Seriya Matematicheskaya}.
  87(3), pp.~5--22. 2023.
\newblock DOI: \href {https://doi.org/10.4213/im9322e}
  {\path{10.4213/im9322e}}.

\bibitem[Abr24]{abrashkin-deformations-2}
Victor~A. Abrashkin.
\newblock Ramification filtration via deformations, {II}. 2024.
\newblock arXiv: \href {https://arxiv.org/abs/2402.04053v3}
  {\path{2402.04053v3}}.

\bibitem[BG14]{arithmetic-invariant}
Manjul Bhargava and Benedict~H. Gross.
\newblock Arithmetic invariant theory.
\newblock In {\em Symmetry: representation theory and its applications}. volume
  257 of {\em Progr. Math.}. pages 33--54. Birkh\"auser/Springer, New York.
  2014.
\newblock DOI: \href {https://doi.org/10.1007/978-1-4939-1590-3}
  {\path{10.1007/978-1-4939-1590-3}}. ISBN: 978-1-4939-1589-7;
  978-1-4939-1590-3.

\bibitem[Bha05]{bharga-quart}
Manjul Bhargava.
\newblock The density of discriminants of quartic rings and fields.
\newblock {\em Annals of Mathematics}. 162(2), pp.~1031--1063. 2005.
\newblock DOI: \href {https://doi.org/10.4007/annals.2005.162.1031}
  {\path{10.4007/annals.2005.162.1031}}.

\bibitem[Bha10]{bharga-quint}
Manjul Bhargava.
\newblock The density of discriminants of quintic rings and fields.
\newblock {\em Annals of Mathematics}. 172(3), pp.~1559--1591. 2010.
\newblock DOI: \href {https://doi.org/10.4007/annals.2010.172.1559}
  {\path{10.4007/annals.2010.172.1559}}.

\bibitem[BM90]{bertmess}
Pierre Berthelot and William Messing.
\newblock Th\'eorie de {D}ieudonn\'e{} cristalline. {III}. {T}h\'eor\`emes
  d'\'equivalence et de pleine fid\'elit\'e.
\newblock In {\em The {G}rothendieck {F}estschrift, {V}ol.\ {I}}. volume~86 of
  {\em Progr. Math.}. pages 173--247. Birkh\"auser Boston, Boston, MA. 1990.
\newblock DOI: \href {https://doi.org/10.1007/978-0-8176-4574-8_7}
  {\path{10.1007/978-0-8176-4574-8_7}}. ISBN: 0-8176-3427-4.

\bibitem[BM00]{bertin-mezard}
Jos\'e{} Bertin and Ariane M\'ezard.
\newblock D\'eformations formelles des rev\^etements sauvagement ramifi\'es de
  courbes alg\'ebriques.
\newblock {\em Inventiones Mathematicae}. 141(1), pp.~195--238. 2000.
\newblock DOI: \href {https://doi.org/10.1007/s002220000071}
  {\path{10.1007/s002220000071}}.

\bibitem[Bos18]{bosch-algebra}
Siegfried Bosch.
\newblock {\em Algebra: {F}rom the {V}iewpoint of {G}alois {T}heory}.
\newblock Birkh\"{a}user Advanced Texts: Basler Lehrb\"{u}cher. [Birkh\"{a}user
  Advanced Texts: Basel Textbooks]. Birkh\"{a}user/Springer, Cham. 2018.
\newblock DOI: \href {https://doi.org/10.1007/978-3-319-95177-5}
  {\path{10.1007/978-3-319-95177-5}}. ISBN: 978-3-319-95176-8;
  978-3-319-95177-5.

\bibitem[BSW15]{bsw}
Manjul Bhargava, Arul Shankar, and Xiaoheng Wang.
\newblock Geometry-of-numbers methods over global fields {I}: {P}rehomogeneous
  vector spaces. 2015.
\newblock arXiv: \href {https://arxiv.org/abs/1512.03035} {\path{1512.03035}}.

\bibitem[Bé09]{belair}
Luc Bélair.
\newblock Approximation for {F}robenius algebraic equations in {W}itt vectors.
\newblock {\em Journal of Algebra}. 321(9), pp.~2353--2364. 2009.
\newblock DOI: \href {https://doi.org/10.1016/j.jalgebra.2009.01.021}
  {\path{10.1016/j.jalgebra.2009.01.021}}.

\bibitem[CdGVL12]{efflaz}
Serena Cical\`o, Willem~A. de~Graaf, and Michael Vaughan-Lee.
\newblock An effective version of the {L}azard correspondence.
\newblock {\em Journal of Algebra}. 352, pp.~430--450. 2012.
\newblock DOI: \href {https://doi.org/10.1016/j.jalgebra.2011.11.031}
  {\path{10.1016/j.jalgebra.2011.11.031}}.

\bibitem[CL84]{cohenlenstra}
Henri Cohen and Hendrik~W. Lenstra, Jr.
\newblock Heuristics on class groups.
\newblock In {\em Number theory ({N}ew {Y}ork, 1982)}. volume 1052 of {\em
  Lecture Notes in Math.}. pages 26--36. Springer, Berlin. 1984.
\newblock DOI: \href {https://doi.org/10.1007/BFb0071539}
  {\path{10.1007/BFb0071539}}. ISBN: 3-540-12909-X.

\bibitem[DH69]{davenport}
Harold Davenport and Hans Heilbronn.
\newblock On the density of discriminants of cubic fields.
\newblock {\em The Bulletin of the London Mathematical Society}. 1,
  pp.~345--348. 1969.
\newblock DOI: \href {https://doi.org/10.1112/blms/1.3.345}
  {\path{10.1112/blms/1.3.345}}.

\bibitem[DH24]{danghippold}
Huy Dang and Matthias Hippold.
\newblock The moduli space of cyclic covers in positive characteristic.
\newblock {\em International Mathematics Research Notices}. 2024(13),
  pp.~10169--10188. 2024.
\newblock DOI: \href {https://doi.org/10.1093/imrn/rnae060}
  {\path{10.1093/imrn/rnae060}}.

\bibitem[DY25]{darda-yasuda-wild}
Ratko Darda and Takehiko Yasuda.
\newblock The {B}atyrev-{M}anin conjecture for {DM} stacks, {II}. 2025.
\newblock arXiv: \href {https://arxiv.org/abs/2502.07157} {\path{2502.07157}}.

\bibitem[ETW23]{ETW}
Jordan~S. Ellenberg, TriThang Tran, and Craig Westerland.
\newblock Fox--{N}euwirth--{F}uks cells, quantum shuffle algebras, and
  {M}alle's conjecture for function fields. 2023.
\newblock arXiv: \href {https://arxiv.org/abs/1701.04541v2}
  {\path{1701.04541v2}}.

\bibitem[EVW16]{EVW}
Jordan~S. Ellenberg, Akshay Venkatesh, and Craig Westerland.
\newblock Homological stability for {Hurwitz} spaces and the {Cohen}--{Lenstra}
  conjecture over function fields.
\newblock {\em Annals of Mathematics. Second Series}. 183(3), pp.~729--786.
  2016.
\newblock DOI: \href {https://doi.org/10.4007/annals.2016.183.3.1}
  {\path{10.4007/annals.2016.183.3.1}}.

\bibitem[FM02]{fried-mezard}
Michael~D. Fried and Ariane M\'ezard.
\newblock Configuration spaces for wildly ramified covers.
\newblock In {\em Arithmetic fundamental groups and noncommutative algebra
  ({B}erkeley, {CA}, 1999)}. volume~70 of {\em Proc. Sympos. Pure Math.}. pages
  353--376. Amer. Math. Soc., Providence, RI. 2002.
\newblock DOI: \href {https://doi.org/10.1090/pspum/070/1935413}
  {\path{10.1090/pspum/070/1935413}}. ISBN: 0-8218-2036-2.

\bibitem[FO22]{fontaineouyang}
Jean-Marc Fontaine and Yi~Ouyang.
\newblock Theory of $p$-adic {G}alois {R}epresentations. 2022.
\newblock URL: \url{http://staff.ustc.edu.cn/~yiouyang/galoisrep.pdf}.

\bibitem[Fon90]{fontaine}
Jean-Marc Fontaine.
\newblock Repr\'esentations {$p$}-adiques des corps locaux. {I}.
\newblock In {\em The {G}rothendieck {F}estschrift, {V}ol.\ {II}}. volume~87 of
  {\em Progr. Math.}. pages 249--309. Birkh\"auser Boston, Boston, MA. 1990.
\newblock ISBN: 0-8176-3428-2.

\bibitem[FS09]{flajolet-sedgewick-analytic-combinatorics}
Philippe Flajolet and Robert Sedgewick.
\newblock {\em Analytic combinatorics}.
\newblock Cambridge University Press, Cambridge. 2009.
\newblock DOI: \href {https://doi.org/10.1017/CBO9780511801655}
  {\path{10.1017/CBO9780511801655}}. ISBN: 978-0-521-89806-5.

\bibitem[Gun24]{gundab}
Fabian Gundlach.
\newblock Counting abelian extensions by {A}rtin--{S}chreier conductor. 2024.
\newblock arXiv: \href {https://arxiv.org/abs/2410.23964} {\path{2410.23964}}.

\bibitem[Har80]{harbater}
David Harbater.
\newblock Moduli of {$p$}-covers of curves.
\newblock {\em Communications in Algebra}. 8(12), pp.~1095--1122. 1980.
\newblock DOI: \href {https://doi.org/10.1080/00927878008822511}
  {\path{10.1080/00927878008822511}}.

\bibitem[HHYZ24]{hils2024langweiltypeestimatesfinite}
Martin Hils, Ehud Hrushovski, Jinhe Ye, and Tingxiang Zou.
\newblock Lang-{W}eil {T}ype {E}stimates in {F}inite {D}ifference {F}ields.
  2024.
\newblock arXiv: \href {https://arxiv.org/abs/2406.00880} {\path{2406.00880}}.

\bibitem[Ima24]{imai-wild-ramification-groups}
Koto Imai.
\newblock Ramification groups of some finite {G}alois extensions of maximal
  nilpotency class over local fields of positive characteristic. 2024.
\newblock arXiv: \href {https://arxiv.org/abs/2102.07928} {\path{2102.07928}}.

\bibitem[KM04]{klueners-malle-nilpotent-Galois-extensions}
J\"{u}rgen Kl\"{u}ners and Gunter Malle.
\newblock Counting nilpotent {G}alois extensions.
\newblock {\em Journal f\"{u}r die Reine und Angewandte Mathematik}. 572,
  pp.~1--26. 2004.
\newblock DOI: \href {https://doi.org/10.1515/crll.2004.050}
  {\path{10.1515/crll.2004.050}}.

\bibitem[KM20]{kluners-muller}
J\"urgen Kl\"uners and Raphael M\"uller.
\newblock The conductor density of local function fields with abelian {G}alois
  group.
\newblock {\em Journal of Number Theory}. 212, pp.~311--322. 2020.
\newblock DOI: \href {https://doi.org/10.1016/j.jnt.2019.11.007}
  {\path{10.1016/j.jnt.2019.11.007}}.

\bibitem[Koc67]{koch-absgal}
Helmut Koch.
\newblock {\"U}ber die {G}aloissche {G}ruppe der algebraischen {A}bschliessung
  eines {P}otenzreihenk\"orpers mit endlichem {K}onstantenk\"orper.
\newblock {\em Mathematische Nachrichten}. 35, pp.~323--327. 1967.
\newblock DOI: \href {https://doi.org/10.1002/mana.19670350509}
  {\path{10.1002/mana.19670350509}}.

\bibitem[KP23]{koymans-pagano-nilpotent}
Peter Koymans and Carlo Pagano.
\newblock On {M}alle's conjecture for nilpotent groups.
\newblock {\em Transactions of the American Mathematical Society. Series B}.
  10, pp.~310--354. 2023.
\newblock DOI: \href {https://doi.org/10.1090/btran/140}
  {\path{10.1090/btran/140}}.

\bibitem[Lag12]{lagemann1}
Thorsten Lagemann.
\newblock Distribution of {A}rtin--{S}chreier extensions.
\newblock {\em Journal of Number Theory}. 132(9), pp.~1867--1887. 2012.
\newblock DOI: \href {https://doi.org/10.1016/j.jnt.2012.03.011}
  {\path{10.1016/j.jnt.2012.03.011}}.

\bibitem[Lag15]{lagemann2}
Thorsten Lagemann.
\newblock Distribution of {A}rtin--{S}chreier--{W}itt extensions.
\newblock {\em Journal of Number Theory}. 148, pp.~288--310. 2015.
\newblock DOI: \href {https://doi.org/10.1016/j.jnt.2014.09.026}
  {\path{10.1016/j.jnt.2014.09.026}}.

\bibitem[Laz54]{lazard}
Michel Lazard.
\newblock Sur les groupes nilpotents et les anneaux de {L}ie.
\newblock {\em Annales Scientifiques de l'\'Ecole Normale Sup\'erieure.
  Troisi\`eme S\'erie}. 71, pp.~101--190. 1954.
\newblock DOI: \href {https://doi.org/10.24033/asens.1021}
  {\path{10.24033/asens.1021}}.

\bibitem[LW54]{langweilig}
Serge Lang and André Weil.
\newblock Number of {P}oints of {V}arieties in {F}inite {F}ields.
\newblock {\em American Journal of Mathematics}. 76(4), pp.~819--827. 1954.
\newblock DOI: \href {https://doi.org/10.2307/2372655}
  {\path{10.2307/2372655}}.

\bibitem[Mal02]{malle1}
Gunter Malle.
\newblock On the {D}istribution of {G}alois {G}roups.
\newblock {\em Journal of Number Theory}. 92(2), pp.~315--329. 2002.
\newblock DOI: \href {https://doi.org/10.1006/jnth.2001.2713}
  {\path{10.1006/jnth.2001.2713}}.

\bibitem[MKS04]{cgt}
Wilhelm Magnus, Abraham Karrass, and Donald Solitar.
\newblock {\em Combinatorial group theory}.
\newblock Dover Publications, Inc., Mineola, NY. second edition. 2004.
\newblock ISBN: 0-486-43830-9.

\bibitem[Mü22]{muller-thesis}
Raphael Müller.
\newblock {\em On the asymptotics of wildly ramified local function field
  extensions}.
\newblock {PhD} thesis. Universität Paderborn. 2022.
\newblock URL:
  \url{https://digital.ub.uni-paderborn.de/hs/download/pdf/7201713}.

\bibitem[Pot24]{potthast}
Nicolas Potthast.
\newblock On the asymptotics of elementary-abelian extensions of local and
  global function fields. 2024.
\newblock arXiv: \href {https://arxiv.org/abs/2408.16394} {\path{2408.16394}}.

\bibitem[Pri02]{pries}
Rachel~J. Pries.
\newblock Families of wildly ramified covers of curves.
\newblock {\em American Journal of Mathematics}. 124(4), pp.~737--768. 2002.
\newblock DOI: \href {https://doi.org/10.1353/ajm.2002.0024}
  {\path{10.1353/ajm.2002.0024}}.

\bibitem[Ser62]{serrecl}
Jean-Pierre Serre.
\newblock {\em Corps {L}ocaux}.
\newblock Hermann, Paris. 1962.
\newblock ISBN: 978-2-7056-1296-2.

\bibitem[Ser06]{serrelie}
Jean-Pierre Serre.
\newblock {\em Lie algebras and {L}ie groups}. volume 1500 of {\em Lecture
  Notes in Mathematics}.
\newblock Springer-Verlag, Berlin. 2006.
\newblock ISBN: 978-3-540-55008-2; 3-540-55008-9.

\bibitem[SV21]{langweiltordu}
Kadattur~Vasudevan Shuddhodan and Yakov Varshavsky.
\newblock The {H}rushovski--{L}ang--{W}eil estimates. 2021.
\newblock arXiv: \href {https://arxiv.org/abs/2106.10682} {\path{2106.10682}}.

\bibitem[TY23]{moduli-torsors}
Fabio Tonini and Takehiko Yasuda.
\newblock Moduli of formal torsors {II}.
\newblock {\em Annales de l'Institut Fourier (Universit\'e{} de Grenoble)}.
  73(2), pp.~511--558. 2023.
\newblock DOI: \href {https://doi.org/10.5802/aif.3533}
  {\path{10.5802/aif.3533}}.

\bibitem[Woo10]{wood-local}
Melanie~Matchett Wood.
\newblock On the probabilities of local behaviors in abelian field extensions.
\newblock {\em Compositio Mathematica}. 146(1), pp.~102--128. 2010.
\newblock DOI: \href {https://doi.org/10.1112/S0010437X0900431X}
  {\path{10.1112/S0010437X0900431X}}.

\bibitem[Wri89]{wright}
David~J. Wright.
\newblock Distribution of discriminants of abelian extensions.
\newblock {\em Proceedings of the London Mathematical Society}. s3-58(1),
  pp.~17--50. 1989.
\newblock DOI: \href {https://doi.org/10.1112/plms/s3-58.1.17}
  {\path{10.1112/plms/s3-58.1.17}}.

\bibitem[Zha19]{zhang}
Jianru Zhang.
\newblock Moduli of certain wild covers of curves. 2019.
\newblock arXiv: \href {https://arxiv.org/abs/1904.03763} {\path{1904.03763}}.

\end{thebibliography}

\end{document}